\newcommand*{\circnuma}[1]{%
  \ifnum#1<1 %
    \@ctrerr
  \else
    \ifnum#1>20 %
      \@ctrerr
    \else
      \mbox{\ding{\numexpr 171+(#1)\relax}}%
     \fi
  \fi
}
\newlist{enumC}{enumerate}{1} % Conditions in Lemma/Theorem/Prop
\setlist[enumC,1]{label=(\alph*),wide,ref=(\alph*)}
\crefname{enumCi}{condition}{conditions}
\Crefname{enumCi}{Condition}{Conditions}
\newlist{enumT}{enumerate}{3} % "Theorem"=conclusions in Lemma/Theorem/Prop
\setlist[enumT]{label=(\roman*),wide}
\setlist[enumT,1]{label=(\roman*),wide}
\setlist[enumT,2]{label=(\alph*),ref ={(\roman{enumTi}.\alph*)},left=2em}
\setlist[enumT,3]{label*=.(\arabic*), ref ={(\roman{enumTi}.\alph{enumTii}.\alph*)}}
\crefname{enumTi}{}{}
\Crefname{enumTi}{Item}{Items}
\crefname{enumTii}{}{}
\Crefname{enumTii}{Item}{Items}
\crefname{enumTiii}{}{}
\Crefname{enumTiii}{Item}{Items}
\newlist{enumPF}{enumerate}{3}
\setlist[enumPF,1]{label=(\roman*),wide}
\setlist[enumPF,2]{label=(\alph*),left=2em}
\setlist[enumPF,3]{label=\arabic*).,left=1em}
\newlist{enumS}{enumerate}{3} % Statement outside Lemma/Theorem/Prop
\setlist[enumS]{label=\roman*)}
\setlist[enumS,1]{label=\roman*)}
\setlist[enumS,2]{label=\alph*)}
\setlist[enumS,3]{label=\arabic*.}
\newlist{enumI}{enumerate}{3} % items
\setlist[enumI,1]{label=\roman*),leftmargin=*}
\setlist[enumI,2]{label=\alph*), leftmargin=*}
\setlist[enumI,3]{label=\arabic*), leftmargin=*}
\newlist{enumIL}{enumerate*}{1} %inline enum
\setlist*[enumIL]{label=\roman*)}
\newlist{enumR}{enumerate}{1} % remarks
\setlist[enumR]{label=\arabic*.,wide,labelwidth=!, labelindent=0pt}
\crefname{enumRi}{remark}{remarks}
\newlist{enuma}{enumerate}{1} % Statement in Lemma/Theorem/Prop
\setlist[enuma]{label=(\alph*),nosep,leftmargin=*}
\colorlet{srcol}{black!15}
\crefname{equation}{}{}
\Crefname{equation}{Equation}{Equations}
\Crefname{lem}{Lemma}{Lemma}
\Crefname{thm}{Theorem}{Theorem}
\Crefname{conv}{Convention}{Convention}
\newlist{des}{enumerate}{1}
\setlist[des]{font=\upshape\sffamily\bfseries, label={}}
\long\def\delete#1{}
\newcommand{\trivial}[2][]{\if\relax\detokenize{#1}\relax
  {%\hfill\break
   % \begin{minipage}{\textwidth}
      \color{orange} \vspace{0em} $[$  #2 $]$
  %\end{minipage}
  %\break
      \color{black}
  }
  \else
\ifx#1h
\ifcsname showtrivial\endcsname
{%\hfill\break
 % \begin{minipage}{\textwidth}
    \color{orange} \vspace{0em}  $[$ #2 $]$
%\end{minipage}
%\break
    \color{black}
}
\fi
\else {\red Wrong argument!} \fi
\fi
}
\newcommand{\byhide}[2][]{\if\relax\detokenize{#1}\relax
{\color{orange} \vspace{0em} Plan to delete:  #2}
\else
\ifx#1h\relax\fi
\fi
}
\newcommand{\pr}{\mathrm{pr}}
\newcommand{\AC}{\mathrm{AC}}
\def\YD{{\mathsf{YD}}}
\def\SYD{{\mathsf{SYD}}}
\def\MYD{{\mathsf{MYD}}}
\def\Im{\operatorname{Im}}
\def\inn#1#2{\left\langle
      \def\ta{#1}\def\tb{#2}
      \ifx\ta\@empty{\;} \else {\ta}\fi ,
      \ifx\tb\@empty{\;} \else {\tb}\fi
      \right\rangle}
\def\binn#1#2{\left\lAngle
      \def\ta{#1}\def\tb{#2}
      \ifx\ta\@empty{\;} \else {\ta}\fi ,
      \ifx\tb\@empty{\;} \else {\tb}\fi
      \right\rAngle}
\def\binn#1#2{\overline{\inn{#1}{#2}}}
\def\usecsname#1{\csname #1\endcsname}
\def\useLetter#1{#1}
\def\usedbletter#1{#1#1}
\def\mydefcirc#1#2#3{\expandafter\def\csname
  circ#3{#1}\endcsname{{}^\circ {#2{#1}}}}
\def\mydefvec#1#2#3{\expandafter\def\csname
  vec#3{#1}\endcsname{\vec{#2{#1}}}}
\def\mydefdot#1#2#3{\expandafter\def\csname
  dot#3{#1}\endcsname{\dot{#2{#1}}}}
\def\mydefacute#1#2#3{\expandafter\def\csname a#3{#1}\endcsname{\acute{#2{#1}}}}
\def\mydefbr#1#2#3{\expandafter\def\csname br#3{#1}\endcsname{\breve{#2{#1}}}}
\def\mydefbar#1#2#3{\expandafter\def\csname bar#3{#1}\endcsname{\bar{#2{#1}}}}
\def\mydefhat#1#2#3{\expandafter\def\csname hat#3{#1}\endcsname{\hat{#2{#1}}}}
\def\mydefwh#1#2#3{\expandafter\def\csname wh#3{#1}\endcsname{\widehat{#2{#1}}}}
\def\mydeft#1#2#3{\expandafter\def\csname t#3{#1}\endcsname{\tilde{#2{#1}}}}
\def\mydefu#1#2#3{\expandafter\def\csname u#3{#1}\endcsname{\underline{#2{#1}}}}
\def\mydefr#1#2#3{\expandafter\def\csname r#3{#1}\endcsname{\mathrm{#2{#1}}}}
\def\mydefb#1#2#3{\expandafter\def\csname b#3{#1}\endcsname{\mathbb{#2{#1}}}}
\def\mydefwt#1#2#3{\expandafter\def\csname wt#3{#1}\endcsname{\widetilde{#2{#1}}}}
\def\mydefbf#1#2#3{\expandafter\def\csname bf#3{#1}\endcsname{\mathbf{#2{#1}}}}
\def\mydefc#1#2#3{\expandafter\def\csname c#3{#1}\endcsname{\mathcal{#2{#1}}}}
\def\mydefsf#1#2#3{\expandafter\def\csname sf#3{#1}\endcsname{\mathsf{#2{#1}}}}
\def\mydefs#1#2#3{\expandafter\def\csname s#3{#1}\endcsname{\mathscr{#2{#1}}}}
\def\mydefcks#1#2#3{\expandafter\def\csname cks#3{#1}\endcsname{{\check{
        \csname s#2{#1}\endcsname}}}}
\def\mydefckc#1#2#3{\expandafter\def\csname ckc#3{#1}\endcsname{{\check{
      \csname c#2{#1}\endcsname}}}}
\def\mydefck#1#2#3{\expandafter\def\csname ck#3{#1}\endcsname{{\check{#2{#1}}}}}
\def\abs#1{\left|{#1}\right|}
\NewDocumentCommand\cent{o m }{
  \IfValueTF{#1}{
    \mathop{Z}_{#1}{(#2)}}
  {\mathop{Z}{(#2)}}
}
\def\fsl{\mathfrak{sl}}
\newcommand{\BC}{{\mathbb {C}}}
\newcommand{\BH}{{\mathbb {H}}}
\newcommand{\BN}{{\mathbb {N}}}
\newcommand{\BR}{{\mathbb {R}}}
\newcommand{\CA}{{\mathcal {A}}}
\newcommand{\CE}{{\mathcal {E}}}
\newcommand{\CF}{{\mathcal {F}}}
\newcommand{\CJ}{{\mathcal {J}}}
\newcommand{\CK}{{\mathcal {K}}}
\newcommand{\CM}{{\mathcal {M}}}
\newcommand{\CO}{{\mathcal {O}}}
\newcommand{\CP}{{\mathcal {P}}}
\newcommand{\CQ}{{\mathcal {Q}}}
\newcommand{\CU}{{\mathcal {U}}}
\newcommand{\CX}{{\mathcal {X}}}
\newcommand{\CZ}{{\mathcal {Z}}}
\DeclareMathOperator{\ind}{ind}
\newcommand{\slt}{\operatorname{SL}_2(\mathbb{R})}
\newcommand{\od}{\operatorname{d}}
\newcommand{\oH}{\operatorname{H}}
\newcommand{\oO}{\operatorname{O}}
\newcommand{\oS}{\operatorname{S}}
\newcommand{\oU}{\operatorname{U}}
\newcommand{\gl}{\mathfrak g \mathfrak l}
\newcommand{\g}{\mathfrak g}
\newcommand{\h}{\mathfrak h}
\newcommand{\p}{\mathfrak p}
\newcommand{\Z}{\mathbb{Z}}
\DeclareDocumentCommand{\C}{}{\mathbb{C}}
\newcommand{\R}{\mathbb R}
\newcommand{\X}{\mathbf{X}}
\def\eDD{\overline{\nabla}}
\def\DD{\nabla}
\newcommand{\la}{\langle}
\newcommand{\ra}{\rangle}
\newcommand{\be}{\begin {equation}}
\newcommand{\ee}{\end {equation}}
\numberwithin{equation}{section}
\def\flushl#1{\ifmmode\makebox[0pt][l]{${#1}$}\else\makebox[0pt][l]{#1}\fi}
\def\flushr#1{\ifmmode\makebox[0pt][r]{${#1}$}\else\makebox[0pt][r]{#1}\fi}
\newtheorem*{thm*}{Theorem}
\newtheorem{thm}{Theorem}[section]
\newtheorem{lem}[thm]{Lemma}
\newtheorem*{lem*}{Lemma}
\newtheorem{prop}[thm]{Proposition}
\newtheorem{cor}[thm]{Corollary}
\newtheorem*{claim*}{Claim}
\newtheorem{defn}[thm]{Definition}
\newtheorem{eg}[thm]{Example}
\theoremstyle{remark}
\newtheorem*{remark}{Remark}
\newtheorem*{Example}{Example}
\def\slt{\fsl_2(\bC)}
\def\csname #1slt\endcsname{{\mathring{#1}}}}
\def\subset{\subseteq}
\def\Nil{\overline{\mathrm{Nil}}}
\NewDocumentCommand{\NilP}{t'}{
\IfBooleanTF{#1}{\Nil_{\fpp'}}{\Nil_\fpp}
}
\NewDocumentCommand{\KTW}{o g}{
  \IfValueTF{#2}{
    \left.\varsigma_{\IfValueT{#1}{#1}}\right|_{#2}}{
    \varsigma_{\IfValueT{#1}{#1}}}
}
\NewDocumentCommand{\CHI}{o g}{
  \IfValueTF{#1}{
    {\chi}_{\left[#1\right]}}{
    \IfValueTF{#2}{
      {\chi}_{\left(#2\right)}}{
      {\chi}}
  }
}
\NewDocumentCommand{\PR}{g}{
  \IfValueTF{#1}{
    \mathop{\pr}_{\left(#1\right)}}{
    \mathop{\pr}}
}
\NewDocumentCommand{\XX}{g}{
  \IfValueTF{#1}{
    {\cX}_{\left(#1\right)}}{
    {\cX}}
}
\NewDocumentCommand{\PP}{g}{
  \IfValueTF{#1}{
    {\fpp}_{\left(#1\right)}}{
    {\fpp}}
}
\NewDocumentCommand{\LL}{g}{
  \IfValueTF{#1}{
    {\bfL}_{\left(#1\right)}}{
    {\bfL}}
}
\NewDocumentCommand{\ZZ}{g}{
  \IfValueTF{#1}{
    {\cZ}_{\left(#1\right)}}{
    {\cZ}}
}
\NewDocumentCommand{\WW}{g}{
  \IfValueTF{#1}{
    {\bfW}_{\left(#1\right)}}{
    {\bfW}}
}
\NewDocumentCommand\KK{g}{
\IfValueTF{#1}{K_{(#1)}}{K}}
\NewDocumentCommand\XXo{d()}{
\IfValueTF{#1}{\cX^\circ_{(#1)}}{\cX^\circ}}
\NewDocumentCommand\ZZo{g}{
\IfValueTF{#1}{\cZ^\circ_{(#1)}}{\cZ^\circ}}
\NewDocumentCommand{\bcO}{t'}{
  \overline{\cO\IfBooleanT{#1}{'}}}
\NewDocumentCommand{\oliftc}{g}{
\IfValueTF{#1}{\boldsymbol{\vartheta} (#1)}{\boldsymbol{\vartheta}}
}
\NewDocumentCommand{\oliftr}{g}{
\IfValueTF{#1}{\vartheta_\bR(#1)}{\vartheta_\bR}
}
\NewDocumentCommand{\olift}{g}{
\IfValueTF{#1}{\vartheta(#1)}{\vartheta}
}
\NewDocumentCommand{\tlift}{g}{
\IfValueTF{#1}{\wtvartheta(#1)}{\wtvartheta}
}
\DeclareDocumentCommand{\NN}{g}{
\IfValueTF{#1}{\fN(#1)}{\fN}
}
\def\lsign{{}^l\mathrm{Sign}}
\def\ssign{\mathrm{Sign}}
\NewDocumentCommand{\sign}{m}{
  \mathrm{Sign}(#1)
}
\NewDocumentCommand\lnn{t+ t- g}{
  \IfBooleanTF{#1}{{}^l n^+\IfValueT{#3}{(#3)}}{
    \IfBooleanTF{#2}{{}^l n^-\IfValueT{#3}{(#3)}}{}
  }
}
\def\bcO{\def\O@@{\cO}\@ifnextchar'\@Op\@Onp}
\def\@Opnext{\@ifnextchar^\@Opsp\@Opnsp}
\def\@Op{\afterassignment\@Opnext\let\scratch=}
\def\@Opnsp{\def\O@@{\cO'}\@Otsb}
\def\@Onp{\@ifnextchar^\@Onpsp\@Otsb}
\def\@Opsp^#1{\def\O@@{\cO'^{#1}}\@Otsb}
\def\@Onpsp^#1{\def\O@@{\cO^{#1}}\@Otsb}
\def\@Otsb{\@ifnextchar_\@Osb{\@Ofinalnsb}}
\def\@Osb_#1{\overline{\O@@_{#1}}}
\def\@Ofinalnsb{\overline{\O@@}}
\def\defpcmd#1{
  \def\nn@tmp{#1}
  \def\nn@np@tmp{@np@#1}
  \expandafter\let\csname\nn@np@tmp\expandafter\endcsname \csname\nn@tmp\endcsname
  \expandafter\def\csname @pp@#1\endcsname`##1{{}^{##1}{\csname @np@#1\endcsname}}
  \expandafter\def\csname #1\endcsname{\,\@ifnextchar`{\csname
      @pp@#1\endcsname}{\csname @np@#1\endcsname}}
}
\def\KK{\bfK}\defpcmd{KK}
\def\G{G}\defpcmd{G}
\NewDocumentCommand\LW{g}{
\IfValueTF{#1}{L_{W_{#1}}}{L_{W}}}
\def\floor#1{{\lfloor #1 \rfloor}}
\def\UU{\rU}
\def\Thetab{\bar{\Theta}}
\def\sp{{\mathrm{sp}}}
\def\Spin{{\mathrm{Spin}}}
\def\totimes{\widehat{\otimes}}
\def\cf{\emph{cf.} }
\def\Irr{\mathrm{Irr}}
\def\Unip{\mathrm{Unip}}
\def\PBPe{\mathrm{PBP}^{\mathrm{ext}}}
\def\PBPes{\mathrm{PBP}^{\mathrm{ext}}_{\star}}
\def\PBPeg{\mathrm{PBP}^{\mathrm{ext}}_{G}}
\def\PBPesp{\mathrm{PBP}^{\mathrm{ext}}_{\star'}}
\def\DDn{\DD_{\mathrm{naive}}}
    \global\let\vrule@YT=\vrule@none@YT
    \global\let\hrule@YT=\hrule@none@YT
    \global\let\vrule@YT=\vrule@normal@YT
    \global\let\hrule@YT=\hrule@normal@YT
\def\hrule@enon@YT{%
  \hrule width  \dimexpr \boxdim@YT + \fboxrule *2 \relax
  height 0pt
}
\def\vrule@enon@YT{%
  \vrule height \dimexpr  \boxdim@YT + \fboxrule\relax
     width \fboxrule
}
\def\enon{\omit\enon@YT}
\newcommand{\enon@YT}[2][clear]{%
  \def\thisboxcolor@YT{#1}%
  \let\hrule@YT=\hrule@enon@YT
  \let\vrule@YT=\vrule@enon@YT
  \startbox@@YT#2\endbox@YT
  \nullfont
}
\let\ytb=\ytableaushort
\newcommand{\tytb}[1]{{\tiny\ytb{#1}}}
\newcommand{\dotminus}{\mathbin{\text{\@dotminus}}}
\newcommand{\@dotminus}{%
  \ooalign{\hidewidth\raise1ex\hbox{.}\hidewidth\cr$\m@th-$\cr}%
}
\def\ckcOp{\ckcO^{\prime}}
\def\ckcOpp{\ckcO^{\prime\prime}}
\def\cOp{\cO^{\prime}}
\def\cOpp{\cO^{\prime\prime}}
\def\uptaup{\uptau^{\prime}}
\def\uptaupp{\uptau^{\prime\prime}}
\def\taup{\tau^{\prime}}
\def\BOX{\mathrm{Box}}
\def\ckDD{{\check\DD}}
\def\PP{\mathrm{PP}}
\def\PBP{\mathrm{PBP}}
\def\Vs{V_{\sfss}}
\def\Vsp{V_{\sfss'}}
\def\maltese{\mathrm{T}}
\NewDocumentCommand\KC{o}{
  \IfNoValueTF{#1}{{K_{\bC}}}{{K_{#1,\bC}}}
}
\def\YD{{\mathsf{YD}}}
\def\SYD{{\mathsf{SYD}}}
\def\MYD{{\mathsf{MYD}}}
\def\AND{\quad\text{and}\quad}
\def\AOD{\mathrm{AOD}}
\def\pac#1{\ac_{#1}^+}
\def\nac#1{\ac_{#1}^-}
\def\AC{\mathrm{AC}}
\def\ac{\cL}
\def\lotimes{\otimes}
\def\sqii{\sqrt{-1}}
\def\St#1{\mathrm{St}_{#1}}
\def\Ls{L_\sfss}
\title[Special unipotent representations]{Special unipotent representations of real classical groups: construction and unitarity}
\author [D. Barbasch] {Dan Barbasch}
\address{Department of Mathematics\\
  310 Malott Hall, Cornell University, Ithaca, New York 14853 }
\email{barbasch@math.cornell.edu}
\author [J.-J. Ma] {Jia-Jun Ma}
\address{School of Mathematical Sciences\\
  Xiamen University
  Xiamen, 361005, China}
  \address{Department of Mathematics, Xiamen University Malaysia campus, Sepang, Selangor Darul Ehsan, 43900,  Malaysia}
 \email{hoxide@xmu.edu.cn}
\author [B. Sun] {Binyong Sun}
\address{Institute for Advanced Study in Mathematics\\
  Zhejiang University\\
  Hangzhou, 310058, China} \email{sunbinyong@zju.edu.cn}
\author [C.-B. Zhu] {Chen-Bo Zhu}
\address{Department of Mathematics\\
  National University of Singapore\\
  10 Lower Kent Ridge Road, Singapore 119076} \email{matzhucb@nus.edu.sg}
\subjclass[2020]{22E45, 22E46} \keywords{Unitary representation, nilpotent orbit, special unipotent representation, Barbasch-Vogan duality, classical group, theta lifting, moment map, associated cycle}
\begin{document}
% \thanks{}

\begin{abstract}
Let $G$ be a real classical group (including the real metaplectic group). We consider a nilpotent adjoint orbit $\check{\mathcal O}$ of $\check G$, the Langlands dual of $G$ (or the metaplectic Langlands dual of $G$ when $G$ is a real metaplectic group). We classify all special unipotent representations of $G$ attached to $\check{\mathcal O}$, in the sense of Arthur and Barbasch-Vogan. When $\check{\mathcal O}$ has good parity in the sense of M{\oe}glin, we construct all such representations of $G$ via the method of theta lifting. As a consequence of the construction and the classification, we conclude that all special unipotent representations of $G$ are unitarizable, as predicted by the Arthur-Barbasch-Vogan conjecture. We also determine precise structure of the associated cycles of special unipotent representations of $G$. %The paper is the second in a series of two papers on the classification of special unipotent representations of real classical groups.
\end{abstract}

\maketitle

\tableofcontents

\section{Introduction}\label{sec:intro}

\subsection{Background and goals}
In \cite{ArPro, ArUni}, Arthur predicted the existence of a collection of automorphic representations, the so-called unipotent automorphic representations, with the purpose of building arbitrary automorphic representations (from tempered and unipotent ones). In particular, it suggested the existence of certain collections of representations of reductive algebraic groups over $\BR$ or $\BC$. Arthur’s desired representations, the special unipotent representations in the title of this paper, were defined by Barbasch-Vogan \cite{BVUni} (for groups over $\BC$; the same definition works for groups over $\BR$, see \cite[Chapter 27]{ABV}).
As special unipotent representations are expected to be the archimedean components of unipotent automorphic representations, it is natural to expect that all of them are unitarizable. This is known as the Arthur-Barbasch-Vogan conjecture (\cite[Section 4]{ArUni}, \cite{ABV}*{Introduction}).

In the present article, we will be concerned with a real classical group $G$ (which in our terminology includes the real metaplectic group). Our main goal is to construct and classify all special unipotent representations of $G$ attached to $\check \CO$. Here $\check \CO$ is a nilpotent adjoint orbit in $\check \g$, the Lie algebra of the Langlands dual of $G$ (or the metaplectic Langlands  dual when $G$ is a real metaplectic group \cite{BMSZ0}).
As a consequence of the construction and the classification, we conclude that all special unipotent representations of $G$ are unitarizable, as predicted by the Arthur-Barbasch-Vogan conjecture.

We derive a further consequence of our results. It is relatively easy to classify all genuine special unipotent representations of the (real, complex, quaternionic) spin groups, which are easily seen to be unitarizable (see \cite{BMSZ3}).
Combining with Barbasch's work for complex classical groups \cite{B.Class} and the authors' work for simple linear Lie groups of type $A$ \cite{BMSZ4}, we further conclude that the Arthur-Barbasch-Vogan conjecture holds for any real form of a connected reductive complex Lie group of a classical type.
%(See \Cref{thmA} and \Cref{thmA'} for the precise statements.)
We remark that Adams, Miller, van Leeuwen and Vogan have recently proved the unitarity of special unipotent representations for real exceptional groups (the Atlas algorithm of Adams-van Leeuwen-Trapa-Vogan being a key tool) \cite{AMVV}. Thus the Arthur-Barbasch-Vogan conjecture holds true in general.

\subsection{Approach}
In an earlier paper \cite{BMSZ2}, the authors have found a precise count for the number of special unipotent representations of $G$ attached to $\check \CO$, and have also reduced the problem of constructing special unipotent representations attached to $\check \CO$ to the case when $\check \CO$ has good parity (in the sense of \cite{Mo11}). We will thus focus our attention on the case when $\check \CO$ has good parity.

We outline the main ingredients of our approach to the classification, which is completely explicit. It starts with a certain combinatorially defined set $\PBPeg(\check \CO)$, called the (extended) set of painted bipartitions attached to $(G, \check \CO)$, which counts the number of special unipotent representations of $G$ attached to $\check \CO$. We then define a combinatorial operation, called the descent that takes $\uptau \in \PBPeg(\check \CO)$ to a $\uptau '\in \mathrm{PBP}^{\mathrm{ext}}_{G'}(\check \CO')$, where $G'$ is a classical group determined by $\uptau$ such that $(G, G')$ is a reductive dual pair \cite{Howe79}, and $\check \CO'$ is a nilpotent adjoint orbit in $\check \g'$ determined by $\check \CO$.
This allows us to construct inductively a special unipotent representation $\pi_{\uptau}$ of $G$ via the method of (local) theta lifting \cite{Howe89} and in terms of the representation $\pi_{\uptau'}$ of $G'$.

The main challenge in concluding the classification (i.e. exhaustion by our construction) is to distinguish the constructed representations, which we overcome by determining the associated cycles of representations \cite{Vo89} in every step of the induction process. It turns out that the combinatorially defined descent map also carries with it a host of geometric information, notably a double fibration of moment maps. This ultimately allows us to compute inductively the associated cycle of $\pi_{\uptau}$ in terms of the associated cycle of $\pi_{\uptau'}$, by geometric theta lifting (a process analogous to, but simpler than, theta lifting).

To complete the (afore-mentioned) task on the associated cycles and to conclude the unitarizability of all special unipotent representations, we make crucial technical advances  on two fundamental issues of theta lifting: nonvanishing and unitarity preservation. Our approach to the issue of nonvanishing is via a  bridge we set up relating the theory of theta lifting and the theory of associated varieties \cite{Vo89}, which is a key part of Vogan’s formulation of the orbit method for real reductive groups \cite{VoBook,Vo98}. On the other hand, our approach to the issue of unitarity preservation exploits properties of representations with almost $L^2$ matrix coefficients \cite{CHH} and is quite general.

\subsection{Prior and recent work}

Important local consequences of Arthur's conjecture were established by Barbasch and Vogan for groups over $\BC$ \cite{BVUni} and by Adams, Barbasch, and Vogan for groups over $\BR$ \cite{ABV}. In particular, (integral) special unipotent representations for complex semisimple groups were classified in \cite{BVUni}. For the group of real points of a connected reductive algebraic group defined over $\BR$, Adams, Barbasch and Vogan defined a finite set of irreducible admissible representations for any local Arthur parameter, now called an ABV-packet. They established a number of properties of such representations expected by Arthur but not their unitarity. See Problems A-F, \cite{ABV}. In the direction of unitarity, the best general results were those of Barbasch for complex classical groups \cite{B.Class}. See also \cite{Mil} for some cases in exceptional groups.

The other thread linking with the subject matter of this article is  Howe's theory of theta lifting \cite{Howe79,Howe89}. By the
end of 1980's, it was well recognized that the theory has much relevance for constructing unitary representations of
classical groups (see for example, \cite{HoweRank,Li89}). Since then there have been many attempts to construct special unipotent representations (or more generally the so-called unipotent representations \cite{VoBook})
in the framework of theta lifting. See \cites{Sa,HZ,HL,Br,He,Tr,PT,LM,B17, Mo17}.
From the vantage point of this article, particularly worth mentioning is the work of Przebinda \cites{Pz1,Pz2} in which a double
fibration of moment maps was exploited in the framework of theta lifting, and the work of He \cite{He} in which an innovative technique called quantum
induction was devised to show the nonvanishing of the lifted representations.

We now discuss the related developments which are more recent. In a monumental work (called the theory of endoscopic classification) \cite{ArEnd}, Arthur has defined the global and local A-packets in the cases of quasi-split orthogonal and symplectic groups.  For these groups, Adams, Arancibia and Mezo \cite{AAM}, at about the same time as the current paper was completed, showed that archimedean A-packets coincide with the ABV-packets constructed earlier by Adams, Barbasch and Vogan in \cite{ABV}. More recently, Arancibia and Mezo \cite{AM} have proved the same results for quasi-split unitary groups, based on the endoscopic classification for quasi-split unitary groups established by Mok \cite{Mok}. As the set of special unipotent representations attached to $\check \CO$ is a union of unipotent ABV-packets \cite[Corollary 27.13]{ABV}, and representations in A-packets are %clearly
unitary \cite{ArEnd,Mok}, the main results of \cite{AAM} and \cite{AM} provide an independent proof of the unitarity of special unipotent representations in the case of  quasi-split classical groups. We remark further that the internal structure of local A-packets has been a subject of intense interest.
In the archimedean case, we refer the reader to several works of M{\oe}glin, and M{\oe}glin and Renard \cite{Mo17,MR18a,MR18b,MR19,MR20} for results in this direction.

\section{The main results}
\subsection{Special unipotent representations of real classical groups}\label{secsu}

Let $\star$ be one of the 10 labels, $G$ be a classical Lie group of type $\star$, and $\check G$ be its Langlands dual group \cite{Borel} (or a variation), as in the following table ($n,p,q\in \bN:=\{0,1,2, \dots\}$).
\be\label{tableg}
  \begin{aligned}
    &\textrm{Label $\star$}&& \textrm{Classical Lie Group $G$} & & \textrm{Langlands dual group  }\check G\\ %(or metaplectic)
    & A^\R&&\GL_n(\R) &&\GL_n(\C)\\
    & A^\bH&&\GL_{\frac{n}{2}}(\bH)\ \,  (n \textrm{ even}) && \GL_{n}(\C)\\
    & A &&\oU(p,q) &&\GL_{p+q}(\C)\\
    &\widetilde{A}&&\widetilde \oU(p,q) &&\GL_{p+q}(\C)/\{\pm 1_{p+q}\}\quad(\textrm{$1_{p+q}$ is the identity matrix})\\
    & B &&\oO(p,q)\ \, (p+q\, \textrm{ odd}) && \Sp_{p+q-1}(\C)\\
    &D&&\oO(p,q)\  \, (p+q\, \textrm{ even}) &&\oO_{p+q}(\C)\\
    &C&&\Sp_{2n}(\R)&& \oO_{2n+1}(\C)\\
    &\widetilde{C}&&\widetilde \Sp_{2n}(\R) &&\Sp_{2n}(\C)\\
    &D^*&& \oO^*(2n) &&\oO_{2n}(\C)\\
    &C^* && \Sp(\frac{p}{2},\frac{q}{2})  \ \, (p,q\, \textrm{ even}) &&\oO_{p+q+1}(\C)
  \end{aligned}
\ee

In this table $\widetilde \Sp_{2n}(\R)$ denotes  the metaplectic double cover of the symplectic
 group $\Sp_{2n}(\R)$ (it does not split for $n>0$), and  $\widetilde \rU(p,q)$ denotes
 the (linear) double cover of the unitary group  $\rU(p,q)$ defined by a square root of the
 determinant character.

 Let $\g$ denote the complexified Lie algebra of $G$, and let $\check \g$ denote the Lie algebra of $\check G$. Then $\check \g$ is the
  Langlands dual of  $\g$, except for the case of $\wtC$. In this exceptional case,  $\check \g=\mathfrak s\p_{2n}(\C)$, to be called the
 %``\textit{metaplectic}''
 metaplectic Langlands dual of $\g=\mathfrak s\p_{2n}(\C)$ (see  \cite{Weis,BMSZ0}). In all cases, the  universal Cartan subalgebra of $\check \g$ is identified with the dual of the universal Cartan subalgebra of $\g$ (for the case of $\wtC$, the identification  is induced by the half of the trace form on the
 Lie algebra $\g=\sp_{2n}(\C)$).
Thus by the Harish-Chandra isomorphism (or a slight variation of Harish-Chandra isomorphism), there is an identification (\cf \cite{BMSZ2}*{Section 2.3})
\begin{equation}\label{idenhach}
   \Hom_{\C-\mathrm{alg}}(\CU(\g)^{G_\C}, \C)=\check G\backslash\{\textrm{semisimple element in $\check \g$}\}.
\end{equation}
Here and henceforth,  $\CU$ indicates the universal enveloping algebra, a superscript group indicates the fixed point set of a group action, $\Hom_{\C-\mathrm{alg}}$ indicates the set of the $\C$-algebra homomorphisms, and $G_\C$ is the complexification of $G$ which is respectively
\[
  \begin{array}{c}
  \GL_n(\C), \ \GL_n(\C),\  \GL_{p+q}(\C), \ \widetilde{\GL}_{p+q}(\C), \\
  \oO_{p+q}(\C),\ \oO_{p+q}(\C),\ \Sp_{2n}(\C),\  \Sp_{2n}(\C),\  \oO_{2n}(\C),\ \textrm{or}\   \Sp_{p+q}(\C).
  \end{array}
\]
In the above, $\widetilde{\GL}_{p+q}(\C)$
denotes the  double cover of $\GL_{p+q}(\C)$ defined by a square root of the
 determinant character.

The Lie algebra $\check \g$ is viewed as a space of matrices as usual, and denote by
$\mathrm{Nil}(\check \g)$ %$\Nil(\check \g)$
the set of nilpotent matrices in $\check \g$.
Let $\check \CO$ be a $\check G$-orbit in $\mathrm{Nil}(\check \g)$.
Consider a semisimple element of $\check \g$ that equals half of
 the neutral element in any
 $\mathfrak s\mathfrak l_2$-triple attached to $\ckcO$, as in \cite{BVUni}*{Section 5}.  Using the identification \eqref{idenhach},   this semisimple element determines a $\C$-algebra homomorphism
 \[
   \chi_{\check \CO}: \CU(\g)^{G_\C}\rightarrow \C.
 \]
 The homomorphism $\chi_{\check \CO}$ is independent of the choice of the $\mathfrak s\mathfrak l_2$-triple.

Let $\CZ(\g)$ denote the center of $\CU(\g)$. Recall the following result of Duflo (see \cite{Dix}, \cite[Section 3]{Bor}): for every algebraic character $\chi$ of $\CZ(\g)$, there exists a unique maximal ideal of $\CU(\g)$ that contains the kernel of $\chi$.
As an easy consequence, there is a unique maximal $G_\C$-stable ideal of $\CU(\g)$ that contains the kernel of $\chi_{\check \CO}$. Write $I_{\check \CO}$ for this ideal.

Recall that a  smooth Fr\'echet representation of  moderate growth  of a real reductive group is called a Casselman-Wallach representation (\cite[Chapter 11]{Wa2}) if its Harish-Chandra module has  finite length. Let $\Irr(G)$ denote the set of isomorphism classes of irreducible Casselman-Wallach representations of $G$.
Following Barbasch-Vogan \cite{BVUni}, define the set of the special unipotent representations of $G$
 attached to $\ckcO$ by
%\[
 \begin{equation}\label{eq:defuni}
   \begin{split}
     \Unip_{\ckcO}(G)
     :=& \begin{cases}
       \{\pi\in \Irr(G) \,:\,\pi \textrm{ is genuine  and annihilated by } I_{\check \CO}\}, & \text{if } \star\in \{\widetilde A, \widetilde C\};\\
       \{\pi\in \Irr(G) \,:\, \pi \textrm{ is annihilated by } I_{\check \CO}\}, & \text{otherwise}.\\
     \end{cases}
   \end{split}
\end{equation}
%\]
 Here ``genuine" means that the central subgroup $\{\pm 1\}$ of $G$, which is the kernel of the covering homomorphism $\widetilde \oU(p,q)\rightarrow  \oU(p,q)$ or $\widetilde \Sp_{2n}(\R)\rightarrow \Sp_{2n}(\R)$, acts on $\pi$ through the nontrivial character.
 Note that an irreducible representation $\pi\in \Irr(G)$ is annihilated by $I_{\check \CO}$ if and only if   $\CU(\g)^{G_\C}$ acts on it through the character $\chi_{\check \CO}$ and the complex associated variety of $\pi$ is contained in the Zariski closure of $\CO$. Here $\CO$ is the Barbasch-Vogan dual of $\check \CO$ (see \Cref{bvdual}).

 \begin{remark} The nilpotent orbit $\CO$ is special in sense of Lusztig \cite{Lsp} when $\star \ne \widetilde C$, and is called metaplectic special when $\star =\widetilde C$. See \cite{BVUni} and \cite{BMSZ0}.
 \end{remark}

 \begin{remark} In \cite{BMSZ2}, we treat special orthogonal groups rather than orthogonal groups. We will examine the relationship of their special unipotent representations by applying Clifford theory. See \Cref{sorth}.
  \end{remark}

\begin{remark} Special unipotent representations of complex classical groups are well-understood (see \cite{BVUni}, \cite{B.Class}) and are known to be unitarizable (\cite{B.Class}). The method of this paper (with minor modifications) applies to complex classical groups. We refer the reader to \cite[Section 2.9]{BMSZ2} for a review both of their counting and their constructions, which is in the same framework as this article. Note also that Losev, Mason-Brown and Matvieievskyi \cite{LMBM} have recently proposed a notion of unipotent representations for a complex reductive group, extending the notion of special unipotent representations.
\end{remark}

    When $\star\in \{A^\R, A^\mathbb H\}$ so that $G$ is a general linear group, all special unipotent representations of $G$  are obtained via normalized smooth parabolic induction from quadratic characters (see \cite{VGL}*{Page 450}). As an immediate consequence, all  special unipotent representations of $G$ are unitarizable. We refer the reader to \cite[Section 7]{BMSZ2} for a review of their classifications.
 In the rest of this article, we assume that $\star\notin \{A^\R, A^\mathbb H\}$.

Combining reduction to good parity in our earlier paper \cite{BMSZ2} (see \Cref{thmB}) and construction and classification in the case of good parity in the current paper (see  \Cref{thmC}), we will have constructed all the representations in $\Unip_{\check \CO}(G)$. As a consequence, we will have the following unitarity result, as predicted by the Arthur-Barbasch-Vogan conjecture. For completeness, we have included the two cases of $\star \in \{A^\R, A^\mathbb H\}$, which is due to Vogan \cite{VGL}.

\begin{thm}\label{thmA}
Let $G$ be from one of the 10 series of classical groups in \eqref{tableg}.
Let $\check \CO$ be a $\check G$-orbit  in $\mathrm{Nil}(\check \g)$. Then all representations in $\Unip_{\ckcO}(G)$ are unitarizable.
\end{thm}
\begin{remark}
As mentioned in the introduction, for a quasi-split orthogonal group or a real symplectic group $G$ (resp. a quasi-split unitary group), the unitarity of the representations in $\Unip_{\ckcO}(G)$ is independently proved in \cite{AAM} (resp. \cite{AM}), from the perspective of the endoscopic classification of representations \cite{ArEnd, Mok}.

\end{remark}

We now temporarily shift gear and consider the following general class of Lie groups. Let $G_\BC$ be a connected  reductive complex Lie group, and let $G$ be a real form of $G_\BC$, namely the fixed point group of an anti-holomorphic involutive automorphism $\sigma$ of $G_\BC$.  Denote by $\check G$ the Langlands dual group of $ G_\BC$, and by $\check \g$ the Lie algebra of $\check G$. For  a nilpotent $\check G$-orbit $\check \CO$ in $\check \g$, let $\mathrm{Unip}_{\check \CO}(G)$ be the set of isomorphism classes of special unipotent representations of $G$ attached to $\check \CO$. The Arthur-Barbasch-Vogan conjecture asserts that all representations in $\mathrm{Unip}_{\check \CO}(G)$ are unitarizable. It is easy to see that the conjecture is reduced to the case when $G_\BC$ is simply connected, and the (real) Lie algebra $\mathrm{Lie}(G)$ of $G$ is simple.  In this case the Lie algebra of $G_\BC$, denoted by $\g$, is either simple or the product of two isomorphic simple Lie algebras. Below is the list of groups $G_\BC$ and $G$ satisfying the aforementioned properties with $\g$ of a classical type ($n,m,p,q\in \BN$, $n\geq 2$, $m\geq 7$).
\[
\label{tableg}
  \begin{aligned}
    & \textrm{$G_\BC$} & \ &  G\\
    & \SL_n(\C) &\ & \SL_n(\R), \ \  \mathrm{SU}(p,q) \ (p+q=n),  \ \ \SL_{\frac{n}{2}}(\BH)\ (n\textrm{ is even})\\
      & \Spin_m(\C) &\ & \Spin (p,q) \ (p+q=m),  \ \ \Spin^*(m)  \ (m\textrm{ is even})\\
       & \Sp_{2n}(\C) &\ & \Sp(\frac{p}{2},\frac{q}{2}) \ (\textrm{$p,q$ are even}, \, p+q=2n),  \ \ \Sp_{2n}(\R)   \\
         & \SL_n(\C)\times \SL_n(\C) &\ & \SL_n(\C)\\
           & \Spin_m(\C)\times \Spin_m(\C) &\ & \Spin_m(\C)\\
       & \Sp_{2n}(\C)\times \Sp_{2n}(\C) &\ & \Sp_{2n}(\C)\\
  \end{aligned}
\]
\trivial[h]{{\bf Type $A$} ($n\geq 2$):
\begin{itemize}
    \item $G_\BC: \SL_n(\BC) \text{ or } \SL_n(\BC)\times \SL_n(\BC)$.
\item $G=\SL_n(\R), \ \mathrm{SU}(p,q) \ (p+q=n),  \ \SL_{\frac{n}{2}}(\BH)\ (n\textrm{ is even}),  \textrm{ or } \SL_n(\BC)$.
\end{itemize}

{\bf Type $B$ or $D$} ($m\geq 7$):
\begin{itemize}
    \item $G_\BC: \Spin_m(\BC) \text{ or } \Spin_m(\BC) \times \Spin_m(\BC)$.
\item $G=\Spin (p,q) \ (p+q=m),  \ \Spin^*(m) \ (m\textrm{ is even}), \ \textrm{or}\ \Spin_m(\BC)$.
\end{itemize}

{\bf Type $C$} ($n\geq 2$):
\begin{itemize}
    \item $G_\BC: \Sp_{2n}(\BC) \text{ or }  \Sp_{2n} (\BC) \times  \Sp_{2n}(\BC)$.
    \item $G=\Sp(p,q) \ (p+q=n),  \ \Sp_{2n}(\BR), \ \textrm{or} \ \Sp_{2n}(\BC)$.
\end{itemize}
}

\Cref{thmA}, together with the results of Barbasch for complex classical groups \cite{B.Class} and the results of the present authors for spin groups (\cite[Theorems 6 and 7]{BMSZ3}) and for simple linear Lie groups of type $A$ (\cite[Corollary 1.2]{BMSZ4}), thus implies the following unitarity result, in accordance with the Arthur-Barbasch-Vogan conjecture.

\begin{thm}\label{thmA'}
Let $G_\BC$ be a connected  reductive complex Lie group, and $G$  a real form of $G_\BC$. Assume that every simple factor of the Lie algebra $\g$ of $G_\BC$ is of a classical type. Let $\check \CO$ be a nilpotent $\check G$-orbit  in $\check \g$. Then all representations in $\Unip_{\ckcO}(G)$ are unitarizable.
\end{thm}

\subsection{Reduction to good parity: a review}\label{subsec:red}
From now on, let $\star$ be one of the following 8 labels: $A, \wtA, B, D, C, \wtC, C^{*}, D^{*}$. Note that if $\star\in \{C, \wtC, D^*\}$, then $n$ (in \eqref{tableg}) equals the rank of $\g$. In all other cases, we also let $n$ denote the rank of $\g$.
We say that an integer has good parity (depends on $\star$ and $n$) if it has the same parity as
\begin{equation}\label{parity}
  \begin{cases}
    n, &  \text{if $\star =A$}; \\
    1+ n, &  \text{if $\star = \wtA$}; \\
   1, & \text{if } \star \in \set{C,C^{*},D,D^{*}};\\
 \text{0}, & \text{if } \star \in \set{B,\wtC}.\\
  \end{cases}
\end{equation}
Otherwise we say that the integer has bad parity.

When there is no confusion, we will not distinguish between a nilpotent orbit $\check \CO$ and  the Young diagram corresponding to $\check \CO$. Given two Young diagrams $\imath$ and $\jmath$, write $\imath \stackrel{r}{\sqcup} \jmath$ for
the Young diagram whose multiset of nonzero row lengths equals the union of
those of $\imath$ and $\jmath$. Also write $2\imath =\imath \stackrel{r}{\sqcup} \imath$.

We have a Young diagram decomposition
\begin{equation*} % \label{deccou}
     \ckcO=\check \CO_\mathrm g  \stackrel{r}{\sqcup} \check \CO_\mathrm b
\end{equation*}
 such that $\check \CO_\mathrm g$ has good parity in the sense that all its nonzero row lengths  have good parity,  and $\check \CO_\mathrm b$ has bad parity in the sense that all its nonzero row lengths  have bad parity.
 We assume that
 \be\label{doubleco}
 \check \CO_\mathrm b =2 \check \CO'_\mathrm b
 \ee
 for some Young diagram $\check \CO'_\mathrm b$. This is automatic when  $\star\in \set{B, C,\wtC, C^{*},D,D^{*}}$, and is necessary when $\star \in\{A, \wtA\}$ and the set $\Unip_{\ckcO}(G)$ is nonempty (\cite[Theorem 2.16]{BMSZ2}).

 For every Young diagram $\imath$, write $\abs{\imath}\in \BN$ for the number of boxes in $\imath$ (see \eqref{eq:BOX22}). Put
 \[
   n_\mathrm b:=\abs{\CO'_\mathrm b},
 \]
 and define a group
 \begin{equation}\label{Gpb}
  G'_\mathrm b := \begin{cases}
  \GL_{n_\mathrm b}(\C),  & \text{if } \star \in \set{A, \wtA}; \\
    \GL_{n_\mathrm b}(\bR), & \text{if } \star \in \set{B,C,D}; \\
       \widetilde{ \GL}_{n_\mathrm b}(\bR), & \text{if } \star =\wtC; \\
    \GL_{\frac{n_\mathrm b}{2}}(\bH), & \text{if } \star \in \set{C^{*},D^{*}}. \\
  \end{cases}
\end{equation}
Here $ \widetilde{ \GL}_{n_\mathrm b}(\bR)$ is the double cover of $ \GL_{n_\mathrm b}(\bR)$ that fits the following Cartesian diagram of Lie groups:
\begin{equation}\label{wgll}
\begin{CD}
 \widetilde{ \GL}_{n_\mathrm b}(\bR)@>>>  \GL_{n_\mathrm b}(\bR)\\
  @VVV @VV g\mapsto \textrm{ sign of $\det(g)$} V\\
  \{\pm 1, \pm \sqrt{-1}\} @> x\mapsto x^2 >> \{\pm 1\}. \\
\end{CD}
\end{equation}

Consider $\check \CO'_\mathrm b$ as a $\GL_{n_\mathrm b}(\C)$-orbit in the set $\mathrm{Nil}(\gl_{n_\mathrm b}(\C))$ of nilpotent matrices. Similar to $\Unip_{\ckcO}(G)$,  let $\Unip_{\check \CO'_\mathrm b}(G'_\mathrm b)$ denote the set of special unipotent representations of $G'_\mathrm b$ attached to $\check \CO'_\mathrm b$. When $\star \in \set{A, \wtA}$ so that $G'_\mathrm b=\GL_{n_\mathrm b}(\C)$, the set  $\Unip_{\check \CO'_\mathrm b}(G'_\mathrm b)$ is a singleton whose  unique element is the parabolic induced representation of a trivial character (see \cite[Section 2.9]{BMSZ2}). When $\star=\wtC$ so that $G'_\mathrm b= \widetilde{ \GL}_{n_\mathrm b}(\bR)$, we have a  bijective map
 \[
    \Unip_{\check \CO'_\mathrm b}(\GL_{n_\mathrm b}(\bR))\rightarrow  \Unip_{\check \CO'_\mathrm b}(\widetilde{ \GL}_{n_\mathrm b}(\bR)), \quad \pi\mapsto \pi\otimes \tilde \chi_{n_\mathrm b},
 \]
 where $\tilde \chi_{n_\mathrm b}$ is the character given by the left vertical arrow of \eqref{wgll}.
 %(see \cite[Section 2.7]{BMSZ2}).

We further assume that
\be\label{existgl}
  \textrm{either $\star\in \{A, \wtA, B,D, C^*\}\ $ and $\ p,q\geq n_\mathrm b,\quad \ $ or  $\quad\  \star\in \{C, \wtC, D^*\}$}.
\ee
Otherwise the set $\Unip_{\ckcO}(G)$  is empty (\cite[Theorems 2.16 and 2.21]{BMSZ2}).
Put  \be\label{gg00}
  G_\mathrm g :=
  \begin{cases}
  \oU(p-n_\mathrm b, q-n_\mathrm b),  & \text{if } \star =A; \\
  \widetilde \oU(p-n_\mathrm b, q-n_\mathrm b),  & \text{if } \star =\wtA; \\
    \oO(p-n_\mathrm b,q-n_\mathrm b), & \textrm{if $\star\in \set{B,D}$};\\
    \oO^{*}(2n-2n_\mathrm b), &\textrm{if $\star = D^{*}$};\\
    \Sp_{2n-2n_\mathrm b}(\bR), &\textrm{if $\star = C$};\\
    \wtSp_{2n-2n_\mathrm b}(\bR), &\textrm{if $\star = \wtC$};\\
    \Sp(\frac{p-n_\mathrm b}{2},\frac{q-n_\mathrm b}{2}), &\textrm{if $\star = C^{*}$}.\\
  \end{cases}
\ee

Define a complex group
\be\label{gg0022}
 \check  G_\mathrm g :=
  \begin{cases}
  \GL_{n-2n_\mathrm b}(\C),  & \text{if } \star =A; \\
 \GL_{n-2n_\mathrm b}(\C)/\{\pm 1_{n-2n_\mathrm b}\},  & \text{if } \star=\wtA;
  \\
    \Sp_{2n-2n_\mathrm b}(\C), & \textrm{if $\star\in\{B,\wtC\}$};\\
    \oO_{2n-2 n_\mathrm b}(\C), &\textrm{if $\star\in \set{D,D^*}$};\\
    \oO_{2n+1-2n_\mathrm b}(\C), &\textrm{if $\star\in \{C, C^*\}$}.
  \end{cases}
\ee
Then $\check \CO_\mathrm g$ is identified with a $\check G_\mathrm g$-orbit in $\mathrm{Nil}(\check \g_\mathrm g)$, where $\check \g_\mathrm g$ denotes the Lie algebra of $\check G_\mathrm g$.
As before we have the set $\Unip_{\ckcO_{\mathrm g}}( G_{\mathrm g})$ of special unipotent  representations of $G_{\mathrm g}$ attached to $\ckcO_{\mathrm g}$.

Up to conjugation, $G$ has a unique parabolic subgroup, to be denoted by $Q$, whose Levi component is
isomorphic to   $G'_\mathrm b\times G_\mathrm g$ (or   $(G'_\mathrm b\times G_\mathrm g)/\{\pm 1\} $ when $\star=\wtC$). Based on \cite[Theorems 2.17 and 2.21]{BMSZ2}, we will prove the following result in \Cref{modify}.
\begin{thm} %[\Cref{reduction22}]
\label{thmB}
 Suppose that the conditions  \eqref{doubleco} and \eqref{existgl} hold. Then the normalized smooth parabolic induction from $Q$ to $G$ yields
   a well-defined bijection
   \[
 % \begin{equation}\label{eq:IND}
    \begin{array}{rccc}
 &\Unip_{\ckcO'_{\mathrm b}}( G'_{\mathrm b}) \times   \Unip_{\ckcO_{\mathrm g}}( G_{\mathrm g})  &         \longrightarrow &\Unip_{\ckcO }(G), \\
                &   (\pi',\pi_\mathrm g) & \mapsto & \pi'\rtimes \pi_\mathrm g.
    \end{array}
 % \end{equation}
 \]
 Otherwise the set $\Unip_{\ckcO }(G)$ is empty.
\end{thm}

By \Cref{thmB}, the classification and unitarity  of special unipotent representations of $G$ attached to $\check \CO$ are reduced to the case when $\check \CO$ has good parity.

\subsection{Main results in the good parity case}\label{subsec:goodPR}

Except for Section \ref{sorth}, in the rest of this article we will only consider the good parity case. Without fully explaining all terminologies and notations, we state the main results of this paper in the good parity case.

We consider real classical groups of type $B,C,D$, namely we  assume that \[\star\in \{B,C,D,\widetilde {C}, C^*, D^*\}.\]

The starting point is a certain combinatorially defined set $\PBPeg(\check \CO)$ (see \eqref{def:PBPes}), called the (extended) set of painted bipartitions attached to $(G, \check \CO)$. According to \cite{BMSZ2}, it counts the number of special unipotent representations of $G$ attached to $\check \CO$ (for this statement, we take $G$ to be the special orthogonal group rather than the orthogonal group if $\star\in \{B,D\}$).

Our first key observation is that there is a combinatorially defined operation, called the descent that takes $\uptau \in \PBPeg(\check \CO)$ to a $\uptau '\in \mathrm{PBP}^{\mathrm{ext}}_{G'}(\check \CO')$, where $(G, G')$ is a Howe pair, and $\check \CO'$ is a $\check {G'}$-orbit in $\mathrm{Nil}(\check \g')$. (See \Cref{sec:desc}.) This allows us to construct inductively a special unipotent representation $\pi_{\uptau}$ of $G$ via the method of theta lifting and in terms of the representation $\pi_{\uptau'}$ of $G'$.

Our main result on the classification and unitarity of special unipotent representations is then as follows.

\begin{thm}[\Cref{thm100}]\label{thmC} Suppose that $\check \CO$ is a $\check G$-orbit in $\mathrm{Nil}(\check \g)$ that has good parity.
\begin{enumerate}[wide=0em,label=(\alph*)]
\item For every $\uptau \in \PBPeg(\check \CO)$, the representation $\pi_{\uptau}$ of $G$  is irreducible, unitarizable,  and special unipotent attached to $\ckcO$.

\item If $\star\in \{B,D\}$ and $(\star, |\check \CO|)\neq (D, 0)$, then the map
\[
\begin{array}{rcl}
\PBPeg(\check \CO)\times \Z/2\Z&\rightarrow &\Unip_{\ckcO}(G),\\
  (\uptau, \epsilon)&\mapsto& \pi_{\uptau}\otimes \det^\epsilon
  \end{array}
\]
is bijective.
 In all other cases, the map
\[
\begin{array}{rcl}
\PBPeg(\check \CO)&\rightarrow &\Unip_{\ckcO}(G),\\
  \uptau &\mapsto& \pi_{\uptau}
  \end{array}
\]
is bijective.
\end{enumerate}
\end{thm}

Our second key observation is that the descent map carries with it a host of of geometric information.
In particular, it allows us to define a certain notion of geometric theta lift from $\CK(\CO')$ to $\CK(\CO)$, where
$\CO$ (resp., $\CO'$) is the Barbasch-Vogan dual of $\ckcO$ (resp., $\ckcO'$), and $\CK(\CO)$ (resp., $\CK(\CO')$) is a certain $\CK$ group associated to the pair $(G,\CO)$ (resp., $(G',\CO')$). (See \Cref{sec:dlift}.) By mimicking the inductive definition of $\pi_{\uptau}$ (by geometric theta lifting, in place of theta lifting), we construct an element $\AC(\uptau)$ of $\CK(\CO)$, called the associated cycle of $\uptau$.

Our main result on the determination of the associated cycles of special unipotent representations then reads as follows. It plays a key role in establishing the exhaustion of special unipotent representations by our construction, namely Part (b) of \Cref{thmC}.

\begin{thm}[\Cref{thmpitau}]\label{thmD} For every $\uptau\in \PBPeg(\ckcO)$, the following equality holds:
\[
\AC_\CO(\pi_\uptau)=\AC(\uptau)\in \CK(\CO),
\]
where $\CO$ denotes the Barbasch-Vogan dual of $\ckcO$, and $\AC_\CO(\pi_\uptau)$ denotes the associated cycle of $\pi_\uptau$.
\end{thm}

\begin{remark} By the fundamental result of Schmid and Vilonen \cite{SV}, the (weak) associated cycle agrees with the wave
front cycle under the Kostant-Sekiguchi
correspondence. Therefore \Cref{thmD} also determines the wave front cycle of any special unipotent representation.
\end{remark}

We highlight one additional result, from our determination of the associated cycles of special unipotent representations.

\begin{thm}[\Cref{thmac0}] \label{thmE} If $\check \CO$ is quasi-distinguished (Definition \ref{defqd}), then the associated cycle map induces a bijection:
\[
\mathrm{AC}_{\CO}: \,\,\, \mathrm{Unip}_{\check \CO}(G)\rightarrow  \mathrm{AOD}(\CO),\]
where $\mathrm{AOD}(\CO)$ denotes the set of admissible orbit data in $\CO$.
\end{thm}

\subsection{Organization}\label{subsec:outline}

We begin with some remarks on the dependence of this article on our earlier article \cite{BMSZ2}. The main result of this article is the construction and the classification of special unipotent representations. (The unitarity is a consequence of the construction and the classification.) Our construction is based on the method of theta lifting and naturally is independent of the results of \cite{BMSZ2}. Our classification (or the exhaustion by our construction) depends on the counting of the special unipotent representations established in \cite{BMSZ2}, but in a relatively weak way. The reason is that we only need a counting inequality (see the remark after \Cref{thmcount}). While we prove a counting equality in \cite{BMSZ2} (on irreducible representations with a given infinitesimal character and a given bound of the complex associated variety) from the theory of coherent continuation representations and its modern development, the corresponding inequality is relatively easy to establish and it follows from well-known results in Kazhdan-Lusztig theory (due to Lusztig, Joseph, Vogan, Barbasch-Vogan and Casian) which were in place in  the 1980's.

We now give an outline on the organization of this article. In Section \ref{sec:bip}, we first review the combinatorics (including the counting) of special unipotent representations from our earlier paper \cite{BMSZ2}. This includes the notion of painted bipartitions as well as the descent of a painted bipartition. The key combinatorial object is a (extended) set $\mathrm{\PBPes}(\check \CO)$ of painted bipartitions attached to $\check \CO$. This will be our parameter set for special unipotent representations attached to $\check \CO$. Given
$\uptau \in \mathrm{\PBPes}(\check \CO)$, we attach a real classical group $G_{\uptau}$ of type $B$, $C$, $D$, and we define inductively a special unipotent representation $\pi_{\uptau}$ of $G_{\uptau}$ via the method of theta lifting and in terms of $\pi_{\uptau'}$, where $\uptau'$ is the descent of $\uptau $. We state our main result (\Cref{thm100}), which says that $\pi_\uptau$ is irreducible, unitarizable and special unipotent attached to $\check \CO$, and special unipotent representations of a real classical group $G$ attached to $\check \CO$
are exhausted by such $\pi_\uptau$'s with $G_{\uptau}=G$ (up to the determinant twist, if $\star\in \{B,D\}$).

Section \ref{sec:bipGeometry} is devoted to the geometry which we associate to $\uptau $. At its center is a certain double fiberation of moment maps which connects the geometries associated to
$G_{\uptau}$ and $G_{\uptau'}$, and which permits us to geometrically lift (from $G_{\uptau'}$ to $G_{\uptau}$) nilpotent orbits as well as equivariant algebraic vector bundles over these orbits. We call this construction the geometric theta lifting. Given
$\uptau \in \mathrm{\PBPes}(\check \CO)$, by mimicking the inductive construction of $\pi_\uptau$ (by geometric theta lifting, in place of theta lifting), we attach an object in a certain Grothendieck group, which we refer to as the associated cycle of $\uptau$. We state the key properties of associated cycles, whose proofs are delayed until  \Cref{sec:ACC}.

In Section \ref{sec:main}, we provide a blueprint for the proof of \Cref{thm100}. There are two main ingredients for the most involved part, which is exhaustion of special unipotent representations by our construction. First we will determine the associated cycle of $\pi_{\uptau}$. To be more precise, we will show that the associated cycle of $\pi_{\uptau}$ matches with the associated cycle of $\uptau$ (this is in Theorem \ref{thmpitau}, proved in Section \ref{sec:equac}).
Second, some remarkable properties of the associated cycles (in Section \ref{sec:bipGeometry}) allow us to distinguish various $\pi_{\uptau}$'s by an inductive argument and to conclude the exhaustion from the counting result of \cite{BMSZ2}. Note that we only require a weaker form of the counting result (i.e. an inequality) to establish the exhaustion.

In Sections \ref{sec:Integrals} to \ref{sec:AC}, we develop the key representation theory tools of the paper. Section \ref{sec:Integrals} is devoted to the model of theta lifting by matrix coefficient integrals, as well as a general criterion on unitarity preservation in this context. In Section \ref{sec:dtheta}, we investigate double theta lifting and its connection with degenerate principal series representations. In  Section \ref{sec:AC}, we prove ``upper bounds'' on the associated cycles, using largely geometric arguments.

In Section \ref{sec:nilmis}, we explicitly describe geometric theta lifts and weak associated cycles of degenerate principal series representations combinatorially. Roughly speaking, a combination of the results of Section \ref{sec:dtheta} and Section \ref{sec:nilmis} have the power to force ``lower bounds'' on the associated cycles. In Section \ref{sec:equac}, we match the associated cycle of $\pi_{\uptau}$ with the associated cycle of $\uptau$, as a consequence of the results of Sections \ref{sec:dtheta} to  \ref{sec:nilmis}.

In Section \ref{sec:ACC}, we prove key properties of the associated cycles stated in
Section \ref{sec:bipGeometry}. All proofs are combinatorial in nature as they depend on the detailed combinatorial description of the descent map (in Section
\ref{sec:bip}) as well as geometric theta lifts (in
Section \ref{sec:nilmis}).  (We have organized our presentation in this way in the hope that the exposition is as clear and direct as possible.) Section \ref{modify} is devoted to some necessary modifications, due to our preference for orthogonal groups in the current paper over special orthogonal groups in our previous paper \cite{BMSZ2}, and for the case of unitary groups (which is easier compared to  classical groups of type $B,C,D$).

\section{From combinatorial parameters to special unipotent representations}\label{sec:bip}

From this section until \Cref{sec:ACC}, we will consider real classical groups of type $B,C,D$, namely we  assume that $\star\in \{B,C,D,\widetilde {C}, C^*, D^*\}$. The nilpotent orbit $\check \CO$ is always assumed to have good parity.

In \Cref{sec:PB,subsec:countbip,sec:desc}, we review the combinatorial aspect of special unipotent representations established in \cite{BMSZ2}.
In \Cref{subsec:comTOrep}, we define special unipotent representations attached to the combinatorial parameters, and state the main result of the paper.

\subsection{Combinatorial construct: painted bipartitions}
\label{sec:PB}

For a Young diagram $\imath$, write
\[
 \mathbf r_1(\imath)\geq \mathbf r_2(\imath)\geq \mathbf r_3(\imath)\geq \cdots
\]
for its row lengths, and similarly,
write
\[
 \mathbf c_1(\imath)\geq \mathbf c_2(\imath)\geq \mathbf c_3(\imath)\geq \cdots
\]
for its column lengths.
We introduce the set $\BOX(\imath)$ of boxes of $\imath$ as the following subset
of $\bN^+\times \bN^+$ ($\bN^+$ denotes the set of positive integers):
\begin{equation}\label{eq:BOX}
\BOX(\imath):=\Set{(i,j)\in\bN^+\times \bN^+ \,:\, j\leq \bfrr_i(\imath)}.
\end{equation}
A subset of $\bN^+\times \bN^+$ of the form \eqref{eq:BOX} is also said to constitute the Young diagram $\imath$. Put
\begin{equation}\label{eq:BOX22}
\abs{\imath}:=\#(\BOX(\imath))=\sum_{i=1}^\infty \mathbf r_i(\imath)=\sum_{i=1}^\infty \mathbf c_i(\imath)\qquad (\textrm{$\#$ indicates the cardinality of a finite set}).
\end{equation}

We also introduce five symbols $\bullet$, $s$, $r$, $c$, and $d$, and make the following definition.

\begin{defn}
A painting on a Young diagram $\imath$ is an assignment (we
    place a symbol in each box)
  \[
    \mathcal P: \mathrm{Box}(\imath) \rightarrow \{\bullet, s, r, c, d \}
  \]
  with the following properties:
  \begin{enumerate}
  \item If we remove the boxes painted with $\{d\}$,
$\{c,d\}$, $\{r,c,d\}$ or $\{s,r,c,d\}$, the remainder still constitutes a Young diagram;
    \item every row of $\imath$ has at most one box
          painted with $s$ and at most one box painted with $r$;
    \item every column of $\imath$ has at most one
          box painted with $c$ and at most one box painted with $d$.
  \end{enumerate}
A painted Young diagram is a pair $(\imath, \CP)$ consisting of a Young diagram $\imath$ and a painting $\CP$ on $\imath$.
\end{defn}

\begin{eg} Suppose that $\imath=\tytb{\ \ ,\  }$, then there are $25+12+6+2=45$ paintings on $\imath$ in total as listed below.
\begin{equation*}\label{eq:sp-nsp.C}
\begin{array}{ll}
   \tytb{\bullet \alpha ,\beta } \quad \alpha, \beta\in \{\bullet, s,r,c,d\} \qquad \qquad  \qquad  & \tytb{s \alpha ,\beta } \quad \alpha\in \{r,c,d\}, \beta\in \{s,r,c,d\} \medskip \medskip \\
     \tytb{r \alpha ,\beta } \quad \alpha\in \{c,d\}, \beta\in \{r,c,d\} \qquad \qquad  \qquad
   &  \tytb{c \alpha , d } \quad  \alpha\in \{c,d\}
     \end{array}
  \end{equation*}

  \end{eg}

When no confusion is possible, we write $\alpha\times \beta\times \gamma$ for a triple $(\alpha, \beta, \gamma)$.   We introduce two more labels $B^+$ and $B^-$ (over the label $B$, to be  used when counting the signature; see \eqref{ptqt}), and make the following definition.
 \begin{defn}\label{defpbp0}
 A painted bipartition is a triple
  $\tau=(\imath, \CP)\times (\jmath, \cQ)\times \gamma$, where $(\imath, \CP)$
  and $ (\jmath, \mathcal Q)$ are painted Young diagrams, and
  $\gamma \in \{B^+,B^-, C,D,\widetilde {C}, C^*, D^*\}$, subject to the
  following conditions:
  \begin{enumerate}
 \item $\CP$ and $\mathcal Q$ have the identical set of boxes painted with $\bullet$;
    \item the symbols of $\CP$ are in
          \[
          \left\{
          \begin{array}{ll}
            \{\bullet, c\}, &\hbox{if $\gamma=B^+$ or $B^-$}; \smallskip\\
            \{\bullet,  r, c,d\}, &\hbox{if $\gamma=C$}; \smallskip\\
            \{\bullet, s, r, c,d\}, &\hbox{if $\gamma=D$}; \smallskip\\
            \{\bullet, s, c\}, &\hbox{if $\gamma=\widetilde{ C}$}; \smallskip \\
            \{\bullet\}, &\hbox{if $\alpha=C^*$}; \smallskip \\
            \{\bullet, s\}, &\hbox{if $\gamma=D^*$},\\
          \end{array}
          \right.
          \]
    \item the symbols of $\mathcal Q$ are in           \[
          \left\{
          \begin{array}{ll}
            \{\bullet, s, r, d\}, &\hbox{if $\gamma=B^+$ or $B^-$}; \smallskip\\
            \{\bullet, s\}, &\hbox{if $\gamma=C$}; \smallskip\\
            \{\bullet\}, &\hbox{if $\gamma=D$}; \smallskip\\
            \{\bullet, r, d\}, &\hbox{if $\gamma=\widetilde{ C}$}; \smallskip\\
            \{\bullet, s,r\}, &\hbox{if $\gamma=C^*$}; \smallskip \\
            \{\bullet, r\}, &\hbox{if $\gamma=D^*$}.
          \end{array}
          \right.
          \]

  \end{enumerate}
  \end{defn}

 For any painted bipartition $\tau$ as in Definition \ref{defpbp0}, we write
\[
  \imath_\tau:=\imath,\ \cP_\tau:=\cP,\  \jmath_\tau:=\jmath,\  \cQ_\tau:=\cQ,\ \gamma _\tau:=\gamma,
\]
and
\[
  \star_\tau:= \left\{
     \begin{array}{ll}
         B, &\hbox{if $\gamma=B^+$ or $B^-$}; \smallskip\\
            \gamma, & \hbox{otherwise}.           \end{array}
   \right.
  \]

We further attach some objects to $\tau$ in what follows:
 \[
  \abs{\tau}, \   (p_\tau, \ q_\tau), \ G_\tau, \ \dim \tau, \ \varepsilon_\tau .
 \]

\smallskip

\noindent $\abs{\tau}$ : This is the natural number \[
  \abs{\tau}:=\abs{\imath}+\abs{\jmath}.
\]

 \smallskip
 \smallskip

  \noindent $(p_{\tau}, q_{\tau})$ :
  This is a pair of natural numbers given by the following recipe.
 \begin{enumerate}[label=(\alph*)]
  \item
  If $\star_\tau\in \{B, D, C^*\}$, $(p_\tau, q_\tau)$ is given by counting  the various symbols appearing in $(\imath, \CP)$, $(\jmath, \CQ)$ and $\{\gamma\}$ :
  \begin{equation}\label{ptqt}
  \left\{
     \begin{array}{l}
    p_\tau :=( \# \bullet)+ 2 (\# r) +(\# c )+ (\# d) + (\# B^+);\smallskip\\
    q_\tau :=( \# \bullet)+ 2 (\# s) + (\# c) + (\# d) + (\# B^-).\\
    \end{array}
    \right.
\end{equation}
Here
\[
\#\bullet:=\#(\cP^{-1}(\bullet))+\#(\cQ^{-1}(\bullet)),
\]
and the other terms are similarly defined.
\item
If $\star_\tau\in \{C, \widetilde C, D^*\}$,  $p_\tau:=q_\tau:=\abs{\tau}$.
\end{enumerate}
\smallskip

 \smallskip

  \noindent $G_{\tau}$: This is a classical group given by
  \begin{equation}\label{def:Gt}
 G_\tau:= \left\{
     \begin{array}{ll}
         \oO(p_\tau, q_\tau), &\hbox{if $\star_\tau=B$ or $D$}; \smallskip\\
            \Sp_{2\abs{\tau}}(\R), &\hbox{if $\star_\tau=C$}; \smallskip\\
           \widetilde{\Sp}_{2\abs{\tau}}(\R), &\hbox{if $\star_\tau=\widetilde{ C}$}; \smallskip \\
        \Sp(\frac{p_\tau}{2}, \frac{q_\tau}{2}), &\hbox{if $\star_\tau=C^*$}; \smallskip \\
          \oO^*(2\abs{\tau}), &\hbox{if $\star_\tau=D^*$}.\\
            \end{array}
   \right.
\end{equation}

\smallskip

 \smallskip

\noindent $\dim \tau$:
This is the dimension of the standard representation of the complexification of $G_\tau$, or equivalently,
 \begin{equation}\label{def:dimtau}
 \dim \tau:= \left\{
     \begin{array}{ll}
          2\abs{ \tau}+1, &\hbox{if $\star_\tau=B$}; \medskip\\
         2 \abs{\tau}, &\hbox{otherwise}.
            \end{array}
   \right.
 \end{equation}

 \smallskip

 \smallskip

  \noindent $\varepsilon_\tau$:
This is the element in $\Z/2\Z$ such that
\be\label{epsilontau}
  \varepsilon_\tau=0\Leftrightarrow  \textrm{the symbol $d$ occurs in the first column of $( \imath, \cP)$ or $(\jmath, \cQ)$}.
\ee

 \smallskip

The triple $\mathsf s_{\tau}:=(\star_{\tau}, p_{\tau},q_{\tau})\in  \{B,C,D, \widetilde C, C^*, D^*\}\times \bN\times \bN$ will also be referred to as the classical signature attached to $\tau$. See \Cref{classical} for the notions of classical spaces and classical signatures.
\smallskip

\begin{eg} Suppose that
\[
\tau= \tytb{\bullet c ,\bullet , c,\none } \times \tytb{\bullet s r ,\bullet ,r, r }\times B^+.
\]
Then
\[
\begin{cases}
\abs{\tau}=10;\\
 p_\tau=4+6+2+0+1=13; \quad\\
 q_\tau=4+2+2+0+0=8; \quad \\
 G_\tau=\oO(13,8);\quad \\
 \dim \tau=21;\\
 \varepsilon_\tau=1.
 \end{cases}
 \]

\end{eg}

\subsection{Counting special unipotent representations by painted bipartitions}\label{subsec:countbip}

\begin{defn}
 A $\star$-pair is a pair  $(i,i+1)$ of consecutive positive integers such that
\[
   \left\{
     \begin{array}{ll}
      i\textrm{ is odd}, \quad &\textrm{if $\star\in\{C, \widetilde{C}, C^*\}$};  \\
      i \textrm{ is even}, \quad &\textrm{if $\star\in\{B, D, D^*\}$}. \\
       \end{array}
   \right.
\]
A $\star$-pair   $(i,i+1)$ is said to be
\begin{itemize}
\item
vacant in $\check \CO$, if $\mathbf r_i(\check \CO)=\mathbf r_{i+1}(\check \CO)=0$;
\item
balanced in $\check \CO$,  if  $\mathbf r_i(\check \CO)=\mathbf r_{i+1}(\check \CO)>0$;
\item
tailed in $\check \CO$,  if  $\mathbf r_i(\check \CO)-\mathbf r_{i+1}(\check \CO)$ is positive and odd;
\item
primitive in $\check \CO$, if    $\mathbf r_i(\check \CO)-\mathbf r_{i+1}(\check \CO)$ is positive and even.
\end{itemize}
Denote $\mathrm{PP}_\star(\check \CO)$ the  set of all $\star$-pairs that are primitive in $\check \CO$.
\end{defn}

\begin{remark} Due to the good parity assumption on $\ckcO$, the condition that
$\mathbf r_i(\check \CO)-\mathbf r_{i+1}(\check \CO)$ is positive and odd, is equivalent to the condition that $\mathbf r_i(\check \CO)$ is odd, and $\mathbf r_{i+1}(\check \CO)=0$. In particular the the notion of tailed in $\check \CO$ is relevant only when $\star\in\{C,C^*,D,D^*\}$.
\end{remark}

\begin{remark} When $\star\neq \wtC$, %\[\bar{\mathrm A}(\ckcO):=
the power set of $\mathrm{PP}_\star(\check \CO)$ gives another description of Lusztig's canonical
  quotient attached to $\ckcO$. The set $\mathrm{PP}_{\star}(\check\CO)$ appears implicitly in
\cite{So}*{Section~5}.
\end{remark}

We attach to $\check \CO$ a pair of Young diagrams
\begin{equation}\label{ijO}
(\imath_{\check \CO}, \jmath_{\check \CO}):=(\imath_\star(\check \CO), \jmath_\star(\check \CO)),
\end{equation}
 as follows.

\medskip

\noindent {\bf The case when $\star=B$.} In this case, the nilpotent orbit $\check \CO$ is of type $C$, and has even rows
  only. Define
 \[
   \mathbf c_{1}(\jmath_{\check \CO})=\frac{\mathbf r_1(\check \CO)}{2},
\]
and for all $i\geq 1$,
\[
\left (\mathbf c_{i}(\imath_{\check \CO}), \mathbf c_{i+1}(\jmath_{\check \CO})\right )=
            \left (\frac{\mathbf r_{2i}(\check \CO)}{2},  \frac{\mathbf r_{2i+1}(\check \CO)}{2}\right ).
\]

\medskip

\noindent {\bf The case when $\star=\widetilde C$.} In
this case, the nilpotent
orbit $\check \CO$ is of type $C$, and has even rows only. Define, for all $i\geq 1$,
\[
(\mathbf c_{i}(\imath_{\check \CO}), \mathbf c_{i}(\jmath_{\check \CO}))=
           \left (\frac{\mathbf r_{2i-1}(\check \CO)}{2},  \frac{\mathbf r_{2i}(\check \CO)}{2}\right).
\]

\medskip

\noindent {\bf The case when $\star=\{ C,C^*\}$.} In this case, the nilpotent orbit $\check \CO$ is of type $B$, and has odd sized  rows only. Define, for all
$i\geq 1$,
\[
(\mathbf c_{i}(\jmath_{\check \CO}), \mathbf c_{i}(\imath_{\check \CO}))=
   \left\{
     \begin{array}{ll}
        (0,  0), &\hbox{if $(2i-1, 2i)$ is vacant  in $\check \CO$};\smallskip\\
        (\frac{\mathbf r_{2i-1}(\check \CO)-1}{2},  0), & \hbox{if $(2i-1, 2i)$ is tailed in $\check \CO$};\smallskip\\
                  (\frac{\mathbf r_{2i-1}(\check \CO)-1}{2},  \frac{\mathbf r_{2i}(\check \CO)+1}{2}), &\hbox{otherwise}.\\
            \end{array}
   \right.
\]
\medskip

\noindent {\bf The case when $\star\in \{D,D^*\}$.} In this case, the nilpotent orbit $\check \CO$ is of type $D$, and has odd sized rows only. Define
 \[
   \mathbf c_{1}(\imath_{\check \CO})= \left\{
     \begin{array}{ll}
      0,  &\hbox{if $\mathbf r_1(\check \CO)=0$}; \smallskip\\
       \frac{\mathbf r_1(\check \CO)+1}{2},   &\hbox{if $\mathbf r_1(\check \CO)>0$},\\
            \end{array}
   \right.
 \]
and for all $i\geq 1$,
\[
(\mathbf c_{i}(\jmath_{\check \CO}), \mathbf c_{i+1}(\imath_{\check \CO}))=
   \left\{
     \begin{array}{ll}
        (0,  0), &\hbox{if $(2i, 2i+1)$ is vacant in $\check \CO$};\smallskip\\
      \left  (\frac{\mathbf r_{2i}(\check \CO)-1}{2},  0\right ), & \hbox{if $(2i, 2i+1)$ is tailed in $\check \CO$};\smallskip\\
                \left  (\frac{\mathbf r_{2i}(\check \CO)-1}{2},  \frac{\mathbf r_{2i+1}(\check \CO)+1}{2}\right ), &\hbox{otherwise}.\\
            \end{array}
   \right.
\]

\begin{remark} The pair $(\imath_{\check \CO}, \jmath_{\check \CO})$ corresponds to an irreducible representation of the  Weyl group of $G_\C$ (which is isomorphic to the Weyl group of type $B_n$ in all cases), as in \cite[Section 11.4]{Carter}.  When $\star\neq \wtC$, this irreducible representation is special in the sense of Lusztig \cite{Lsp} and it corresponds under the Springer correspondence to the Barbasch-Vogan dual $\CO$ of $\check \CO$. See \cite[Section 8]{BMSZ2}.

\end{remark}

Define
\[
\mathrm{PBP}_\star(\check \CO):=\Set{
\tau\textrm{ is a painted bipartition}  \,:\,  \star_\tau = \star, \ (\imath_\tau,\jmath_\tau) = (\imath_\star(\check \CO), \jmath_\star(\check \CO))
}
\]
and
\[
\mathrm{PBP}_G(\check \CO)\\
:=\Set{
\tau\in \mathrm{PBP}_\star(\check \CO)  \,:\,  G_\tau=G
}.
\]
When $\star\in \{B, D, C^*\}$,  the condition ``$G_{\uptau} = G$" amounts to saying that $(p_\uptau, q_\uptau)=(p,q)$; when  $\star\in \{C, \wtC, D^*\}$, this condition is automatic.
We also define the following extended parameter set:
\begin{equation}\label{def:PBPes}
\mathrm{PBP}^{\mathrm{ext}}_G(\check \CO):=\begin{cases}
\mathrm{PBP}_G(\check \CO)\times \{\wp\subseteq \mathrm{PP}_\star(\check \CO)\},\quad& \textrm{if }\star\in \{B,C, D, \widetilde C\};\\
\mathrm{PBP}_G(\check \CO)\times \{\emptyset \},\quad& \textrm{if }\star\in \{C^*, D^*\}.
\end{cases}
\end{equation}
Here and in the sequel, $\emptyset$ denotes the empty set (in the usual way), as well as the empty Young diagram or the painted Young diagram whose underlying Young diagram is empty.

Similarly, write  %\begin{equation}\label{def:PBPes39}
\[
\mathrm{PBP}^{\mathrm{ext}}_\star(\check \CO):=\begin{cases}
\mathrm{PBP}_\star(\check \CO)\times \{\wp\subseteq \mathrm{PP}_\star(\check \CO)\},\quad& \textrm{if }\star\in \{B,C, D, \widetilde C\};\\
\mathrm{PBP}_\star(\check \CO)\times \{\emptyset \},\quad& \textrm{if }\star\in \{C^*, D^*\}.
\end{cases}
\]
%\end{equation}
In what follows, we will use $\uptau=(\tau,\wp)$ to denote an element in $\PBPes(\ckcO)$.

The following result is a weak form of \cite[Theorem 2.28]{BMSZ2}, which is sufficient for our purpose. Note that in \cite{BMSZ2}, we consider special orthogonal groups rather than orthogonal groups.

\begin{prop}\label{thmcount}
The inequality
\be\label{count0}
 \#(\Unip_{\ckcO}(G))\leq \begin{cases}
2  \#(\mathrm{PBP}^{\mathrm{ext}}_G(\check \CO)),&\quad  \text{if  $\star\in \{B,D\}$ and $(\star, |\check \CO|)\neq (D, 0)$;}\\
 \#(\mathrm{PBP}^{\mathrm{ext}}_G(\check \CO)),  &\quad  \text{otherwise }\\
\end{cases}
\ee
holds.
\end{prop}
If fact, \cite[Theorem 2.28]{BMSZ2} immediately implies that the equality holds in \eqref{count0} unless (possibly when) $\star=D$. When $\star=D$, we will see from \Cref{thm100} that the  equality still holds in \eqref{count0}.

\begin{remark} The counting inequality in \Cref{thmcount} is explicated from a counting inequality of irreducible representations of general nature \cite[Corollary 5.4]{BMSZ2}. Unlike the counting equality, it does not depend on knowledge of the intricate relationship of Harish-Chandra cells and double cells \cite[Section 4.5]{BMSZ2}. The results of this paper imply that the equality must hold. See \Cref{thmac7} and its proof.
\end{remark}

\subsection{Descent of a painted bipartition}
\label{sec:desc}
We recall the notion of the descent of a painted bipartition $\tau$ from \cite{BMSZ2}*{Section~10}.
For the current paper, we only need to consider the case when $\tau\in \mathrm{PBP}_\star(\check \CO)$, namely when the pair $(\imath_\tau, \jmath_\tau)$ is attached to a nilpotent orbit as in \eqref{ijO}.
 We will thus only recall the notion in this special case, which is less involved than the general case.

Define a label
\[
\star':=\widetilde{C}, \ D, \  C, \ B, \ D^*,\  \textrm{ or } \ C^*
\]
respectively if
\[
\star=B,\  C, \ D, \ \widetilde{C}, \ C^*, \ \textrm{ or }\  D^*.
\]
We call $\star'$ the Howe dual of $\star$.

 Define the naive dual descent of $\check \CO$ to be
 \[
  \check \nabla_{\mathrm{naive}}(\check \CO):= \textrm{the Young diagram obtained from $\check \CO$ by removing the first row}.
  \]

   The dual descent of $\check \CO$ is defined to be
  \be\label{duald}
   \check \CO':=\check \nabla(\check \CO):=\check \nabla_\star( \check \CO):=\begin{cases}
   \tytb{\   }\, , \quad& \textrm{if $\star\in \{D, D^*\}$ and $|\check \CO|=0$};\smallskip\\
   \check \nabla_{\mathrm{naive}}(\check \CO), \quad & \textrm{otherwise},
    \end{cases}
  \ee
  where $\tytb{\   }$ denotes the Young diagram with exactly one box.

For a Young diagram $\imath$, its naive descent, which is denoted by $\nabla_\mathrm{naive}(\imath)$, is defined to be the Young diagram obtained from $\imath$ by removing the first column.

\def\bipartl{\mathrm{bi\cP_L}}
\def\bipartr{\mathrm{bi\cP_R}}
\def\dsdiagl{\mathrm{DS_L}}
\def\dsdiagr{\mathrm{DS_R}}
\def\DDl{\eDD_\mathrm{L}}
\def\DDr{\eDD_\mathrm{R}}

We first define the naive descent of a painted bipartition. Let $\tau=(\imath,\cP)\times (\jmath,\cQ)\times \gamma $ be a  painted bipartition such that $\star_\tau=\star$. Put
\delete{\begin{equation} \label{eq:def.alphap}
\alpha'=\begin{cases} B^+,
& \textrm{if $\alpha = \wtC$ and $\cP_\tau(l_{\star,\ckcO},1),1) \neq c$;}\\
B^-,
& \textrm{if $\alpha = \wtC$ and $\cP_\tau(l_{\star,\ckcO},1),1)  = c$;}\\
\star', & \textrm{if $\alpha\neq \widetilde C$}.
\end{cases}
\end{equation}
}
  \begin{equation} \label{eq:def.alphap}
    \gamma '=\begin{cases} B^+,
  & \textrm{if $\gamma =\widetilde{C}$ and $c$ does not occur in the first column of $(\imath,\cP)$}; \smallskip \\
  B^-,
  & \textrm{if $\gamma =\widetilde{C}$ and  $c$ occurs in the first column of $(\imath,\cP)$}; \smallskip \\
  \star', & \textrm{if $\gamma \neq \widetilde C$}.
  \end{cases}
  \end{equation}

\begin{lem}(\cite[Lemmas 10.4 and 10.5]{BMSZ2})
\label{lemDDn1} \label{lem:DDn}

\noindent (a) If $\star \in \set{B,C,C^*}$, then there is a unique painted bipartition of the form $\tau'_\mathrm{naive}= (\imath',\cP')\times (\jmath',\cQ')\times \gamma '$ with the following properties:
  \begin{itemize}
        \item $
   (\imath',\jmath')= (\imath,\DD_\mathrm{naive}(\jmath)); \smallskip
   $
   \item for all $(i,j)\in \BOX(\imath')$,
   \[
     \cP'(i,j)=\begin{cases}
    \bullet \textrm{ or } s,&\textrm{ if  $\ \cP(i,j)\in \{\bullet, s\}$;} \smallskip \\
  \cP(i,j),& \textrm{ if $\ \cP(i,j)\notin \{\bullet, s\}$};\end{cases}
   \]
   \item for all $(i,j)\in \BOX(\jmath')$,
   \[
     \cQ'(i,j)=\begin{cases}
    \bullet \textrm{ or } s,&\textrm{ if  $\ \cQ(i,j+1)\in \{\bullet, s\}$;} \smallskip \\
  \cQ(i,j+1), & \textrm{ if $\ \cQ(i,j+1)\notin \{\bullet, s\}$}.  \end{cases}
   \]
    \end{itemize}
%    \end{lem}

% \begin{lem}\label{lemDDn2}
\noindent (b) If $\star \in \set{ \widetilde C, D,D^*}$,
 then there is a unique painted bipartition of the form
 $\tau'_\mathrm{naive}= (\imath',\cP')\times (\jmath',\cQ')\times \gamma'$ with the following properties:
 \begin{itemize}
 \item $
 (\imath',\jmath')= (\DD_\mathrm{naive}(\imath),\jmath); \smallskip
 $
 \item for all $(i,j)\in \BOX(\imath')$,
 \[
 \cP'(i,j)=\begin{cases}
 \bullet \textrm{ or } s,&\textrm{ if  $\ \cP(i,j+1)\in \{\bullet, s\}$;} \smallskip \\
 \cP(i,j+1),& \textrm{ if $\ \cP(i,j+1)\notin \{\bullet, s\}$};\end{cases}
 \]
 \item for all $(i,j)\in \BOX(\jmath')$,
 \[
 \cQ'(i,j)=\begin{cases}
 \bullet \textrm{ or } s,&\textrm{ if  $\ \cQ(i,j)\in \{\bullet, s\}$;} \smallskip \\
 \cQ(i,j), & \textrm{ if $\ \cQ(i,j)\notin \{\bullet, s\}$}.  \end{cases}
 \]
 \end{itemize}
 \end{lem}

 \begin{defn}
 In the notation of  \Cref{lem:DDn},
 we call $\tau'_\mathrm{naive}$ the naive descent of $\tau$, to be denoted by $\DDn(\tau)$.
 \end{defn}

\begin{eg}
 Suppose that the nonzero row lengths of $\check \CO$ are $8,6,6,6,4,4,2$. Then
    \[
     \uptau = \ytb{\bullet\bullet\bullet {c},\bullet {s} {c},{s} {c},{c}}
    \times \ytb{\bullet\bullet\bullet ,\bullet {r} {d},{d}{d}, \none}
    \times \widetilde C \in \PBP_{\widetilde{\Sp}_{36}(\bR)}(\check \CO).
    \]
  We have that
   \[
    \nabla_{\mathrm{naive}}(\uptau) =\ytb{\bullet\bullet{c} ,\bullet{c},{c} }
    \times  \ytb{\bullet\bullet {s} ,\bullet {r} {d},{d}{d}}\times B^- \in \PBP_{\rO(14,15)}(\check \CO').
    \]
\end{eg}

We now define the descent of a painted bipartition in $\mathrm{PBP}_\star(\check \CO)$.
Suppose that
$
\tau=(\imath,\cP)\times(\jmath,\cQ)\times \gamma \in  \mathrm{PBP}_\star(\check \CO)
$
and write
\[
  \tau'_{\mathrm{naive}}=(\imath', \cP'_{\mathrm{naive}})\times (\jmath', \cQ'_{\mathrm{naive}})\times \gamma '
\]
for the naive descent of $\tau$. It is clearly an element of $  \mathrm{PBP}_{\star'}(\check \CO')$, where $\check \CO'=\check \nabla(\check \CO)$.

The following two lemmas are easily verified and we omit the proofs. We will give an example in each case.

\begin{lem}\label{descb1}
  Suppose that
\[  \begin{cases}
 \gamma = B^+; & \\
 \bfrr_2(\ckcO)>0; & \\
 \cQ(\mathbf c_1(\imath),1)\in \set{r,d}.
\end{cases}
\]
 Then there is a unique element in $\mathrm{PBP}_{\star'}(\check \CO')$ of the form
  \[
      \tau'=(\imath', \cP')\times (\jmath', \cQ'_{\mathrm{naive}} )\times \gamma '
  \]
 such that
\[
\cP'(i,j) = \begin{cases}
  s, & \ \text{ if $(i,j) = (\mathbf c_1(\imath'),1)$;}\\
  \cP'_{\mathrm{naive}}(i,j), & \ \text{ otherwise}
  \end{cases}
  \]
 for all $(i,j)\in \BOX(\imath')$.
\end{lem}

\begin{eg} Suppose that  the nonzero row lengths of $\check \CO$ are $4,4,4,2$. Then
 \[
 \tau= \ytb{\bullet c, c} \times \ytb{\bullet r, rd}\times
  B^+ \in \PBP_{\rO(10,5)}(\ckcO).
 \]
We have that
\[
 \tau'_{\mathrm{naive}}= \ytb{s c, c} \times \ytb{r, d}\times
  \widetilde C\qquad\textrm{and}\qquad \tau'= \ytb{s c, s} \times \ytb{r, d}\times
  \widetilde C,
 \]
both in $\PBP_{\widetilde \Sp_{10}(\bR)}(\check \CO')$.
\end{eg}

\begin{lem}\label{descd2}
  Suppose that
  \[  \begin{cases}
 \gamma = D; & \smallskip \\
\mathbf r_2(\check \CO)=\mathbf r_3(\check \CO)>0;  &\smallskip\\
\left(\cP(\mathbf c_2(\imath),1), \textrm{\vrule width 0pt  height 0.9em  \relax} \cP(\mathbf c_2(\imath),2)\right)=(r,c); &\smallskip\\
 \cP(\mathbf c_1(\imath),1)\in \set{r,d}.
\end{cases}
\]
 Then there is a unique element in $\mathrm{PBP}_{\star'}(\check \CO')$ of the form
  \[
      \tau'=(\imath', \cP')\times (\jmath', \cQ'_{\mathrm{naive}} )\times \gamma '
  \]
  such that
  \[
\cP'(i,j) = \begin{cases}
  r, & \ \text{ if } \ (i,j) = (\mathbf c_1(\imath'),1); \\
  \cP'_{\mathrm{naive}}(i,j), & \ \text{ otherwise}
\end{cases}
\]
 for all $(i,j)\in \BOX(\imath')$.
\end{lem}

\begin{eg} Suppose that the nonzero row lengths of $\check \CO$ are $7,7,7,3$.
Then
 \[
 \tau= \ytb{\bullet\bullet, \bullet s, \bullet s, r c} \times \ytb{\bullet\bullet,\bullet,\bullet, \none }\times
  D \in \PBP_{\rO(11,13)}(\ckcO).
 \]
We have that
\[
 \tau'_{\mathrm{naive}}=\ytb{\bullet, \bullet , \bullet ,  c} \times \ytb{\bullet s,\bullet,\bullet, \none } \times
  C\qquad\textrm{and}\qquad \tau'=\ytb{\bullet, \bullet , \bullet ,  r} \times \ytb{\bullet s,\bullet,\bullet, \none } \times
  C,
 \]
both in $\PBP_{\Sp_{16}(\bR)}(\check \CO')$.
\end{eg}

\begin{defn}
The descent of $\tau$ is defined to be
\[
  \nabla(\tau):= \begin{cases}
  \tau', & \ \text{ if either of the conditions of Lemma \ref{descb1}  or Lemma \ref{descd2} hold}; \\
  \tau'_\mathrm{naive}, %  \nabla_{\mathrm{naive}}( \tau),
  & \ \text{ otherwise},
\end{cases}
\]
which is an element of $  \mathrm{PBP}_{\star'}(\check \CO')$.
Here $\tau'$ is as in Lemma  \ref{descb1} or Lemma \ref{descd2}.
\end{defn}
To summarize, we have by now a well-defined descent map
\begin{equation}\label{def:dspbp}
\nabla: \mathrm{PBP}_{\star}(\check \CO)\rightarrow \mathrm{PBP}_{\star'}(\check \CO').
\end{equation}

A key property of the descent map is the following injectivity result.

\begin{prop}\label{prop:DD.BDinj}{(\cite{BMSZ2}*{Proposition 10.8 and Corollary~10.10})}

\noindent (a)
    If $\star \in  \set{C,\wtC,D^*}$, then the map
\begin{equation}
  \begin{array}{rcl}
  \DD:  \PBP_\star(\ckcO)&\rightarrow&
   \PBP_{\star'}(\ckcOp)
   \end{array}
\end{equation}
is injective.

\noindent (b)
If $\star \in \set{B, D,C^*}$, then the map
\begin{equation}
  \begin{array}{rcl}
   \PBP_\star(\ckcO)&\rightarrow&
   \PBP_{\star'}(\ckcOp)\times \BN\times \bN\times \Z/2\Z, \smallskip\\
   \tau& \mapsto & (\DD(\tau), p_\tau, q_\tau, \varepsilon_\tau)
   \end{array}
\end{equation}
is injective.

\end{prop}

\subsection{From combinatorial parameters to representations}
\label{subsec:comTOrep}
We retain the setting and notation of Section \ref{sec:desc}. Recall that $\star\in \{ B, C,  D, \widetilde{C},  C^*, D^*\}$, and $\check \CO$ is a $\check G$-orbit in $\mathrm{Nil}(\check \g)$ that has good parity.
Then the dual descent $\check \CO'$ of $\check \CO$  has good parity with respect to $\star'$, where $\star'$ is the Howe dual of $\star$.

Recall the descent map from \Cref{sec:desc}:
 \[
   \nabla:  \mathrm{PBP}_\star(\check \CO)\rightarrow  \mathrm{PBP}_{\star'}(\check \CO').
 \]
Clearly we have
\[
 \mathrm{PP}_{\star'}(\check \CO')=\{(i,i+1)\,:\, i\in \bN^+, \, (i+1, i+2)\in \mathrm{PP}_{\star}(\check \CO) \}.
\]
Define the dual descent of a subset $\wp\subset \mathrm{PP}_{\star}(\check \CO)$ to be the subset
\begin{equation}\label{eq:DD.wp}
  \wp':=\ckDD(\wp):=\{(i,i+1)\,:\, i\in \bN^+, \, (i+1, i+2)\in \wp \}\subset \mathrm{PP}_{\star'}(\check \CO').
\end{equation}

Let  $\uptau = (\tau,\wp)\in \mathrm{\PBPes}(\check \CO)$.  We define its  descent to be the element
 \[
  \uptau' := (\tau',\wp'):=\nabla(\uptau):= (\DD(\tau), \ckDD(\wp))\in \mathrm{\PBP}^{\mathrm{ext}}_{\star'}(\check \CO').
 \]

We now come to the basic framework of theta lifting \cite{Howe79, Howe89}.

Let $(W_{\tau, \tau'}, \la \,\cdot\,,\,\cdot\,\ra_{\tau, \tau'})$ be a real symplectic space of  dimension $\dim \tau\cdot \dim \tau'$. As usual, there are continuous homomorphisms $G_\tau\rightarrow \Sp(W_{\tau, \tau'})$ and $G_{\tau'}\rightarrow \Sp(W_{\tau, \tau'})$ whose images form a reductive dual pair in $\Sp(W_{\tau, \tau'})$. We form the semidirect product
   \[
   J_{\tau, \tau'}:=(G_\tau\times G_{\tau'})\ltimes \oH(W_{\tau, \tau'}),
   \]
   where
  \[
  \oH(W_{\tau, \tau'}):=W_{\tau, \tau'}\times \R
  \]
  is the Heisenberg group with group multiplication
  \[
  (w,t)(w',t'):=(w+w', t+t'+\la w,w'\ra_{\tau, \tau'}), \quad
  w,w'\in W_{\tau, \tau'},\,\,t,t'\in \R.
 \]

 Let $\omega_{\tau, \tau'}$ be a suitably normalized smooth oscillator representation of $J_{\tau, \tau'}$ such that every $t\in \R\subset J_{\tau, \tau'}$ acts on it through the scalar multiplication by $e^{\sqrt{-1}\, t}$. See Section \ref{secoscil} for details.
  For any Casselman-Wallach representation $\pi'$ of $G_{\tau'}$, write
 \[
   \check \Theta_{\tau'}^{\tau}(\pi'):=(\omega_{\tau, \tau'}\widehat \otimes \pi')_{G_{\tau'}} \qquad (\textrm{the Hausdorff coinvariant space}),
 \]
 where $\widehat \otimes$ indicates the complete projective tensor product. This representation is clearly Fr\'echet, smooth, and of moderate growth, and so a Casselman-Wallach representation by the fundamental result of Howe \cite{Howe89}.

For notational convenience, we shall also write $G_{\uptau}:=G_{\tau}$ and $G_{\uptau'}:=G_{\tau'}$.

 In what follows we will construct a representation $\pi_{\uptau}$ of $G_{\uptau}$ by the method of theta lifting. For the initial case when $|\check \CO|=0$,
 define
 \[
 \pi_{\uptau}:=
 \begin{cases}
 \textrm{the determinant character} , &\text{ if $\gamma _\tau=B^-$ so that $G_\tau=\rO(0,1)$;} \smallskip\\
 \textrm{the one dimensional genuine representation} , &\text{ if $\star=\widetilde C$ so that $G_\tau=\widetilde{\Sp}_0(\R)$;} \smallskip\\
 \textrm{the one dimensional trivial representation} , &\text{ otherwise.}
 \end{cases}
 \]
 When $|\check \CO|\neq 0$,   we define the representation $\pi_{\uptau}$ of $G_{\uptau}$ by induction on the number of nonempty rows of $\check \CO$:
 \begin{equation}\label{eq:def-pi}
    \pi_{\uptau}:=\left\{
     \begin{array}{ll}
         \check \Theta_{\tau'}^{\tau}(\pi_{\uptau'})\otimes (1_{p_\tau, q_\tau}^{+,-})^{\varepsilon_{\tau}}, &\hbox{if  $\star=B$ or $D$}; \smallskip \\
         \check \Theta_{\tau'}^{\tau}(\pi_{\uptau'}\otimes \det^{\varepsilon_{\wp}}), &\hbox{if $\star=C$ or $\widetilde C$};  \smallskip\\
              \check \Theta_{\tau'}^{\tau}(\pi_{\uptau'}), &\hbox{if $\star=C^*$ or $D^*$}. \\
            \end{array}
   \right.
 \end{equation}
 Here $1_{p_\tau, q_\tau}^{+,-}$ denotes the character of $\oO(p_\tau, q_\tau)$ whose restriction to $\oO(p_\tau)\times \oO(q_\tau)$ equals $1\otimes \det$ ($1$ stands for the trivial character), $\epsilon_{\tau}$ is defined in \eqref{epsilontau} and
$\varepsilon_{\wp}$ denotes the element in $\Z/2\Z$ such that  \[
 \varepsilon_{\wp}=1\Leftrightarrow  (1,2)\in \wp.
\]

The following is the main result of this paper.
\begin{thm}\label{thm100} Suppose that $\check \CO$ is a $\check G$-orbit in $\mathrm{Nil}(\check \g)$ that has good parity.
\begin{enumerate}[wide=0em,label=(\alph*)]
\item For every $\uptau = (\tau,\wp)\in \PBPeg(\check \CO)$, the representation $\pi_{\uptau}$ of $G$  is irreducible, unitarizable,  and special unipotent attached to $\ckcO$.

\item If $\star\in \{B,D\}$ and $(\star, |\check \CO|)\neq (D, 0)$, then the map
\[
\begin{array}{rcl}
\PBPeg(\check \CO)\times \Z/2\Z&\rightarrow &\Unip_{\ckcO}(G),\\
  (\uptau, \epsilon)&\mapsto& \pi_{\uptau}\otimes \det^\epsilon
  \end{array}
\]
is bijective.
%\item
 In all other cases, the map
\[
\begin{array}{rcl}
\PBPeg(\check \CO)&\rightarrow &\Unip_{\ckcO}(G),\\
  \uptau &\mapsto& \pi_{\uptau}
  \end{array}
\]
is bijective.
\end{enumerate}
\end{thm}

As we will see in \Cref{sec:main}, the determination of the associated cycle of $\pi_\uptau$ will play a key role
in the proof of \Cref{thm100}. The unitarity of $\pi_\uptau$ will follow from the method of matrix coefficient integrals (see \Cref{unitarity}).

\section{From combinatorial parameters to associated cycles} \label{sec:bipGeometry}

In this section, we introduce the  notion of a classical space of signature $\mathsf s$, and define a notion of geometric theta lift associated to two classical spaces of
signatures $\mathsf s$ and $\mathsf s'$ (which are dual in a certain precise sense; see \Cref{secmmap}). By means of geometric theta lift we attach, for each $\uptau$ in the parameter set $\mathrm{\PBPes}(\check \CO)$,
an element in a certain Grothendieck group which we call the associated cycle of $\uptau$. We state the key properties of associated cycles which will be used in the proof of our main result in \Cref{sec:main} and whose proofs are delayed until \Cref{sec:ACC}.

\subsection{Classical spaces}\label{classical}
Let $\star\in \{B,C,D, \widetilde C, C^*, D^*\}$, as before. Put
\be\label{epsilond}
(\epsilon, \dot \epsilon):=(\epsilon_\star, \dot \epsilon_\star):=\begin{cases}
  (1,1),&\quad \textrm{ if  $\star\in \{B,D\}$;} \smallskip \\
 (-1,-1),&\quad \textrm{ if  $\star\in \{C,\widetilde C\}$;} \smallskip \\
 (-1,1),&\quad \textrm{ if  $\star=C^*$;} \smallskip \\
 (1,-1),&\quad \textrm{ if  $\star=D^*$.}
 \end{cases}
\ee
A classical signature is defined to be a triple  $\mathsf s=(\star, p,q)\in  \{B,C,D, \widetilde C, C^*, D^*\}\times \bN\times \bN$ such that
\[
\begin{cases}
  p+q\textrm{ is odd },&\quad \textrm{ if  $\star=B$;} \smallskip \\
   p+q\textrm{ is even },&\quad \textrm{ if  $\star=D$;} \smallskip \\
 p=q,&\quad \textrm{ if  $\star\in \{C,\widetilde C, D^*\}$;} \smallskip \\
\textrm{both $p$ and $q$ are even} ,&\quad \textrm{ if  $\star=C^*$.} \smallskip \\
 \end{cases}
\]
 Suppose that $\mathsf s=(\star, p,q)$ is a classical  signature in the rest of this section.
Set
\begin{equation}\label{eq:abss}
  \abs{\sfss}:=p+q.
\end{equation}

We omit the proof of the following lemma (\cf \cite[Section~1.3]{Ohta}).
\begin{lem}\label{lem:cartan}
  There is a quadruple  $(V, \la\,,\,\ra, J,L)$ satisfying the following conditions:
  \begin{itemize}
   \item $V$ is a complex vector space of dimension $\abs{\sfss}$;
   \item $\la\,,\,\ra$ is an $\epsilon$-symmetric non-degenerate bilinear form on $V$;
   \item $J: V\rightarrow V$ is a conjugate linear automorphism of $V$ such that $J^2=\epsilon\cdot \dot \epsilon$;
  \item $L: V\rightarrow V$ is a  linear automorphism of $V$ such that $L^2=\dot \epsilon$;
\item   $\inn{J u}{Jv}=
  \overline{\inn{u}{v}},\quad$  for all $u,v\in V$;
\item  $ \inn{Lu}{Lv}=
  \inn{u}{v}$, $\quad$ for all  $ u,v\in V$;
  \item $LJ =  JL$;
  \item  the Hermitian form $(u,v)\mapsto \inn{Lu}{Jv}$ on
    $V$ is positive definite;
    \item if $\dot \epsilon=1$, then $\dim\{ v\in V \,:\, Lv=v\}=p$ and $\dim\{ v\in V\,:\, Lv=-v\}=q$.
  \end{itemize}
 Moreover, such a quadruple is  unique in the following sense: if  $(V', \la\,,\,\ra', J',L')$ is another quadruple satisfying the analogous conditions, then there is a linear isomorphism $\phi: V\rightarrow V'$ that respectively transforms $ \la\,,\,\ra$, $J$, and $L$ to  $ \la\,,\,\ra'$, $J'$, and $L'$.
 \end{lem}

In the notation of Lemma \ref{lem:cartan}, we  call $(V, \la\,,\,\ra, J,L)$ a classical space of signature $\mathsf s$, and denote it by
 $ (V_{\mathsf s}, \la\,,\,\ra_{\mathsf s}, J_{\mathsf s},L_{\mathsf s})$.

We now introduce some general notation for a classical space $(V, \la\,,\,\ra, J,L)$ of signature $\mathsf s$, which will be used throughout the article.

Denote by $G_{\C}$ the isometry group of $ (V, \la\,,\,\ra)$, which is a complex orthogonal group if $\epsilon=1$ and a complex symplectic group if $\epsilon=-1$. Respectively denote by  $G_{\C}^{J}$  and  $G_{\C}^{L}$ the centralizes of $J$ and $L$ in  $G_{\C}$. Then $G_{\C}^{J}$ is a real classical group isomorphic to
\[
 \left\{
     \begin{array}{ll}
         \oO(p, q), &\hbox{if $\star=B$ or $D$}; \smallskip\\
            \Sp_{2p}(\R), &\hbox{if $\star\in \{C,\widetilde C\}$}; \smallskip\\
                   \Sp(\frac{p}{2}, \frac{q}{2}), &\hbox{if $\star=C^*$}; \smallskip \\
          \oO^*(2p), &\hbox{if $\star=D^*$}.\\
            \end{array}
   \right.
\]
Suppose that
\be\label{defg}
G= \left\{
     \begin{array}{ll}
       \textrm{the metaplectic double cover of $G_{\C}^{J}$}, \quad &\hbox{if $\star=\widetilde C$}; \smallskip\\
            G_{\C}^{J},  \quad  &\hbox{otherwise}.\\
            \end{array}
   \right.
\ee
Denote by $K$ the inverse image of  $G_{\C}^{L}$  under the natural homomorphism $G\rightarrow G_{\C}$, which is a maximal compact subgroup of $G$. Write $K_{\C}$ for the complexification of  $K$, which is a reductive complex linear algebraic group.

Write
\be\label{comdet}
{\det} : K_{\C}\rightarrow \C^\times
\ee
for the composition of
\[
   K_{\C}\xrightarrow{\textrm{the natural homomorphism}} \GL(V)\xrightarrow{\textrm{the determinant character}}\C^\times.
\]
This is the trivial character unless $\star=B$ or $D$.

For every $\lambda\in \C$, write $V_{\lambda}\subset V$ for the eigenspace of $L$ with eigenvalue $\lambda$. Write
\be\label{comdetlam}
{\det}_\lambda : K_{\C}\rightarrow \C^\times
\ee
for the composition of
\begin{equation*}\label{dethalf0}
   K_{\C}\xrightarrow{\textrm{the natural homomorphism}} \GL(V_{\lambda})\xrightarrow{\textrm{the determinant character}}\C^\times.
\end{equation*}
If $\star=\widetilde C$, then the natural homomorphism  $ K_{\C}\rightarrow \GL(V_{\sqrt{-1}})$ is a double cover, and there is a unique genuine algebraic character

\be\label{dethalf}
{\det}_{\sqrt{-1}}^{\frac{1}{2}}:  K_{\C}\rightarrow \C^\times
\ee
whose square equals $\det_{\sqrt{-1}}$.

Recall from the introductory section that $\g$ equals  the Lie algebra of $G_{\C}$.
We have  decompositions
\[
\g=\fkk \oplus \p
\quad \text{ and } \quad
\g^*=\fkk^*\oplus \p^*,
\]
where $\mathfrak k$ is the Lie algebra of $K_{\C}$, and $\p$ is the orthogonal complement of $\mathfrak k$ in $\g$ under the trace form.
We identify $\g^*$ with $\g$ and $\p^*$ with $\p$ by using the trace form.
Denote by $\Nil(\g)=\Nil(\g^*)$ the set of nilpotent $G_{\C}$-orbits in $\fgg=\g^*$, and by
$\Nil(\p)=\Nil(\p^*)$ the set of nilpotent $K_{\C}$-orbits in $\fpp=\p^*$.

Given a $K_{\C}$-orbit $\sO$ in $\p^*$, write $\CK(\sO)$ for the Grothendieck group of the category of
$K_{\C}$-equivariant algebraic vector bundles  over $\sO$ (throughout the paper we use $\C$ for the coefficient ring in all Grothenieck groups). Denote by $\CK^+(\sO)\subset \CK(\sO)$ the submonoid generated by $K_{\C}$-equivariant algebraic vector bundles over $\sO$.

Following Vogan \cite[Definition 7.13]{Vo89}, we make the following definition.
\begin{defn}\label{defaod}
  Let $\sO$ be a $K_\C$-orbit in $\p^*$. An admissible orbit datum over
  $\sO$ is an irreducible $K_\C$-equivariant algebraic vector bundle $\CE$
  over $\sO$ such that
  \begin{itemize}
    \item $\CE_\mathbf e$ is isomorphic to a multiple of
    $(\bigwedge^{\mathrm{top}} \fkk_\mathbf e )^{\frac{1}{2}}$ as a representation of
    $\fkk_\mathbf e$;
    \item $\CE$ is genuine if $\star=\widetilde C$.
  \end{itemize}
  Here $\mathbf e\in \sO$, $\CE_\mathbf e$ is the fibre of $\cE$ at $\mathbf e$, $\fkk_\mathbf e$
  denotes the Lie algebra of the stabilizer of $\mathbf e$ in $K_\C$, and
  $(\bigwedge^{\mathrm{top}} \fkk_\mathbf e)^{\frac{1}{2}}$ is a one-dimensional
  representation of $\fkk_\mathbf e$ whose tensor square is the top degree wedge
  product $\bigwedge^{\mathrm{top}} \fkk_\mathbf e$.
\end{defn}
Note that in the situation of the classical groups we consider in this article, all admissible
orbit data are line bundles.
Write
\[
\mathrm{AOD}(\sO)\subset \CK^+(\sO)
\]
for the set of isomorphism classes of admissible orbit data over $\sO$.

Let $\CO$ be a $G_{\C}$-orbit in $\g^*$.
It is well-known that $\CO\cap \p^*$ has only finitely many $K_{\C}$-orbits \cite{KoR}. Put
\[
\CK(\CO):=\bigoplus_{\sO\textrm{ is a $K_{\C}$-orbit in $\CO\cap \p^*$ }}\CK(\sO),
\]
and
\[
\CK^+(\CO):=\sum_{\sO\textrm{ is a $K_{\C}$-orbit in $\CO\cap \p^*$ }}\CK^+(\sO).
\]
Put
\begin{equation}\label{eq:AODO}
\mathrm{AOD}(\CO):=\bigsqcup_{\sO\textrm{ is a $K_{\C}$-orbit in $\CO\cap \p^*$ }}\mathrm{AOD}(\sO)\subset \CK^+(\CO).
\end{equation}

Define a partial order $\preceq $ on $ \CK(\CO)$ such that
\be\label{pord}
  \CE_1\preceq \CE_2\Leftrightarrow \CE_2-\CE_1\in \CK^+(\CO) \qquad (\CE_1, \CE_2\in \CK(\CO)).
\ee
For every algebraic character $\chi$ of $K_{\C}$, the twisting map
\[
\CK(\CO)\rightarrow \CK(\CO), \qquad \CE\mapsto \CE\otimes \chi
\]
is obviously defined.

\begin{remark} For all notation defined above, we will put a subscript $\mathsf s$ if there is a need to indicate its dependence on the classical signature $\mathsf s$.
\end{remark}

\subsection{The moment maps}\label{secmmap}
Recall that $\star'$ is the Howe dual of $\star$. Suppose that $\mathsf s'=(\star', p',q')$ is another classical signature.  Put
\[
  W_{\mathsf s, \mathsf s'} := \Hom_\bC(V_\mathsf s,V_{\mathsf s'}).
\]
Then we have the adjoint map
\[
  W_{\mathsf s, \mathsf s'} \rightarrow W_{\mathsf s', \mathsf s},\qquad \phi\mapsto \phi^*
\]
that is specified by requiring
 \be\label{adjointmap}
    \inn{\phi v}{v'}_{\mathsf s'} = \inn{v}{ \phi^* v'}_{\mathsf s},  \qquad\textrm{for all }v\in
    V_\mathsf s,\, v'\in V_{\mathsf s'}, \, \phi\in   W_{\mathsf s, \mathsf s'}.
  \ee

Define three maps
\[
  \la\,,\,\ra_{\mathsf s, \mathsf s'}:  W_{\mathsf s, \mathsf s'}\times  W_{\mathsf s, \mathsf s'}\rightarrow \C, \qquad(\phi_1,\phi_2)\mapsto \tr(\phi_1^* \phi_2),
\]
\[
J_{\mathsf s, \mathsf s'}: W_{\mathsf s, \mathsf s'}\rightarrow W_{\mathsf s, \mathsf s'}, \qquad \phi\mapsto  J_{\mathsf s'}\circ \phi \circ J_{\mathsf s}^{-1};
\]
and
\[
L_{\mathsf s, \mathsf s'}: W_{\mathsf s, \mathsf s'}\rightarrow W_{\mathsf s, \mathsf s'}, \qquad \phi\mapsto  \dot \epsilon L_{\mathsf s'}\circ \phi \circ L_{\mathsf s}^{-1}.
\]
It is routine to check that
\[
( W_{\mathsf s, \mathsf s'},  \la\,,\,\ra_{\mathsf s, \mathsf s'}, J_{\mathsf s, \mathsf s'}, L_{\mathsf s, \mathsf s'})
\]
is a classical space of signature $(C, \frac{\abs{\sfss}\cdot \abs{\sfss'}}{2}, \frac{\abs{\sfss}\cdot \abs{\sfss'}}{2})$.

Write
\begin{equation}\label{def:Xss'}
   W_{\mathsf s, \mathsf s'}=\cX_{\mathsf s, \mathsf s'}\oplus  \cY_{\mathsf s, \mathsf s'},
\end{equation}
where $\cX_{\mathsf s, \mathsf s'}$ and $ \cY_{\mathsf s, \mathsf s'}$ are  the eigenspaces of $L_{\mathsf s, \mathsf s'}$ with eigenvalues $\sqrt{-1}$ and $-\sqrt{-1}$, respectively. Note that $\phi \in \cX_{\mathsf s, \mathsf s'}$ amounts to the condition
  \[
 \phi L_{\mathsf s}=-\sqrt{-1}\dot \epsilon L_{\mathsf s'}
\phi,\]
or equivalently
\begin{equation}\label{eq:eigen}
 L_{\mathsf s}\phi ^{*}=\sqrt{-1}\dot \epsilon \phi ^* L_{\mathsf s'}.
 \end{equation}
Then we have the following two well-defined algebraic maps: (\cite[Section 1]{KP} and \cite[Section 2]{NOZ})
  \be\label{momentmap}
    \xymatrix@R=0em@C=4em{
      \fpp_\mathsf s &\ar[l]_{M_\mathsf s:=M_\mathsf s^{\sfss, \sfss'}} \cX_{\mathsf s, \mathsf s'}\ar[r]^{M_{\sfss'}:=M_{\mathsf s'}^{\sfss, \sfss'}}& \fpp_{\mathsf s'},\\
     \phi^* \phi & \ar@{|->}[l] \phi \ar@{|->}[r] & \phi \phi^*.
    }
  \ee

These two maps $M_\mathsf s$ and $M_{\mathsf s'}$ are called the moment maps. They are both $K_{\mathsf s,\C}\times K_{\mathsf s', \C}$-equivariant. Here  $K_{\mathsf s', \C}$ acts trivially on $\p_\mathsf s$,
 $K_{\mathsf s, \C}$ acts trivially on $\p_\mathsf s'$, and all the other actions are the obvious ones.

Put
\[
  W_{\mathsf s, \mathsf s'}^\circ:=\{\phi\in W_{\mathsf s, \mathsf s'}\,:\, \textrm{the image of $\phi$ is non-degenerate with respect to $\la\,,\,\ra_{\mathsf s'}$}\}
\]
and
\[
  \cX_{\mathsf s, \mathsf s'}^\circ:=\cX_{\mathsf s, \mathsf s'}\cap W_{\mathsf s, \mathsf s'}^\circ.
\]

\def\Ms{M_{\sfss}}
\def\cXsso{\cX_{\sfss,\sfss'}^\circ}
\def\cXss{\cX_{\sfss,\sfss'}}
\def\Ksp{K_{\sfss',\bC}}
\def\Lsp{L_{\sfss'}}
\def\Ls{L_{\sfss}}
\def\Vsp{V_{\sfss'}}
\begin{lem}
  \label{descko}
Let $\sO$ be a $K_{\mathsf s, \C}$-orbit in $\p_{\mathsf s}$. Suppose that $\sO$ is contained in the image of the moment map $M_\mathsf s$. Then the set
\be\label{kkpo}
  M_{\mathsf s}^{-1}(\sO)\cap \cXsso
\ee
is a single $K_{\mathsf s,\C}\times K_{\mathsf s', \C}$-orbit. Moreover, for every element $\phi$ in $M_{\mathsf s}^{-1}(\sO)\cap \cX_{\mathsf s, \mathsf s'}^\circ$, there is an exact sequence of algebraic groups:
\[
  1\rightarrow (K_{\mathsf s',\C})_\phi \rightarrow (K_{\mathsf s,\C}\times K_{\mathsf s', \C})_\phi\xrightarrow{\textrm{the projection to the first factor}} (K_{\mathsf s,\C})_{\mathbf e}\rightarrow 1,
\]
where $\mathbf e:=M_\mathsf s(\phi)\in \sO$,  $(K_{\mathsf s',\C})_\phi$ is the stabilizer of $\phi$ in $K_{\mathsf s',\C}$, $(K_{\mathsf s,\C}\times K_{\mathsf s', \C})_\phi$ is the stabilizer of $\phi$ in $K_{\mathsf s,\C}\times K_{\mathsf s', \C}$, and $(K_{\mathsf s,\C})_{\mathbf e}$ is the stabilizer of $\mathbf e$ in $K_{\mathsf s,\C}$.
\end{lem}

\begin{remark} A version of Lemma \ref{descko} for a nilpotent $G$-orbit in $\g_{\R}$ is in \cite[Lemma 3.4]{Zh}.
\end{remark}

\begin{proof}
 When the rank of some (and hence all) elements in $\sO\subset \Hom_\bC(V_\sfss,V_\sfss)$
 are equal to $\dim V_{\sfss'}$,
  all elements in $\Ms^{-1}(\sO)\subset \Hom_\bC(V_\sfss,V_{\sfss'})$
  are surjective and hence belong to  $\cXsso$.
In this case, the lemma is proved in \cite[Lemma 13]{Ohta}.

 In general, take an element  $\phi_0 \in \Ms^{-1}(\sO)$. Then its image $\Im(\phi_0)$ is an $\Lsp$-stable subspace of $\Vsp$.
 Fix any $\Lsp$-stable decomposition
 \[
  \Im(\phi_0) = V' \oplus N'
 \]
 such that the form $\inn{}{}_{\Vsp}$ is non-degenerate on $V'$ and zero on
 $N'$. Let $\phi_1$ be the composition of $\phi_0$ with the projection to $V'$
 in the above decomposition.
 Then
 $\phi_1$ is an element in  $\Ms^{-1}(\sO)\cap \cXsso$. This proves that the set
 $\Ms^{-1}(\sO)\cap \cXsso$ is nonempty.

 For an element $\phi\in \Ms^{-1}(\sO)\cap \cXsso$, write $\mathbf e :=\Ms (\phi)=\phi^* \phi $.
We have that $\Ker (\phi) =\Ker (\mathbf{e})$, which is $\Lsp$-stable,  and $\phi ^*|_{\Im(\phi)}$ is a linear bijection onto $\Im(\mathbf{e})$.

Let $\Vsp(\sO)$ denote the set of all $\Lsp$-stable
   non-degenerate subspaces $V''$ of $\Vsp$ such that
  \[
     \dim (V'')_{\lambda} = \dim \left(\Im(\mathbf e)\right)_{\sqrt{-1} \dot \epsilon \lambda}
  \qquad \textrm{for all $\lambda \in \C$}.
\]
Here a subscript complex number   indicates the  corresponding eigenspace of $\Lsp$ (or $\Ls$). Note that these eigenspaces are zero unless $\lambda^2=-\dot \epsilon$.

 In view of \eqref{eq:eigen}, $\Im(\phi)\in \Vsp(\sO)$ for every $\phi\in \Ms^{-1}(\sO)\cap \cXsso$. Since $\Vsp(\sO)$ is a single $\Ksp$-orbit, the lemma is reduced to the previously discussed case.
\end{proof}

In the notation of Lemma \ref{descko}, write
\[
  \nabla^{\mathsf s}_{\mathsf s'}(\sO):=\textrm{the image of the set \eqref{kkpo} under the moment map  $M_{\mathsf s'}$,}
\]
which is a $ K_{\mathsf s', \C}$-orbit in $\p_{\mathsf s'}$. This is called the descent of $\sO$. It is an element of $\Nil(\p_{\mathsf s'})$ if $\sO\in \Nil(\p_{\mathsf s})$.

Similar to \eqref{momentmap}, we have the following two well-defined algebraic maps:
   \be\label{momentmap2}
    \xymatrix@R=0em@C=4em{
      \g_\mathsf s &\ar[l]_{\tilde M_\mathsf s:=\tilde M_\mathsf s^{\sfss, \sfss'}} W_{\mathsf s, \mathsf s'}\ar[r]^{\tilde M_{\mathsf s'}:=\tilde M_{\mathsf s'}^{\sfss, \sfss'}}& \g_{\mathsf s'},\\
     \phi^* \phi & \ar@{|->}[l] \phi \ar@{|->}[r] & \phi \phi^*.
    }
  \ee
These two maps are also called the moment maps. Similar to the maps in \eqref{momentmap},   they  are both $G_{\mathsf s,\C}\times G_{\mathsf s', \C}$-equivariant.

Now we suppose that the $G_{\sfss, \C}$-orbit $\CO\subset\g_\sfss$
  is contained in the image of the moment map $\tilde M_{\mathsf s}$. Similar to
the first assertion of Lemma \ref{descko}, the set
\be\label{kkpo2}
  \tilde M_{\mathsf s}^{-1}(\CO)\cap W_{\mathsf s, \mathsf s'}^\circ
\ee
is a single $G_{\mathsf s,\C}\times G_{\mathsf s', \C}$-orbit.
Write
\[
 \CO':= \nabla^{\mathsf s}_{\mathsf s'}(\CO):=\textrm{the image of the set \eqref{kkpo2} under the moment map  $\tilde M_{\mathsf s'}$,}
\]
which is a $ G_{\mathsf s', \C}$-orbit in $\g_{\mathsf s'}$.
This is  called the descent of $\CO$. It is an element of  $\Nil(\g_{\mathsf s'})$ if $\CO\in \Nil(\g_{\mathsf s})$.

We record the following lemma for later use.
\begin{lem}\label{imageofmm}
Suppose that $\CO\in \Nil(\g_\sfss)$. Then  $\CO$ is contained in the image of the moment map $\tilde M_{\mathsf s}$ if and only if
\[
 \delta:=  \abs{\sfss'}-\abs{\DD_\mathrm{naive}(\CO)}\geq 0.
\]
When this is the case, the Young diagram of $\DD_{\mathsf s'}^{\mathsf s}(\CO)\in \Nil(\g_{\mathsf s'})$ is obtained from that of $\DD_\mathrm{naive}(\CO)$ by adding $\delta$ boxes in the first column.
\end{lem}
\begin{proof}
This is implied by \cite[Theorem 3.6]{DKPC}.
\end{proof}

\subsection{Geometric theta lift} \label{sec:dlift}
Let $\zeta_{\mathsf s, \mathsf s'}$ denote the algebraic character on $K_{\mathsf s, \C}\times K_{\mathsf s', \C}$  such that
\[
 (\zeta_{\mathsf s, \mathsf s'})|_{K_{\mathsf s, \C}}=
   \begin{cases}
    1, \quad & \textrm{if $\star\in \{B, D, C^*\}$}; \smallskip\\
      ({\det}_{\sqrt{-1}})^{\frac{p'-q'}{2}}, \quad & \textrm{if $\star\in \{C, D^*\}$};\smallskip\\
     ({\det}_{\sqrt{-1}}^{\frac{1}{2}})^{p'-q'}, \quad & \textrm{if $\star=\widetilde C$,}\\
  \end{cases}
\]
and
\[
 (\zeta_{\mathsf s, \mathsf s'})|_{K_{\mathsf s', \C}}=
   \begin{cases}
    1, \quad & \textrm{if $\star\in \{C, \widetilde C, D^*\}$}; \smallskip\\
      ({\det}_{\sqrt{-1}})^{\frac{p-q}{2}}, \quad & \textrm{if $\star\in \{D, C^*\}$};\smallskip\\
     ({\det}_{\sqrt{-1}}^{\frac{1}{2}})^{p-q}, \quad & \textrm{if $\star=B$}.
 \end{cases}
\]
Here and henceforth, when no confusion is possible, we  use $1$ to indicate the trivial representation of a group (we also use $1$ to denote the identity element of a group). For other notation, see
\eqref{comdetlam} and \eqref{dethalf}. This algebraic character arises naturally in the theory of theta lifting. See \Cref{deforos}.

Let  $\sO$ be a $K_{\mathsf s, \C}$-orbit in $\p_{\mathsf s}$ as before. Suppose that $\sO$ is contained in the image of the moment map $M_{\mathsf s}$, and write $\sO':=\nabla^{\mathsf s}_{\mathsf s'}(\sO)$.
Let $\phi,\mathbf e$ be as in Lemma  \ref{descko} and let $\mathbf e':=M_{\mathsf s'}(\phi)$. We have an exact sequence
\[
  1\rightarrow  (K_{\mathsf s',\C})_\phi\rightarrow (K_{\mathsf s,\C}\times K_{\mathsf s', \C})_\phi\rightarrow (K_{\mathsf s,\C})_{\mathbf e}\rightarrow 1
\]
as in Lemma  \ref{descko}.

 Let $\CE'$ be a $K_{\mathsf s',\C}$-equivariant algebraic vector bundle  over $\sO'$. Its fibre
$\CE'_{\bfee'}$ at $\bfee'$ is an algebraic representation of the stabilizer group $(K_{\mathsf s',\C})_{\mathbf e'}$. We also view it as a representation of the group
$(K_{\mathsf s,\C}\times K_{\mathsf s', \C})_\phi$ by the pull-back through the homomorphism
\[
  (K_{\mathsf s,\C}\times K_{\mathsf s', \C})_\phi\xrightarrow{\textrm{the projection to the second factor}} (K_{\mathsf s',\C})_{\mathbf e'}.
\]
Then $\CE'_{\mathbf e'} \otimes \zeta_{\mathsf s, \mathsf s'}$ is a representation of $ (K_{\mathsf s,\C}\times K_{\mathsf s', \C})_\phi$, and the coinvariant space
\[
(\CE'_{\mathbf e'} \otimes \zeta_{\mathsf s, \mathsf s'})_{ (K_{\mathsf s',\C})_\phi}
\]
 is an algebraic representation of $(K_{\mathsf s,\C})_{\mathbf e}$. Write $\CE:= \check \vartheta_{\sO'}^{\sO}(\mathcal E')$ for the  $K_{\mathsf s,\C}$-equivariant algebraic vector bundle  over $\sO$ whose fibre at $\mathbf e$ equals this coinvariant space representation. In this way, we get an exact functor $  \check \vartheta_{\sO'}^{\sO}$ from the category of
$K_{\mathsf s',\C}$-equivariant algebraic vector bundle  over $\sO'$ to the category of $K_{\mathsf s,\C}$-equivariant algebraic vector bundle  over $\sO$. This exact functor induces a  homomorphism of the  Grothendieck groups:
\[
   \check \vartheta_{\sO'}^{\sO}:  \CK_{\mathsf s'}(\sO')\rightarrow  \CK_{\mathsf s}(\sO).
\]
The above homomorphism is independent of the choice of $\phi$.

Recall that $\CO':= \nabla^{\mathsf s}_{\mathsf s'}(\CO)$, and finally we define the geometric theta lift to be the homomorphism
\[
 \check \vartheta_{\CO'}^{\CO}: \CK_{\mathsf s'}(\CO')\rightarrow \CK_{\mathsf s}(\CO)
\]
such that
\[
 \check \vartheta_{\CO'}^{\CO}(\CE')= \sum_{\sO\textrm{ is a $K_{\mathsf s, \C} $-orbit in $\CO\cap \p_{\mathsf s}$,  $\, \nabla^{\mathsf s}_{\mathsf s'}(\sO)=\sO'$}}    \check \vartheta_{\sO'}^{\sO}(\CE'),
\]
for all $K_{\mathsf s', \C} $-orbit $\sO'$ in $\CO'\cap \p_{\mathsf s'}$, and all $\CE'\in \CK_{\mathsf s'}(\sO')$.

\subsection{Associated cycles of painted bipartitions}\label{subsecass}

Note that $\g=\g_\sfss$ by the assumptions in \Cref{classical}. View $\check \CO\in\overline{\mathrm{Nil}}(\check \g)$ as a Young diagram (that has good parity with respect to $\star$).

\begin{defn}[\cite{BVUni} and \cite{BMSZ0}]\label{bvdual0}
The  Barbasch-Vogan dual $\mathrm d_{\mathrm{BV}}^\star(\check \CO)$ of $\check \CO$ is the Young diagram satisfying
\begin{eqnarray*}
 &&\left (\mathbf c_i(\mathrm d_{\mathrm{BV}}^\star(\check \CO)), \mathbf c_{i+1}(\mathrm d_{\mathrm{BV}}^\star(\check \CO))\right )\\
  &=&\begin{cases}
      (\mathbf r_i(\check \CO), \mathbf r_{i+1}(\check \CO)),\quad& \textrm{if  $(i,i+1)$ is a $\star$-pair that is vacant or balanced in $\check \CO$};  \\
        (\mathbf r_i(\check \CO)-1, 0),\quad& \textrm{if  $(i,i+1)$ is a $\star$-pair that is  tailed in $\check \CO$};  \\
         (\mathbf r_i(\check \CO)-1, \mathbf r_{i+1}(\check \CO)+1),\quad& \textrm{if  $(i,i+1)$ is a $\star$-pair that is primitive in $\check \CO$},
  \end{cases}
  \end{eqnarray*}
and
\[
    \mathbf c_1(\mathrm d_{\mathrm{BV}}^\star(\check \CO))=\begin{cases}
      0,\qquad& \textrm{if  $\star\in \{D, D^*\}$ and $|\check \CO|=0$};  \\
       \mathbf r_1(\check \CO)+1, \qquad& \textrm{if  either $\star=B$, or $\star\in \{D, D^*\}$ and $|\check \CO|\neq 0$.}
          \end{cases}
\]

\end{defn}

Note that
\[
 \abs{\mathrm d_{\mathrm{BV}}^\star(\check \CO)}=\abs{\sfss},
\]
and $\mathrm d_{\mathrm{BV}}^\star(\check \CO)$ is identified with an element of $\Nil(\g_\sfss)$.
Recall  the ideal $I_{\check \CO}$ of $\mathcal U(\g_\sfss)$ from Section \ref{secsu}.

\begin{lem}\label{bvdual}
The  associated variety of $I_{\check \CO}$ equals   the Zariski closure of $\mathrm d_{\mathrm{BV}}^\star(\check \CO)$.
\end{lem}
\begin{proof}
This is implied by \cite[Corollary A3]{BVUni} and \cite[Theorem B]{BMSZ0}.
\end{proof}

As before, let $\check \CO'$ be the dual descent of $\check \CO$, which has good parity (with respect to $\star'$). Assume that
\[
  \abs{\mathrm d_{\mathrm{BV}}^{\star'}(\check \CO')}=\abs{\sfss'}.
\]
Then  $\mathrm d_{\mathrm{BV}}^{\star'}(\check \CO')$ is identified with an element of $\Nil(\g_{\sfss'})$.

\begin{lem}\label{dualdesc}
The orbit $\mathrm d_{\mathrm{BV}}^{\star}(\check \CO)$ is contained in the image of the moment map $\tilde M_{\mathsf s}$, and
\[
  \nabla_{\mathsf s'}^{\mathsf s}(\mathrm d_{\mathrm{BV}}^{\star}(\check \CO))=\mathrm d_{\mathrm{BV}}^{\star'}(\check \CO').
\]
\end{lem}
\begin{proof}
This is elementary to check by using \Cref{imageofmm}.
\end{proof}

In view of Lemma \ref{dualdesc}, we suppose that
\[
\CO=\mathrm d_{\mathrm{BV}}^\star(\check \CO)\in \Nil(\g_\sfss)\quad \textrm{and}\quad \CO'=\mathrm d_{\mathrm{BV}}^{\star'}(\check \CO')\in \Nil(\g_{\sfss'}).
\]

Put
\begin{equation}\label{def:PBPs}
  \PBPe_\star(\check \CO,\mathsf s):=\{(\tau, \wp)\in  \PBPe_\star(\check \CO)\,:\, (p_\tau,q_\tau)=(p,q)\}.
\end{equation}
This is identical to the set $ \PBPe_G(\check \CO)$ because of the assumption \eqref{defg}.

Let $\uptau=(\tau, \wp)\in \PBPe_\star(\check \CO,\mathsf s)$, where $\mathsf s=\mathsf s_{\tau}=:(\star, p_\tau, q_\tau)$ is the classical signature uniquely determined by $\tau$. In what follows we will define the associated cycle $\mathrm{AC}(\uptau)\in\CK_{\mathsf s}(\CO)$ of $\uptau$.

For the initial case when $|\check \CO|=0$
so that $\CO\cap \p_{\mathsf s}$ is a singleton, we define  $\mathrm{AC}(\uptau)\in\CK_{\mathsf s}(\CO)$ to be the element that corresponds to the following  representation of $K_{\mathsf s,\C}$:
 \[
\left\{
     \begin{array}{ll}
                    \textrm{the determinant character} , &\hbox{ if $\alpha_\tau=B^-$ so that $K_{\mathsf s,\C}=\rO(0,1)$;} \smallskip\\
                    \textrm{the one dimensional genuine representation} , &\hbox{ if $\star=\widetilde C$ so that $K_{\mathsf s,\C}=\{\pm 1\}$;} \smallskip\\
 \textrm{the one dimensional trivial representation} , &\hbox{ otherwise.}            \end{array}
   \right.
\]

 Write  $\uptau'\in \PBPe_{\star'}(\check \CO')$ for the descent of $\uptau$, and assume that $\uptau'\in \PBPe_{\star'}(\check \CO',\mathsf s')$.
When $|\check \CO|\neq 0$,  similar to the definition of $\pi_\uptau$ in \eqref{eq:def-pi} (of Section \ref{subsec:comTOrep}), we inductively define
$\mathrm{AC}(\uptau)\in \CK_{\mathsf s}(\CO) $ by
 \begin{equation}\label{eq:AC}
   \mathrm{AC}(\uptau):=\left\{
     \begin{array}{ll}
         \check \vartheta_{\CO'}^{\CO}(\mathrm{AC}(\uptau'))\otimes ({\det}_{-1})^{\varepsilon_{\tau}}, &\hbox{if  $\star=B$ or $D$}; \smallskip\\
         \check \vartheta_{\CO'}^{\CO}(\mathrm{AC}(\uptau')\otimes \det^{\varepsilon_{\wp}}), &\hbox{if $\star=C$ or $\widetilde C$};\smallskip \\
              \check \vartheta_{\CO'}^{\CO}(\mathrm{AC}(\uptau')), &\hbox{if $\star=C^*$ or $D^*$}. \\
            \end{array}
   \right.
    \end{equation}
Here ${\det}_{-1}: K_{\mathsf s,\C}\rightarrow \C^\times$ is the character of $K_{\mathsf s,\C}$ defined in \eqref{comdetlam}.

\subsection{Key properties of associated cycles}\label{subsec:kpac}

Recall that $\check \CO$ is assumed to have good parity.
Recall also that a nilpotent orbit in $\check \g$ is said to be distinguished if it  has empty intersection with every proper Levi subalgebra of $\check \g$. Combinatorially, this is equivalent to saying that no pair of rows of the Young diagram  have equal nonzero length \cite[Theorem 8.2.14]{CM}. Note that all distinguished nilpotent orbits in $\check \g$ has good parity.

\begin{defn}\label{defqd}
The orbit $\check \CO$ is said to be quasi-distinguished if there is no $\star$-pair that is balanced in $\check \CO$.

\end{defn}

When $\star\in \set{C^*, D^*}$, the situation is made simpler by the following non-existence result.

\begin{lem}[{\cite{BMSZ2}*{Proposition~10.1}}]\label{prop:CD*}
  Suppose that $\star\in \set{C^*, D^*}$. If the set $\PBP_\star(\ckcO)$ is nonempty, then $\check \CO$ is quasi-distinguished.
\end{lem}

We postpone the proof of the following propositions concerning associated cycles to  \Cref{sec:ACC}.

\begin{prop}\label{thmac1}
 For every $\uptau\in \PBPe_\star(\check \CO,\mathsf s)$, the associated cycle $\mathrm{AC}(\uptau)\in \CK_{\mathsf s}(\CO)$ is a nonzero  sum of  admissible orbit data
over pairwise distinct $K_{\sfss,\C}$-orbits in $\CO\cap \p_{\sfss}$.
\end{prop}

\begin{prop}\label{thmac3}
Suppose that  $\check \CO$ is quasi-distinguished.  If $\star\in \{B,D\}$ and $(\star, |\check \CO|)\neq (D, 0)$, then  the map
\[
\mathrm{AC}: \PBPe_\star(\check \CO,\mathsf s)\times \Z/2\Z \rightarrow  \mathrm{AOD}_{\sfss}(\CO),\quad (\uptau, \epsilon)\mapsto \mathrm{AC}(\uptau)\otimes {\det}^{\epsilon}
\]
is well-defined and bijective. In all other cases, the
map
\[
\mathrm{AC}: \PBPe_\star(\check \CO,\mathsf s)\rightarrow \mathrm{AOD}_{\sfss}(\CO)
\]
is well-defined and bijective.
Here the set $\mathrm{AOD}_{\sfss}(\cO)$ is defined as in
\eqref{eq:AODO}.
\end{prop}

\begin{prop}\label{thmac4}
Suppose that  $\star\in \{B,D\}$ and $(\star, |\check \CO|)\neq (D, 0)$. Let $\uptau_i=(\tau_i, \wp_i)\in \PBPe_\star(\check \CO,\mathsf s)$ and $\epsilon_i\in \Z/2\Z$ ($i=1,2$).   If
\[
  \mathrm{AC}(\uptau_1)\otimes {\det}^{\epsilon_1}= \mathrm{AC}(\uptau_2)\otimes {\det}^{\epsilon_2},
\]
then
\[
  \epsilon_1=\epsilon_2\qquad\textrm{and}\qquad \varepsilon_{\tau_1}=\varepsilon_{\tau_2}.
\]
 \end{prop}

\begin{prop}\label{thmac5}
Suppose that  $\star\in \{C,\widetilde C, D^*\}$. Let $\uptau_i=(\tau_i, \wp_i)\in \PBPe_\star(\check \CO,\mathsf s)$, and write $\uptau'_i=(\tau'_i, \wp'_i)$ for its descent ($i=1,2$).   If
\[
  \mathrm{AC}(\uptau_1)= \mathrm{AC}(\uptau_2),
\]
then
\[
 ( p_{\tau'_1}, q_{\tau'_1})=( p_{\tau'_2}, q_{\tau'_2}),
 \]
 and
 \[
  \varepsilon_{\wp_1}=\varepsilon_{\wp_2}\  \textrm{when $\ \star\in\{C,\wtC\}$}.
\]
 \end{prop}

\section{Associated cycles and structure of proof of \Cref{thm100}}\label{sec:main}

In this section,  we provide a blueprint for the proof of \Cref{thm100} and highlight the role of associated cycles in its proof.  We recall the general notation of Section \ref{classical}. Thus we have $G$,  $G_\C$, $K$, $K_\C$, associated to a classical space $(V, \la\,,\,\ra, J,L)$ of signature $\mathsf s$.
Recall that $\check \CO$ is a $\check G$-orbit in $\mathrm{Nil}(\check \g)$ which has good parity, and $\CO\in \Nil(\g^*)$ is its  Barbasch-Vogan dual. The algebraic variety $\CO\cap \p^*$
is a finite union of $K_\C$-orbits.  As in Section \ref{classical}, we set
\[
\cK(\CO):=\bigoplus_{\sO\textrm{ is a $K_\C$-orbit in
      $\CO\cap \p^*$}} \cK(\sO).
\]

We say that a Casselman-Wallach representation of $G$ is  $\CO$-bounded  if
the associated variety  of its annihilator ideal
is contained in the Zariski closure $\overline \CO$ of $\CO$. Note that all representations in $\mathrm{Unip}_{\check \CO}(G)$ are $\CO$-bounded. Write $\cK_{\overline \CO}(G)$ for the  Grothendieck group of the category of all such representations.
From \cite[Theorem 2.13]{Vo89},  we have a canonical homomorphism
\[
\xymatrix{
  \mathrm{AC}_\cO\colon   \cK_{\overline \cO}(G) \ar[r]& \cK(\CO).
}
\]
For an $\CO$-bounded Casselman-Wallach representation $\pi$ of $G$, we call $ \mathrm{AC}_\cO(\pi)$ the associated cycle of $\pi$. This is a fundamental invariant attached to $\pi$.

The following theorem will be proved in \Cref{sec:comANDgeo}.

\begin{thm}\label{thmpitau} Let $\uptau\in \PBPeg(\ckcO)$.
\begin{enumerate}[label=(\alph*),wide=0em]
\item
The representation $\pi_\uptau$ is irreducible, unitarizable and special unipotent attached to $\check \CO$.
\item The following equality holds:
\[
\AC_\CO(\pi_\uptau)=\AC(\uptau)\in \CK(\CO),
\]
where $\AC(\uptau)$ is defined in \eqref{eq:AC}.
\end{enumerate}
\end{thm}

Based on \Cref{thmpitau}, and the tools introduced in \Cref{sec:bip} and \Cref{sec:bipGeometry}, we prove the following result, on the exhaustion of  special unipotent representations by our construction.

\begin{thm}\label{thmac7}
If $\star\in \{B,D\}$ and $(\star, |\check \CO|)\neq (D, 0)$, then  the map
\be\label{bijthm1}
 \PBPe_G(\check \CO)\times \Z/2\Z \rightarrow \Unip_{\check \cO}(G),\quad (\uptau, \epsilon)\mapsto \pi_\uptau\otimes {\det}^{\epsilon}
\ee
is bijective. In all other cases, the
map
\begin{equation}\label{bijthm2}
\PBPe_G(\check \CO)\rightarrow \Unip_{\check \cO}(G), \quad \uptau\mapsto \pi_\uptau
\end{equation}
is bijective.
\end{thm}

\begin{proof}
In view of \Cref{thmcount}, we only need to show the injectivity of  the two maps in the statement of the theorem. We prove by induction on the number of nonempty rows of $\check \CO$. The theorem is trivially true in the case when  $|\check \CO|=0$. So we assume that $|\check \CO|\neq 0$  and the theorem has been proved for the dual descent $\check \CO'$.
Suppose that  $\uptau_i=(\tau_i, \wp_i)\in \PBPeg(\check \CO)$, $\epsilon_i\in \Z/2\Z$ ($i=1,2$). Write  $\uptau_i'=(\tau_i', \wp_i')$ for the descent of $\uptau_i$.

First assume that $\star\in \{B,D\}$, and
\[
  \pi_{\uptau_1}\otimes {\det}^{\epsilon_1}\cong \pi_{\uptau_2}\otimes {\det}^{\epsilon_2}.
\]
Theorem \ref{thmpitau} implies that
\[
\mathrm{AC}(\uptau_1)\otimes {\det}^{\epsilon_1}=\mathrm{AC}(\uptau_2)\otimes {\det}^{\epsilon_2}.
\]
By Proposition \ref{thmac4}, we know that
\[
  \epsilon_1=\epsilon_2\qquad\textrm{and}\qquad \varepsilon_{\tau_1}=\varepsilon_{\tau_2}.
\]
Then the definition of $ \pi_{\uptau_1}$ and $ \pi_{\uptau_2}$ implies that
\[
 \check  \Theta_{\tau_1'}^{\tau_1}(\pi_{\uptau_1'})\cong \check \Theta_{\tau_2'}^{\tau_2}(\pi_{\uptau_2'}).
\]
 Consequently,
\[
\pi_{\uptau_1'}\cong \pi_{\uptau_2'}
\]
by the injectivity property of the theta correspondence. Hence $\uptau'_1=\uptau'_2$ by the induction hypothesis, and \Cref{prop:DD.BDinj} then implies $\uptau_1=\uptau_2$.
This proves that the map \eqref{bijthm1} is injective.

A slightly simplified argument shows that the map \eqref{bijthm2} is injective when $\star=C^*$.

Now assume that $\star\in \{C,\widetilde C\}$. Suppose that
\[
  \pi_{\uptau_1}\cong \pi_{\uptau_2}.
\]
Theorem \ref{thmpitau} implies that
\[
\mathrm{AC}(\uptau_1)=\mathrm{AC}(\uptau_2).
\]
By Proposition \ref{thmac5}, we know that
\[
 ( p_{\tau'_1}, q_{\tau'_1})=( p_{\tau'_2}, q_{\tau'_2})\qquad\textrm{and}\qquad \varepsilon_{\wp_1}=\varepsilon_{\wp_2}.
\]
Then the definition of $ \pi_{\uptau_1}$ and $ \pi_{\uptau_2}$ implies that
\[
  \check \Theta_{\tau_1'}^{\tau_1}(\pi_{\uptau_1'}\otimes {\det}^{\varepsilon_{\wp_1}})\cong \check \Theta_{\tau_2'}^{\tau_2}(\pi_{\uptau_2'}\otimes {\det}^{\varepsilon_{\wp_1}}).
\]
The injectivity property of the theta correspondence implies that
\[
\pi_{\uptau_1'}\otimes {\det}^{\varepsilon_{\wp_1}}\cong \pi_{\uptau_2'}\otimes {\det}^{\varepsilon_{\wp_2}},
\]
which further implies that $\pi_{\uptau_1'}\cong \pi_{\uptau_2'}$.
Hence $\uptau'_1=\uptau'_2$ by the induction hypothesis, and \Cref{prop:DD.BDinj} then implies $\uptau_1=\uptau_2$. This proves that the map \eqref{bijthm2} is injective.

A slightly simplified argument shows that the map \eqref{bijthm2} is injective when $\star=D^*$. This completes the proof of the theorem.
\end{proof}

%Finally,
Part (a) of \Cref{thmpitau} and \Cref{thmac7} combine to give
\Cref{thm100}.

In view of Propositions \ref{thmac1} and \ref{thmac3},
Part (b) of  \Cref{thmpitau} and \Cref{thmac7} also imply the following result on the associated cycles of special unipotent representations.

\begin{thm}\label{thmac0}

\noindent (a) For every $\pi\in \mathrm{Unip}_{\check \CO}(G)$, the associated cycle $\mathrm{AC}_{\CO}(\pi)\in \cK(\CO) $ is a nonzero  sum of  admissible orbit data
over pairwise distinct $K_\C$-orbits in $\CO\cap \p^*$.

\noindent  (b) If $\check \CO$ is quasi-distinguished, then the map $\mathrm{AC}_{\CO}$ induces a bijection
\[
\mathrm{Unip}_{\check \CO}(G)\rightarrow  \mathrm{AOD}(\CO).
\]

\end{thm}

\begin{remark}
Suppose that $\star\in \{C^*, D^*\}$ so that $G$ is quaternionic.
If $\check \CO$ is quasi-distinguished, then there is precisely one admissible orbit datum over  $\sO$ for each $K_\C$-orbit $\sO\subset \CO\cap \p^*$.
Thus
\[
 \mathrm{AOD}(\CO)=K_\C\backslash  (\CO\cap \p^*).
\]
If $\check \CO$ is not quasi-distinguished, then
$\PBP_\star(\ckcO)$ is empty (see \Cref{prop:CD*}), and
hence $\mathrm{Unip}_{\check \CO}(G)$ is empty. Note that in this case $\CO\cap \p^*$ is also empty (see \cite[Theorems 9.3.4 and 9.3.5]{CM}).

\end{remark}

\section{Theta lifting via matrix coefficient integrals and unitarity preservation}\label{sec:Integrals}

In this section,
we investigate theta lifting via
matrix coefficient integrals. The main result is \Cref{positivity000}, which proves preservation of unitarity under a certain convergence condition.

Let $\mathsf s=(\star, p,q)$ and $ \mathsf s'=(\star', p',q')$ be classical signatures such that $\star'$ is the Howe dual of $\star$.

\subsection{The oscillator representation}\label{secoscil}
We use the notation of Section \ref{secmmap}. Write
$
  W_{\mathsf s, \mathsf s'}^{J_{\mathsf s, \mathsf s'}}\subset W_{\mathsf s, \mathsf s'}
$
for the fixed point set of $J_{\mathsf s, \mathsf s'}$. It is a real symplectic space under the restriction of the form $\la\,,\,\ra_{\mathsf s, \mathsf s'}$. Let $H_{\mathsf s, \mathsf s'}:= W_{\mathsf s, \mathsf s'}^{J_{\mathsf s, \mathsf s'}}\times \R$
denote the Heisenberg group attached to $W_{\mathsf s, \mathsf s'}^{J_{\mathsf s, \mathsf s'}}$, with group multiplication
\[
  (\phi ,t)\cdot (\phi ', t'):=(\phi +\phi ', t+t'+\la \phi , \phi '\ra_{\mathsf s, \mathsf s'}), \qquad \phi ,\phi '\in  W_{\mathsf s, \mathsf s'}^{J_{\mathsf s, \mathsf s'}}, \quad t, t'\in \R.
\]
Denote by $\h_{\mathsf s, \mathsf s'}$ the complexified Lie algebra of $H_{\mathsf s, \mathsf s'}$. Then  $\cX_{\mathsf s, \mathsf s'}$ (as in \eqref{def:Xss'}) is  an abelian Lie subalgebra of $\h_{\mathsf s, \mathsf s'}$.

The following is the smooth version of the Stone-von Neumann Theorem.

\begin{lem}[{\cf \cite[Theorem 5.1]{Cl89} and \cite[Section 4]{Ad07}}]\label{vn}
Up to isomorphism, there exists a unique irreducible smooth Fr\'echet representation $\omega_{\mathsf s, \mathsf s'}$ of $H_{\mathsf s, \mathsf s'}$ of moderate growth
with central character
\[
\R\rightarrow \C^\times, \ t\mapsto e^{\sqrt{-1}\, t}.
\]
Moreover, the space
\[
  \omega_{\mathsf s, \mathsf s'}^{\cX_{\mathsf s, \mathsf s'}}:=\{v\in \omega_{\mathsf s, \mathsf s'}\,:\, x \cdot v=0\quad \textrm{for all $x\in \cX_{\mathsf s, \mathsf s'}$}\}
\]
is one-dimensional.
\end{lem}

The group $G_{\mathsf s}\times G_{\mathsf s'}$ acts on $H_{\mathsf s, \mathsf s'}$ as group automorphisms via the following natural action of $G_{\mathsf s}\times G_{\mathsf s'}$ on  $W_{\mathsf s, \mathsf s'}^{J_{\mathsf s, \mathsf s'}}$:
\[
  (g, g')\cdot \phi:=g'\circ \phi\circ g^{-1}, \qquad (g,g')\in G_{\mathsf s}\times G_{\mathsf s'},\ \phi\in W_{\mathsf s, \mathsf s'}^{J_{\mathsf s, \mathsf s'}}.
\]
From this action, we form the semidirect product $(G_{\mathsf s}\times G_{\mathsf s'})\ltimes H_{\mathsf s, \mathsf s'}$.

Recall the character $\zeta_{\mathsf s, \mathsf s'}$ from Section \ref{sec:dlift}.

\begin{lem}[{\cf \cite[Proposition 7.5]{Ad07}}]\label{deforos}
The representation $\omega_{\mathsf s, \mathsf s'}$ of $H_{\mathsf s, \mathsf s'}$ in
\Cref{vn} uniquely extends to a  smooth representation of
$(G_{\mathsf s}\times G_{\mathsf s'})\ltimes H_{\mathsf s, \mathsf s'}$
such  that $K_{\mathsf s}\times  K_{\mathsf s'}$ acts on
$ \omega_{\mathsf s, \mathsf s'}^{\cX_{\mathsf s, \mathsf s'}}$
through the scalar multiplication by $\zeta_{\mathsf s, \mathsf s'}$.
\end{lem}

Let $\omega_{\mathsf s, \mathsf s'}$ denote the representation of $(G_{\mathsf s}\times G_{\mathsf s'})\ltimes H_{\mathsf s, \mathsf s'}$ as in \Cref{deforos},
henceforth called the smooth oscillator representation or simply the oscillator representation. As is well-known, this representation is unitarizable \cite{Weil}. Fix an invariant continuous Hermitian inner product $\inn{}{}$ on it, which is unique up to a positive scalar multiplication.
Denote by $\hat \omega_{\mathsf s, \mathsf s'}$ the completion of $\omega_{\mathsf s, \mathsf s'}$ with respect to this inner product, which is a unitary representation of  $(G_{\mathsf s}\times G_{\mathsf s'})\ltimes H_{\mathsf s, \mathsf s'}$.

 Let $\omega_{\mathsf s, \mathsf s'}^\vee$ denote the  contragredient of the  oscillator  representation $\omega_{\mathsf s, \mathsf s'}$. It is  the smooth Fr\'echet representation of $(G_{\mathsf s}\times G_{\mathsf s'})\ltimes H_{\mathsf s, \mathsf s'}$ of moderate growth specified by the following conditions:
 \begin{itemize}
 \item there is given a  $(G_{\mathsf s}\times G_{\mathsf s'})\ltimes H_{\mathsf s, \mathsf s'}$-invariant, non-degenerate, continuous bilinear form
 \[
   \la\,,\,\ra:  \omega_{\mathsf s, \mathsf s'}\times \omega_{\mathsf s, \mathsf s'}^\vee\rightarrow \C;
 \]
 \item  $\omega_{\mathsf s, \mathsf s'}^\vee$  is irreducible as a representation of $H_{\mathsf s, \mathsf s'}$.
 \end{itemize}
Since $\omega_{\mathsf s, \mathsf s'}$ is contained in the unitary representation $\hat \omega_{\mathsf s, \mathsf s'}$, $\omega_{\mathsf s, \mathsf s'}^\vee$ is identified with the complex conjugation of $\omega_{\mathsf s, \mathsf s'}$.

Put
\[
   {\sfss'}^-:=(\star', q', p'),
\]
and fix a linear isomorphism
\be\label{iota0}
 \iota_{\sfss'}:  V_{\sfss'}\rightarrow V_{\sfss'^-},
\ee
such that $(-\la\,,\,\ra_{\sfss'}, J_{\sfss'},-L_{\sfss'})$ corresponds to   $(\la\,,\,\ra_{\mathsf s'^-}, J_{\mathsf s'^-}, L_{\mathsf s'^-})$ under this isomorphism.
This induces an isomorphism
\be\label{isoggp}
  G_{\sfss'}\rightarrow G_{\sfss'^-}, \qquad g'\mapsto g'^-.
\ee
Then we have a group isomorphism
\be\label{identifyjg}
\begin{array}{rcl}
  (G_{\mathsf s}\times G_{\mathsf s'})\ltimes H_{\mathsf s, \mathsf s'}&\rightarrow& (G_{\mathsf s'^-}\times G_{\mathsf s})\ltimes H_{\mathsf s'^-, \mathsf s},\smallskip\\
   (g, g', (\phi, t))&\mapsto & (g'^-, g, (\phi^*  \circ \iota_{\sfss'}^{-1}, t)),
   \end{array}
\ee
where $\phi^*$ is defined as in \eqref{adjointmap}.
It is easy to see that the irreducible representation $\omega_{\sfss, \sfss'}$
corresponds to the irreducible representation $\omega_{\sfss'^-, \sfss}$ under this isomorphism.

 For every Casselman-Wallach representation $\pi'$ of $G_{\mathsf s'}$,
put
\[
   \check \Theta_{\mathsf s'}^{\mathsf s}(\pi'):=(\omega_{\mathsf s, \mathsf s'}\widehat \otimes \pi')_{G_{\mathsf s'}} \qquad (\textrm{the  Hausdorff coinvariant space}).
\]
This is a Casselman-Wallach representation of $G_{\mathsf s}$.

\subsection{Growth of Casselman-Wallach representations}

Write $\p_\mathsf s^{J_\mathsf s}$ for the centralizer of $J_\mathsf s$ in $\p_\mathsf s$, which is a real form of $\p_\mathsf s$. The Cartan decomposition asserts that
\[
  G_\mathsf s=K_\mathsf s\cdot \exp(\p_\mathsf s^{J_\mathsf s}).
\]
Denote by  $\Psi_\mathsf s$ the function of $G_\mathsf s$ satisfying the following conditions:
\begin{itemize}
\item it is both left and right $K_\mathsf s$-invariant;
\item for all $g\in \exp(\p_\mathsf s^{J_\mathsf s})$,
\[
  \Psi_\mathsf s(g)=\prod_{a} \left(\frac{1+a}{2}\right)^{-\frac{1}{2}},
\]
 where $a$ runs over all eigenvalues of $g: V_\mathsf s\rightarrow V_\mathsf s$, counted with multiplicities.
\end{itemize}
Note that all the  eigenvalues of $g \in \exp(\p_\mathsf s^{J_\mathsf s})$ are positive real numbers, and  $0<\Psi_\mathsf s(g)\leq 1$ for all $g\in G_\mathsf s$.

Set
\be\label{def:nus}
  \nu_\mathsf s:=\begin{cases}
    \abs{\mathsf s},\quad &\textrm{if }\star\in \{C, \widetilde C\};\\
     \abs{\mathsf s}-1,\quad & \textrm{if }\star= C^*;\\
      \abs{\mathsf s}-2,\quad & \textrm{if }\star\in \{B,D\};\\
       \abs{\mathsf s}-3,\quad & \textrm{if }\star= D^*.
  \end{cases}
\ee

Denote by $\Xi_\sfss$ the bi-$K_\sfss$-invariant Harish-Chandra's $\Xi$ function on $G_\sfss$.

\begin{lem}\label{boundpsi}
There exists a real number $C_\sfss>0$ such that
\[
  \Psi_\sfss^{\nu_\sfss}(g)\leq C_\sfss\cdot \Xi_\sfss (g)\quad\textrm{ for all $g\in G_\sfss$}.
\]

\end{lem}
\begin{proof}
This is implied by the well-known  estimate of Harish-Chandra's $\Xi$ function (\cite[Theorem 4.5.3]{Wa1}).
\end{proof}

\begin{lem}\label{int}
Let $f$ be a bi-$K_\sfss$-invariant positive function  on $G_\sfss$  such that
\[
  \p_\mathsf s^{J_\mathsf s}\rightarrow \R, \qquad x\mapsto f(\exp(x))
\]
is a polynomial function. Then for every real number $\nu>0$, the function $f\cdot \Psi_\mathsf s^\nu\cdot \Xi_\sfss^2$ is integrable with respect to a Haar measure on $G_\mathsf s$.
\end{lem}
\begin{proof}
  This follows from the integral formula for $G_\mathsf s$ under the Cartan decomposition (see
 \cite[Lemma 2.4.2]{Wa1}), as well as the  estimate of Harish-Chandra's $\Xi$ function (\cite[Theorem 4.5.3]{Wa1}).
\end{proof}

For every Casselman-Wallach representation $\pi$ of $G_\mathsf s$, write $\pi^\vee$ for its contragredient representation, which is a  Casselman-Wallach representation  of $G_\mathsf s$ equipped with a $G_{\mathsf s}$-invariant, non-degenerate, continuous bilinear map
 \[
   \la\,,\,\ra: \pi \times\pi^\vee\rightarrow \C.
 \]

\begin{defn}
Let $\nu\in \R$.
A  positive function $\Psi$ on $G_\mathsf s$ is $\nu$-bounded if there is
a  function $f$ as in \Cref{int}
such that
\[
  \Psi(g)\leq f(g)\cdot \Psi_{\sfss}^\nu(g)\cdot \Xi_\sfss(g)\qquad \textrm{for all }g\in G_\sfss.
\]
A Casselman-Wallach representation $\pi$ of $G_\mathsf s$ is said to be $\nu$-bounded if there
exist a $\nu$-bounded positive function $\Psi$ on $G_\sfss$, and continuous seminorms $\abs{\,\cdot\,}_{\pi}$ and $\abs{\,\cdot\,}_{\pi^\vee}$ on  $ \pi$ and $\pi^\vee$ respectively such that
\[
 \abs{ \la g \cdot u, v\ra}\leq \Psi(g)\cdot \abs{u}_{\pi}\cdot \abs{v}_{\pi^\vee}
\]
for all $u\in \pi$, $v\in \pi^\vee$, and $g\in G_{\mathsf s}$.

\end{defn}

Let $X_\sfss$ be a maximal $J_\sfss$-stable totally isotropic subspace of $V_\sfss$, which is unique  up to the action of $K_\sfss$. Put $Y_\sfss:=L_\sfss(X_\sfss)$. Then $X_\sfss\cap Y_\sfss=\{0\}$.
Write
\[
P_\sfss=R_\sfss\ltimes N_\sfss
\]
 for the parabolic subgroup of $G_\sfss$ stabilizing $X_\sfss$, where $R_\sfss$ is the Levi subgroup stabilizing both $X_\sfss$ and $Y_\sfss$, and $N_\sfss$ is the unipotent radical. For every character $\chi: R_\sfss\rightarrow \C^\times$, view it as a character of $P_\sfss$ that is trivial on $N_\sfss$. Write
\[
  I(\chi):=I_\sfss(\chi):=\Ind_{P_\sfss}^{G_\sfss}\chi \qquad(\textrm{normalized smooth induction}),
\]
which is a Casselman-Wallach representation of $G_\sfss$ under the right translations. Note that the representations   $I(\chi)$ and $ I(\chi^{-1})$ are contragredients of each other with the $G_\sfss$-invariant pairing
 \[
  \la\,,\,\ra:  I(\chi)\times I(\chi^{-1})\rightarrow \C, \quad (f, f')\mapsto \int_{K_\sfss} f(g) \cdot f'(g) \od\!g.
 \]
 Unless otherwise specified, all the measures on Lie groups occurring  in this article are  right invariant Haar measures.

Let $\nu_\chi$ be a real number such that $\abs{\chi}$ equals the composition of
\be\label{nuchi}
   R_\sfss\xrightarrow{\textrm{the natural homomorphism} }\GL(X_\sfss)\xrightarrow{\abs{\det}^{\nu_\chi}} \C^\times.
\ee

\begin{defn}
A classical signature $\sfss$ is split if $V_\sfss=X_\sfss\oplus Y_\sfss$.
\end{defn}

When $\sfss$ is split, $P_\sfss$ is called a Siegel parabolic subgroup of $G_\sfss$.

\begin{lem}\label{growthdp}
Suppose that $\sfss$ is split and let $\chi: P_\sfss\rightarrow \C^\times$ be a character. Then there is a positive function $\Psi$ on $G_\sfss$ with the following properties:
\begin{itemize}
\item
 $\Psi$ is $(1-2\abs{\nu_\chi}-\frac{\abs{\sfss}}{2})$-bounded if $\star\in \{B,C,D, \widetilde C\}$, and  $(2-2\abs{\nu_\chi}-\frac{\abs{\sfss}}{2})$-bounded if $\star\in \{C^*,D^*\}$;
 \item
for all $f\in I(\chi)$, $f'\in I(\chi^{-1})$ and $g\in G_\sfss$,
\be\label{psi123}
\abs{ \la g.f, f'\ra}\leq \Psi(g) \cdot \abs{f|_{K_\sfss}}_\infty \cdot \abs{f'|_{K_\sfss}}_\infty\qquad(\textrm{$\abs{\,\,}_\infty$ stands for the suppernorm}).
\ee
\end{itemize}
\end{lem}
\begin{proof}

 Let $f_0$ denote the element in $I(\abs{\chi})$ such that $(f_0)|_{K_\sfss}=1$, and likewise let $f'_0$ denote the element in $I(\abs{\chi^{-1}})$ such that $(f'_0)|_{K_\sfss}=1$. Put
 \[
   \Psi(g):=\la g \cdot f_0, f_0'\ra, \qquad g\in G_\sfss.
 \]
 Then it is easy to see that \eqref{psi123} holds. Note that $\Psi$ is an elementary spherical function, and the lemma then follows by using the well-known estimate of the elementary spherical functions (\cite[Lemma 3.6.7]{Wa1}).
\end{proof}

\subsection{Matrix coefficient integrals against the oscillator representation}

We begin with the following lemma.

\begin{lem}\label{matrico}
 There exist continuous seminorms $\abs{\,\cdot\,}_{\mathsf s, \mathsf s'}$ and $\abs{\,\cdot\,}_{\mathsf s, \mathsf s'}^\vee$ on  $ \omega_{\mathsf s, \mathsf s'}$ and $\omega_{\mathsf s, \mathsf s'}^\vee$ respectively such that
\[
 \abs{ \la (g,g')\cdot u, v\ra}\leq \Psi_{\mathsf s}^{\abs{\mathsf s'}}(g)\cdot \Psi_{\mathsf s'}^{\abs{\mathsf s}}(g')\cdot \abs{u}_{\mathsf s, \mathsf s'}\cdot \abs{v}_{\mathsf s, \mathsf s'}^\vee
\]
for all $u\in \omega_{\mathsf s, \mathsf s'}$, $v\in \omega_{\mathsf s, \mathsf s'}^\vee$, and $(g,g')\in G_{\mathsf s}\times G_{\mathsf s'}$.
\end{lem}
\begin{proof}
  This follows from the  proof of \cite[Theorem 3.2]{Li89}.
\end{proof}

 \begin{defn}\label{defn:CRcov}
A Casselman-Wallach representation of $G_{\mathsf s'}$ is convergent for $\check \Theta_{\mathsf s'}^{\mathsf s}$
if it is $\nu$-bounded for some $\nu>\nu_{\mathsf s'}-\abs{\mathsf s}$.
\end{defn}

\begin{Example}
Suppose that $\star'\neq \widetilde C$. Then the trivial representation of $G_{\mathsf s'}$ is convergent for $\check \Theta_{\mathsf s'}^{\mathsf s}$ if $\abs{\sfss}>2\nu_{\sfss'}$.

\end{Example}
Let $\pi'$ be a  Casselman-Wallach representation of $G_{\mathsf s'}$ that is convergent for $\check \Theta_{\mathsf s'}^{\mathsf s}$.
Consider the integrals
\be\label{convint00}
\begin{array}{rcl}
 (\pi' \times \omega_{\mathsf s, \mathsf s'})\times (\pi'^\vee \times \omega_{\mathsf s, \mathsf s'}^\vee )&\rightarrow &\C, \smallskip \\
   ((u,v),(u',v')) &\mapsto &\int_{G_{\mathsf s'}} \la g\cdot u, u'\ra\cdot \la g\cdot v, v'\ra \od\! g.
   \end{array}
 \ee

\begin{lem}\label{lemconv}
The integrals in \eqref{convint00} are absolutely convergent and the map \eqref{convint00} is   continuous and multi-linear.
\end{lem}
\begin{proof}
This is a direct consequence of Lemmas \ref{boundpsi}, \ref{int} and \ref{matrico}.
\end{proof}

By Lemma \ref{lemconv}, the integrals in \eqref{convint00} yield a continuous bilinear form
\be\label{convint01}
 (\pi' \widehat \otimes \omega_{\mathsf s, \mathsf s'})\times (\pi'^\vee \widehat \otimes \omega_{\mathsf s, \mathsf s'}^\vee )\rightarrow \C.
 \ee
Put
\begin{equation}\label{thetab0}
  \Thetab_{\mathbf s'}^{\mathbf s}(\pi'):=\frac{\pi' \widehat \otimes \omega_{\mathsf s, \mathsf s'}}{\textrm{the left kernel of \eqref{convint01}}}.
\end{equation}

\begin{prop}\label{boundm}
The representation $\Thetab_{\mathbf s'}^{\mathbf s}(\pi')$ of $G_{\mathsf s}$ is a quotient of  $\check \Theta_{\mathbf s'}^{\mathbf s}(\pi')$, and is  $(\abs{\mathsf s'}-\nu_\sfss)$-bounded.
\end{prop}
\begin{proof}
Note that the bilinear form \eqref{convint01} is  $(G_{\mathsf s'}\times G_{\mathsf s'})$-invariant, as well as $G_{\mathsf s}$-invariant. Thus $\Thetab_{\mathbf s'}^{\mathbf s}(\pi')$ is a quotient of  $\check \Theta_{\mathbf s'}^{\mathbf s}(\pi')$, and is therefore a Casselman-Wallach representation. Its contragedient representation
is identified with
\[
(\Thetab_{\mathbf s'}^{\mathbf s}(\pi'))^\vee:=\frac{\pi'^\vee \widehat \otimes \omega^\vee_{\mathsf s, \mathsf s'}}{\textrm{the right kernel of \eqref{convint01}}}.
\]
 Lemmas \ref{int} and \ref{matrico} implies that there are continuous seminorms $\abs{\,\cdot\,}_{\pi', \mathsf s, \mathsf s'}$ and $\abs{\,\cdot\,}_{\pi'^\vee, \mathsf s, \mathsf s'}$ on  $\pi' \widehat \otimes \omega_{\mathsf s, \mathsf s'}$ and $\pi'^\vee \widehat \otimes \omega^\vee_{\mathsf s, \mathsf s'}$ respectively such that
\[
 \abs{ \la g\cdot u, v\ra}\leq \Psi_{\mathsf s}^{\abs{\mathsf s'}}(g)\cdot \abs{u}_{\pi', \mathsf s, \mathsf s'}\cdot \abs{v}_{\pi'^\vee, \mathsf s, \mathsf s'}
\]
for all $u\in \pi' \widehat \otimes \omega_{\mathsf s, \mathsf s'}$, $v\in \pi'^\vee \widehat \otimes \omega^\vee_{\mathsf s, \mathsf s'}$, and $g\in G_{\mathsf s}$.
The proposition  then easily follows in view of  Lemma \ref{boundpsi}.
 \end{proof}

\subsection{Unitarity}\label{unitarity}

For the notion of weakly containment of unitary representations, see \cite{CHH} for example.

\begin{lem}\label{weaklycont}
Suppose that $\abs{\mathsf s}\geq  \nu_{\mathsf s'}$. Then as a unitary representation of $G_{\mathsf s'}$, $\hat \omega_{\mathsf s, \mathsf s'}$ is weakly  contained
in the regular representation.
\end{lem}
\begin{proof}
This has been known to experts (see   \cite[Theorem 3.2]{Li89}).  Lemmas \ref{boundpsi}, \ref{int} and \ref{matrico} imply that for a dense subspace of $\hat \omega_{\mathsf s, \mathsf s'}|_{G_{\mathsf s'}}$, the diagonal  matrix coefficients are almost square integrable.
 Thus the lemma follows form \cite[Theorem 1]{CHH}.
\end{proof}

However, if $\star'\in\{B, C^*, D^*\}$, $\abs{\mathsf s}\neq\nu_{\mathsf s'}$ due to  parity. For this reason, we introduce
\[
   \nu_{\mathsf s'}^\circ:=\begin{cases}
    \nu_{\mathsf s'},\quad &\textrm{if }\star'\in \{C, D, \widetilde C\};\\
     \nu_{\mathsf s'}+1,\quad &\textrm{if }\star'\in \{B, C^*, D^*\}.\\
  \end{cases}
\]

The following definition is a slight variation of \Cref{defn:CRcov}.
 \begin{defn}\label{defn:CR33}
A Casselman-Wallach representation  of $G_{\mathsf s'}$ is overconvergent  for $\check \Theta_{\mathsf s'}^{\mathsf s}$ if
 it is  $\nu$-bounded  for some $\nu>\nu_{\mathsf s'}^\circ -\abs{\mathsf s}$.
\end{defn}

We will prove the  following unitarity result in  the rest of this subsection.
\begin{thm}\label{positivity000}
Assume that $\abs{\mathsf s}\geq \nu^\circ_{\mathsf s'}$. Let $\pi'$ be a Casselman-Wallach representation of $G_{\mathsf s'}$ that is overconvergent  for $\check \Theta_{\mathsf s'}^{\mathsf s}$. If $\pi'$ is unitarizable, then so is $\Thetab_{\mathbf s'}^{\mathbf s}(\pi')$.
\end{thm}

\begin{remark} Another general approach to unitarity in the context of theta lifting is due to He, using the notion of strongly semistable range (\cite[Chapter 5]{He} or \cite{He2}).
\end{remark}

Recall the following positivity result of matrix coefficient integrals, which is a special case of \cite[Theorem A. 5]{HLS}.

\begin{lem}\label{positivity}
Let $H$ be a real reductive group with a maximal compact subgroup $K_H$. Let $\pi_1$ and $\pi_2$ be two unitary representations of $H$ such that $\pi_2$ is weakly
contained in the regular representation. Let $u_1, u_2, \cdots, u_r$ ($r\in \bN$) be vectors in $\pi_1$ such that for all $i,j=1,2, \cdots, r$,
the integral
\[
  \int_H \la h\cdot u_i, u_j\ra\,\Xi_H (h) \od\!h %\quad (\textrm{$\od\! g$  is a Haar measure  on $G$})
\]
is absolutely convergent, where  $\Xi_H$ is the bi-$K_H$-invariant Harish-Chandra's $\Xi$ function on $H$, and $\od\!h$ is a Haar measure on $H$.   Let $v_1,v_2,\cdots, v_r$ be  $K_H$-finite vectors in $\pi_2$.
Put
\[
u:=\sum_{i=1}^r u_i\otimes v_i\in \pi_1\otimes \pi_2.
\]
Then the integral
\[
\int_H \la h \cdot u,u \rangle\,\od\! h
\]
absolutely converges to a nonnegative real number.
\end{lem}

Now we come to the proof of Theorem \ref{positivity000}.

\begin{proof}[Proof of Theorem \ref{positivity000}]
Fix an invariant continuous Hermitian inner product on $\pi'$, and  write $\hat \pi'$ for the completion of $\pi'$ with respect to this Hermitian inner product.
The space $\pi' \widehat \otimes \omega_{\mathsf s, \mathsf s'}$ is equipped with the  inner product $\la\,,\,\ra$  that is the tensor product of the ones on $\pi'$ and $\omega_{\mathsf s, \mathsf s'} $. It suffices to show that
\[
  \int_{G_{\mathsf s'}}\la g \cdot u,u\ra\od\! g\geq 0
\]
for all $u$ in a dense subspace of $\pi' \widehat \otimes \omega_{\mathsf s, \mathsf s'}$.

If $\nu^\circ_{\mathsf s'}<0$, then $\star\in \{D, D^*\}$ and $\abs{\mathsf s'}=0$. The theorem is trivial in this case.  Thus we assume that $\nu^\circ_{\mathsf s'}\geq 0$.
We also assume that $\star=B$. The proof in the other cases is similar and is omitted.

Note that $\nu^\circ_{\mathsf s'}=\nu_{\mathsf s'}=\abs{\mathsf s'}$ is even.
 Let $\mathsf s_1=(B, p_1, q_1)$ and  $\mathsf s_2=(D, p_2, q_2)$ be two classical signatures such that
\[
  (p_1, q_1)+(p_2, q_2)=(p,q)\quad \textrm{and}\quad \abs{\mathsf s_2}=\nu^\circ_{\mathsf s'}.
\]
View $\hat \omega_{\mathsf s_2, \mathsf s'}$ as a unitary representation of the symplectic group $G_{(C, p',q')}$.
Define $\pi_2$ to be its  pull-back through the covering homomorphism $G_{\mathsf s'}\rightarrow G_{(C, p',q')}$. By Lemma \ref{weaklycont}, the representation $\pi_2$ of $G_{\mathsf s'}$ is weakly contained in the regular representation.

Put
\[
  \pi_1:=\hat \pi'\widehat \otimes_{\mathrm h} (\hat \omega_{\mathsf s_1, \mathsf s'}|_{G_{\mathsf s'}})\qquad (\textrm{$\widehat \otimes_{\mathrm h}$ indicates the Hilbert space tensor product}).
\]
 Lemmas  \ref{int} and \ref{matrico} imply that the integral
\[
 \int_{G_{\mathsf s'}} \la g\cdot u,v\ra \cdot \Xi_{G_{\mathsf s'}}(g)\od\!g
\]
is absolutely convergent for all $u,v\in \pi' \otimes \omega_{\mathsf s_1, \mathsf s'}$. The theorem then follows by Lemma \ref{positivity}.
\end{proof}

\section{Double theta lifts and degenerate principal series}\label{sec:dtheta}

\def\GLE{\GL(\bfE)^{J_{\bfU}}}
\def\GLEz{\GL_{\bfE_0}}
\def\GLE{{\GL_{\bfE}}}
\def\wtGLE{\widetilde{\GLE}}
\def\wtGLEz{\widetilde{\GLEz}}
\def\wtPE{\widetilde{P_\bfE}}
\def\JU{{J_{\bfU}}}
\def\LU{{L_{\bfU}}}
\def\wtGU{\widetilde{G}_\bfU}
\def\dsfss{{\dot{\mathsf s}}}

The purpose of this section is to prove \Cref{doubtt}, which relates double theta lifts with degenerate principal series representations. The underlying idea (due to He; see \cite[Chapter 4]{He}) is that one can extract information on a theta lift from the knowledge of double theta lifts, and thus it should be viewed as a variant of the doubling method \cite[Section II]{Ra}.  \Cref{doubtt} will be used in \Cref{sec:equac} to determine the associated cycle of a special unipotent representation.

Let
\[
\dot{ \mathsf s}=(\dot \star, \dot p,\dot q),\qquad  \mathsf s'=(\star', p',q'),\qquad \textrm{and}\qquad \sfss''=(\star'', p'', q'')
\]
be classical signatures such that
\begin{itemize}
  \item $ (q', p')+(p'', q'')=(\dot p, \dot q)$;\smallskip
  \item if $\dot \star\in \{B,D\}$,  then $\star', \star''\in \{B,D\}$; \smallskip
  \item if $\dot \star\notin \{B,D\}$,  then $\star'=\star''=\dot \star$.
\end{itemize}

As in Section \ref{secoscil}, put $\sfss'^-:=(\star', q', p')$. We view $V_{\sfss'^-}$ and $V_{\sfss''}$ as subspaces of $V_\dsfss$ such that $(\la\,,\,\ra_{\dsfss}, J_{\dsfss},L_{\dsfss})$  extends both $(\la\,,\,\ra_{\sfss'^-}, J_{\sfss'^-}, L_{\sfss'^-})$ and
$(\la\,,\,\ra_{\mathsf s''}, J_{\mathsf s''}, L_{\mathsf s''})$, and $V_{\sfss'^-}$ and $V_{\sfss''}$ are perpendicular to each other under the form $\la\,,\,\ra_{\dsfss}$. Then we have an orthogonal decomposition
\[
  V_\dsfss=V_{\sfss'^-}\oplus V_{\sfss''},
\]
and both $G_{\sfss'^-}$ and $G_{\sfss''}$ are identified with subgroups of $G_{\dsfss}$.

 Fix a linear isomorphism
\be\label{iota1}
 \iota_{\sfss'}:  V_{\sfss'}\rightarrow V_{\sfss'^-}%\quad v\mapsto v^-
\ee
as in \eqref{iota0}, which  induces an isomorphism
\be\label{isosfss1}
 G_{\sfss'}\rightarrow G_{\sfss'^-}, \quad g\mapsto g^-.
\ee
Let $\pi'$ be a Casselman-Wallach representation of $G_{\sfss'}$. Write $\pi'^-$ for the representation of $G_{\sfss'^-}$ that corresponds to $\pi'$ under the isomorphism \eqref{isosfss1}.

\subsection{Matrix coefficient integrals and double theta lifts}

We begin with the following lemma.

\begin{lem}\label{boundxx}
The function $(\Xi_\dsfss)|_{G_{\sfss'}}$ on $G_{\sfss'}$  is   $\abs{\sfss''}$-bounded.
\end{lem}
\begin{proof}
This follows from the  estimate of Harish-Chandra's $\Xi$ function (\cite[Theorem 4.5.3]{Wa1}).
\end{proof}

 \begin{defn}\label{defn:CR}
The Casselman-Wallach representation  $\pi'^-$ of $G_{\mathsf s'^-}$ is convergent  for a Casselman-Wallach representation  $\dot \pi$ of $G_{\dsfss}$ if there are real numbers $\nu'$ and $\dot \nu$ such that $\pi'$ is $\nu'$-bounded, $\dot \pi$ is $\dot \nu$-bounded, and
\[
  \nu'+\dot \nu>-\abs{\sfss''}.
\]
\end{defn}

Let $\dot \pi$  be a Casselman-Wallach representation of $G_{\dsfss}$ such that $\pi'^-$  is convergent  for  $\dot \pi$.
Consider the integrals
\be\label{convint0004}
\begin{array}{rcl}
 (\pi'^- \times \dot \pi)\times ((\pi'^-)^\vee \times \dot \pi^\vee )&\rightarrow &\C, \smallskip \\
   ((u,v),(u',v')) &\mapsto &\int_{G_{\mathsf s'}} \la g^-\cdot u, u'\ra\cdot \la g^-\cdot v, v'\ra \od\! g.
   \end{array}
 \ee

\begin{lem}\label{intpi0004}
The integrals in \eqref{convint0004} are absolutely convergent and the map \eqref{convint0004} is   continuous and multi-linear.
\end{lem}
\begin{proof}
Note that
$
  (\Psi_{\dsfss})|_{G_{\sfss'}}=\Psi_{\sfss'}.
$
Thus the lemma follows from Lemmas \ref{int} and \ref{boundxx}.
\end{proof}

By Lemma \ref{intpi0004}, the integrals in \eqref{convint0004} yield a continuous bilinear form
\be\label{convint0011}
 \la\,,\,\ra: (\pi'^- \widehat \otimes{ \dot \pi})\times ((\pi'^-)^\vee \widehat \otimes {\dot \pi}^\vee )\rightarrow \C.
 \ee
Put
\begin{equation}\label{theta.gen}
 \pi'^-*\dot \pi:=\frac{\pi'^-\widehat \otimes \dot \pi}{\textrm{the left kernel of \eqref{convint0011}}}.
\end{equation}
This is a smooth Fr\'echet representation of $G_{\sfss''}$ of moderate growth.

For every  classical signature $\sfss=(\star, p,q)$, put
\be\label{bsfss}
  [\sfss]:=\begin{cases}
    (C, p,q), \quad & \textrm{if $\star=\widetilde C$};\\
    \sfss, \quad & \textrm{if $\star\neq \widetilde C$}.
  \end{cases}
\ee

\begin{prop}\label{doublelift}
Suppose that $\sfss=(\star, p,q)$ is a classical signature such that both  $\star'$ and $\star''$ equals the Howe dual of $\star$,  and $2\nu_\sfss<\abs{\dsfss}$.
Assume that $\pi'$  is $\nu'$-bounded for some $\nu'>\nu_{\sfss'}-\abs{\sfss}$.
Then
\begin{itemize}
\item $\pi'$ is convergent for $\check \Theta^{\sfss}_{\sfss'}$;
 \item $\Thetab^{\sfss}_{\sfss'}(\pi')$  is convergent for $\check \Theta^{\sfss''}_{\sfss}$;
 \item
  the trivial representation
$1$ of $G_{[\sfss]}$ is convergent for $\check \Theta^{\dsfss}_{\bar \sfss}$;
\item
  $\pi'^-$ is convergent for  $\Thetab_{[\sfss]}^\dsfss(1)$;
 \item as representations of $G_{\sfss''}$,
\begin{equation}
\label{thetabv00}
  \Thetab^{\sfss''}_{\sfss}(\Thetab^{\sfss}_{\sfss'}(\pi'))\cong \pi'^-* \Thetab_{[\sfss]}^\dsfss(1).
\end{equation}
\end{itemize}
\end{prop}
\begin{proof}
The first four claims in the proposition are obvious. We only need to prove the last one.
Note that the integrals in
\begin{equation}\label{intt00}
\begin{array}{rcl}
   (\pi'\totimes \omega_{\sfss,\sfss'}\totimes\omega_{\sfss'', \sfss })\times
    ((\pi')^\vee \totimes\omega_{\sfss,\sfss'}^\vee\totimes \omega_{\sfss'', \sfss }^\vee)&\rightarrow &\C,\\
    (u,v)&\mapsto &\int_{G_{\sfss'}\times G_\sfss} \inn{(g',g)\cdot u}{v}\od\! g'\, \od\! g
    \end{array}\end{equation}
are absolutely convergent and defines a continuous bilinear map. Also note that
\[
  \omega_{\sfss,\sfss'}\totimes\omega_{\sfss_2, \sfss }\cong \omega_{\dsfss,\sfss}.
\]
In view of Fubini's theorem, the lemma follows as both sides of \eqref{thetabv00} are isomorphic to the quotient of $\pi'\totimes \omega_{\sfss,\sfss'}\totimes\omega_{\sfss'', \sfss }$ by the left kernel of the pairing \eqref{intt00}.
\end{proof}

\subsection{Matrix coefficient integrals against degenerate principal series}
\label{sec:DP}

We are particularly interested in the case when $\dot \pi$ is a degenerate principal series representation.
Suppose that $\dsfss$ is split,  and
\[
  p'\leq p''\qquad\textrm{and}\qquad q'\leq q''.
\]
Then $\star'=\star''$, and there is a  split classical signature $\sfss_0=(\star_0, p_0, q_0)$ such that
\begin{itemize}
\item $\star_0=\dot \star$;
\item $(p', q')+(p_0, q_0)=(p'', q'')$.
\end{itemize}
We view $V_{\sfss'}$ and $V_{\sfss_0}$ as subspaces of $V_{\sfss''}$
such that $(\inn{}{}_{\sfss''}, J_{\sfss''}, L_{\sfss''})$
extends  both $(\inn{}{}_{\mathsf s'}, J_{\mathsf s'}, L_{\mathsf s'})$
and $(\inn{}{}_{\sfss_0}, J_{\sfss_0},L_{\sfss_0})$,
and  $V_{\sfss'}$  and $V_{\sfss_0}$ are perpendicular
to each other under the form $\inn{}{}_{\sfss''}$.

Put
\[
   V_{\sfss'}^\triangle:=\{\iota_{\sfss'}(v)+v\in V_\dsfss \,:\, v\in V_{\sfss'}\}\quad\textrm{and}\quad V_{\sfss'}^\nabla:=\{\iota_{\sfss'}(v)-v\in V_\dsfss \,:\, v\in V_{\sfss'}\}.
 \]
As before, we have that
\[
  V_{\sfss_0}=X_{\sfss_0}\oplus Y_{\sfss_0},
\]
where $X_{\sfss_0}$ is a maximal $J_{\sfss_0}$-stable totally isotropic subspace of $V_{\sfss_0}$, and $Y_{\sfss_0}:=L_{\sfss_0}(X_{\sfss_0})$.
Suppose that
 \[
 X_{\dsfss}= V_{\sfss'}^\triangle \oplus X_{\sfss_0}\quad\textrm{and}\quad Y_{\dsfss}=V_{\sfss'}^\nabla \oplus Y_{\sfss_0}.
 \]
In summary, we have decompositions
\be\label{decomv0}
  V_{\dsfss}=V_{\sfss'^-}\oplus V_{\sfss''}= V_{\sfss'^-}\oplus V_{\sfss'}\oplus  V_{\sfss_0}= (V_{\sfss'}^\triangle \oplus X_{\sfss_0})\oplus (V_{\sfss'}^\nabla \oplus Y_{\sfss_0}).
\ee

As before, $X_\dsfss$ and $X_{\sfss_0}$ yield
 the Siegel parabolic subgroups
 \[
  P_{\dsfss}=R_{\dsfss}\ltimes N_\dsfss\subset G_\dsfss\qquad \textrm{and}\qquad P_{\sfss_0}=R_{\sfss_0}\ltimes N_{\sfss_0}\subset G_{\sfss_0}.
 \]
 Write
\be\label{parabolic}
  P_{\sfss'',\sfss_0}=R_{\sfss'',\sfss_0}\ltimes N_{\sfss'',\sfss_0}
\ee
for the parabolic subgroup of $G_{\sfss''}$ stabilizing $X_{\sfss_0}$, where $R_{\sfss'',\sfss_0}$ is the Levi subgroup stabilizing both $X_{\sfss_0}$ and $Y_{\sfss_0}$, and $N_{\sfss'',\sfss_0}$ is the unipotent radical. We have an obvious homomorphism
\be  \label{parabolic2}
  G_{\sfss'}\times R_{\sfss_0}\rightarrow R_{\sfss'',\sfss_0},
\ee
which is a two fold covering map when $\dot \star=\widetilde C$, and an isomorphism in the other cases.
For every $g\in G_{\sfss'}$, write $g^\triangle:= g^- g$, which is an element of $R_{\dsfss}$.

Let $\dot \chi: R_{\dsfss}\rightarrow \C^\times $ be a character. It yields a degenerate principal series representation
\[
I(\dot \chi):=I_{\dsfss}(\dot \chi):=\Ind_{P_\dsfss}^{G_\dsfss} \dot \chi
\]
 of $G_\dsfss$.
Write
\be\label{chi0}
  \chi_0:=\dot \chi|_{R_{\sfss_0}},
\ee
and define a character
\be\label{chip}
 \chi':  G_{\sfss'}\rightarrow \C^\times, \quad g\mapsto \dot \chi(g^\triangle) \qquad(\textrm{this is a quadratic character}).
\ee

Recall the representations  $\pi'$  of $G_{\sfss'}$ and
$\pi'^-$ of $G_{\sfss'^-}$. If $\dot \star =\widetilde C$, we assume that both $\pi'$ and $\dot \chi$ are genuine.  Then $(\pi'\otimes \chi') \otimes \chi_0$ descends to a Casselman-Wallach representation of $R_{\sfss'',\sfss_0}$. View it as a representation of $P_{\sfss'',\sfss_0}$ via the trivial action of $N_{\sfss'',\sfss_0}$, and form the representation  $\Ind_{P_{\sfss'',\sfss_0}}^{G_{\sfss''}} ((\pi_1\otimes \chi') \otimes \chi_0)$.

Let $\nu_{\dot \chi}\in \R$ be  as in \eqref{nuchi}. The rest of this subsection is devoted to a proof of the following theorem.

\begin{thm}\label{midp}
Assume that the Casselman-Wallach representation $\pi'$ of $G_{\sfss'}$ is $\nu'$-bounded for some
\[
 \nu'>\begin{cases}
  2\abs{\nu_{\dot \chi}}- \frac{\abs{\sfss_0}}{2}-1,\quad &\textrm{if $\dot \star\in \{B,C,D, \widetilde C\}$};\\
   2\abs{\nu_{\dot \chi}}- \frac{\abs{\sfss_0}}{2}-2,\quad & \textrm{if $\dot \star\in \{C^*,D^*\}$}.
 \end{cases}
\]
 Then $\pi'^-$ is convergent for $I_{\dsfss}(\dot \chi)$, and
\[
 \pi'^-*I(\dot \chi)\cong \Ind_{P_{\sfss'',\sfss_0}}^{G_{\sfss''}} ((\pi'\otimes \chi')\otimes \chi_0).
\]
\end{thm}

Let the notation and assumptions be as in Theorem \ref{midp}.
Recall that the representations   $I(\dot \chi)$ and $ I(\dot \chi^{-1})$ are contragredients of each other with the $G_\dsfss$-invariant pairing
 \[
  \la\,,\,\ra:  I(\dot \chi)\times I(\dot \chi^{-1})\rightarrow \C, \quad (f, f')\mapsto \int_{K_\dsfss} f(g) \cdot f'(g) \od\!g,
 \]

The first assertion of Theorem \ref{midp} is a direct consequence of Lemma \ref{growthdp}. As in \eqref{convint0011},  we have a continuous bilinear form
\be\label{convint00112}
\begin{array}{rcl}
(\pi'^- \widehat \otimes{ I(\dot \chi)})\times ((\pi'^-)^\vee \widehat \otimes I(\dot \chi^{-1}) )&\rightarrow &\C,\\
 (u,v)&\mapsto& \int_{G_{\sfss'}} \la g^-\cdot u,v\ra \od\! g
 \end{array}
 \ee
so that
\begin{equation*}%\label{thetab0}
 \pi'^-*I(\dot \chi):=\frac{\pi'^-\widehat \otimes I(\dot \chi)}{\textrm{the left kernel of \eqref{convint00112}}}.
\end{equation*}

Note that
\[
\G_\dsfss^\circ:= P_{\dsfss}\cdot G_{\sfss''}
\] is open and dense in $G_\dsfss$, and its complement has measure zero in $G_\dsfss$. Moreover,
\be\label{opencell}
P_{\dsfss}\backslash \G_\dsfss^\circ= (R_{\sfss_0}\ltimes N_{\sfss'', \sfss_0})\backslash G_{\sfss''}.
\ee
Form the normalized Schwartz induction
\[
   I^\circ(\dot \chi):=\ind_{R_{\sfss_0}\ltimes N_{\sfss'', \sfss_0}}^{G_{\sfss''}} \chi_0.
\]
The reader is referred to \cite[Section 6.2]{CS21} for the general notion of Schwartz inductions (in a slightly different unnormalized setting). Similarly, put
\[
   I^\circ(\dot \chi^{-1}):=\ind_{R_{\sfss_0}\ltimes N_{\sfss'', \sfss_0}}^{G_{\sfss''}}\chi_0^{-1}.
\]
Note that
\be\label{modulus}
\textrm{the modulus character of $P_\dsfss$ restricts to the modulus character of  $R_{\sfss_0}\ltimes N_{\sfss'', \sfss_0}$.}
\ee
In view of \eqref{opencell} and \eqref{modulus}, by extension by zero, $I^\circ(\dot \chi)$ is viewed as a closed subspace of $ I(\dot \chi)$, and $ I^\circ(\dot \chi^{-1})$  is viewed as a closed subspace of $ I(\dot \chi^{-1})$. These two closed subspaces are  $(G_{\sfss'}\times G_{\sfss''})$-stable.

Similar to \eqref{convint00112}, we have a continuous bilinear form
\be\label{convint001123}
\begin{array}{rcl}
  (\pi'^- \widehat \otimes{ I^\circ (\dot \chi)})\times ((\pi'^-)^\vee \widehat \otimes I^\circ (\dot \chi^{-1}) )&\rightarrow &\C,\\
 (u,v)&\mapsto& \int_{G_{\sfss'}} \la g^-\cdot u,v\ra \od\! g.
 \end{array}
 \ee
Put
\begin{equation*}
 \pi'^-*I^\circ (\dot \chi):=\frac{\pi'^- \widehat \otimes I^\circ (\dot \chi)}{\textrm{the left kernel of \eqref{convint001123}}},
\end{equation*}
which is still a smooth Fr\'echet representation of $G_{\sfss''}$ of moderate growth.

\begin{lem}\label{isoipi}
 As representations of $G_{\sfss''}$,
 \[
   \pi'^-*I^\circ (\dot \chi)\cong \Ind_{P_{\sfss'',\sfss_0}}^{G_{\sfss''}} ((\pi'\otimes  \chi')\otimes \chi_0).
   \]
\end{lem}
\begin{proof}
As Fr\'echet spaces, $\pi'^-$ is obviously identified with $\pi'$.
Note that the natural map $G_{\sfss'}\rightarrow (R_{\sfss_0}\ltimes N_{\sfss'', \sfss_0})\backslash G_{\sfss''}$ is proper and hence the following integrals are absolutely convergent and yield a $G_{\sfss''}$-equivariant continuous linear map:
\[
\begin{array}{rcl}
\xi:   \pi'^-\widehat \otimes I^\circ (\dot \chi)&\rightarrow & \Ind_{P_{\sfss'',\sfss_0}}^{G_{\sfss''}} ((\pi'\otimes \chi')\otimes \chi_0),\\
   v\otimes f &\mapsto  & \left(h\mapsto\int_{G_{\sfss'}}\chi'(g)\cdot  f(g^{-1}h)(g\cdot v)\od\! g\right).
   \end{array}
\]
Moreover, this map is surjective (\cf \cite[Section 6.2]{CS21}). It is thus open by the open mapping theorem. Similarly, we have an open surjective
$G_{\sfss''}$-equivariant  continuous linear map
\[
\begin{array}{rcl}
\xi':   (\pi'^-)^\vee\widehat \otimes I^\circ (\dot \chi^{-1})&\rightarrow & \Ind_{P_{\sfss'',\sfss_0}}^{G_{\sfss''}} ((\pi'^\vee\otimes \chi')\otimes \chi_0^{-1}),\\
   v'\otimes f' &\mapsto  & \left(h\mapsto\int_{G_{\sfss'}} \chi'(g)\cdot  f'(g^{-1}h)(g\cdot v')\od\! g\right).
   \end{array}
\]

For all $v\otimes f \in \pi'^-\widehat \otimes I^\circ (\dot \chi)$ and $v'\otimes f' \in (\pi'^-)^\vee \widehat \otimes I^\circ (\dot \chi^{-1})$, we have that
\begin{eqnarray*}
 && \int_{G_{\sfss'}}\la g^-\cdot (v\otimes f), v'\otimes f'\ra \od\! g\\
 &=& \int_{G_{\sfss'}}\la g^-\cdot v, v'\ra \cdot \la g^-\cdot f, f'\ra \od\! g\\
 &=& \int_{G_{\sfss'}}\la g^-\cdot v, v'\ra \cdot \int_{K_{\sfss''}} \int_{G_{\sfss'}}   (g^-\cdot f)(hx) \cdot  f'(hx)  \od\! h \od \! x \od \! g\\
 &=& \int_{G_{\sfss'}}\la g^-\cdot v, v'\ra \cdot \int_{K_{\sfss''}} \int_{G_{\sfss'}}  \chi'(g) \cdot f(g^{-1}h x) \cdot  f'(hx)  \od\! h \od \! x \od \! g\\
  &=& \int_{K_{\sfss''}}  \int_{G_{\sfss'}}  \int_{G_{\sfss'}} \la (h^- (g^-)^{-1})\cdot v, v'\ra \cdot \chi'(hg^{-1}) \cdot f(gx) \cdot  f'(hx)  \od\! g \od \! h \od \! x \\
 &=&  \int_{K_{\sfss''}}\la  \xi(v\otimes f)(x),\xi'(v'\otimes f')(x)\ra  \od \! x\\
 &=& \la  \xi(v\otimes f),\xi'(v'\otimes f')\ra.
\end{eqnarray*}
This implies the lemma.
\end{proof}

\begin{lem}\label{imb}
Let $u\in \pi'^- \widehat \otimes{ I(\dot \chi)}$. Assume that
\begin{equation}\label{intguv0}
  \int_{G_{\sfss'}} \la g^-\cdot u, v\ra\od\! g =0
\end{equation}
for all $v\in (\pi'^-)^\vee \widehat \otimes I^\circ (\dot \chi^{-1})$. Then \eqref{intguv0} also holds for all $v\in (\pi'^-)^\vee \widehat \otimes I(\dot \chi^{-1})$.
\end{lem}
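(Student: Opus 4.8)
The plan is to approximate an arbitrary $v\in \pi^\vee\widehat\otimes \RI(\chi_{\bfE}^{-1})$ by elements of $\pi^\vee\widehat\otimes \RI^\circ(\chi_{\bfE}^{-1})$ in a mode of convergence that is compatible with the matrix coefficient integral, so that the hypothesis propagates from the smaller space to the larger one. Note first that $\RI^\circ(\chi_{\bfE}^{-1})$ is \emph{not} dense in $\RI(\chi_{\bfE}^{-1})$ (it is a proper closed subspace, the sections vanishing to infinite order on the complement of the big cell), so a bare density argument is not available; instead one must use the integral form of \eqref{intguv0} together with the growth estimates already in place. Since the functional $v\mapsto \int_{\wtG}\la g\cdot u,v\ra\,\od g$ is continuous by \Cref{intpi00}, and $\pi^\vee\otimes \RI(\chi_{\bfE}^{-1})$ is dense in $\pi^\vee\widehat\otimes \RI(\chi_{\bfE}^{-1})$, it suffices to treat $v=\sum_{i=1}^s v_i'\otimes f_i$ with $v_i'\in\pi^\vee$ and $f_i\in \RI(\chi_{\bfE}^{-1})$.

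Realize $\RI(\chi_{\bfE}^{-1})$ as the space of smooth sections of a line bundle over the compact manifold $\wtP_{\bfE}\backslash \wtG_{\bfU}$, and let $Z\subset \wtP_{\bfE}\backslash \wtG_{\bfU}$ be the complement of the open orbit $\wtP_{\bfE}\backslash \wtG^\circ_{\bfU}$. It is closed and of strictly smaller dimension (a finite union of lower-dimensional orbits in the orbit decomposition governing the doubling method; see e.g. \cite{MVW}), hence a null set for the invariant measure on $\wtP_{\bfE}\backslash \wtG_{\bfU}$. Choose smooth functions $\chi_n$ on $\wtP_{\bfE}\backslash \wtG_{\bfU}$ with $0\le \chi_n\le 1$, with $\chi_n\equiv 0$ on a neighborhood of $Z$, and with $\chi_n(x)\to 1$ for every $x\notin Z$; set $v_n:=\sum_{i=1}^s v_i'\otimes(\chi_n f_i)$. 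Because $\chi_n f_i$ vanishes identically near $Z$, it lies in $\RI^\circ(\chi_{\bfE}^{-1})$, so $v_n\in \pi^\vee\widehat\otimes \RI^\circ(\chi_{\bfE}^{-1})$ and therefore $\int_{\wtG}\la g\cdot u,v_n\ra\,\od g=0$ by hypothesis. It then remains to show $\int_{\wtG}\la g\cdot u,v_n\ra\,\od g\to \int_{\wtG}\la g\cdot u,v\ra\,\od g$, which I would get by dominated convergence over $\wtG$: expressing the pairing of $g\cdot u$ against $v_i'\otimes f_i$ as an integral over $\wtK_{\bfU}$ (via the model of the pairing between $\RI(\chi_{\bfE})$ and $\RI(\chi_{\bfE}^{-1})$ recalled before \Cref{esti}), the cutoff enters as $\int_{\wtK_{\bfU}}(\cdots)\chi_n(x)f_i(x)\,\od x$, which converges to $\int_{\wtK_{\bfU}}(\cdots)f_i(x)\,\od x$ since $\chi_n\to 1$ off the null set $Z$ and $|\chi_n|\le 1$; hence $\la g\cdot u,v_n\ra\to \la g\cdot u,v\ra$ for each $g\in\wtG$. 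For the uniform dominating function, the estimate \eqref{estl} from the proof of \Cref{intpi00}, applied to $u$ and $v_n$, bounds $|\la g\cdot u,v_n\ra|$ by $\ell(g)\,(\Xi_{\wtG}(g))^{\nu_\pi}\,\Xi_{\bfU}(g)$ times a product of seminorms of $u$ and $v_n$; by \Cref{esti} the relevant seminorm of $v_n$ involves only $\max_{\wtK_{\bfU}}|\chi_n f_i|\le \max_{\wtK_{\bfU}}|f_i|$, so it is bounded independently of $n$, while $\ell(g)\,(\Xi_{\wtG}(g))^{\nu_\pi}\,\Xi_{\bfU}(g)$ is integrable over $\wtG$ by \Cref{boundxi} and \Cref{intgrability}, exactly as in \Cref{intpi00}. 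Dominated convergence then gives $\int_{\wtG}\la g\cdot u,v\ra\,\od g=\lim_n\int_{\wtG}\la g\cdot u,v_n\ra\,\od g=0$.

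There is no deep obstacle here — the argument is soft once the growth bounds of \Cref{esti}, \Cref{boundxi}, \Cref{intgrability} and the convergence assertion of \Cref{intpi00} are in hand. The two points that require care are the (standard but worth recording) fact that the complement $Z$ of the big cell is a measure-zero set, so that the cutoffs $\chi_n f_i$ genuinely recover $f_i$ inside the $\wtK_{\bfU}$-integral, and the observation that the seminorm of $v_n$ entering \eqref{estl} stays bounded as $n\to\infty$ (which is why the seminorm in \Cref{esti} is taken to be a plain sup norm over $\wtK_{\bfU}$ rather than something involving derivatives).
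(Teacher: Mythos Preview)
Your argument is correct and is essentially the paper's own proof: multiply by smooth cutoffs supported in the open cell, note that the relevant seminorm (the sup over $\wtK_{\bfU}$ from \Cref{esti}) does not increase, and apply dominated convergence using the integrable majorant from \eqref{estl}. The only cosmetic differences are that the paper works directly with a general $v$ (multiplying it by $\eta_i$, which is a continuous operator on the completed tensor product) rather than first reducing to finite tensors, and it does not spell out the measure-zero step for $Z$ as explicitly as you do.
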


\begin{proof}
Take a sequence $(\eta_1, \eta_2, \eta_3, \cdots)$ of real valued smooth functions on $P_{\dsfss}\backslash G_{\dsfss}$ such that
\begin{itemize}
\item
for all $i\geq 1$,  the support of $\eta_i$ is contained in $P_\dsfss \backslash  G_\dsfss^\circ$;
\item for all $i\geq 1$ and  $x\in  P_{\dsfss}\backslash G_{\dsfss}$, $ 0\leq  \eta_i(x)\leq \eta_{i+1}(x)\leq 1$; \smallskip
 \item
 $\bigcup_{i=1}^\infty \eta_i^{-1}(1)= P_{\dsfss}\backslash G_{\dsfss}^\circ$.
\end{itemize}
Let $v\in  (\pi'^-)^\vee \widehat \otimes I^\circ (\dot \chi^{-1})$. Note that $\eta_i  I (\dot \chi^{-1})\subset  I^\circ (\dot \chi^{-1})$. Thus $\eta_i v\in (\pi'^-)^\vee \widehat \otimes  I^\circ (\dot \chi^{-1})$.

Lemma \ref{growthdp} and  Lebesgue's dominated convergence theorem imply that
\[
  \int_{G_{\sfss'}} \la g^-\cdot u, v\ra\od\! g
  =\lim_{i\rightarrow +\infty}  \int_{G_{\sfss'}} \la g^-\cdot u, \eta_i v\ra \od\! g =0.
\]
This proves the lemma.
\end{proof}

Lemma \ref{imb} implies that we have a natural continuous  linear map
\[
 \Ind_{P_{\sfss'',\sfss_0}}^{G_{\sfss''}} ((\pi'\otimes  \chi')\otimes \chi_0)= \pi'^-* I^\circ (\dot \chi)\rightarrow \pi'^-* I(\dot \chi).
\]
This map is clearly  injective and $G_{\sfss''}$-equivariant. Similarly to Lemma \ref{imb}, we know that the natural pairing
\[
 ( \pi'^-* I(\dot \chi))\times( (\pi'^-)^\vee* I^\circ(\dot \chi^{-1}))\rightarrow \C
\]
is well-defined and non-degenerate. Thus Theorem \ref{midp} follows by the following lemma.

\begin{lem}\label{imb2}
Let $H$ be a real reductive group. Let $\pi$ be a Casselman-Wallach representation of $H$, and let $\tilde \pi$ be a smooth Fr\'echet representation of $H$ with an $H$-equivariant injective continuous linear map
\[
  \phi: \pi\rightarrow \tilde \pi.
\]
Assume that  there is a non-degenerate $H$-invariant continuous bilinear map
\[
\la\,,\,\ra: \tilde \pi\times \pi^\vee\rightarrow \C,
\]
such that the composition of
\[
 \pi\times \pi^\vee\xrightarrow{(u,v)\mapsto (\phi(u),v)} \tilde \pi\times \pi^\vee\xrightarrow{\la\,,\,\ra} \C
\]
is the natural pairing. Then $\phi$ is a topological isomorphism.
\end{lem}
\begin{proof}
 Let $u\in \tilde \pi$.
 Using the theorem of Dixmier-Malliavin \cite[Theorem 3.3]{DM}, we write
 \[
   u=\sum_{i=1}^{s} \int_H \varphi_i(h) ( h\cdot u_i)\od\! h \qquad (s\in \bN, \ u_i\in\tilde \pi),
 \]
 where $\varphi_i$'s are compactly supported smooth functions on $H$.  As a continuous linear functional on $\pi^\vee$, we have that
  \[
   \la u,\, \cdot\, \ra=\sum_{i=1}^{s} \int_H \varphi_i(h)  \cdot \la h\cdot u_i, \,\cdot\,\ra \od\! h.
 \]
By \cite[Lemma 3.5]{SZ1},  the right-hand side  functional  equals  $\la u_0, \,\cdot\,\ra$ for a unique $u_0\in \pi$. Thus $u=\phi(u_0)$, and the lemma follows by the open mapping theorem.
\end{proof}

\subsection{Double theta lifts and parabolic induction}\label{doublep}
Let $\star$ be the Howe dual of $\star'=\star''$. Let $k\in \bN$ and we assume that the set
\be\label{sstark}
 S_{\star, k}:=\{\textrm{classical signature of the form $(\star, p_1,q_1)$ with $p_1+q_1=k$}\}
\ee
is non-empty. Thus $k$ is odd if $\star=B$, and $k$ is even in all other cases.
In this subsection, we further assume that $k\geq 2$ and
\be\label{dotp}
 \dot p=\dot q= \begin{cases}
 k-1,  &\quad \textrm{if } \star\in \{B,D\};\\
k+1,  &\quad  \textrm{if $\star\in \{C, \widetilde C\}$;}\\
k, & \quad  \textrm{if $ \star\in\{C^*,D^*\}$.}
\end{cases}
 \ee
%For technical   reason, we also assume that $k\geq 2$ when $\star\in \{B,D\}$.

We also assume that the character $\dot \chi: R_{\dsfss}\rightarrow \C^\times $
satisfies  the following conditions:
\be\label{chid}
\begin{cases}
 \dot \chi \textrm{ is genuine and $\dot \chi^4=1$,}& \textrm{if $ \star=B$};\\
 \dot \chi^2=1,  & \textrm{if $\star=D$;}\\
 \dot \chi=1,\quad   &\textrm{if } \star\in \{C,\widetilde C\};\\
    \dot \chi^2\textrm{ equals the composition of $R_\dsfss\xrightarrow{\textrm{natural map}} \GL(X_\dsfss)\xrightarrow{\det}\C^\times$},  & \textrm{if $ \star=C^*$;}\\
 \dot \chi^2\textrm{ equals the composition of $R_\dsfss\xrightarrow{\textrm{natural map}} \GL(Y_\dsfss)\xrightarrow{\det}\C^\times$},  & \textrm{if $ \star=D^*$.}
\end{cases}
\ee
If  $\star\notin \{B,D\}$, the character   $\dot{\chi}$ is uniquely determined by \eqref{chid}.
If  $\dot \star\in \{B,D\}$, there are two characters satisfying \eqref{chid}, and we let $\dot \chi'$ denote the one other than $\dot \chi$.

The relationship between the degenerate principle series representation $I(\dot{\chi})$ and
Rallis quotients is summarized in the following lemma. (The corresponding result for unitary groups is in \cite[Introduction]{LZ1}, which is relevant for \Cref{subsec:unitary}.)
The idea originated in \cite[Section II]{Ra} (see also \cite{KR}).

\begin{lem}\label{degens}
\noindent
(a) If $\star\in \{B,D\}$,  then
\[
  I(\dot{\chi}) \oplus I(\dot \chi')\cong \bigoplus_{\sfss_1\in S_{\star, k}}
   \check \Theta_{\sfss_1}^{\dsfss}(1).
\]

\noindent
(b) If $\star\in \{C,\widetilde C\}$, then
\[
  I(\dot{\chi})\cong \check \Theta_{\sfss_1}^{\dsfss}(1)\oplus  (\check \Theta_{\sfss_1}^{\dsfss}(1)\otimes \det),\qquad \textrm{where } \ \mathsf s_1=(C, \frac{k}{2}, \frac{k}{2}).
\]

\noindent
(c) If $\star=D^*$, then
\[
  I(\dot{\chi})\cong \check \Theta_{\sfss_1}^{\dsfss}(1),\qquad \textrm{where }\ \mathsf s_1=(D^*,  \frac{k}{2}, \frac{k}{2}).
\]

\noindent
(d) If $\star=C^*$, then there is an exact sequence of representations of $G_{\dsfss}$:
\[
0\rightarrow \bigoplus_{\mathsf s_1\in S_{\star, k}} \check \Theta_{\sfss_1}^{\dsfss}(1) \rightarrow
 I(\dot{\chi})\rightarrow  \bigoplus_{\mathsf s_2\in S_{\star, k-2}} \check \Theta_{\sfss_2}^{\dsfss}(1) \rightarrow 0.
\]

\noindent
(e) All the representations $ \check \Theta_{\sfss_1}^{\dsfss}(1) $and $ \check \Theta_{\sfss_2}^{\dsfss}(1)$ appearing in (a), (b), (c) and (d) are irreducible and unitarizable.

\end{lem}
\begin{proof}
 See \cite[Theorem 2.4]{Ku}, \cite[Introduction]{LZ2}, \cite[Theorem 6.1]{LZ3} and  \cite[Sections 9 and 10]{Ya}.
\end{proof}

\begin{lem}\label{lem:coinv}
For all $\mathsf s_1$ appearing in Lemma \ref{degens}, the trivial representation $1$ of $G_{\mathsf s_1}$ is overconvergent for $\check \Theta_{\mathsf s_1}^{\dsfss}$, and
\[
   \Thetab_{\mathsf s_1}^{\dsfss}(1)=\check \Theta_{\mathsf s_1}^{\dsfss}(1).
\]

\end{lem}
\begin{proof}
Note that the trivial representation $1$ of $G_{\mathsf s_1}$ is $(-\nu_{\mathsf s_1})$-bounded, and $-\nu_{\mathsf s_1}>\nu_{\mathsf s_1}^\circ -\abs{\dsfss}$. This implies the first assertion.
Since $\check \Theta_{\mathsf s_1}^{\dsfss}(1)$ is irreducible and $\Thetab_{\mathsf s_1}^{\dsfss}(1)$ is  a quotient of $\check \Theta_{\mathsf s_1}^{\dsfss}(1)$, for the proof of the second assertion, it suffices to show that $\Thetab_{\mathsf s_1}^{\dsfss}(1)$ is nonzero.

Put
\[
  V_{\sfss_1}(\R):=\begin{cases}
    \textrm{the fixed point set  of  $J_{\sfss_1}$ in $V_{\sfss_1}$},\quad &\textrm{if $\star\in \{B,C, D, \widetilde C\}$};\\
    V_{\sfss_1},\quad & \textrm{if $\star\in \{C^*, D^*\}$}.
  \end{cases}
\]
As usual, realize $\hat \omega_{\dsfss, \sfss_1}$ on the space of square integrable functions on $(V_{\sfss_1}(\R))^{\dot p}$ so that $ \omega_{\dsfss, \sfss_1}$ is identified with the space of the Schwartz functions, and $G_{\sfss_1}$ acts on it through the obvious transformation.

Take a positive valued Schwartz function $\phi$ on
$(V_{\sfss_1}(\R))^{\dot p}$. Then
\[
  \inn{ g\cdot \phi}{ \phi}=\int_{(V_\sfss(\R))^{\dot p}} \phi(g^{-1}\cdot x) \cdot \phi(x)
  \od \! x>0, \quad \textrm{for all }g\in G_{\sfss_1}.
\]
Thus
\[
  \int_{G_{\sfss_1}} \inn{ g \cdot \phi}{ \phi} \od\! g \neq 0,
\]
and the lemma follows.
\end{proof}

\begin{lem}\label{doublelift5}
Assume that $\pi'$  is $\nu'$-bounded for some
\[
  \nu'>
  \begin{cases}
 -\frac{\abs{\sfss_0}}{2}-3,\quad& \textrm{if $\star =C^*$};\\
   -\frac{\abs{\sfss_0}}{2}-1,\quad& \textrm{otherwise}.
   \end{cases}
   \]
\delete{\be\label{bound}
\nu'>\begin{cases}
  \nu_{\sfss'}-2k,\quad& \textrm{if $\dot \star\in \{C, C^*, D^*\}$};\\
  \nu_{\sfss'}-2k+2,\quad& \textrm{if $\dot \star= D$};\\
  \nu_{\sfss'}-2k-1,\quad& \textrm{if $\dot \star=\widetilde C$}.
   \end{cases}
\ee
}
 Then for all $\sfss_1$ appearing in Lemma \ref{degens} and all unitary character $\gamma'$ of $G_{\sfss'}$,  the representation $\pi'\otimes \gamma'$ is convergent for $\check \Theta^{\sfss_1}_{\sfss'}$, and
 $\Thetab^{\sfss_1}_{\sfss'}(\pi'\otimes \gamma')$ is overconvergent for $\check \Theta^{\sfss''}_{\sfss_1}$.
 \end{lem}
\begin{proof}
The first assertion follows by noting that
\be\label{p123}
\nu_{\sfss'}-\abs{\sfss_1}=
  \begin{cases}
 -\frac{\abs{\sfss_0}}{2}-3,\quad& \textrm{if $\star =C^*$};\\
   -\frac{\abs{\sfss_0}}{2}-1,\quad& \textrm{otherwise}.
   \end{cases}
   \ee
The proof of second assertion is similar to the proof of the first assertion of Lemma \ref{lem:coinv}.
\end{proof}

Recall the character
$ \chi_0:=\dot \chi|_{R_{\sfss_0}}$ from \eqref{chi0}. Similarly we define $\chi'_0:=\dot \chi'|_{R_{\sfss_0}}$.
Recall that $\pi'$ is assumed to be genuine when $\dot \star=\widetilde C$.

\begin{thm}\label{doubtt}
Assume that $\pi'$  is $\nu'$-bounded for some
$
  \nu'>
 -\frac{\abs{\sfss_0}}{2}-1.
$

\noindent
(a) If $\star\in \{B,D\}$,  then
\[
    \bigoplus_{\sfss_1\in S_{\star, k}}   \Thetab^{\sfss''}_{\sfss_1}(\Thetab^{\sfss_1}_{\sfss'}(\pi'))\cong \Ind_{P_{\sfss'',\sfss_0}}^{G_{\sfss''}} (\pi'\otimes \chi_0)
    \oplus  \Ind_{P_{\sfss'',\sfss_0}}^{G_{\sfss''}} (\pi' \otimes \chi'_0).
    \]

  \smallskip

    \noindent
(b) If $\star\in \{C,\widetilde C\}$, then
\[
 \Thetab^{\sfss''}_{\sfss_1}(\Thetab^{\sfss_1}_{\sfss'}(\pi'))\oplus\left(  (\Thetab^{\sfss''}_{\sfss_1}(\Thetab^{\sfss_1}_{\sfss'}(\pi'\otimes \det)))\otimes \det \right)\cong \Ind_{P_{\sfss'',\sfss_0}}^{G_{\sfss''}} (\pi'\otimes \chi_0),
\]
where $\sfss_1=(\star, \frac{k}{2},\frac{k}{2})$.
  \smallskip

\noindent
(c) If $\star=D^*$, then
\[
 \Thetab^{\sfss''}_{\sfss_1}(\Thetab^{\sfss_1}_{\sfss'}(\pi'))\cong \Ind_{P_{\sfss'',\sfss_0}}^{G_{\sfss''}} (\pi'\otimes \chi_0),
 \]
where $ \mathsf s_1=(D^*, \frac{k}{2}, \frac{k}{2})$.

\smallskip

\noindent
(d) If $\star=C^*$, then there is an exact sequence
\[
 0\rightarrow   \bigoplus_{\mathsf s_1\in S_{\star, k}}  \Thetab^{\sfss''}_{\sfss_1}(\Thetab^{\sfss_1}_{\sfss'}(\pi'))\rightarrow \Ind_{P_{\sfss'',\sfss_0}}^{G_{\sfss''}} (\pi'\otimes \chi_0)\rightarrow \cJ\rightarrow 0
\]
 of representations of $G_{\sfss''}$ such that $\cJ$ is a quotient of
\[
\bigoplus_{\mathsf s_2\in S_{\star, k-2}} \check \Theta^{\sfss''}_{\sfss_2}(\check \Theta^{\sfss_2}_{\sfss'}(\pi')).
\]
\end{thm}
\begin{proof}
Using Lemma \ref{growthdp}, we know that $\pi'^-$ is convergent for $I(\dot \chi)$ (and convergent for $I(\dot \chi')$ when $\star\in \{B,D\}$). Also note that the character $\chi'$ (see \eqref{chip}) is trivial.

If we are in the situation (a), (b) or (c), by using Theorem \ref{midp}, Lemma \ref{lem:coinv} and Proposition \ref{doublelift}, the theorem follows by applying the operation $\pi'^-*(\,\cdot\,)$ to the isomorphism in Lemma \ref{degens}.

Now assume that we are in the situation (d).
For simplicity, write $0\rightarrow I_1\rightarrow I_2\rightarrow I_3\rightarrow
0$ for the exact sequence in part (d) of Lemma \ref{degens}. Since $\pi'^-$ is nuclear as a Fr\'echet space, the sequence
\[
0\rightarrow \pi'^-\totimes I_1\rightarrow \pi'^-\totimes I_2\rightarrow \pi'^-\totimes I_3\rightarrow
0
\]
is also topologically exact.
Note that the natural map
\[
\pi'^- * I_1\rightarrow \pi'^- * I_2
\]
is injective, and the natural map
\[
  \pi'^-\totimes I_3\cong \frac{\pi'^-\totimes I_2}{\pi'^- \totimes I_1}\longrightarrow\frac{ \pi'^-*I_2}{\pi'^-*I_1}
  \]
descends to a surjective map
\[
  (\pi'^-\totimes  I_3)_{G_{\sfss'^-}}\rightarrow \frac{ \pi'^-*I_2}{\pi'^-*I_1}.
\]
Note that
\[
   (\pi'^-\totimes  I_3)_{G_{\sfss'^-}}\cong \bigoplus_{\mathsf s_2\in S_{\star, k-2}} \check \Theta^{\sfss''}_{\sfss_2}(\check \Theta^{\sfss_2}_{\sfss'}(\pi')).
\]
Theorem \ref{midp} implies that
\[
\pi'^-*I_2\cong \Ind_{P_{\sfss'',\sfss_0}}^{G_{\sfss''}} (\pi'\otimes \chi_0).
\]
Lemma \ref{lem:coinv} and Proposition \ref{doublelift} imply that
\[
\pi'^-*I_1\cong  \bigoplus_{\mathsf s_1\in S_{\star, k} } \Thetab^{\sfss''}_{\sfss_1}(\Thetab^{\sfss_1}_{\sfss'}(\pi')).
\]
Therefore the theorem follows.
\end{proof}

\section{Regular descents and bounding of the associated cycles}\label{sec:AC}

In this section, we investigate the bounding of associated cycles in the context of theta lifting. The main result is in \Cref{prop:GDS.AC}. Earlier results with a similar theme were proved in \cites{NOT, NZ, LM}.

Let $\mathsf s=(\star, p,q)$ and $ \mathsf s'=(\star', p',q')$ be classical signatures such that $\star'$ is the Howe dual of $\star$.
Let $\CO\in \Nil(\g_\mathsf s)$, whose Zariski closure in $\g_\sfss$ is denoted by $\overline \CO$.

\subsection{Associated cycles}

In this subsection, we will recall the definition of associated cycles from \cite{Vo89}.
\begin{defn}\label{spkmodule}
 Let $H_\C$ be a complex linear algebraic group. Let $\CA$ be a commutative $\C$-algebra carrying a locally algebraic linear action of $H_\C$ by algebra automorphisms.
  An $(\CA, H_\C)$-module is an  $\CA$-module
$
  \sigma
$
equipped with a locally algebraic linear action of $H_\C$ such that
the module structure map
$
  \CA\otimes \sigma\rightarrow \sigma
$
is $H_{\C}$-equivariant. An $(\CA, H_\C)$-module is said to be finitely generated if it is so as an $\CA$-module.

\end{defn}

For every affine complex algebraic variety $Z$, write $\C[Z]$ for the algebra of regular functions on $Z$. By convention, all elements of complex algebraic varieties are closed.
Given a finitely generated $(\C[\overline \CO\cap \p_\sfss], K_{\sfss, \C})$-module $\sigma^\circ$, for each $K_{\sfss, \C}$-orbit $\sO\subset \CO\cap \p_\sfss$, the set
\[
  \bigsqcup_{\mathbf e\in \sO} \C_\mathbf e\otimes_{\C[\overline \CO\cap \p_\sfss]}\sigma^\circ
\]
is naturally a $K_{\sfss, \C}$-equivariant algebraic vector bundle over $\sO$, where $\C_\mathbf e$  denotes the complex number field $\C$ viewing as a $\C[\overline \CO\cap \p_\sfss]$-algebra via the evaluation map at $\mathbf e$. We define $\mathrm{AC}_\sO(\sigma^\circ)\in \CK_\sfss(\sO)$ to be the Grothendieck group element associated to this bundle. Define the  associated cycle of $\sigma^\circ$ to be
\[
  \mathrm{AC}_\CO(\sigma^\circ):=\sum_{\sO\textrm{ is a   $K_{\sfss, \C}$-orbit in $ \CO\cap \p_\sfss$}} \mathrm{AC}_\sO(\sigma^\circ)\in \CK_\sfss(\CO).
\]

Write $I_{\overline \CO\cap \p_\sfss}$ for the radical ideal of $\C[\p_\sfss]$ corresponding to the closed subvariety $\overline \CO\cap \p_\sfss$.
Every $(\C[\overline \CO\cap \p_\sfss], K_{\sfss, \C})$-module is clearly an $(\C[\p_\sfss], K_{\sfss,\C})$-module.
On the other hand, for each  $(\C[\p_{\sfss}], K_{\sfss, \C})$-module $\sigma$,
\[
  \frac{(I_{\overline \CO\cap \p_\sfss})^i\cdot \sigma}{(I_{\overline \CO\cap \p_\sfss})^{i+1}\cdot \sigma}\qquad (i\in \bN)
\]
is naturally a $(\C[\overline \CO\cap \p_\sfss], K_{\sfss, \C})$-module.
We say that $\sigma$ is $\CO$-bounded if
\[
(I_{\overline \CO\cap \p_\sfss})^i \cdot\sigma=0\qquad
\textrm{
for some $i\in \bN$.}
\]
When $\sigma$ is finitely generated, this is equivalent to saying that  the support of $\sigma$ is contained in $\overline{\CO}\cap \p_\sfss$.

 When $\sigma$ is finitely generated and $\CO$-bounded, we define its associated cycle to be
\[
  \mathrm{AC}_\CO(\sigma):=\sum_{i\in \bN}\mathrm{AC}_\CO\left(\frac{(I_{\overline \CO\cap \p_\sfss})^i\cdot \sigma}{(I_{\overline \CO\cap \p_\sfss})^{i+1}\cdot \sigma}\right)\in  \CK_\sfss(\CO).
\]
The assignment $\mathrm{AC}_\CO$ is additive in the following sense: the equality
\[
 \mathrm{AC}_\CO(\sigma)= \mathrm{AC}_\CO(\sigma')+ \mathrm{AC}_\CO(\sigma'')
\]
holds for every exact sequence
$0\rightarrow \sigma'\rightarrow \sigma\rightarrow \sigma''\rightarrow 0$ of finitely generated $\CO$-bounded $(\C[\p_{\sfss}], K_{\sfss, \C})$-modules.

Given a $(\g_\sfss, K_{\sfss})$-module $\rho$ of finite length, pick a filtration
\be\label{goodf}
\CF:\quad  \cdots\subset \rho_{-1}\subset \rho_0\subset \rho_1\subset \rho_2\subset \cdots
\ee
of $\rho$ that is good in the following sense:
\begin{itemize}
\item for each $i\in \Z$, $\rho_i$ is a finite-dimensional  $K_{\sfss}$-stable subspace of $\rho$;
\item $\g_\sfss \cdot \rho_i\subset \rho_{i+1}$ for all $i\in \Z$, and $\g_\sfss \cdot \rho_i=\rho_{i+1}$ when  $i\in \Z$ is sufficiently large;
\item $\bigcup_{i\in \Z} \rho_i=\rho$, and $\rho_i=0$ for some $i\in \Z$.
\end{itemize}
Then the grading
\[
  \mathrm{Gr}(\rho):= \mathrm{Gr}(\rho,\CF):=\bigoplus_{i\in \Z} \rho_i/\rho_{i+1}
\]
is naturally a finitely generated  $(\oS(\p_{\sfss}), K_{\sfss, \C})$-module, where $\oS(\p_{\sfss})$ denotes the symmetric algebra of $\p_\sfss$. Recall that we have identified $\p_\sfss$ with its dual space $\p_\sfss^*$ by using the trace from. Hence $\oS(\p_{\sfss})$ is identified with $\C[\p_\sfss]$, and $\mathrm{Gr}(\rho)$ is a $(\C[\p_{\sfss}], K_{\sfss, \C})$-module.

Recall that $\rho$ is said to be $\CO$-bounded if the associated variety of its annihilator ideal in $\CU(\g_\mathsf s)$ is contained in  $\overline \CO$.

\begin{lem}\label{l62}
Let $\rho$ be a $(\g_\sfss, K_{\sfss})$-module  of finite length,  and let $\CF$ be a  good filtration of $\rho$. Then $\rho$ is $\CO$-bounded if and only if $\mathrm{Gr}(\rho, \CF)$ is $\CO$-bounded as a $(\C[\p_{\sfss}], K_{\sfss, \C})$-module.
\end{lem}
\begin{proof}
This is a direct consequence of   \cite[Theorem 8.4]{Vo89}.
\end{proof}

When $\rho$ is  $\CO$-bounded, it associated cycle is defined to be
\[
   \mathrm{AC}_\CO(\rho):= \mathrm{AC}_\CO(\mathrm{Gr}(\rho))\in  \CK_\sfss(\CO).
\]
This is independent of the good filtration \eqref{goodf}. Moreover, taking the associated cycles is additive in the following sense: the equality
\[
 \mathrm{AC}_\CO(\rho)= \mathrm{AC}_\CO(\rho')+ \mathrm{AC}_\CO(\rho'')
\]
holds for every exact sequence
$0\rightarrow \rho'\rightarrow \rho\rightarrow \rho''\rightarrow 0$ of  $\CO$-bounded  $(\g_\sfss, K_{\sfss})$-modules of finite length.

For every Cassleman-Wallach representation $\pi$ of $G_\sfss$, write $\pi^{\mathrm{alg}}$ for the underlying $(\g_\sfss, K_\sfss)$-module of $\pi$. When $\pi$ is $\CO$-bounded, we define the associated cycle $\mathrm{AC}_\CO(\pi):=\mathrm{AC}_\CO(\pi^{\mathrm{alg}})$.

\subsection{Algebraic theta lifts and commutative theta lifts}

We retain the notation of Section \ref{sec:bipGeometry} and Section \ref{sec:Integrals}.
Denote by $\omega^{\mathrm{alg}}_{\mathsf s, \mathsf s'}$ the $\h_{\mathsf s, \mathsf s'}$-submodule of $\omega_{\mathsf s, \mathsf s'}$ generated by  $ \omega_{\mathsf s, \mathsf s'}^{\cX_{\mathsf s, \mathsf s'}}$. This is an $(\g_\mathsf s\times \g_{\mathsf s'}, K_\mathsf s\times K_{\mathsf s'})$-module. For every $ (\g_{\mathsf s'}, K_{\mathsf s'})$-module $\rho'$, define its algebraic (full) theta lift to be the $(\g_{\mathsf s}, K_{\mathsf s})$-module
\[
   \check \Theta_{\mathsf s'}^{\mathsf s}(\rho'):=(\omega^{\mathrm{alg}}_{\mathsf s, \mathsf s'}\otimes \rho')_{\g_{\mathsf s'}, K_{\mathsf s'}} \qquad (\textrm{the  coinvariant space}).
\]
It has finite length whenever $\rho'$ has finite length (\cite[Section 4]{Howe89}).

Recall from \eqref{momentmap} the moment maps
  \[
    \xymatrix@R=0em@C=3.0em{
     \p_\sfss^*=\fpp_\mathsf s &\ar[l]_{M_\mathsf s} \cX_{\mathsf s, \mathsf s'}\ar[r]^{M_{\mathsf s'}}& \p_{\sfss'}^*=\fpp_{\mathsf s'},\\
    \qquad \phi^* \phi  &\ar@{|->}[l] \phi \ar@{|->}[r] & \phi \phi^*.\qquad
    }
  \]
We view $\C[\cX_{\mathsf s, \mathsf s'}]$ as an $\C[\p_\sfss]\otimes \C[\p_{\sfss'}]$-algebra by using the moment maps. The natural action of $K_{\sfss,\C}\times K_{\sfss',\C}$ on $\cX_{\mathsf s, \mathsf s'}$ yields a locally algebraic linear action of  $K_{\sfss,\C}\times K_{\sfss',\C}$ on $\C[\cX_{\mathsf s, \mathsf s'}]$. Thus $\C[\cX_{\mathsf s, \mathsf s'}]$  is naturally a  $(\C[\p_{\sfss}]\otimes \C[\p_{\sfss'}], K_{\sfss, \C}\times K_{\sfss', \C})$-module.
%: it is an $\oS(\p_\sfss)\otimes \oS(\p_{\sfss'})$-algebra via  the moment maps, and it is a locally algebraic representation of $K_{\sfss,\C}\times K_{\sfss',\C}$ via the action of $K_{\sfss,\C}\times K_{\sfss',\C}$ on $\cX_{\mathsf s, \mathsf s'}$.

For every  $(\C[\p_{\sfss'}], K_{\sfss', \C})$-module $\sigma'$, define its commutative theta lift to be
\[
   \check \Theta_{\mathsf s'}^{\mathsf s}(\sigma'):=(\C[\CX_{\sfss, \sfss'}]\otimes_{\C[\p_{\sfss'}]} \sigma'\otimes \zeta_{\sfss, \sfss'})_{K_{\mathsf s', \C}} \qquad (\textrm{the  coinvariant space}),
\]
which is naturally a  $(\C[\p_{\sfss}], K_{\sfss, \C})$-module.

\begin{lem}
Suppose that $\sigma'$ is a finitely generated  $(\C[\p_{\sfss'}], K_{\sfss', \C})$-module. Then the  $(\C[\p_{\sfss}], K_{\sfss, \C})$-module   $\check \Theta_{\mathsf s'}^{\mathsf s}(\sigma')$ is also finitely generated.
\end{lem}
\begin{proof} This follows from the structural analysis of the space of joint harmonics, as in \cite[Section 4]{Howe89}.
\end{proof}

\begin{lem}\label{lm}
Suppose that $\rho'$ is a  $(\g_{\sfss'}, K_{\sfss'})$-module of finite length. Then there exist a good filtration $\CF'$ on $\rho'$, a good filtration $\CF$ on $\check \Theta_{\mathsf s'}^{\mathsf s}(\rho')$, and a surjective $(\C[\p_{\sfss}], K_{\sfss, \C})$-module homomorphism
\[
  \check \Theta_{\mathsf s'}^{\mathsf s}(\mathrm{Gr}(\rho',\CF')) \rightarrow \mathrm{Gr}(\check \Theta_{\mathsf s'}^{\mathsf s}(\rho'),\CF).
\]
\end{lem}
\begin{proof} This follows from the discussions in \cite[Section 3.2]{LM}.
\end{proof}

Recall from \eqref{momentmap2}  the moment maps
\[
    \xymatrix@R=0em@C=3em{
      \g_\mathsf s &\ar[l]_{\tilde M_\mathsf s} W_{\mathsf s, \mathsf s'}\ar[r]^{\tilde M_{\mathsf s'}}& \g_{\mathsf s'},\\
     \phi^* \phi & \ar@{|->}[l] \phi \ar@{|->}[r] & \phi \phi^*.
    }
  \]

\begin{lem}\label{comobound}
Suppose that $\CO'\in \Nil(\g_{\sfss'})$ and  $\tilde M_{\mathsf s'}^{-1}(\overline{\CO'})\subset \tilde M_{\sfss}^{-1}(\overline{\CO})$, where  $\overline{\CO'}$ denotes the Zariski closure of $\CO'$ in $\g_{\sfss'}$.  Then
\begin{itemize}
\item
$\check \Theta_{\mathsf s'}^{\mathsf s}(\sigma')$ is $\CO$-bounded for every $\CO'$-bounded $(\C[\p_{\sfss'}], K_{\sfss', \C})$-module $\sigma'$;
\item
 $\check \Theta_{\mathsf s'}^{\mathsf s}(\rho')$ is $\CO$-bounded for every $\CO'$-bounded $(\g_{\mathsf s'}, K_{\sfss'})$-module $\rho'$ of finite length;
 \item $\check \Theta_{\mathsf s'}^{\mathsf s}(\pi')$ is $\CO$-bounded for every $\CO'$-bounded Casselman-Wallach representation $\pi'$ of $G_{\mathsf s'}$.
 \end{itemize}
\end{lem}
\begin{proof} The assumption of the lemma implies  that
\[
 ( I_{\overline{\CO}\cap \p_\sfss})^i\cdot \C[\CX_{\sfss, \sfss'}] \subset I_{\overline{\CO'}\cap \p_{\sfss'}}\cdot  \C[\CX_{\sfss, \sfss'}]\qquad \textrm{for some } i\in \bN,
\]
where $I_{\overline{\CO'}\cap \p_{\sfss'}}$ denotes  the radical ideal of $\C[\p_{\sfss'}]$ corresponding to the closed subvariety $\overline{\CO'}\cap \p_{\sfss'}$.
This implies the first assertion of the lemma. In view of Lemma \ref{l62}, the second assertion is implied by the first one and Lemma \ref{lm}. The third assertion is implied by the second one since there is a surjective
$(\g_{\sfss}, K_{\sfss})$-module homomorphisms
\[
\check \Theta_{\mathsf s'}^{\mathsf s}(\pi'^{\mathrm{alg}})\rightarrow \left (\check \Theta_{\mathsf s'}^{\mathsf s}(\pi')\right )^{\mathrm{alg}}.
 \]
\end{proof}

\subsection{Regular descent of a nilpotent orbit}\label{regud}

\begin{defn}
The orbit $\CO\in \Nil(\g_\mathsf s)$ is regular for $\DD_{\mathsf s'}^{\mathsf s}$ if
either
\[
\abs{\sfss'}=\abs{\DD_\mathrm{naive}(\CO)},
\]
 or
 \[
 \abs{\sfss'}>\abs{\DD_\mathrm{naive}(\CO)}\qquad\textrm{and}\qquad \mathbf c_1(\CO)=\mathbf c_2(\CO).
\]

\end{defn}

In the rest of this section we assume that $\CO$ is regular for $\DD_{\mathsf s'}^{\mathsf s}$. Then Lemma \ref{imageofmm} implies that  $\CO$ is contained in the image of the moment map $\tilde M_{\mathsf s}$. Put  $\CO':=\DD_{\mathsf s'}^{\mathsf s}(\CO)\in  \Nil(\g_{\mathsf s'})$, and let  $\overline{\CO'}$ denote the Zariski closure of $\CO'$ in $\g_{\sfss'}$.

\begin{lem}\label{liftop000}
As subsets of $\g_\sfss$,
\[
  \tilde M_{\mathsf s}(\tilde M_{\mathsf s'}^{-1}(\overline{\CO'}))=\overline{\CO}.
\]
\end{lem}
\begin{proof}
This is implied by \cite[Theorems 5.2 and 5.6]{DKPC}.
\end{proof}

\begin{lem}\label{comobound2}
Suppose that $\sigma'$ is an $\CO'$-bounded $(\C[\p_{\sfss'}], K_{\sfss', \C})$-module.  Then the  $(\C[\p_{\sfss}], K_{\sfss, \C})$-module   $\check \Theta_{\mathsf s'}^{\mathsf s}(\sigma')$ is $\CO$-bounded.
\end{lem}
\begin{proof}
This follows from Lemmas \ref{liftop000} and \ref{comobound}.
\end{proof}

Put
\[
\partial :=\overline \CO\setminus \CO\qquad\textrm{and}\qquad  \bar \partial :=\g_\sfss\setminus \partial.
\]
Then $\bar \partial$ is an open subvariety of $\g_\sfss$ and $\CO$ is a closed subvariety of $\bar \partial$.
Put
\[
W_{\sfss, \sfss'}^{\bar \partial}:=\tilde M_{\sfss}^{-1}(\bar \partial )
\qquad\textrm{and}\qquad
  W_{\sfss, \sfss'}^{\CO, \CO'}:=\tilde M_{\sfss}^{-1}(\CO)\cap  \tilde M_{\sfss'}^{-1}(\CO').
\]
Then $W_{\sfss, \sfss'}^{\bar \partial}$ is an open subvariety of $W_{\sfss, \sfss'}$, and the following lemma implies that $W_{\sfss, \sfss'}^{\CO, \CO'}$ is a closed subvariety of $W_{\sfss, \sfss'}^{\bar \partial}$.

\begin{lem}\label{liftop0}
The set $W_{\sfss, \sfss'}^{\CO, \CO'}$ is  a single $G_{\sfss, \C}\times G_{\sfss',\C}$-orbit. Moreover, it is  contained in $W_{\sfss,\sfss'}^\circ$ and equals
\[
   W_{\sfss, \sfss'}^{\bar \partial} \cap \tilde M_{\mathsf s'}^{-1}(\overline{\CO'}).
\]
\end{lem}
\begin{proof}
The first assertion is implied by \cite[Theorem 3.6]{DKPC}. Recall from \eqref{kkpo2} that $W_{\sfss,\sfss'}^\circ\cap \tilde M_{\sfss}^{-1}(\CO)$ is a single  $G_{\sfss, \C}\times G_{\sfss',\C}$-orbit  whose image under the moment map $\tilde M_{\sfss'}$ equals $\CO'$. Thus
\[
W_{\sfss,\sfss'}^\circ\cap W_{\sfss, \sfss'}^{\CO, \CO'}=W_{\sfss,\sfss'}^\circ\cap \tilde M_{\sfss}^{-1}(\CO),
\]
which  is also a   single  $G_{\sfss, \C}\times G_{\sfss',\C}$-orbit. Hence $W_{\sfss, \sfss'}^{\CO, \CO'}\subset W_{\sfss,\sfss'}^\circ$.

In fact, \cite[Theorem 3.6]{DKPC} also implies that
\[
W_{\sfss, \sfss'}^{\CO, \CO'}=\tilde M_{\sfss}^{-1}(\CO)\cap  \tilde M_{\sfss'}^{-1}(\overline{\CO'}).
\]
Thus the last assertion is a direct consequence of Lemma \ref{liftop000}.
\end{proof}

Write
\[
    \CX_{\sfss, \sfss'}^{\bar \partial}:=\CX_{\sfss, \sfss'} \cap W_{\sfss, \sfss'}^{\bar \partial}\qquad\textrm{and}\qquad \CX_{\sfss, \sfss'}^{\CO, \CO'}:=\CX_{\sfss, \sfss'} \cap W_{\sfss, \sfss'}^{\CO, \CO'}.
   \]
Then  $ \CX_{\sfss, \sfss'}^{\bar \partial}$ is an open subvariety of $ \CX_{\sfss, \sfss'}$ and  $\CX_{\sfss, \sfss'}^{\CO, \CO'}$ is a  closed subvariety of  $\CX_{\sfss, \sfss'}^{\bar \partial}$. We have a decomposition
 \[
   \CX_{\sfss, \sfss'}^{\CO, \CO'}=\bigsqcup_{\sO\textrm{ is a   $K_{\sfss, \C}$-orbit in $ \CO\cap \p_\sfss$ that is contained in the image of $M_\sfss$}}  \CX_{\sfss, \sfss'}^{\sO, \CO'},
 \]
 where
 \[
   \CX_{\sfss, \sfss'}^{\sO, \CO'}:=M_\sfss^{-1}(\sO)\cap \CX_{\sfss, \sfss'}^{\CO, \CO'},
 \]
 which is a Zariski open and closed subset of $\CX_{\sfss, \sfss'}^{\CO, \CO'}$. Lemmas \ref{descko} and \ref{liftop0} imply that $ \CX_{\sfss, \sfss'}^{\sO, \CO'}$ is a single $K_{\sfss, \C}\times K_{\sfss',\C}$-orbit.

\medskip

We defer the proof of the following proposition to Sections \ref{sec:GG} and \ref{secpp}.
\begin{prop}\label{propreduced}
 The scheme theoretic  fibre product
\[
\CX_{\sfss, \sfss'}^{\bar \partial}\times_{\p_{\sfss'}} ({\CO'}\cap \p_{\sfss'})
\]
is reduced, where $\CX_{\sfss, \sfss'}^{\bar \partial}$ is viewed as a $\p_{\sfss'}$-scheme via the moment map $M_{\sfss'}$.
\end{prop}

By Lemma \ref{liftop0} and Proposition \ref{propreduced}, we know that
\be\label{cxee}
 \CX_{\sfss, \sfss'}^{\CO, \CO'}=\CX_{\sfss, \sfss'}^{\bar \partial}\times_{\p_{\sfss'}} ({\CO'}\cap \p_{\sfss'})=\CX_{\sfss, \sfss'}^{\bar \partial}\times_{\p_{\sfss'}} ({\overline{\CO'}}\cap \p_{\sfss'}),
 \ee
 which  is a smooth closed subvariety of $\CX_{\sfss, \sfss'}^{\bar \partial}$.

For every element $\mathbf e\in \CO\cap \p_\sfss$, write $K_\mathbf e$ for its  stabilizer in $K_{\sfss, \C}$. Let $\C_\mathbf e$ be the field $\C$ viewing as a  $\C[\overline \CO\cap \p_\sfss]$-algebra via the evaluation map at $\mathbf e$. Put
\[
  \CX_{\sfss, \sfss'}^{\mathbf e, \CO'}:=M_\sfss^{-1}(\mathbf e)\cap M_{\sfss'}^{-1}(\CO'\cap\p_{\sfss'})=M_\sfss^{-1}(\mathbf e)\cap M_{\sfss'}^{-1}(\overline{\CO'}\cap\p_{\sfss'}),
\]
which is a closed subvariety of $\CX_{\sfss, \sfss'}$. If $\mathbf e$ is not contained in the image of $M_\sfss$, then $\CX_{\sfss, \sfss'}^{\mathbf e, \CO'}=\emptyset$. Otherwise, it is a single
$(K_\mathbf e\times K_{\sfss',\C})$-orbit, and Lemma \ref{descko} implies that it is also a single $K_{\sfss',\C}$-orbit.
\begin{lem}\label{fiber111}
For every element $\mathbf e\in \CO\cap \p_\sfss$,
\[
  \C_\mathbf e \otimes_{\C[\p_{\sfss}]} \C[\CX_{\sfss, \sfss'}]\otimes_{\C[\p_{\sfss'}]} \C[\overline{\CO'}\cap \p_{\sfss'}]=\C[\CX_{\sfss, \sfss'}^{\mathbf e, \CO'}].
\]
\end{lem}
\begin{proof}
As schemes, we have that
\begin{eqnarray*}
  &&\{\mathbf e\}\times_{\p_\sfss}{\CX_{\sfss, \sfss'}}\times_{\p_{\sfss'}} (\overline{\CO'}\cap \p_{\sfss'})\\
  &=& \{\mathbf e\}\times_{\p_\sfss}{\CX^{\bar \partial}_{\sfss, \sfss'}}\times_{\p_{\sfss'}} (\overline{\CO'}\cap \p_{\sfss'})\\
    &=& \{\mathbf e\}\times_{\p_\sfss}   \CX_{\sfss, \sfss'}^{\CO, \CO'}\\
    &=& \{\mathbf e\} \times_{\CO\cap \p_\sfss} ( (\CO\cap \p_\sfss) \times_{\p_\sfss} \CX_{\sfss, \sfss'}^{\CO, \CO'})\\
     &=& \{\mathbf e\} \times_{\CO\cap \p_\sfss} \CX_{\sfss, \sfss'}^{\CO, \CO'}\\
   &=& \CX_{\sfss, \sfss'}^{\mathbf e, \CO'}.
\end{eqnarray*}
The last equality holds because the morphism $M_\sfss: \CX_{\sfss, \sfss'}^{\CO, \CO'}\rightarrow \CO\cap \p_\sfss$  is  $K_{\sfss, \C}\times K_{\sfss',\C}$-equivariant and hence smooth in the sense of algebraic geometry. This proves the lemma.
\end{proof}

\subsection{Commutative theta lifts and  geometric theta lifts}

The purpose of this subsection is to prove the following proposition.
\begin{prop}\label{propthetag}
Suppose that $\sigma'$ is a finitely generated  $(\C[\overline{\CO'}\cap \p_{\sfss'}], K_{\sfss', \C})$-module. Then
\[
\mathrm{AC}_{\CO}( \check \Theta_{\mathsf s'}^{\mathsf s}(\sigma'))= \check \vartheta_{\CO'}^{\CO}(\mathrm{AC}_{\CO'}(\sigma'))
\]
as elements of $\CK_\sfss(\CO)$.

\end{prop}

By a quasi-coherent module over a scheme $Z$, we mean a quasi-coherent module over the structure sheaf of $Z$. We say that a quasi-coherent module $\CM$ over a scheme $Z$ descends to a closed subscheme $Z_1$ of $Z$ if $\CM$ is isomorphic to the push-forward of a
 quasi-coherent module over $Z_1$ via the closed embedding $Z_1\rightarrow Z$. This is equivalent to saying that the ideal sheaf defining $Z_1$ annihilates $\CM$.

When $Z$ is an affine complex algebraic variety and $\sigma$ is a $\C[Z]$-module,
 we write $\CM_\sigma$ for  the quasi-coherent module over $Z$ corresponding to $\sigma$.

\begin{lem}\label{geotheta1}
Let $\sigma$ be a finitely generated $\CO$-bounded $(\C[\p_{\sfss}],K_{\sfss})$-module.  Assume that  $(\CM_\sigma)|_{\bar \partial\cap \p_\sfss}$ descends to $\CO\cap \p_\sfss$. Then
\[
  \mathrm{AC}_{\CO}(\sigma)=\mathrm{AC}_{\CO}(\C[\overline \CO\cap \p_{\sfss}]\otimes_{\C[\p_{\sfss}]} \sigma).
\]
\end{lem}
\begin{proof}
Let $\mathbf e\in \CO\cap \p_\sfss$. Note that for every ideal $I$ of $\C[\p_{\sfss}]$,
\[
  (I\cdot \sigma)_\mathbf e=(I_\mathbf e)\cdot \sigma_\mathbf e\subset  \sigma_\mathbf e,
\]
where a subscript $\mathbf e$ indicates the localization at $\mathbf e$. The assumption of the lemma implies that
\[
  ((I_{\overline{\CO}\cap \p_\sfss})^i)_\mathbf e \cdot \sigma_\mathbf e=0\qquad \textrm{for all } i\in \bN^+.
\]
Thus
\[
  ((I_{\overline{\CO}\cap \p_\sfss})^i \cdot \sigma)_\mathbf e=0,
\]
which implies the  lemma.
\end{proof}

\begin{lem}\label{geotheta2}
Suppose that  $Z$ is an affine complex algebraic variety with a transitive algebraic action of $K_{\sfss, \C}$. Let $\sigma$ be a finitely generated $(\C[Z], K_{\sfss, \C})$-module.
Let $z\in Z$ and write $K_z$ for the stabilizer of $z$ in $K_{\sfss, \C}$. Then  the natural map
\[
  \sigma^{K_{\sfss, \C}}\rightarrow (\C_z\otimes_{\C[Z]} \sigma)^{K_{z}}
\]
is a linear isomorphism. Here $\C_z$ is the field $\C$ viewing as a $\C[Z]$-algebra via the evaluation map at $z$, and a superscript group indicates the space of  invariant vectors under the group action.
\end{lem}
\begin{proof}
Note that the set
\[
 \bigsqcup_{x\in Z} \C_x\otimes_{\C[Z]} \sigma
\]
is naturally a $K_{\sfss, \C}$-equivariant algebraic vector bundle over $Z$, and  $\sigma$ is identified with the space of algebraic sections of this bundle. Thus
\[
  \sigma\cong \,^{\mathrm{alg}}\Ind_{K_z}^{K_{\sfss, \C}}(\C_z\otimes_{\C[Z]}\sigma)\qquad(\textrm{algebraically induced representation}).
\]
The lemma then follows by the algebraic version of the Frobenius reciprocity.
\end{proof}

\begin{lem}\label{geotheta3}
Suppose that $\sigma'$ is a finitely generated  $(\C[\overline{\CO'}\cap \p_{\sfss'}], K_{\sfss', \C})$-module, and write $\sigma:=\check \Theta_{\mathsf s'}^{\mathsf s}(\sigma')$. Then the  quasi-coherent module $(\CM_\sigma)|_{\bar \partial \cap \p_\sfss}$ descends  to $\CO\cap \p_\sfss$.
\end{lem}
\begin{proof}
Write
\[
 \tilde \sigma:=\C[\CX_{\sfss, \sfss'}]\otimes_{\C[\p_{\sfss'}]} \sigma'\otimes \zeta_{\sfss, \sfss'}=(\C[\CX_{\sfss, \sfss'}]\otimes_{\C[\p_{\sfss'}]} {\C[\overline{\CO'}\cap \p_{\sfss'}]})\otimes_{\C[\overline{\CO'}\cap \p_{\sfss'}]}(\sigma'\otimes \zeta_{\sfss, \sfss'}),
\]
to be viewed as a $\C[\CX_{\sfss, \sfss'}]$-module. Write $\tilde \sigma_0:=\tilde \sigma$, viewing as a $\C[\p_\sfss]$-module via the moment map $M_\sfss$.

Proposition \ref{propreduced} and the equalities  in \eqref{cxee} imply that
$(\CM_{ \tilde \sigma})|_{\CX_{\sfss, \sfss'}^{\bar \partial}}$ descends to $ \CX_{\sfss, \sfss'}^{\CO, \CO'}$. This implies that $(\CM_{ \tilde \sigma_0})|_{\bar \partial\cap \p_\sfss}$ descends to $ \CO\cap \p_\sfss$. The lemma then follows since $\sigma$ is a direct summand of $\tilde \sigma_0$.
\end{proof}

We are now ready to prove Proposition \ref{propthetag}.

\begin{proof}[Proof of Proposition \ref{propthetag}]
Let  $\sigma'$ be a finitely generated  $(\C[\overline{\CO'}\cap \p_{\sfss'}], K_{\sfss', \C})$-module, and write  $\sigma:=\check \Theta_{\mathsf s'}^{\mathsf s}(\sigma')$.
 Lemmas \ref{geotheta3} and \ref{geotheta1} imply that
\[
  \mathrm{AC}_{\CO}(\sigma)=\mathrm{AC}_{\CO}(\C[\overline \CO\cap \p_{\sfss}]\otimes_{\C[\p_{\sfss}]} \sigma).
\]

Suppose that $\mathbf e\in \CO\cap \p_\sfss$. Then
\begin{eqnarray*}
  && \C_\mathbf e \otimes_{\C[\overline \CO\cap \p_{\sfss}]} (\C[\overline \CO\cap \p_{\sfss}]\otimes_{\C[\p_{\sfss}]} \sigma)\\
  &=&  \C_\mathbf e \otimes_{\C[\p_{\sfss}]} \sigma\\
  &=&  \C_\mathbf e \otimes_{\C[\p_{\sfss}]} (\C[\CX_{\sfss, \sfss'}]\otimes_{\C[\p_{\sfss'}]} \sigma'\otimes \zeta_{\sfss, \sfss'})_{K_{\mathsf s', \C}} \\
   &=&\left( (\C_\mathbf e \otimes_{\C[\p_{\sfss}]} \C[\CX_{\sfss, \sfss'}]\otimes_{\C[\p_{\sfss'}]} \C[\overline{\CO'}\cap \p_{\sfss'}]) \otimes_{\C[\overline{\CO'}\cap \p_{\sfss'}]}\sigma'\otimes \zeta_{\sfss, \sfss'}\right)_{K_{\mathsf s', \C}} \\
      &=&\left( \C[\CX_{\sfss, \sfss'}^{\mathbf e, \CO'}] \otimes_{\C[\overline{\CO'}\cap \p_{\sfss'}]}\sigma'\otimes \zeta_{\sfss, \sfss'}\right)_{K_{\mathsf s', \C}} \qquad (\textrm{by Lemma \ref{fiber111}}).
\end{eqnarray*}

If $\mathbf e$ is not in the image of $M_\sfss$, then the above module is zero. Now we assume that $\mathbf e$ is in the image of $M_\sfss$ so that $\CX_{\sfss, \sfss'}^{\mathbf e, \CO'}$ is a single $K_{\sfss',\C}$-orbit. Pick an arbitrary element $\phi\in \CX_{\sfss, \sfss'}^{\mathbf e, \CO'}$, and write $\mathbf e':=M_{\sfss'}(\phi)$. Let
\[
  1\rightarrow  (K_{\mathsf s',\C})_\phi \rightarrow (K_{\mathsf s,\C}\times K_{\mathsf s', \C})_\phi\xrightarrow{\textrm{the projection to the first factor}} (K_{\mathsf s,\C})_{\mathbf e}\rightarrow 1
\]
be the exact  sequence as in Lemma \ref{descko}.

 Let $\C_\phi$ denote the field $\C$ viewing as a $\C[\CX_{\sfss, \sfss'}^{\mathbf e, \CO'}]$-algebra via the evaluation map at $\phi$. Then we have that
\begin{eqnarray*}
      &&\left( \C[\CX_{\sfss, \sfss'}^{\mathbf e, \CO'}] \otimes_{\C[\overline{\CO'}\cap \p_{\sfss'}]}\sigma'\otimes \zeta_{\sfss, \sfss'}\right)_{K_{\mathsf s', \C}}\\
             &=&\left( \C[\CX_{\sfss, \sfss'}^{\mathbf e, \CO'}] \otimes_{\C[\overline{\CO'}\cap \p_{\sfss'}]}\sigma'\otimes \zeta_{\sfss, \sfss'}\right)^{K_{\mathsf s', \C}}\quad \qquad (\textrm{because $K_{\sfss',\C}$ is reductive})\\
             &=&\left( \C_\phi\otimes_{\C[\CX_{\sfss, \sfss'}^{\mathbf e, \CO'}]} \C[\CX_{\sfss, \sfss'}^{\mathbf e, \CO'}] \otimes_{\C[\overline{\CO'}\cap \p_{\sfss'}]}\sigma'\otimes \zeta_{\sfss, \sfss'}\right)^{ (K_{\mathsf s',\C})_\phi} \qquad (\textrm{by Lemma \ref{geotheta2}})\\
               &=&\left( \C_{\mathbf e'}  \otimes_{\C[\overline{\CO'}\cap \p_{\sfss'}]}\sigma'\otimes \zeta_{\sfss, \sfss'}\right)^{ (K_{\mathsf s',\C})_\phi}\\
                 &=&\left( \C_{\mathbf e'}  \otimes_{\C[\overline{\CO'}\cap \p_{\sfss'}]}\sigma'\otimes \zeta_{\sfss, \sfss'}\right)_{ (K_{\mathsf s',\C})_\phi} \quad \qquad (\textrm{because $ (K_{\mathsf s',\C})_\phi$ is reductive}).
\end{eqnarray*}
Putting all things together, we have an identification
\[
\C_\mathbf e \otimes_{\C[\overline \CO\cap \p_{\sfss}]} (\C[\overline \CO\cap \p_{\sfss}]\otimes_{\C[\p_{\sfss}]} \sigma)=\left( \C_{\mathbf e'}  \otimes_{\C[\overline{\CO'}\cap \p_{\sfss'}]}\sigma'\otimes \zeta_{\sfss, \sfss'}\right)_{ (K_{\mathsf s',\C})_\phi}.
\]
It is routine to check that this identification respects the $(K_{\mathsf s,\C}\times K_{\mathsf s', \C})_\phi$-actions, and $ (K_{\mathsf s',\C})_\phi$ acts trivially on both sides. Thus the identification respects the $(K_{\mathsf s,\C})_{\mathbf e}$-actions. In view of Lemmas \ref{geotheta1} and \ref{geotheta3}, this proves the proposition.
\end{proof}

\subsection{Algebraic theta lifts and  geometric theta lifts}

Proposition \ref{propthetag} has the following consequence.
\begin{prop}\label{propthetag2}
Suppose that $\sigma'$ is a finitely generated $\CO'$-bounded $(\C[\p_{\sfss'}], K_{\sfss', \C})$-module. Then
\[
\mathrm{AC}_{\CO}( \check \Theta_{\mathsf s'}^{\mathsf s}(\sigma'))\preceq  \check \vartheta_{\CO'}^{\CO}(\mathrm{AC}_{\CO'}(\sigma'))
\]
in $\CK_\sfss(\CO)$.

\end{prop}
\begin{proof}
Suppose that  $0\rightarrow \sigma'_1\rightarrow \sigma'_2\rightarrow \sigma'_3\rightarrow 0$ be an exact sequence of  $\CO'$-bounded finitely generated $(\C[\p_{\sfss'}], K_{\sfss', \C})$-modules. Then
\[
 \check \Theta_{\mathsf s'}^{\mathsf s}(\sigma'_1)\rightarrow \check \Theta_{\mathsf s'}^{\mathsf s}(\sigma'_2)\rightarrow \check \Theta_{\mathsf s'}^{\mathsf s}(\sigma'_3)\rightarrow 0
 \]
 is  an exact sequence of  $\CO$-bounded finitely generated $(\C[\p_{\sfss}], K_{\sfss, \C})$-modules.
Thus
 \[
   \mathrm{AC}_{\CO}( \check \Theta_{\mathsf s'}^{\mathsf s}(\sigma'_2))\preceq \mathrm{AC}_{\CO}( \check \Theta_{\mathsf s'}^{\mathsf s}(\sigma'_1))+\mathrm{AC}_{\CO}( \check \Theta_{\mathsf s'}^{\mathsf s}(\sigma'_3)).
 \]
 On the other hand, it is clear that
  \[
  \vartheta_{\CO'}^{\CO}(\mathrm{AC}_{\CO'}(\sigma'_2))= \vartheta_{\CO'}^{\CO}(\mathrm{AC}_{\CO'}(\sigma'_1))+ \vartheta_{\CO'}^{\CO}(\mathrm{AC}_{\CO'}(\sigma'_3)).
  \]
The proposition then easily follows by Proposition \ref{propthetag}.
\end{proof}

We are now ready to prove the main result of this section. Recall that  $\cO$ is assumed to be  regular  for $\DD_{\mathsf s'}^{\mathsf s}$.
\begin{thm}\label{prop:GDS.AC}

  Let $\rho'$ be an $\CO'$-bounded $(\g_{\mathsf s'}, K_{\mathsf s'})$-module of finite length. Then  $\check \Theta_{\mathsf s'}^{\mathsf s}(\rho')$ is $\CO$-bounded, and
    \[
    \mathrm{AC}_{\cO}(\check \Theta_{\mathsf s'}^{\mathsf s}(\rho'))\preceq \check \vartheta_{\cO'}^\cO(\mathrm{AC}_{\cO'}(\rho')).
  \]
\end{thm}
\begin{proof}
Let $\CF'$ and $\CF$ be  good filtrations  on $\rho'$ and  $\check \Theta_{\mathsf s'}^{\mathsf s}(\rho')$ respectively as in Lemma \ref{lm} so that there exists a
 surjective $(\C[\p_{\sfss}], K_{\sfss, \C})$-module homomorphism
\be\label{surpkm}
  \check \Theta_{\mathsf s'}^{\mathsf s}(\mathrm{Gr}(\rho',\CF')) \rightarrow \mathrm{Gr}(\check \Theta_{\mathsf s'}^{\mathsf s}(\rho'),\CF).
\ee
The first assertion then follows from Lemmas \ref{l62} and \ref{comobound2}.

Put $\sigma':=\mathrm{Gr}(\rho',\CF')$.
Then
\begin{eqnarray*}
      && \mathrm{AC}_{\cO}(\check \Theta_{\mathsf s'}^{\mathsf s}(\rho'))\\
      &=&  \mathrm{AC}_{\cO}(\mathrm{Gr}(\check \Theta_{\mathsf s'}^{\mathsf s}(\rho'),\CF))\\
             &\preceq& \mathrm{AC}_{\cO}(\check \Theta_{\mathsf s'}^{\mathsf s}(\sigma')) \quad\  \qquad (\textrm{by \eqref{surpkm}})\\
          &  \preceq & \check \vartheta_{\CO'}^{\CO}(\mathrm{AC}_{\CO'}(\sigma'))\quad \qquad (\textrm{by Proposition \ref{propthetag2}})\\
               &=&\check \vartheta_{\CO'}^{\CO}(\mathrm{AC}_{\CO'}(\rho')).
               \end{eqnarray*}
This proves the theorem.
\end{proof}

\subsection{Geometries of regular descent}
\label{sec:GG}

\def\UU{{\bar \partial}}
\def\dbM{\breve{M}}
\def\dbMM{\breve{MM}}
\def\dbX{\breve{X}}
\def\dbfpp{\breve{\fpp}}
\def\ZdbX{\cZ_{\dbX}}
\def\aV{\acute{V}}
\def\fggs{\fgg_{\sfss}}
\def\fggsp{\fgg_{\sfss'}}
\def\fggspp{\fgg_{\sfss''}}
\def\fggspo{\fgg_{\sfss'_0}}
\def\fggspt{\fgg_{\sfss'_1}}
\def\fggspi{\fgg_{\sfss'_i}}
\def\fkks{\fkk_{\sfss}}
\def\fkksp{\fkk_{\sfss'}}
\def\fkkspo{\fkk_{\sfss'_0}}
\def\fkkspt{\fkk_{\sfss'_1}}
\def\fkkspi{\fkk_{\sfss'_i}}
\def\fpps{\fpp_{\sfss}}
\def\fppsp{\fpp_{\sfss'}}
\def\fppspo{\fpp_{\sfss'_0}}
\def\fppspt{\fpp_{\sfss'_1}}
\def\fppspi{\fpp_{\sfss'_i}}
\def\DDss{\DD_{\sfss'}^{\sfss}}
\def\DDsso{\DD_{\sfss'_0}^{\sfss}}
\def\cOpo{\cOp_{0}}
\def\Mss{M_{\sfss,\sfss'}}
\def\Ms{M_{\sfss}}
\def\Msp{M_{\sfss'}}

\def\Ks{{K}_{\sfss,\bC}}
\def\Ksp{{K}_{\sfss',\bC}}
\def\Kspo{{K}_{\sfss'_0,\bC}}
\def\Kspt{{K}_{\sfss'_1,\bC}}

\def\Gs{{G}_{\sfss}}
\def\Gsp{{G}_{\sfss'}}
\def\Gspo{{G}_{\sfss'_0}}
\def\Gspt{{G}_{\sfss'_1}}
\def\CMs{{\tilde M}_{\sfss}}
\def\CMsp{{\tilde M}_{\sfss'}}
\def\CMss{{\tilde M}_{\sfss,\sfss'}}
\def\Wss{{W_{\sfss,\sfss'}}}
\def\Woss{{W^{\circ}_{\sfss,\sfss'}}}
\def\CXss{\cX_{\sfss,\sfss'}}
\def\X{\bfee}
\def\Xp{{\bfee'}}
\def\Xpo{{\bfee'_0}}
\def\ww{\phi}
\def\wwo{\phi_0}
\def\sOp{\sO'}
\def\fggpsb{{\fgg^{\boxslash}_{\sfss'}}}
\def\fpppsb{{\fpp^{\boxslash}_{\sfss'}}}
\def\fpppso{\fpp_{\sfss'_0}}
\def\fpppst{\fpp_{\sfss'_1}}

\def\Vs{V_\sfss}
\def\Vsp{V_{\sfss'}}
\def\Vspp{V_{\sfss''}}
\def\Vspo{V_{\sfss'_0}}
\def\Vspt{V_{\sfss'_1}}
\def\Vspi{V_{\sfss'_i}}
\def\Wssi{W_{\sfss,\sfss'_i}}
\def\Wsso{W_{\sfss,\sfss'_0}}
\def\Wsst{W_{\sfss,\sfss'_1}}

\def\CXUO{\cX_{\sfss,\sfss'}^{\bar\partial ,\cOp}}
\def\CXOO{\cX_{\sfss,\sfss'}^{\cO ,\cOp}}

The following lemma is a form of  the Jacobian criterion for
regularity (see \cite[Theorem~2.19]{LiuAG}).
\begin{lem}\label{jacobic}
Let $Z$ and $Z'$ be smooth complex algebraic varieties, and let $z'\in Z$. Let $f: Z\rightarrow Z'$ be a morphism of algebraic varieties such that $f^{-1}(z')$ is a smooth subvariety of $Z$. Then the scheme theoretic fibre product $Z\times_{Z'} \{z'\}$ is reduced if and only if the sequence
\be\label{jc0}
  \mathrm T_z(f^{-1}(z'))\xrightarrow{\textrm{ inclusion }} \mathrm T_z(Z)\xrightarrow{\textrm{the differential of $f$}} \mathrm T_{z'}(Z')
\ee
is exact for all $z\in f^{-1}(z')$.
\end{lem}

Here and as usual $\mathrm T$ indicates the tangent space.

Recall that  $\CO\in \Nil(\g_\sfss)$ is regular for $\DD_{\mathsf s'}^{\mathsf s}$.
Let $\mathbf e'\in \CO'=\DD_{\mathsf s'}^{\mathsf s}(\CO)$. Denote by $G_{\mathbf e'}$ the stabilizer of $\mathbf e'$ in $G_{\sfss',\C}$, and by $\g_{\mathbf e'}$ the Lie algebra of $G_{\mathbf e'}$. Consider the map
\[
 \tilde M':=  (\tilde M_{\sfss'})|_{W_{\sfss, \sfss'}^{\bar \partial}} : W_{\sfss, \sfss'}^{\bar \partial}\rightarrow \g_{\sfss'}.
\]
By Lemma \ref{liftop0}, its fibre at $\mathbf e'$ equals the set
\[
W_{\sfss, \sfss'}^{\CO, \mathbf e'}:=\tilde M_{\sfss}^{-1}(\CO)\cap  \tilde M_{\sfss'}^{-1}(\mathbf e'),
\]
and this set is a single $G_{\sfss,\C}\times G_{\mathbf e'}$-orbit. Thus the set $W_{\sfss, \sfss'}^{\CO, \mathbf e'}$ is a smooth closed subvariety of $W_{\sfss, \sfss'}^{\bar \partial}$.

Let $\phi\in W_{\sfss, \sfss'}^{\CO, \mathbf e'}$. Then we have a sequence
\be\label{jc1}
    \mathrm T_\phi (W_{\sfss, \sfss'}^{\CO, \mathbf e'})\xrightarrow{\textrm{ inclusion}} \mathrm T_\phi(W_{\sfss, \sfss'}^{\bar \partial})\xrightarrow{\textrm{the differential of $\tilde M'$}} \mathrm T_{\mathbf e'}(\g_{\sfss'})
\ee
as in \eqref{jc0}.
The tangent spaces $\mathrm T_\phi(W_{\sfss, \sfss'}^{\bar \partial})$ and $\mathrm T_{\mathbf e'}(\g_{\sfss'})$ are respectively identified with $W_{\sfss, \sfss'}$ and $\g_{\sfss'}$ as usual. Then the second map in \eqref{jc1} equals the map
\be\label{differ1}
  W_{\sfss, \sfss'}\rightarrow \g_{\sfss'}, \qquad b\mapsto b\ww^{*}+\ww b^{*} .
\ee
Note that the  image of the first map in \eqref{jc1} is identified with the image of the following map
\be\label{differ2}
 \fggs\times  \g_{\mathbf e'} \rightarrow W_{\sfss, \sfss'}, \qquad (a,a')\mapsto a'\ww - \ww a.
\ee
Thus the sequence \eqref{jc1} is exact if and only if the following sequence is exact:
\begin{equation}\label{eq:CGG}
  \fggs\times  \g_{\mathbf e'} \xrightarrow{\eqref{differ2} } W_{\sfss, \sfss'}
  \xrightarrow{\eqref{differ1}} \fggsp.
\end{equation}

\begin{lem}\label{lemexact1}
The sequence \eqref{jc1} is exact when $\abs{\sfss'}=\abs{\nabla_{\mathrm{naive}}(\CO)}$.
\end{lem}
\begin{proof}
The map \eqref{differ1} equals the composition of
\be\label{differ3}
  W_{\sfss, \sfss'}\xrightarrow{b\mapsto b\phi^*} \g\mathfrak l(V_{\sfss'})\xrightarrow{x\mapsto x-x^*} \g_{\sfss'}.
\ee
Here $ \g\mathfrak l(V_{\sfss'})$ denotes the algebra of  linear endomorphisms of $V_{\sfss'}$, and $x^*\in  \g\mathfrak l(V_{\sfss'})$ is specified by requiring that
\be\label{ajoint4}
  \la x u, v\ra_{\sfss'}=\la u, x^* v\ra_{\sfss'},\qquad \textrm{for all }u,v\in V_{\sfss'}.
\ee
The second map in \eqref{differ3} is always surjective.

Note that $\abs{\sfss'}=\abs{\nabla_{\mathrm{naive}}(\CO)}$ if and only if  $\phi: V_{\sfss}\rightarrow V_{\sfss'}$ is surjective. Assume this is the case. Then $\phi^*$ is injective, and consequently the first map in  \eqref{differ3}  is  surjective. Thus the map \eqref{differ1} is also surjective. Since $\phi\in W_{\sfss, \sfss'}^{\CO, \mathbf e'}$ is arbitrary, this implies that the scheme theoretic  fibre product
$W_{\sfss, \sfss'}^{\bar \partial}\times_{\g_{\sfss'}}\{\mathbf e'\}$ is reduced (see \cite[Proposition~10.4]{HS}). Lemma \ref{jacobic} then implies that the sequence \eqref{jc1} is exact.
\end{proof}

\begin{lem}\label{lemexact2}
The sequence \eqref{jc1} is exact when $\mathbf c_1(\CO)=\mathbf c_2(\CO)$.
\end{lem}
\begin{proof}
Write
\be\label{v12}
V_{\sfss'}=V_1'\oplus V_2',
\ee
where $V_1':=\phi(V_{\sfss})$, which is a non-degenerate subspace of $V_{\sfss'}$, and $V_2'$ is the orthogonal complement of $V_1'$ in $V_{\sfss'}$. Then
\[
   W_{\sfss, \sfss'}=\left\{\begin{bmatrix}
        b_1\\
        b_2
      \end{bmatrix} \,:\, b_1\in  \Hom_\C(V_\sfss, V_1'), \ b_2\in  \Hom_\C(V_\sfss, V_2')\right\} \qquad\textrm{and}\qquad \phi=\begin{bmatrix}
        \phi_1\\
        0
      \end{bmatrix},
\]
where $\phi_1\in \Hom_\C(V_\sfss, V_1')$ is a surjective linear map.

Using the decomposition \eqref{v12}, we also have that
\[
 \g_{\sfss'}:= \Set{\begin{bmatrix}
        a_1 & -\psi^*\\
        \psi & a_2
      \end{bmatrix}\,:\, a_1\in \g_1', \, a_2\in \g_2', \,\psi \in \Hom_\C(V_1', V_2')},
\]
where $\g_1'$ and $\g_2'$ are respectively the Lie algebras of the isometry groups of $V_1'$ and $V_2'$. Here and henceforth, the adjoint operation $\psi\mapsto \psi^*$ is defined as in
\eqref{ajoint4} and \eqref{adjointmap}.

Note that
\[
  \mathbf e'=\begin{bmatrix}
        \mathbf e'_1 & 0\\
        0 & 0
      \end{bmatrix}, \qquad \textrm{where  } \, \mathsf e_1':=\phi_1 \phi_1^*\in \g_1',%\qquad (\textrm{$\phi_1^*$ is defined as in \eqref{ajoint4}}),
\]
and
\[
 \g_{\mathbf e'}:= \Set{\begin{bmatrix}
        a_1 & -\psi^*\\
        \psi & a_2
      \end{bmatrix}\,:\, a_1\in \g_{\mathbf e_1'}, \, a_2\in \g_2', \,\psi \in \Hom_\C(V_1', V_2')^{\mathbf e_1'}},
\]
where $\g_{\mathbf e_1'}$ denotes the centralizer of $\mathbf e_1'$ in $\g_1'$, and
\[
   \Hom_\C(V_1', V_2')^{\mathbf e_1'}:=\{\psi\in  \Hom_\C(V_1', V_2')\,:\, \psi  {\mathbf e_1'}=0\}.
\]

The two maps in \eqref{eq:CGG} are respectively given by
\[
  \left(a, \begin{bmatrix}
        a_1 & -\psi^*\\
        \psi & a_2
      \end{bmatrix}\right)\mapsto   \begin{bmatrix}
        a_1 \phi_1-\phi_1 a\\
        \psi \phi_1
      \end{bmatrix}
\]
and
\[
  \begin{bmatrix}
       b_1\\
       b_2
      \end{bmatrix}\mapsto   \begin{bmatrix}
       b_1 \phi_1^*+\phi_1 b_1^*  & \phi_1 b_2^*\\
        b_2 \phi_1^* & 0
      \end{bmatrix}.
\]
In order to prove the lemma, it suffices to show that the sequences
 \[
  \fggs\times  \g_{\mathbf e_1'} \xrightarrow{(a,a_1)\mapsto  a_1 \phi_1-\phi_1 a } \Hom_\C(V_\sfss, V_1')
  \xrightarrow{b_1\mapsto  b_1\phi_1^{*}+\phi_1 b_1^{*} } \g_1'
  \]
and
\be\label{exacthom}
  \Hom_\C(V_1', V_2')^{\mathbf e_1'} \xrightarrow{\psi \mapsto  \psi \phi_1 } \Hom_\C(V_\sfss, V_2')
  \xrightarrow{b_2\mapsto  b_2\phi_1^{*}}  \Hom_\C(V_1', V_2')
\ee
are both exact. Lemma \ref{lemexact1} implies that the first sequence is exact.

Note that the Young diagram of the nilpotent orbit in $\g_1'$ containing $\mathbf e_1'$ equals $\nabla_{\mathrm{naive}}(\CO)$. Thus
\[
  \dim (V_1'/\mathbf e_1'(V_1'))=\mathbf c_1(\nabla_{\mathrm{naive}}(\CO))=\mathbf c_2(\CO).
\]
On the other hand,
\begin{eqnarray*}
  \dim (V_{\sfss}/\phi_1^*(V_1'))&=&\dim (V_{\sfss})-\dim (\phi(V_{\sfss}))\\
  &=&\dim(\Ker(\phi))\\
  &=&\dim(\Ker(\phi^*\phi))\\
  &=&\mathbf c_1(\CO).
\end{eqnarray*}
The first map of \eqref{exacthom} is injective since $\phi_1$ is surjective. Then we have that
\begin{eqnarray*}
     && \dim ( \Hom_\C(V_1', V_2')^{\mathbf e_1'} ) \\
      &=& \dim V_2' \cdot \dim (V_1'/\mathbf e_1'(V_1'))\\
      & = &\dim V_2' \cdot  \bfcc_{2}(\cO)\\
      & = &\dim V_2' \cdot  \bfcc_{1}(\cO)\\
        & = &\dim V_2' \cdot  \dim (V_{\sfss}/\phi_1^*(V_1'))\\
        &=& \textrm{dimension of the kernel of the second map of  \eqref{exacthom}}.
    \end{eqnarray*}
Therefore the sequence \eqref{exacthom} is exact since the composition of  \eqref{exacthom} is the zero map. This finishes the proof of the lemma.
\end{proof}

\subsection{Proof of Proposition \ref{propreduced}} \label{secpp}

Now we suppose that $\mathbf e'\in \CO'\cap \p_{\sfss'}$. Denote by $K_{\mathbf e'}$ the stabilizer of $\mathbf e'$ in $K_{\sfss',\C}$. Consider the map
\[
 M':=  M_{\sfss'}|_{\CX_{\sfss, \sfss'}^{\bar \partial}} : \CX_{\sfss, \sfss'}^{\bar \partial}\rightarrow \p_{\sfss'}.
\]
As before, its fibre at $\mathbf e'$ equals the variety
\[
\CX_{\sfss, \sfss'}^{\CO, \mathbf e'}:=M_{\sfss}^{-1}(\CO\cap \p_\sfss)\cap  M_{\sfss'}^{-1}(\mathbf e').
\]
This variety is a finite union of open and closed
$K_{\sfss,\C}\times K_{\mathbf e'}$-orbits:
\[
   \CX_{\sfss, \sfss'}^{\CO, \mathbf e'}=\bigsqcup_{\sO\textrm{ is a   $K_{\sfss, \C}$-orbit in $ \CO\cap \p_\sfss$ such that $\sO\subset M_\sfss(\CX_{\sfss, \sfss'})$ and $\mathbf e'\in \nabla^\sfss_{\sfss'}(\sO)$}}  \CX_{\sfss, \sfss'}^{\sO, \mathbf e'},
 \]
 where
 \[
   \CX_{\sfss, \sfss'}^{\sO, \mathbf e'}:=M_\sfss^{-1}(\sO)\cap \CX_{\sfss, \sfss'}^{\CO, \mathbf e'}.
 \]
Thus  $\CX_{\sfss, \sfss'}^{\CO, \mathbf e'}$ is a smooth closed subvariety of $\CX_{\sfss, \sfss'}^{\bar \partial}$.

\begin{lem}\label{jacobic22}
Suppose that  $\phi\in \CX_{\sfss, \sfss'}^{\CO, \mathbf e'}$. Then the sequence
\be\label{jc111}
    \mathrm T_\phi (\CX_{\sfss, \sfss'}^{\CO, \mathbf e'})\xrightarrow{\textrm{ inclusion}} \mathrm T_\phi(\CX_{\sfss, \sfss'}^{\bar \partial})\xrightarrow{\textrm{the differential of $M'$}} \mathrm T_{\mathbf e'}(\p_{\sfss'})
\ee
is exact.
\end{lem}
\begin{proof}
Recall $\dot \epsilon\in \{\pm 1\}$ form \eqref{epsilond}. We have commutative diagrams
\[
 \begin{CD}
 \fggs \times  \g_{\mathbf e'} @>  \qquad  \textrm{\eqref{differ2} }\qquad  >>  W_{\sfss, \sfss'}
  @> \qquad  \textrm{\eqref{differ1}} \qquad  >>\fggsp\\
  @V(a,\,a') \mapsto  (L_\sfss\circ a \circ L_{\sfss}^{-1},\,  L_{\sfss'} \circ a' \circ L_{\sfss'}^{-1}) VV  @V b\mapsto - \dot \epsilon \sqrt{-1} L_{\sfss'}\circ b \circ L_\sfss^{-1} VV
  @V a' \mapsto - L_{\sfss'} \circ a' \circ L_{\sfss'}^{-1} VV \\
\fggs \times  \g_{\mathbf e'} @> \qquad   \textrm{\eqref{differ2} }\qquad  >>  W_{\sfss, \sfss'}
  @>\qquad  \textrm{\eqref{differ1}} \qquad  >> \fggsp.
\end{CD}
\]
Lemmas \ref{lemexact1} and \ref{lemexact2} imply that the horizontal sequences are exact. The lemma then follows by taking the fixed points of the vertical arrows.
\end{proof}

Lemmas \ref{jacobic} and \ref{jacobic22} imply that the scheme theoretic fibre product $\CX_{\sfss, \sfss'}^{\bar \partial}\times_{\p_{\sfss'}} \{\mathbf e'\}$ is reduced.
In other words,
\[
\left(\CX_{\sfss, \sfss'}^{\bar \partial}\times_{\p_{\sfss'}} ({\CO'}\cap \p_{\sfss'})\right)\times_{\CO'\cap \p_{\sfss'}}\{\mathbf e'\}
\]
is reduced.
Since $\mathbf e'\in \CO\cap \p_{\sfss'}$ is arbitrary, this further implies that  the scheme theoretic  fibre product
\[
\CX_{\sfss, \sfss'}^{\bar \partial}\times_{\p_{\sfss'}} ({\CO'}\cap \p_{\sfss'})
\]
is reduced (see \cite[Proposition~11.3.13]{EGAIV3} and \cite[Th\'eor\`eme~6.9.1]{EGAIV2}). Proposition  \ref{propreduced} is now proved.

\subsection{Geometric theta lifts of admissible orbit data}

\def\wedgetop{{\bigwedge}^{\mathrm{top}}}

\medskip

\DeclareDocumentCommand{\dliftv}{O{\sfss'} O{\sfss}}{
  {{\check \vartheta}_{#1}^{#2}}}
\def\dlift{{\check\vartheta}}
\def\AOD{\mathrm{AOD}}

Recall that  $\CO$ is regular for $\DDss$. Suppose that $\phi\in \CXOO$, $\mathbf e=M_\sfss(\phi)$ and $\mathbf e'=M_{\sfss'}(\phi)$. As before, $K_\mathbf e$ and $K_{\mathbf e'}$ are respectively the  stabilizers of $\mathbf e$ and $\mathbf e'$ in $K_{\sfss, \C}$ and $K_{\sfss', \C}$.  Write $\mathfrak k_{\mathbf e}$ and $\mathfrak k_{\mathbf e'}$ for the Lie algebras of $K_{\mathbf e}$ and $K_{\mathbf e'}$, respectively. As before, $(K_{\sfss, \C}\times K_{\sfss', \C})_{\ww}$ denotes the stabilizer of $\phi$ in $K_{\sfss, \C}\times K_{\sfss', \C}$.
   Write $\mathfrak s_\phi$ for the Lie algebra of  $(\Ks\times \Ksp)_{\ww}$.

\begin{lem}\label{lem:tan}
 As representations of  $\mathfrak s_\phi$,
  \begin{equation}\label{eq:aod}
    \wedgetop \mathfrak k_{\X} \otimes \wedgetop \CXss  \cong \wedgetop \mathfrak k_{\Xp}.
  \end{equation}
\end{lem}
\begin{proof}
  We retain the notation in the proof of \Cref{lemexact2}. Since $\phi\in \CX_{\sfss, \sfss'}$, both $V_1'$ and $V_2'$ are $L_{\sfss'}$-stable.  Write
  \[
    \g_i'=\mathfrak k_i'\oplus \p_i'\qquad (i=1,2),
  \]
  where $\mathfrak k_i'$ is the centralizer of $L_{\sfss'}$ in $\g_i'$, and $\p_i'$ is its orthogonal complement in $\g_i'$ under the trace form.

   The sequence \eqref{jc111} leads to a short exact sequence
  \[
    0\longrightarrow (\fkks\oplus \mathfrak k_{\mathbf e'})/ \fss_{\ww}
    \longrightarrow \CXss \longrightarrow \p_{\sfss'}/\p_2'  \longrightarrow 0
  \]
  of representations of $(\Ks\times \Ksp)_{\ww}$.
By
Lemma \ref{descko}, we have an exact sequence
\[
  0\rightarrow \mathfrak k_2'\rightarrow \mathfrak s_\phi\rightarrow \mathfrak k_{\mathbf e}\rightarrow 1
\]
of representations of $(\Ks\times \Ksp)_{\ww}$. Therefore, as  representations of $(\Ks\times \Ksp)_{\ww}$, we have that
\begin{eqnarray*}
      &&\wedgetop \mathfrak k_{\X} \otimes \wedgetop \CXss\\
      & \cong & \wedgetop (\p_{\sfss'}/\p_2')  \otimes \wedgetop (\fkks\oplus \mathfrak k_{\mathbf e'}) \otimes
      \left(\wedgetop \mathfrak k_2'\right)^{-1}\\
       & \cong & \wedgetop (\p_{\sfss'}/\p_2')  \otimes  \wedgetop \fkks \otimes
      \left(\wedgetop \mathfrak k_2'\right)^{-1}\otimes \wedgetop  \mathfrak k_{\mathbf e'}. \\
\end{eqnarray*}
The lemma then follows since $\mathfrak s_\phi$ acts trivially on $ \wedgetop (\p_{\sfss'}/\p_2')  \otimes  \wedgetop \fkks \otimes
      \left(\wedgetop \mathfrak k_2'\right)^{-1}$.
      \end{proof}

\begin{lem}\label{lem:aod}
  Suppose that $\sO$ is a $K_{\sfss,\C}$-orbit in $\cO\cap \fpps$ that is contained in the image of $M_\sfss$,  and
  $\sOp = \DDss(\sO)\subset \cOp\cap \fppsp$.
  Then for every $\cE' \in \AOD_{\sfss'}(\sOp)$,
   the geometric theta lift $\dlift_{\sO'}^{\sO}(\cE')$ is either zero or an element of  $\AOD_{\sfss}(\sO)$.
\end{lem}
\begin{proof}
  Note that $(\zeta_{\sfss,\sfss'})^{2} \cong (\wedgetop \CXss)^{-1}$ as representations of $\mathfrak k_\sfss\times \mathfrak k_{\sfss'}$, and $\cE'$ is represented by a line bundle. Thus
   the lemma is a direct consequence of  \Cref{lem:tan}.
\end{proof}

The following is an obvious consequence of \Cref{lem:aod}.
\begin{cor}
\label{lem:actau}
Let $\uptau=(\tau,\wp)\in \PBPes(\ckcO, \mathsf s)$. Then the associated cycle $\AC(\uptau)$ defined in \eqref{eq:AC}
is a nonnegative integral linear combination of elements of
$\AOD_{\sfss }(\cO)$.
\end{cor}

\section{Geometric theta lifts and induced nilpotent orbits: combinatorial description}\label{sec:nilmis}

In this section, we first review the combinatorial parametrization  of nilpotent
orbits and admissible orbit data. In terms of the combinatorial parameters, we will
  describe geometric theta lifts and calculate weak associated cycles of some induced representations.

Recall that $\star\in \{B,C,D,\widetilde {C}, C^*, D^*\}$. As before,  $\sfss=(\star, p,q)$ is a classical signature. Recall that $\abs{\sfss}:=p+q$. We call $\star$ the type of $\sfss$, and write $\star_\sfss:=\star$. Define the signature of $\sfss$ to be $\sign{\sfss}:=(p,q)$.
Let $\sfss'=(\star', p',q')$ be another classical signature, where  $\star'$ is the Howe dual of $\star$.

\subsection{$\mathfrak s\mathfrak l_2$-modules}
As usual, $\slt$ denotes the complex Lie algebra consisting of $2\times 2$ complex matrices
of trace zero.
 For each $i\in \bN^+$, write $\St{i}:=\Sym^{i\!-\!1}(\bC^{2})$ (the $(i-1)$-th symmetric power), which is an irreducible
 $\slt$-module of dimension $i$. We equip $\St{i}$ with a triple
 $(\inn{}{}_{\St{i}},J_{\St{i}},L_{\St{i}})$ that has the following properties:
\def\Lsti{L_{\St{i}}}
\def\Jsti{J_{\St{i}}}
\def\innsti#1#2{\inn{#1}{#2}_{\St{i}}}
\begin{itemize}
\item $\inn{}{}_{\St{i}}$ is a bilinear form on $\St{i}$ that is $\slt$-invariant in the sense that
\[
\innsti{x\cdot u}{v}+ \innsti{u}{x\cdot v} = 0\qquad\textrm{for all  $u,v\in \St{i}$,  $\ x\in \slt$};
\]
  \item $J_{\St{i}}$ is the complex conjugation on $\St{i}=\Sym^{i-1}(\bC^{2})$ corresponding to the real form $\Sym^{i-1}(\bR^{2})$;
  \item $L_{\St{i}} = (-1)^{\floor{\frac{i-1}{2}}}\, \Sym^{i-1}(\text{\tiny$\begin{pmatrix}\phantom{-}0 & 1 \\-1 & 0 \end{pmatrix}$})$, which is a linear automorphism of $\St{i}$ such that $L_{\St{i}}^2=(-1)^{i-1}$;
  \item  the Hermitian form  \be\label{slth}
  \St{i}\times \St{i} \rightarrow \C, \quad (u,v)\mapsto \innsti{\Lsti(u)}{\Jsti(v)}
  \ee
 is positive definite.
\end{itemize}
Note that the first three conditions imply that \eqref{slth} is a Hermitian form.

It is easy to see that the form $\innsti{}{}$ with the above conditions exists and is unique up to  positive scalar multiplications. Moreover,  the quadruple  $(\St{i}, \inn{}{}_{\St{i}},J_{\St{i}},L_{\St{i}})$
is a classical space of  signature of
\[
\left\{
  \begin{array}{ll}
    (B,(i+1)/2,(i-1)/2),\quad & \textrm{if $i$ is odd;}\\
    (C,i/2,i/2), \quad & \textrm{if $i$ is even.}
    \end{array}
    \right.
    \]

Put
\be\label{Sslt}
  \Xslt := \begin{bmatrix}1 & \phantom{-} \sqii \\
    \phantom{-} \sqii & -1 \end{bmatrix}
  \qquad \text{and} \qquad
\eslt := \begin{bmatrix}0 & \sqii\\ 0 & 0 \end{bmatrix}.
\ee
For each $\CO\in\Nil(\g_\sfss)$,
by Jacobson-Morozov theorem,  there is a
Lie algebra homomorphism
\begin{equation}\label{eq:slt00}
\varphi_\CO \colon \slt \rightarrow \fggs \subset \gl(\Vs)
 \end{equation}
such that  $\varphi_\CO(\Xslt), \varphi_\CO(\eslt)\in \CO$. Moreover, such a homomorphism is unique up to conjugation by $G_{\sfss, \C}$. Write \be\label{vco}
V_{\CO}:=V_\sfss,
\ee
to be viewed as an $\slt$-module via the homomorphism \eqref{eq:slt00}.

Similarly, for each $\sO\in\Nil(\p_\sfss)$,  there is a
Lie algebra homomorphism
\begin{equation}\label{eq:slt000}
\varphi_\sO \colon \slt \rightarrow \fggs \subset \gl(\Vs)
 \end{equation}
such that
\begin{itemize}
\item the diagram
\[
 \begin{CD}
          \slt
                  @>   \varphi_\sO  >>  \g_\sfss\\
            @V   \textrm{complex cojugation}  VV         @ VV x\mapsto J_\sfss x J_\sfss^{-1} V \\
      \slt
                  @>   \varphi_\sO  >>  \g_\sfss
  \end{CD}
\]
commutes;
\item the diagram
\[
 \begin{CD}
          \slt
                  @>   \varphi_\sO  >>  \g_\sfss\\
            @V   \textrm{negative transpose}  VV         @ VV x\mapsto L_\sfss x L_\sfss^{-1} V \\
      \slt
                  @>   \varphi_\sO  >>  \g_\sfss
  \end{CD}
\]
commutes;
    \item $\varphi_\sO(\Xslt)\in \sO$.
\end{itemize}
Moreover, such a homomorphism is unique up to conjugation by $K_{\sfss}$.  The map
\[
  \Nil(\p_\sfss)\rightarrow \{\textrm{nilpotent $G_\sfss$-orbit in $\sqii \g_{\sfss, \R}$ }\},\quad \sO\mapsto G_\sfss\cdot  (\varphi_\sO(\eslt))
\]
is a well-define bijection,
where $\g_{\sfss, \R}$ denotes the Lie algebra of $G_\sfss$. This is called the Kostant-Sekiguchi
correspondence.  See \cite{Se},  \cite[Section~6]{Vo89},  or \cite[Section 6]{SV}.

Write $V_{\sO}:=V_\sfss$, to be viewed as an $\slt$-module via the homomorphism \eqref{eq:slt000}. Then we have a decompostion
\begin{equation}\label{eq:Vl.100}
V_{\sO} = \bigoplus_{i\in \bN^{+}} \,^i V_{\sO}\otimes_\bC \St{i},
\end{equation}
where
\[
\,^i V_{
\sO} := \Hom_{\slt}( \St{i},V_{\sO})
\]
is the multiplicity space. Note that the decomposition \eqref{eq:Vl.100} is $J_\sfss$-stable as well as $L_\sfss$-stable. Moreover, for each $i\in \bN^+$, the subspace $\,^i V_{\sO}\otimes_\bC \St{i}$ of $V_\sfss$ is non-degenerate (with respect to $\la\,,\,\ra_\sfss$).
It is easy to see that
 there is a unique triple $(\inn{}{}_{\sO,i},J_{\sO,i},L_{\sO,i})$ with the following properties:
\begin{itemize}
\item $\inn{}{}_{\sO,i}$ is a bilinear form on $\,^i V_{\sO}$ such that
\[
\inn{}{}_{\sO,i}\otimes \inn{}{}_{\St{i}}=\inn{}{}_\sfss;
\]
  \item $J_{\sO,i}$ is a conjugate-linear automorphism of $\,^i V_{\sO}$ such that
\[
J_{\sO,i}\otimes J_{\St{i}}=J_\sfss;
\]
  \item   $L_{\sO,i}$ is a linear automorphism of $\,^i V_{\sO}$ such that
\[
L_{\sO,i}\otimes L_{\St{i}}=L_\sfss.
\]
\end{itemize}

\def\CCSS{{\mathsf{CS}}}
\def\LLCC{{\mathsf{LC}}}

Define an equivalence relation $\sim$ on the label set $\{B,C,D, \wtC, C^*, D^*\}$ corresponding to the partition \be\label{partlabel}
\{\{B,D\}, \{C,\wtC\}, \{C^*\}, \{D^*\}\}.
\ee
Then there is a
unique classical signature
\be\label{sso1}
\sfss_{\sO,i}=(\star_{\sO,i}, p_{\sO,i}, q_{\sO,i})
\ee
such that
\begin{itemize}
    \item $\star_{\sO,i}\in \{B,C,D,C^*,D^*\}$;
    \item
     $\star_{\sO,i}\sim \star$ if $i$ is odd, and $\star_{\sO,i}\sim \star'$ if $i$ is even;
    \item
      the quadruple
$(\,^i V_{\sO},\inn{}{}_{\sO,i},J_{\sO,i},L_{\sO,i})$
is a classical space of signature $(\star_{\sO,i}, p_{\sO,i}, q_{\sO,i})$.
\end{itemize}

It is routine to check that
\be\label{pqso}
  \begin{split}
    (p,q) = &
    \sum_{i\in \bN^{+}} \big(i \cdot p_{\sO, 2i} + i \cdot q_{\sO, 2i},
    i \cdot p_{\sO, 2i}+i\cdot q_{\sO, 2i}\big)\\
    &  +
    \sum_{i\in \bN^{+}} \big(i \cdot p_{\sO, 2i-1} + (i-1)\cdot q_{\sO, 2i-1},
    (i-1) \cdot p_{\sO, 2i-1}+i \cdot q_{\sO, 2i-1}\big).
  \end{split}
\ee

\subsection{Signed Young diagrams and the descent map}

For every Young diagram $\imath$ and every $i\in \bN^+$, write $\imath(i)$ for the number of rows of length $i$ in $\imath$.  We identify $\imath$ with the map
\[
  \bN^+\rightarrow \bN, \quad i\mapsto \imath(i).
\]
This map has  finite support, namely $\imath(i)=0$ for all but finitely many $i$. Via the above identification, the set of Young diagrams is then identified with the set of all maps $\bN^+\rightarrow \bN$
that have finite support, to be denoted by $\YD$.
We say that a Young diagram $\imath$ has type $\star$ if
\begin{itemize}
    \item $\imath(2)$, $\imath(4)$, $\imath(6)$, $\dots$, are even when $\star\in \{B,D, D^*\}$;
    \item $\imath(1)$, $\imath(3)$, $\imath(5)$, $\dots$, are even when $\star\in \{C,\wtC, C^*\}$.
\end{itemize}
Write $\YD_\star$ for the subset of $\YD$ consisting of the Young diagrams that have type $\star$. Then the map
\[
  \begin{array}{rcl}
  \Nil(\g_{\sfss})&\rightarrow& \Set{\imath\in \YD_\star \,:\,  \abs{\imath}=\abs{\sfss}}, \smallskip\\
  \CO&\mapsto& \left(i\mapsto \dim \Hom_{\slt}(\St{i}, V_\CO)\right)
  \end{array}
\]
is a well-defined bijection, where $V_\CO$ is as in \eqref{vco}. Using this bijection, the set $\Nil(\g_{\sfss})$ is  identified with a subset of $\YD_\star$.

\begin{defn}
 A signed Young diagram  is a map
 \[
 \bN^+\rightarrow \bN\times \bN,\quad i\mapsto (p_i,q_i)
 \]
 with finite support (namely $p_i=q_i=0$ for all but finitely many $i$). It has  type $\star$ if the following condition is satisfied:
 \begin{itemize}
     \item if $\star\in\{B,D\}$, then $p_i=q_i$ when $i$ is even;
     \item if $\star\in\{C,\wtC\}$, then $p_i=q_i$ when $i$ is odd;
     \item if $\star=C^*$, then $p_i, q_i$ are both even when $i$ is odd, and $p_i= q_i$  when $i$ is even;
     \item if $\star=D^*$, then  $p_i= q_i$  when $i$ is odd, and $p_i, q_i$ are both even when $i$ is even.
 \end{itemize}
\end{defn}

Let $\SYD_{\star}$ denote the set of all signed Young diagrams that have type $\star$.
In view of the formula \eqref{pqso}, for every $\sO\in \SYD_{\star}$ we define its signature to be
\be\label{signso}
  \begin{split}
    \Sign(\sO) :=(p_\sO,q_\sO):= &
    \sum_{i\in \bN^{+}} \big(i \cdot p_{2i} + i \cdot q_{2i},
    i \cdot p_{2i}+i\cdot q_{2i}\big)\\
    &  +
    \sum_{i\in \bN^{+}} \big(i \cdot p_{2i-1} + (i-1)\cdot q_{2i-1},
    (i-1) \cdot p_{2i-1}+i \cdot q_{2i-1}\big),
  \end{split}
\ee
where we write $\sO(j)=( p_{j},q_{j})$ for every $j\in \bN^+$.
Then the map
\[
  \begin{array}{rcl}
  \Nil(\p_{\sfss})&\rightarrow& \{\sO\in \SYD_\star\,:\, (p_\sO, q_\sO)=(p,q)\}, \smallskip\\
  \sO&\mapsto& \left(i\mapsto (p_{\sO,i}, q_{\sO,i})\right)
  \end{array}
\]
is a well-defined bijection, where $(p_{\sO,i}, q_{\sO,i})$ is as in \eqref{sso1}.
 Using this bijection, the set $\Nil(\p_{\sfss})$ is  identified with a subset of $\SYD_\star$.

For each signed Young diagram
$\sO$, define its naive descent $\DDn(\sO)$ to be the signed Young diagram given by
\be\label{naivedso}
\big(\DDn(\sO)\big)(i) = \sO(i+1) \quad \text{for } i \in \bN^+.
\ee
It is clear that $\DDn(\sO)\in
\SYD_{\star'}$ when $\sO\in
\SYD_{\star}$.

The following lemma is an analogue of \Cref{imageofmm} and is proved by direct verification. The case of real nilpotent orbits is treated in \cite[Section 3.2]{Zh}.

\begin{lem}\label{prop:DDss}
Let  $\sO\in \Nil(\fpps)$ and put
\begin{equation}\label{eq:DDss1}
  (p_0,q_0):= (p',q') - \Sign(\DDn(\sO)).
\end{equation}
Then $\sO$
is in the
image of moment map $M_\sfss^{\sfss, \sfss'}$ (see \eqref{momentmap}) if and only if $p_0,q_0\geq 0$.
When this is the case,   the descent $\sO' := \DDss(\sO)\in \Nil(\p_{\sfss'})$
is given  by
\[
\sO'(i)=
\left\{\begin{array}{ll}
  \sO(2)+(p_0,q_0),\quad& \textrm{if $i=1$};\\
  \sO(i+1),\quad& \textrm{if $i\geq 2$}.
  \end{array}
  \right.
  \]
\end{lem}

\subsection{Admissible orbit data and marked Young diagrams}
\label{sec:MYD}
\def\KCs{K_{[\sfss],\bC}}
\def\KCsX{(K_{[\sfss],\bC})_{\X}}
\def\KCsoX{(K_{[\sfss],\bC})_{\X}^{\circ}}

Recall the classical signature $[\sfss]$ defined in \eqref{bsfss}. Then we have a natural homomorphism $K_{\sfss,\C}\rightarrow K_{[\sfss],\C}$ that is either an isomorphism or a double covering map.

Let $\sO\in\Nil(\p_\sfss)$. Fix a homomorphism  $\varphi_{\sO}: \slt\rightarrow \g_\sff$  as in \eqref{eq:slt000}. Let $(K_{\sfss, \C})_\sO$ denote the centralizer of the image of $\varphi_{\sO}$ in $K_{\sfss, \C}$, and similarly let $(K_{[\sfss], \C})_\sO$ denote the centralizer of the image of $\varphi_{\sO}$ in $K_{[\sfss], \C}$. Then we have an identification
\[
(K_{[\sfss], \C})_\sO= \prod_{i\in \bN^{+}} K_{\sfss_{\sO,i},\C},
\]
where $\sfss_{\sO,i}=(\star_{\sO,i}, p_{\sO,i}, q_{\sO,i})$ is as in \eqref{sso1}.
Put $\mathbf e:=\varphi_\sO(\Xslt)$ (see \eqref{Sslt}), and write $(K_{\sfss,\C})_\mathbf e$ and $(K_{[\sfss],\C})_\mathbf e$ for its centralizers in $K_{\sfss,\C}$ and $K_{[\sfss],\C}$ respectively. Then $(K_{[\sfss], \C})_\sO$ is a Levi factor of $(K_{[\sfss], \C})_\mathbf e$, namely $(K_{[\sfss], \C})_\mathbf e$ is the semidirect product of $(K_{[\sfss], \C})_\sO$ with the unipotent radical. Consequently  $(K_{\sfss, \C})_\sO$ is a Levi factor of $(K_{\sfss, \C})_\mathbf e$.

For each $\CE\in \AOD_\sfss(\sO)$ and $i\in \bN^+$, we define a pair   $\CE(i)\in \Z\times \Z$ that is determined by the following conditions:
\begin{itemize}
    \item if $\star_{\sO,i}\notin\{B,D\}$, then $\CE(i)=(p_{\sO,i},q_{\sO,i})$;
    \item if $\star_{\sO,i}\in\{B,D\}$, then
    \[
    \CE(i)=((-1)^{\epsilon_i^+} \cdot p_{\sO,i}, (-1)^{\epsilon_i^-}\cdot  q_{\sO,i}),
    \]
    where $\epsilon_i^+$ and $\epsilon_i^-$ are elements of $\Z/2\Z$ such that
   \[
   \begin{cases}
       (\CE_{\mathbf e})|_{K_{\sfss_{\sO,i},\C}}\cong \det_1^{\epsilon_i^+} \otimes \det_{-1}^{\epsilon_i^+},\quad &\textrm{if $\star\neq \wtC$};\\
       \left(\left(\CE\otimes \det_{\sqrt{-1}}^{-\frac{1}{2}}\right)_{\mathbf e}\right)|_{K_{\sfss_{\sO,i},\C}}\cong \det_1^{\epsilon_i^+} \otimes \det_{-1}^{\epsilon_i^+},\quad &\textrm{if $\star= \wtC$}.
    \end{cases}
   \]
\end{itemize}
 Here $\det_{1}$ and $\det_{-1}$ are characters of $K_{\sfss_{\sO,i},\C}$ as in \eqref{dethalf0},  $\det_{\sqrt{-1}}^{-\frac{1}{2}}:=\left(\det_{\sqrt{-1}}^{\frac{1}{2}}\right)^{-1}$ is a character of $K_{\sfss, \C}$ as in \eqref{dethalf}, a subscript $\mathbf e$ indicates the fiber at $\mathbf e$ of a vector bundle, and  $\left(\CE\otimes \det_{\sqrt{-1}}^{-\frac{1}{2}}\right)_{\mathbf e}$ (which is originally a representation of $(K_{\sfss,\C})_\mathbf e$) is viewed as a representation of $(K_{[\sfss],\C})_\mathbf e$ by descent.

\begin{defn}
A marked Young diagram  is a map
 \[
 \bN^+\rightarrow \Z\times \Z,\quad i\mapsto (p_i,q_i)
 \]
 with finite support (namely $p_i=q_i=0$ for all but finitely many $i$). It has  type $\star$ if the following condition is satisfied:
 \begin{itemize}
     \item if $\star\in\{B,D\}$, then $p_i=q_i\in \bN$ when $i$ is even;
     \item if $\star\in\{C,\wtC\}$, then $p_i=q_i\in \bN$ when $i$ is odd;
     \item if $\star=C^*$, then $p_i, q_i\in 2\bN$ when $i$ is odd, and $p_i= q_i\in \bN$  when $i$ is even;
     \item if $\star=D^*$, then  $p_i= q_i\in \bN$  when $i$ is odd, and $p_i, q_i\in 2\bN$  when $i$ is even.
 \end{itemize}
\end{defn}

Let $\MYD_{\star}$ denote the set of all marked Young diagrams that have type $\star$. Define a map
\begin{equation}\label{eq:F1}
\sV\colon \Z\times \Z\rightarrow \bN\times \bN,\qquad (a,b)\mapsto (\abs{a},\abs{b}).
\end{equation}
Put
\[
  \MYD_\star(\sO) := \Set{\cE\in \MYD_\star  \,:\, \sV\circ \cE=\sO }.
\]
Then we have a well-defined map
\be\label{identmy}
\begin{array}{rcl}
\AOD_{\sfss}(\sO) & \rightarrow & \MYD_\star(\sO),\\
    \CE&\mapsto&(i\mapsto \CE(i)).
    \end{array}
\ee
It is easy to see that the map \eqref{identmy} is independent of the choice of $\varphi_\sO$, and is bijective whenever the set $\AOD_{\sfss}(\sO)$ is non-empty. By using the map \eqref{identmy}, we identify every admissible orbit datum in $\AOD_{\sfss}(\sO)$ with the corresponding marked Young diagram in $\MYD_\star(\sO)$.

\subsection{Four operations on marked Young diagrams}\label{fouro}

In this subsection, we will define four operations on marked Young diagrams. The first one corresponds to sign twist of admissible orbit data, and the next three ones will be used to describe theta lifts of admissible orbit data.

\smallskip

(i) {\bf  Sign twist.}
Suppose that $\star\in \set{B,D}$. Let $(\epsilon^{+},\epsilon^{-})\in \bZ/2\bZ\times \bZ/2\bZ$. For every $\CE\in \MYD_{\star}$  and $i\in \bN^+$, write
\[
(\cE \lotimes(\epsilon^{+},\epsilon^{-}))(i) := \begin{cases}
  ( (-1)^{\frac{(i+1)}{2}\cdot \epsilon^{+} +\frac{(i-1)}{2}\cdot \epsilon^{-}}p_{i},(-1)^{\frac{(i-1)}{2}\cdot \epsilon^{+} +\frac{(i+1)}{2}\cdot \epsilon^{-}} q_{i}),
  & \text{if $i$  is odd;}\\
  ( p_{i},q_{i}), & \text{if $i$  is even,}
\end{cases}
\]
where $( p_{i},q_{i}):=\CE(i)$. Then $\cE \lotimes(\epsilon^{+},\epsilon^{-})\in \MYD_{\star}$ and we get a  map
\be\label{stwist}
\begin{array}{rcl}
  \MYD_{\star}&\rightarrow&\MYD_{\star}, \\
   \CE&\mapsto &\cE \lotimes(\epsilon^{+},\epsilon^{-}).
   \end{array}
\ee
This is called the sign twist operation.

It is easily verified that the diagram
\[
 \begin{CD}
       \AOD_{\sfss}(\sO)
                  @>  (\,\cdot\,) \otimes((\det_1)^{\epsilon^+}\otimes (\det_{-1})^{\epsilon^-}) >>  \AOD_{\sfss}(\sO)\\
            @V  \eqref{identmy}  VV         @ VV  \eqref{identmy} V \\
     \MYD_\star (\sO)
                  @>\qquad \ (\,\cdot\,)  \otimes(\epsilon^{+},\epsilon^{-})\ \qquad  >>  \MYD_\star (\sO)
  \end{CD}
\]
commutes for all $(\epsilon^{+},\epsilon^{-})\in \bZ/2\bZ\times \bZ/2\bZ$, where $\det_{1}$ and $\det_{-1}$ are characters of $K_{\sfss,\C}$ as in \eqref{dethalf0}.

\medskip
(ii) {\bf  Involution.}
Suppose that $\star\in \set{C,\wtC}$. For every $\CE\in \MYD_{\star}$, and $i\in \bN^+$, write
\begin{equation}\label{eq:inv}
(\maltese\cE)(i) := \begin{cases}
  -\cE(i),& \text{if $i\equiv 2 \pmod{4}$;}\\
  \cE(i), & \text{otherwise.}
\end{cases}
\end{equation}
Then $\maltese\cE\in \MYD_{\star}$, and the map
\be\label{tinv}
  \mathrm T: \MYD_{\star}\rightarrow \MYD_{\star}
  \ee
  is an involution, namely $\mathrm T^2$ is the identify map.

\medskip

(iii) {\bf  Augmentation.}
Let $(p_{0},q_{0})\in \bN\times \bN$. When $\star\in \{B,D\}$ or
$ (\star,p_{0},q_{0})$ is a classical signature, we define a map
\be\label{aug}
  \begin{array}{rcl}
  \MYD_{\star'}&\rightarrow & \MYD_{\star},\\
  \CE'&\mapsto & \CE'\circledast (p_0,q_0):=\left(i\mapsto \begin{cases}
    (p_0,q_0), & \text{if } i=1;\\
    \cE'(i-1), & \text{if } i>1
  \end{cases}\right).
  \end{array}
\ee
This is called the augmentation operation.

\medskip

(iv) {\bf  Truncation.}
Let $(p_{0},q_{0})\in \bZ\times \bZ$. For  every $\cE\in \MYD_{\star}$
we write
\begin{equation}
  \label{eq:sub}
  \cE\sqsupseteq (p_{0},q_{0}) \quad \textrm{if and only if } \quad
  \begin{cases}
  p_{1}\geq p_0 \geq 0  \ \text{ or }\
  p_{1}\leq p_0 \leq 0; $\ $ and \\
  q_{1}\geq q_0 \geq 0 \  \text{ or } \
  q_{1}\leq q_0 \leq 0, \\
  \end{cases}
\end{equation}
where $ (p_{1},q_{1}):=\cE(1)$.

 When $\star\in \{B,D\}$ or
$ (\star,p_{0},q_{0})$ is a classical signature, we define a map
\be\label{tru}
 \begin{array}{rcl}
 \Lambda_{(p_0,q_0)}: \{\CE\in \MYD_{\star}\,:\,\CE\sqsupseteq (p_{0},q_{0})\} &\rightarrow & \MYD_{\star},\\
  \CE&\mapsto & \Lambda_{(p_0,q_0)}\cE,
  \end{array}
\ee
where $\Lambda_{(p_0,q_0)}\cE$ is the marked Young diagram
\[
i\mapsto \begin{cases}
    \CE(1)-(p_{0},q_{0}), & \text{if } i=1;\\
    \cE(i), & \text{if } i>1
  \end{cases}.
  \]
This is called the truncation operation.

For every set $X$, write $\Z[X]$ for the free abelian group with free generators $X$. Then the sign twisting operation \eqref{stwist}
extends to a homomorphism
\be\label{homom1}
\begin{array}{rcl}
  \Z[\MYD_{\star}]&\rightarrow&\Z[\MYD_{\star}], \\
   \CE&\mapsto &\cE \lotimes(\epsilon^{+},\epsilon^{-}).
   \end{array}
\ee
Similarly, the involution
\eqref{tinv} extends to a homomorphism
\be\label{homom2}
\begin{array}{rcl}
\mathrm T:   \Z[\MYD_{\star}]&\rightarrow&\Z[\MYD_{\star}]. \\
   \end{array}
\ee
For every pair $(p_0,q_0)\in \frac{1}{2}\Z\times \frac{1}{2}\Z$, we define the augmentation operation
\be\label{homom3}
\begin{array}{rcl}
  \Z[\MYD_{\star'}]&\rightarrow&\Z[\MYD_{\star}], \\
   \CE'&\mapsto &\cE' \circledast (p_0,q_0)
   \end{array}
\ee
to be the homomorphism determined by the following conditions:
\begin{itemize}
    \item if $(p_{0},q_{0})\in \bN\times \bN$, and $\star\in \{B,D\}$ or
$ (\star,p_{0},q_{0})$ is a classical signature, then the map \eqref{homom3} extends the map \eqref{aug};
\item
otherwise  \eqref{homom3} is the zero map.
\end{itemize}
The truncation operation \eqref{tru} also
extends to a  homomorphism
\be\label{homom4}
 \begin{array}{rcl}
 \Lambda_{(p_0,q_0)}: \Z[\MYD_{\star}] &\rightarrow & \Z[\MYD_{\star}],\\
  \CE&\mapsto & \Lambda_{(p_0,q_0)}\cE
  \end{array}
\ee
that  vanishes on $\MYD_\star\setminus \set{\CE\in \MYD_{\star}\,:\,\CE\sqsupseteq (p_{0},q_{0})}$.

The map
\[
  \MYD_{\star}\rightarrow \SYD_\star, \quad \CE\mapsto \sV\circ \CE.
\]
is surjective  and extends to a surjective homomorphism
\be\label{homom5}
\Z[\MYD_{\star}]\rightarrow \Z[\SYD_\star].
\ee
The homomorphism \eqref{homom3} (uniquely) descends to a homomorphism
\be\label{homom6}
\begin{array}{rcl}
  \Z[\SYD_{\star'}]&\rightarrow&\Z[\SYD_{\star}], \\
   \sO'&\mapsto &\sO' \circledast (p_0,q_0)
   \end{array}
\ee
in the sense that the diagram
\[
 \begin{CD}
       \Z[\MYD_{\star'}]
                  @>(\,\,) \circledast (p_0,q_0) >>  \Z[\MYD_{\star}]\\
            @V \eqref{homom5}   VV         @ VV  \eqref{homom5}  V \\
      \Z[\SYD_{\star'}]
                  @> (\,\,) \circledast (p_0,q_0) >>  \Z[\SYD_{\star}]\\
  \end{CD}
\]
commutes. Similarly the homomorphism \eqref{homom4} descends to a homomorphism
\be\label{homom7}
\begin{array}{rcl}
  \Lambda_{(p_0,q_0)}: \Z[\SYD_{\star}]&\rightarrow&\Z[\SYD_{\star}], \\
   \sO&\mapsto & \Lambda_{(p_0,q_0)} \sO.
   \end{array}
\ee
Obviously the homomorphisms \eqref{homom1} and \eqref{homom2} descend to the identity map on  $\Z[\SYD_{\star}]$.

\subsection{Theta lift of admissible orbit data: combinatorial description}

\def\cEp{\cE'}
\def\dliftso{{\check \vartheta}_{\sfss',\cO'}^{\sfss\,,\,\cO}}

Let $\CO\in \Nil(\g_\sfss)$, and put
\be\label{MYDs}
  \MYD_\sfss(\CO):=\bigsqcup_{\sO\in \Nil(\p_\sfss), \sO\subset \CO}  \MYD_\star(\sO).
  \ee
  Suppose that $\CO$ is contained in the image of the moment map $\tilde M_{\sfss}$ (see \eqref{momentmap2}), and put $\CO':=\nabla_{\sfss'}^\sfss(\CO)\in \Nil(\g_{\sfss'})$.
In what follows we will define the theta lift of a marked Young diagram, namely a homomorphism
\[
  \dliftso \colon \bZ[\MYD_{\sfss'}(\CO')]\rightarrow \bZ[\MYD_\sfss(\CO)].
\]
 Suppose that $\CE'\in\MYD_{\star '}(\sO')$, where $\sO'\in \Nil(\p_{\sfss'})$ and $\sO'\subset \CO'$.
 We only need to define $\dliftso (\cE')$.

 As in \Cref{imageofmm}, put
\[
 \delta:=  \abs{\sfss'}-\abs{\DD_\mathrm{naive}(\CO)}.
\]
 \Cref{imageofmm} implies that
\[
\delta= \bfcc_{1}(\cO')-\bfcc_{2}(\cO)\geq 0.
\]
 As in \eqref{signso}, we write $\sO'(i)=( p'_i, q'_i)$ ($i\in \bN^+$) and put
\[
  \lsign(\sO') :=
    \sum_{i\in \bN^{+}} \big(p'_{2i},q'_{2i} \big)
      + \sum_{i\in \bN^{+}} \big(q'_{2i-1},p'_{2i-1} \big).
\]

{\bf The case when  $\star \in \set{B,D, C^{*}}$.} Note that $\delta$ is even when $\star\in \set{B,D}$. Set
\[
(p_{0}, q_{0})  := (p,q) -(p',q')-\lsign(\sO') +
(\frac{\delta}{2},\frac{\delta}{2}).
\]
We define
\begin{equation}\label{eq:LCD}
  \dliftso(\cEp) :=
  \begin{cases}
  \left(\maltese^{\frac{p-q+1}{2}}(\Lambda_{(\frac{\delta}{2},\frac{\delta}{2})}( \cEp))\right)\circledast (p_{0},q_{0}), &
    \text{if } \star=B; \\
    \left(\maltese^{\frac{p-q}{2}}(\Lambda_{(\frac{\delta}{2},\frac{\delta}{2})}( \cEp))\right)\circledast (p_{0},q_{0}), &
    \text{if }  \star=D; \\
    \big(\Lambda_{(\frac{\delta}{2},\frac{\delta}{2})}( \cEp)\big)\circledast (p_{0},q_{0}), &
    \text{if } \star=C^{*}. \\
  \end{cases}
\end{equation}

{\bf The case when  $\star \in \set{C,\wtC,D^{*}}$.}
Set \[
n_{0} := \frac{\bfcc_{1}(\cO)-\bfcc_{2}(\cO)}{2}.
\]
Then $n_0\in \bN$ when $\star\in \set{C,\wtC}$. Also note that $\delta\in 2\bN$ when $\star=D^*$. We define
\begin{equation}\label{eq:LDC}
  \dliftso(\cEp) :=
  \begin{cases}
    \maltese^{\frac{p'-q'}{2}}\big(\sum_{j=0}^{\delta} \Lambda_{(j,\delta-j)}( \cEp)\circledast (n_{0},n_{0})\big), & \text{if
    } \star = C;\\
    \maltese^{\frac{p'-q'-1}{2}}\big(\sum_{j=0}^{\delta} \Lambda_{(j,\delta-j)}( \cEp)\circledast (n_{0},n_{0})\big), & \text{if
    } \star = \wtC;\\
    \sum_{j=0}^{\frac{\delta}{2}} \Lambda_{(2j,\delta-2j)}( \cEp)\circledast (n_{0},n_{0}),\quad & \text{if
    $\star = D^{*}$.}
  \end{cases}
\end{equation}

In both cases discussed above, it is easily verified that  $\dliftso(\CE')\in \Z[\MYD_\sfss(\CO)]$.

When $\cO$ is regular for $\DDss$, then admissible orbit data are
lifted to admissible orbit data, in view of \Cref{lem:aod}. The following proposition is now routine to check, which we skip.
\begin{prop}\label{lem:clift}
Suppose that $\cO$ is regular for $\DDss$. Then the diagram
\[
  \begin{tikzcd}
    \bZ[\AOD_{\sfss'}(\cO')]\ar[r,"\dliftso"]\ar[d] & \bZ[\AOD_{\sfss}(\cO)]
     \ar[d] \\
    \bZ[\MYD_{\sfss'}(\cO')] \ar[r,"\dliftso"] & \bZ[\MYD_{\sfss'}(\cO')
  \end{tikzcd}
\]
commutes, where the vertical arrows are the homomorphisms induced by the map \eqref{identmy}, and the top horizontal map is defined in  \Cref{sec:dlift}.
\end{prop}

\subsection{Induction of signed Young diagrams}
\label{subsec:induced}
\def\indsss{\Ind_{R_{\sfss'',\sfss_0}}^{G_{\sfss''}}}
\def\indss{\Ind_{R_{\sfss'',\sfss_0,\bC}}^{G_{\sfss'',\bC}}}
\def\WFw{{\mathrm{WF^{weak}}}}
\def\Gor{G_1}
\def\Ror{R_1}
\def\fggor{\fgg_{1,\bR}}
\def\frror{\frr_{1,\bR}}
\def\fnnor{\fnn_{1,\bR}}
\def\sOpr{\sO'_\bR}
\def\sOr{\sO_\bR}
\def\Indrg{\Ind_{R_{1}}^{G_{1}}}
\def\Indrgc{\Ind_{R_{1,\bC}}^{G_{1,\bC}}}

Recall that $\sfss'=(\star', p',q')$. Let $\CO'\in\Nil(\g_{\sfss'})$, as before. Let $\sfss''$ be a classical signature of the form $(\star',p'+p_0,q'+q_0)$, where
\begin{itemize}
    \item $p_0=q_0\in \bN$ when $\star'\in \{B,C,\wtC, D\}$, and
    \item $p_0=q_0\in 2 \bN$ when $\star'\in \{C^*,D^*\}$.
\end{itemize}
Then $\g_{\sfss''}$ has a Levi subalgebra of the form $\gl_{p_0}(\C)\times \g_{\sfss'}$. Identify $\CO'$ with the nilpotent orbit $\{0\}\times \CO'$ in $\gl_{p_0}(\C)\times \g_{\sfss'}$,
and put
\be\label{induo0}
\CO'':=\Ind_{\sfss'}^{\sfss''}(\CO'):=\Ind_{\gl_{p_0}(\C)\times \g_{\sfss'}}^{\g_{\sfss''}}(\CO')\in \Nil(\g_{\sfss''})
\ee
for the induced nilpotent orbit, namely the unique nilpotent orbit in $\Nil(\g_{\sfss''})$ that contains a non-empty Zariski open subset of  $
  \CO'+\mathfrak n_0$. Here  $\mathfrak n_0$ is the nilpotent radical of a parabolic subalgebra of $\g_{\sfss''}$ that contains $\gl_{p_0}(\C)\times \g_{\sfss'}$ as a Levi factor. See \cite{LS} or \cite[Section 7.1]{CM}.

We assume that
\[
p_0\geq \mathbf c_2(\CO'),
\]
and put
\[
  t_0:=p_0-\mathbf c_1(\CO').
\]
By \cite[Theorem 7.3.3]{CM}, the Young diagram of $\CO''$ is determined by the following conditions:
\begin{itemize}
    \item $\bfcc_i(\cOpp)=\bfcc_{i-2}(\cOp)$ for $i=4,5,6,\dots$;
    \item if $\star'\in \set{B,D, D^*}$,
    then
    \[
      (\bfcc_1(\cOpp), \bfcc_2(\cOpp),\bfcc_3(\cOpp))
      =\begin{cases}
        (p_0,p_0,\bfcc_1(\cOp)),     & \text{if $t_0\geq 0$ is even}; \\
          (p_0+1,p_0-1,\bfcc_1(\cOp)), & \text{if $t_0\geq 0$ is odd};  \\
          (\bfcc_1(\cOp),p_0,p_0),     & \text{if $t_0<0$};             \\
      \end{cases}
    \]
    \item  if $\star'\in \set{C,\wtC, C^*}$,
    then
    \[
      (\bfcc_1(\cOpp), \bfcc_2(\cOpp),\bfcc_3(\cOpp))
      =\begin{cases}
        (p_0,p_0,\bfcc_1(\cOp)),     & \text{if $t_0\geq 0$}; \\
        (\bfcc_1(\cOp),p_0,p_0), & \text{if $t_0 < 0$ is even};  \\
        (\bfcc_1(\cOp),p_0+1,p_0-1),     & \text{if $t_0<0$ is odd}.             \\
      \end{cases}
    \]
\end{itemize}

Recall that $\Nil(\p_{\sfss'})$ is identified with a subset of $\SYD_{\star'}$.  Put
\[
\SYD_{\sfss'}(\cO'):=\Nil_{\sfss'}(\CO'):=\{\sO'\in \Nil(\p_{\sfss'})\,:\, \sO'\subset \CO'\}\subset \SYD_{\star'}.
\]
Similarly we have a set  $\SYD_{\sfss''}(\cO'')$. In what follows we will define a homomorphism
\[
  \Ind_{\sfss'}^{\sfss''}: \Z[\SYD_{\sfss'}(\cO')] \rightarrow \Z[\SYD_{\sfss''}(\cO'')].
\]
Let $\sOp\in
\SYD_{\sfss'}(\cOp)$. We only need to define $\Ind_{\sfss'}^{\sfss''}(\sO')\in \Z[\SYD_{\sfss''}(\cO'')]$. Note that $t_0$ is even when $\star\in \{C^*, D^*\}$, as the set $\SYD_{\sfss'}(\cOp)$ is non-empty.

\medskip

{\bf The case when  $\star' \in \set{B,D, C^{*}}$.}
In this case, we define
\[
      \Ind_{\sfss'}^{\sfss''}(\sOp): =
      \begin{cases}
        \sOp\circledast (\frac{t_0}{2},\frac{t_0}{2})\circledast (0,0),
         & \text{if $t_0 \geq 0$ is even}; \\
        2  \left(\sOp\circledast (\frac{t_0-1}{2},\frac{t_0-1}{2})\circledast (1,1) \right),
         & \text{if $t_0\geq 0$ is odd};   \\
        \sum_{a,b\in \bN, a+b=-t_0}
        (\Lambda_{(a,b)}\sOp)\circledast(0,0)\circledast (a,b),
         & \text{if $\star'\in \{B,D\}$ and $t_0<0$};
         \\
         \sum_{a,b\in 2\bN, a+b=-t_0}
        (\Lambda_{(a,b)}\sOp)\circledast(0,0)\circledast (a,b),
         & \text{if $\star'=C^*$ and $t_0<0$}.
      \end{cases}
    \]

{\bf The case when  $\star' \in \set{C,\wtC, D^{*}}$.}
In this case, we define
\[
      \Ind_{\sfss'}^{\sfss''}(\sOp) :=
      \begin{cases}
        \sum_{a, b\in \bN, a+b=t_0}\sOp\circledast (a,b)\circledast (0,0),
           & \text{if $\star'\in \{C,\wtC\}$ and $t_0\geq 0$}; \\
            \sum_{a, b\in 2 \bN, a+b=t_0}\sOp\circledast (a,b)\circledast (0,0),
           & \text{if $\star'=D^*$ and $t_0\geq 0$}; \\
        (\Lambda_{(\frac{-t_0}{2},\frac{-t_0}{2})}\sOp) \circledast (0,0)
        \circledast (\frac{-t_0}{2},\frac{-t_0}{2}),
         & \text{if $t_0 < 0$ is even};  \\
        \sum_{a,b\in \bN, a+b = 2}(\Lambda_{(\frac{1-t_0}{2},\frac{1-t_0}{2})}\sOp)
        \circledast (a,b) \circledast (\frac{1-t_0}{2},\frac{1-t_0}{2}),
            & \text{if $t_0<0$ is odd}.
      \end{cases}
   \]

In both cases, it is easily verified that  $\Ind_{\sfss'}^{\sfss''}(\sOp)\in \Z[\SYD_{\sfss''}(\CO)]$.

\smallskip
\smallskip

For every   $K_{\sfss',\C}$-equivariant algebraic vector bundle $\CE'$ over a $\sO'$, define
\[
   (\CE')^{\mathrm{weak}}:=\textrm{(rank of $\CE'$)}\cdot \sO'\in \Z[\SYD_{\sfss'}(\CO')].
\]
This yields a homomorphism
\[
  \CK_{\sfss'}(\CO')\rightarrow \Z[\SYD_{\sfss'}(\CO')], \quad \CE'\mapsto (\CE')^{\mathrm{weak}}.
\]
For an $\CO'$-bounded Casselman-Wallach representation $\pi '$ of $G_{\sfss'}$,
we call $(\AC_{\cO'}(\pi'))^{\mathrm{weak}}$ the weak associated cycle of $\pi'$. Likewise we have a homomorphism
\[
  \CK_{\sfss''}(\CO'')\rightarrow \Z[\SYD_{\sfss''}(\CO'')], \qquad \CE''\mapsto (\CE'')^{\mathrm{weak}}.
\]

Put
\[
  \sfss_0 :=
  \begin{cases}
     (D, p_0,q_0),& \textrm{if $\star'=B$};\\
     (\star',p_0,q_0),\quad & \textrm{otherwise}.
  \end{cases}
\]
As in
\eqref{parabolic},
  $P_{\sfss'',\sfss_0}$ is a parabolic subgroup of $G_{\sfss''}$ with a Levi component $G_{\sfss'}\cdot R_{\sfss_0} $, where
\[
  R_{\sfss_0} \cong
  \begin{cases}
     \GL_{p_0}(\R),\quad & \textrm{if $\star\in \{B,D,C\}$};\\
     \widetilde \GL_{p_0}(\R),\quad & \textrm{if $\star=\wtC$};\\
     \GL_{\frac{p_0}{2}}(\bH),\quad & \textrm{if $\star\in \{C^*,D^*\}$}.
  \end{cases}
\]

\begin{prop}\label{lem:ind00}
Let $\pi'$ be an $\CO'$-bounded Casselman-Wallach representation of $G_{\sfss'}$ that extends to a representation of the Levi subgroup $R_{\sfss_0}\cdot G_{\sfss'}$ (still dented by $\pi'$). View it  as a representation of $P_{\sfss'',\sfss_0}$ through the trivial action of the unipotent radical. Then the (normalized smooth) parabolically induced representation $\Ind_{P_{\sfss'',\sfss_0}}^{G_{\sfss''}} (\pi')$ is $\CO''$-bounded, and
\be\label{wac0}
 \left( \AC_{\cO''}(\Ind_{P_{\sfss'',\sfss_0}}^{G_{\sfss''}} (\pi'))\right)^{\mathrm{weak}} = \Ind_{\sfss'}^{\sfss''}\left( (\AC_{\cO'}(\pi'))^{\mathrm{weak}}\right).
\ee
\end{prop}

\begin{proof}
By the result of Schmid-Vilonen \cite{SV},  the weak associated cycle agrees with the wave
front cycle under the Kostant-Sekiguchi
correspondence. On the other hand, the  wave
front cycle for a parabolically induced representation is calculated by Barbasch in {\cite[Corollary~5.0.10]{B.Orbit}}. From this result, the first assertion follows and it is also routine to verify that
\eqref{wac0} holds. We omit the details.
\end{proof}

\begin{remark} We thank Professor K. Vilonen for confirming that the results of \cite{SV} remain valid for all real reductive groups that may or may not be linear.% groups which appear in this article.
\end{remark}

\section{Matching the associated  cycles and proof of Theorem \ref{thmpitau}}\label{sec:equac}

The main purpose of this section is to \Cref{thmpitau}, and in particular to determine the associated cycle of the constructed representation $\pi _{\uptau}$, where
$\uptau \in \mathrm{\PBP}^{\mathrm{ext}}_{G}(\check \CO)$.

As before, $\mathsf s=(\star, p,q)$ and $ \mathsf s'=(\star', p',q')$ are classical signatures such that $\star'$ is the Howe dual of $\star$.

\subsection{Good descents and the doubling method}\label{subsec:gdescent}

\begin{defn}
An nilpotent orbit $\CO\in \Nil(\g_\mathsf s)$ is good for $\DD_{\mathsf s'}^{\mathsf s}$ if it is regular for $\DD_{\mathsf s'}^{\mathsf s}$ and satisfies the following additional condition:
\[
\begin{cases}
   \mathbf c_1(\CO)>\mathbf c_2(\CO), \qquad  &\textrm{if $\star \in\{B,D\}$}; \\
      \abs{\sfss'}-\abs{\DD_\mathrm{naive}(\CO)}\in\{0,1\},\qquad  &\textrm{if $\star \in\{C, \widetilde C\}$}; \\
\abs{\sfss'}=\abs{\DD_\mathrm{naive}(\CO)}, \qquad &\textrm{if $\star \in \{C^*, D^*\}$}.
  \end{cases}
\]

\end{defn}

In the rest of this section we suppose that $\CO\in \Nil(\g_\sfss)$ is good for $\DD_{\mathsf s'}^{\mathsf s}$. Put $\CO':=\DD_{\mathsf s'}^{\mathsf s}(\CO)$ as before. Write
 \[
  p_0:=q_0:=\begin{cases}
    \abs{\sfss}-\abs{\sfss'}-1, \qquad  &\textrm{if $\star \in\{B,D\}$}; \\
       \abs{\sfss}-\abs{\sfss'}+1, \qquad  &\textrm{if $\star \in\{C, \widetilde C\}$}; \\
 \abs{\sfss}-\abs{\sfss'},\qquad  &\textrm{if $\star \in \{C^*, D^*\}$},
  \end{cases}
\]
which is a non-negative integer.
Similar to \eqref{p123}, we have that
\be\label{p1234}
-p_0-1=
  \begin{cases}
 \nu_{\sfss'}-\abs{\sfss} +2,\quad& \textrm{if $\star =C^*$};\\
  \nu_{\sfss'}-\abs{\sfss},\quad& \textrm{otherwise}.
   \end{cases}
   \ee
Here $\nu_{\sfss'}$ is defined in \eqref{def:nus}. Thus $\pi'$ is convergent for $\check \Theta_{\mathsf s'}^{\mathsf s}$ whenever $\pi'$ is a Casselman-Wallach representation of $G_{\sfss'}$  that is  $\nu'$-bounded for some
$\nu'> -p_0-1$.

We will prove the following theorem in this section.
As alluded to earlier, $\cO$ is assumed to be  good for $\DD_{\mathsf s'}^{\mathsf s}$.

\begin{thm}\label{thm:GDS.AC}
  Let $\pi'$ be an $\CO'$-bounded Casselman-Wallach representation of $G_{\sfss'}$  that is  $\nu'$-bounded for some
$
  \nu'>
 -p_0-1.
$
Then $\Thetab_{\mathsf s'}^{\mathsf s}(\pi')$, $\check \Theta_{\mathsf s'}^{\mathsf s}(\pi')$  and $\check \Theta_{\mathsf s'}^{\mathsf s}(\pi'^{\mathrm{alg}})$ are all $\CO$-bounded, and the following equalities in $\CK_\sfss(\CO)$ hold:
    \[
  \mathrm{AC}_{\cO}(\Thetab_{\mathsf s'}^{\mathsf s}(\pi'))=    \mathrm{AC}_{\cO}(\check \Theta_{\mathsf s'}^{\mathsf s}(\pi'))= \mathrm{AC}_{\cO}(\check \Theta_{\mathsf s'}^{\mathsf s}(\pi'^{\mathrm{alg}}))=   \check \vartheta_{\cO'}^\cO(\mathrm{AC}_{\cO'}(\pi')).
  \]
\end{thm}

%Theorem \ref{thm:GDS.AC} is obviously true when $\abs{\sfss'}=0$. In the rest of this subsection we further assume that $\abs{\sfss'}>0$.

  Define three classical signatures
\[
  \dsfss:=(\dot \star, \dot p, \dot q):=\begin{cases}
    (\star', \abs{\sfss'}+p_0,  \abs{\sfss'}+q_0), \qquad  &\textrm{if $\star'\neq B$}; \\
        (D, \abs{\sfss'}+p_0,  \abs{\sfss'}+q_0),\qquad  &\textrm{if $\star'=B$},
  \end{cases}
\]
\[
  \sfss'':=(\star', p'+p_0, q'+q_0)\qquad\textrm{and}\qquad \sfss_0:=(\dot \star, p_0, q_0). %:=( \star'', p'', q'')
  \]
Then we have decompositions
\[
  V_{\dsfss}=V_{\sfss'^-}\oplus V_{\sfss''}= V_{\sfss'^-}\oplus V_{\sfss'}\oplus  V_{\sfss_0}= (V_{\sfss'}^\triangle \oplus X_{\sfss_0})\oplus (V_{\sfss'}^\nabla \oplus Y_{\sfss_0}).
\]
as in \eqref{decomv0}.

Theorem \ref{thm:GDS.AC} is obviously true when $\abs{\sfss}\leq 1$. Thus  we  assume in the rest of this subsection that $\abs{\sfss}\geq 2$. As in \eqref{dotp}, we have that
\[
 \dot p=\dot q= \begin{cases}
 \abs{\sfss}-1,\quad   &\textrm{if } \star\in \{B,D\};\\
 \abs{\sfss}+1,  & \textrm{if $\star\in \{C, \widetilde C\}$;}\\
 \abs{\sfss} & \textrm{if $ \star\in\{C^*,D^*\}$.}
\end{cases}
\]
Thus we are in the situation of
Section \ref{doublep} (with $\abs{\sfss}$ equals the integer $k$ of Section \ref{doublep}).

Let
\[
  P_{\sfss'',\sfss_0}=R_{\sfss'',\sfss_0}\ltimes N_{\sfss'',\sfss_0}=(G_{\sfss'}\cdot R_{\sfss_0})\ltimes N_{\sfss'',\sfss_0}
\]
be the parabolic subgroup of $G_{\sfss''}$ as in \eqref{parabolic} and \eqref{parabolic2}. Write $\mathfrak r_{\sfss_0}$ and $\mathfrak n_{\sfss'',\sfss_0}$ for the complexified Lie algebras of $R_{\sfss_0}$ and $N_{\sfss'',\sfss_0}$ respectively so that  the complexified Lie algebra of $P_{\sfss'', \sfss_0}$ equals $(\g_{\sfss'}\times \mathfrak r_{\sfss_0})\ltimes \mathfrak n_{\sfss'',\sfss_0}$.
As in \eqref{induo0}, write
\[
  \CO'':= \Ind_{\sfss'}^{\sfss''}  (\CO')\in \Nil(\g_{\sfss''})
\]
for the induced orbit.

\begin{lem}
The nilpotent orbit $\CO''$ is good for  $\nabla_\mathsf s^{\sfss''}$ and $\nabla_\mathsf s^{\sfss''}(\CO'')=\CO$.
\end{lem}
\begin{proof}
This follows from the description of induced orbits in \cite[Section 7.3]{CM}, which is reproduced in
\Cref{subsec:induced}.
\end{proof}

Recall the partial order  $\preceq $ on $ \CK_\sfss(\CO)$ defined in  \eqref{pord}.

\begin{lem}\label{lem74}
Suppose that $\CE\in \CK_{\sfss}(\CO)$ and $0\preceq \CE\preceq \check \vartheta_{\sfss', \CO'}^{\sfss, \CO}(\CE')$ for some $\CE'\in \CK^+_{\sfss'}(\CO')$. If $\CE\neq 0$, then  $\check \vartheta_{\sfss, \CO}^{\sfss'', \CO''}(\CE)\neq 0$.
\end{lem}
\begin{proof}
  Without loss of generality we assume that $\cE'$ is represented by an irreducible equivariant algebraic vector bundle over a nilpotent orbit $\sO'\in \Nil_{\sfss'}(\cO')$, and $\CE$ is represented by an irreducible equivariant algebraic vector bundle over a nilpotent orbit $\sO\in \Nil_{\sfss}(\cO)$.
  The definition of geometric theta lift implies that $\sO'=\DDss(\sO)$. Using \Cref{prop:DDss},
  one checks that there is a nilpotent orbit $\sO''\in \Nil_{\sfss''}(\cO'')$ such that
  $\sO = \DD_{\sfss}^{\sfss''}(\sO'')$.
  When $\cO = \DDn(\cO'')$, the non-vanishing of $\check \vartheta_{\sfss, \CO}^{\sfss'', \CO''}(\cE)$
  follows from the inejctivity of $\dlift_{\sO}^{\sO''}$.
  When $\abs{\cO} > \abs{\DDn(\cO'')}$, we have that $\star \in \set{B,D}$, $\sO'=\DDn(\sO)$
  and  $\cE = \dlift_{\sO'}^{\sO}(\cE')$.
  Then it is straightforward to check that $\vartheta_{\sfss,\sO}^{\sfss,\sO''}(\cE) \neq 0$.
\end{proof}

For each $\sfss_1\in S_{\star, \abs{\sfss}}$ (see \eqref{sstark}),  $\g_{\sfss_1}$ is identified with $\g_{\sfss}$ and as such we also consider $\CO$ as an element of $\Nil(\g_{\sfss_1})$.

\begin{prop}\label{prpaseq}
 Let $\pi'$ be an $\CO'$-bounded Casselman-Wallach representation of $G_{\sfss'}$ that extends to a representation of  the Levi subgroup $R_{\sfss_0}\cdot G_{\sfss'}$. View it  as a representation of $P_{\sfss'',\sfss_0}$ through the trivial action of the unipotent radical.
Assume that $\pi'$ is genuine when $\star'=\widetilde C$.

\noindent
(a) If $ \star\in \{B,D\}$,  then
\[
    \bigoplus_{\sfss_1\in S_{\star, \abs{\sfss}}}  \left(\check \vartheta^{\sfss'', \CO''}_{\sfss_1, \CO}(\check \vartheta^{\sfss_1, \CO}_{\sfss',\CO'}(\mathrm{AC}_{\CO'}(\pi')))\right)^{\mathrm{weak}}=2\cdot \left( \mathrm{AC}_{\CO'}(\Ind_{P_{\sfss'',\sfss_0}}^{G_{\sfss''}} (\pi'))\right)^{\mathrm{weak}}.
        \]

  \smallskip

    \noindent
(b) If $\star \in \{C, \widetilde C\}$, then
\begin{eqnarray*}
   && \left(\check \vartheta^{\sfss'', \CO''}_{\sfss, \CO}(\check \vartheta^{\sfss, \CO}_{\sfss',\CO'}(\mathrm{AC}_{\CO'}(\pi')))\right)^{\mathrm{weak}} +  \left(\check \vartheta^{\sfss'', \CO''}_{\sfss, \CO}(\check \vartheta^{\sfss, \CO}_{\sfss',\CO'}(\mathrm{AC}_{\CO'}(\pi'\otimes \det )))\right)^{\mathrm{weak}}\\
   & =&\left( \mathrm{AC}_{\CO''}(\Ind_{P_{\sfss'',\sfss_0}}^{G_{\sfss''}} (\pi'))\right)^{\mathrm{weak}}.
\end{eqnarray*}
  \smallskip

\noindent
(c) If $\star=D^*$, then
\[
 \left(\check \vartheta^{\sfss'', \CO''}_{\sfss, \CO}(\check \vartheta^{\sfss, \CO}_{\sfss',\CO'}(\mathrm{AC}_{\CO'}(\pi')))\right)^{\mathrm{weak}} = \left( \mathrm{AC}_{\CO''}(\Ind_{P_{\sfss'',\sfss_0}}^{G_{\sfss''}} (\pi'))\right)^{\mathrm{weak}}.
 \]

\smallskip

\noindent
(d) If $\star=C^*$, then
\[
  \bigoplus_{\sfss_1\in S_{\star, \abs{\sfss}} } \left(\check \vartheta^{\sfss'', \CO''}_{\sfss_1, \CO}(\check \vartheta^{\sfss_1, \CO}_{\sfss',\CO'}(\mathrm{AC}_{\CO'}(\pi')))\right)^{\mathrm{weak}}=\left( \mathrm{AC}_{\CO'}(\Ind_{P_{\sfss'',\sfss_0}}^{G_{\sfss''}} (\pi'))\right)^{\mathrm{weak}}.
\]
\end{prop}
\begin{proof}
  With the direct computations, the proposition follows from \Cref{lem:clift} and  \Cref{lem:ind00}.
\end{proof}

\begin{lem}\label{lem:GDS.AC2}
  Let $\pi'$ be an $\CO'$-bounded Casselman-Wallach representation of $G_{\sfss'}$  that is  $\nu'$-bounded for some
$
  \nu'>
 -p_0-1.
$
Then
 \begin{eqnarray*}
  && \left( \bigoplus_{\sfss_1\in S_{\star, \abs{\sfss}} } \mathrm{AC}_{\cO''}( \Thetab^{\mathsf s''}_{\mathsf s_1}(\Thetab_{\mathsf s'}^{\mathsf s_1}(\pi')))\right)^{\mathrm{weak}}
\\
&=& \left( \bigoplus_{\sfss_1\in S_{\star, \abs{\sfss}}}   \vartheta^{\sfss'', \CO''}_{\sfss_1, \CO}(\vartheta^{\sfss_1, \CO}_{\sfss',\CO'}(\mathrm{AC}_{\CO'}(\pi')))\right)^{\mathrm{weak}}.
\end{eqnarray*}

\end{lem}

\begin{proof}
There is no loss of generality to assume that $\pi'$ is genuine when $\star'=\widetilde C$. When $\star\neq C^*$, the lemma is a direct consequence of Theorem \ref{doubtt} and Proposition \ref{prpaseq}.

Now assume that $\star=C^*$.  Let $\CJ$ be as in Part (d) of Theorem \ref{doubtt}. In view of Lemma \ref{comobound}, and using formulas in  \cite[Theorems 5.2 and 5.6]{DKPC}, one checks that $\CJ$ is $\CO_2$-bounded for a certain oribit $\CO_2\in \Nil(\g_{\sfss''})$ that is contained in the boundary $\overline{\CO''}\setminus \CO''$ of  $\CO''$. Thus
$\CJ$ is $\cO''$-bounded, and $ \mathrm{AC}_{\cO''}(\CJ)=0$. Hence  the  lemma when $\star=C^*$  is also a consequence of Theorem \ref{doubtt} and Proposition \ref{prpaseq}.
\end{proof}

\subsection{Proof of \Cref{thm:GDS.AC}}
Let $\pi'$ be as in \Cref{thm:GDS.AC}.
For every $\sfss_1\in S_{\star, \abs{s}}$ we have
surjective $(\g_{\sfss_1}, K_{\sfss_1})$-module homomorphisms
\[
\check \Theta_{\mathsf s'}^{\mathsf s_1}(\pi'^{\mathrm{alg}})\rightarrow \left (\check \Theta_{\mathsf s'}^{\mathsf s_1}(\pi')\right )^{\mathrm{alg}}\rightarrow
 \left (\Thetab_{\mathsf s'}^{\mathsf s_1}(\pi')\right )^{\mathrm{alg}}.
 \]
 Thus Theorem \ref{prop:GDS.AC} implies that  $\check \Theta_{\mathsf s'}^{\mathsf s_1}(\pi'^{\mathrm{alg}})$, $\check \Theta_{\mathsf s'}^{\mathsf s_1}(\pi')$ and $\Thetab_{\mathsf s'}^{\mathsf s_1}(\pi')$ are all $\CO$-bounded,  and
    \be\label{ineq000}
   \mathrm{AC}_{\cO}( \Thetab_{\mathsf s'}^{\mathsf s_1}(\pi'))\preceq  \mathrm{AC}_{\cO}(\check \Theta_{\mathsf s'}^{\mathsf s_1}(\pi'))\preceq  \mathrm{AC}_{\cO}(\check \Theta_{\mathsf s'}^{\mathsf s_1}(\pi'^{\mathrm{alg}}))\preceq \check \vartheta_{\sfss', \cO'}^{\sfss_1, \cO}(\mathrm{AC}_{\cO'}(\pi')).
  \ee
  Consequently,
   \be\label{acco}
   \mathrm{AC}_{\cO}( \Thetab_{\mathsf s'}^{\mathsf s_1}(\pi'))\preceq   \check \vartheta_{\sfss', \cO'}^{\sfss_1, \cO}(\mathrm{AC}_{\cO'}(\pi')).
  \ee

By Lemma \ref{doublelift5},  $ \Thetab_{\mathsf s'}^{\mathsf s_1}(\pi')$ is over-convergent for $ \check \Theta_{\mathsf s_1}^{\mathsf s''}$ so that $\Thetab_{\mathsf s_1}^{\mathsf s''}( \Thetab_{\mathsf s'}^{\mathsf s_1}(\pi'))$ is defined.  Similar to \eqref{acco}, we have that
   \be\label{acco2}
   \mathrm{AC}_{\cO''}( \Thetab_{\mathsf s_1}^{\mathsf s''}(\Thetab_{\mathsf s'}^{\mathsf s_1}(\pi')))\preceq   \check \vartheta_{\sfss_1, \cO}^{\sfss'', \cO''}(\mathrm{AC}_{\cO}( \Thetab_{\mathsf s'}^{\mathsf s_1}(\pi'))).
  \ee
Combining \eqref{acco} and \eqref{acco2}, we obtain that
\be\label{acineq}
   \mathrm{AC}_{\cO''}( \Thetab_{\mathsf s_1}^{\mathsf s''}(\Thetab_{\mathsf s'}^{\mathsf s_1}(\pi')))\preceq   \check \vartheta_{\sfss_1, \cO}^{\sfss'', \cO''}(\check \vartheta_{\sfss', \cO'}^{\sfss_1, \cO}(\mathrm{AC}_{\cO'}(\pi'))).
\ee
Then Lemma \ref{lem:GDS.AC2} implies that the equality in \eqref{acineq} holds. In particular,
\be\label{acineq2}
   \mathrm{AC}_{\cO''}( \Thetab_{\mathsf s}^{\mathsf s''}(\Thetab_{\mathsf s'}^{\mathsf s}(\pi')))=  \check \vartheta_{\sfss, \cO}^{\sfss'', \cO''}(\check \vartheta_{\sfss', \cO'}^{\sfss, \cO}(\mathrm{AC}_{\cO'}(\pi'))).
\ee

In view of \eqref{acco}, write
\[
  \check \vartheta_{\sfss', \cO'}^{\sfss, \cO}(\mathrm{AC}_{\cO'}(\pi'))=\mathrm{AC}_{\cO}( \Thetab_{\mathsf s'}^{\mathsf s}(\pi'))+\CE,\qquad\textrm{where $\ \CE\in \CK^+_\sfss(\CO)$}.
\]
Then
\begin{eqnarray}
\nonumber \check \vartheta_{\sfss, \cO}^{\sfss'', \cO''}(\check \vartheta_{\sfss', \cO'}^{\sfss, \cO}(\mathrm{AC}_{\cO'}(\pi')))
\nonumber &=&\mathrm{AC}_{\cO''}( \Thetab_{\mathsf s}^{\mathsf s''}(\Thetab_{\mathsf s'}^{\mathsf s}(\pi')))\\
\nonumber &\preceq &  \check \vartheta_{\sfss, \cO}^{\sfss'', \cO''}(\mathrm{AC}_{\cO}( \Thetab_{\mathsf s'}^{\mathsf s}(\pi')))\\
\label{prece}  &\preceq &  \check \vartheta_{\sfss, \cO}^{\sfss'', \cO''}(\mathrm{AC}_{\cO}( \Thetab_{\mathsf s'}^{\mathsf s}(\pi'))+\CE)\\
\nonumber &=&\check \vartheta_{\sfss, \cO}^{\sfss'', \cO''}(\check \vartheta_{\sfss', \cO'}^{\sfss, \cO}(\mathrm{AC}_{\cO'}(\pi'))).
\end{eqnarray}
In particular,  the equality holds in \eqref{prece}. Therefore  $\check \vartheta_{\sfss, \cO}^{\sfss'', \cO''}(\CE)=0$, and
Lemma \ref{lem74} implies that $\CE=0$. This completes the proof of Theorem \ref{thm:GDS.AC}.

 \subsection{Proof of Theorem \ref{thmpitau} }\label{sec:comANDgeo}
  As in \Cref{sec:bipGeometry} and \Cref{sec:main},
  $\check \CO$ is a Young diagram that has good parity. Suppose that
$\uptau=(\tau, \wp)\in \PBPe_\star(\check \CO,\mathsf s)$. Then $\mathsf s =\mathsf s_{\tau}:=(\star, p_\tau, q_\tau)$. We have the dual descent
\[
  \check \CO':=\check \nabla(\check \CO),
\]
and the descent
 \[
  \uptau' := (\tau',\wp'):=\nabla(\uptau):= (\DD(\tau), \ckDD(\wp))\in \mathrm{\PBPe}_{\star'}(\check \CO').
 \]
 Set
 \[
  \mathrm d'_\tau:=
    \dim \tau' \qquad (\text{see \eqref{def:dimtau}}).
    \]

Suppose that $\CO\in \Nil(\g_\sfss)$ equals the Barbasch-Vogan dual of $\check \CO$.   Recall that $\pi_\uptau$ is the Casselman-Wallach representation of $G_{\mathsf s}$, defined in Section \ref{subsec:comTOrep}.
Also recall the element $\mathrm{AC}(\uptau)\in \CK_{\mathsf s}(\CO)$ from Section \ref{subsecass}.

Recall the character $\chi_{\check \CO}: \CU(\g_\sfss)^{G_{\sfss, \C}}\rightarrow \C$  defined in Section \ref{sec:intro}.
\begin{lem}\label{inf1}
The algebra $\CU(\g_\sfss)^{G_{\sfss, \C}}$ acts on $\pi_\uptau$ through the character $\chi_{\check \CO}$.% as defined in the Introduction.
\end{lem}
\begin{proof}
By induction, this is implied by \cite[Theorem
1.19]{PrzInf}.
\end{proof}

Recall the ideal $I_{\check \CO}\subset \mathcal U(\g_\sfss)$ from Section \ref{sec:intro}.
\begin{lem}\label{inf2}
Let $\pi$ be an $\CO$-bounded irreducible Casselman-Wallach representation of $G_{\sfss}$ such that $\mathcal U(\g_\sfss)^{G_{\sfss, \C}}$ acts on it  through the character $\chi_{\check \CO}$. Then $I_{\check \CO}$ annihilates $\pi$ and $\mathrm{AC}_{\CO}(\pi)\neq 0$.
\end{lem}

\begin{proof}
By \cite[Korollar 3.6]{BK}, the annihilator ideal of $\pi$ (or rather $\pi^{\mathrm{alg}}$, the  underlying $(\g_\sfss, K_\sfss)$-module of $\pi$) must be $I_{\check \CO}$, and so the complex associated variety of $\pi$ equals $\overline \CO$. By \cite[Theorem 8.4]{Vo89}, this implies $\mathrm{AC}_{\CO}(\pi)\neq 0$.
\end{proof}

In the rest of this subsection we will prove the following theorem, which, together with Lemmas \ref{inf1} and \ref{inf2},  implies  Theorem \ref{thmpitau}.
\begin{thm}\label{thmpitau222}
The representation $\pi_\uptau$ is irreducible, unitarizable, $\CO$-bounded and $( \mathrm d'_\tau-\nu_{\sfss})$-bounded.     Moreover,
\[
\mathrm{AC}_\CO(\pi_\uptau)=\mathrm{AC}(\uptau)\in \CK_{\mathsf s}(\CO).
\]
\end{thm}

We prove Theorem \ref{thmpitau222} by induction on $\mathbf c_1(\check \CO)$ (which equals the number of nonempty rows of $\check \CO$).  Theorem \ref{thmpitau222} is obvious when $\mathbf c_1(\check \CO)=0$. Now we assume $\mathbf c_1(\check \CO)>0$ and Theorem \ref{thmpitau222} holds for  smaller $\mathbf c_1(\check \CO)$.

 \begin{lem}\label{lem78}
 The orbit $\CO$ is good for $\nabla^\sfss_{\sfss_{\tau'}}$, and $\nabla^\sfss_{\sfss_{\tau'}}(\CO)$ equals the Barbasch-Vogan dual  of $\check \CO'$.
 \end{lem}
\begin{proof}
The first assertion is routine to check, and the second one is Lemma \ref{dualdesc}.
\end{proof}

In view of Lemma \ref{lem78}, we may take $\sfss'=\sfss_{\tau'}$ and results of \Cref{subsec:gdescent} will apply. Then
\[
\CO':= \nabla^\sfss_{\sfss'}(\CO)=\textrm{the Barbasch-Vogan dual  of $\check \CO'$}.
\]

By the induction hypothesis, the representation $\pi_{\uptau'}$ is irreducible, unitarizable, $\CO'$-bounded and $( \mathrm d'_{\tau'}-\nu_{\sfss'})$-bounded,     and
\[
\mathrm{AC}_\CO(\pi_{\uptau'})=\mathrm{AC}(\uptau')\in \CK_{\mathsf s'}(\CO').
\]

It is routine to check case by case  that
\[
\mathrm d'_{\tau'}-\nu_{\sfss'}>\max \left \{\nu^\circ_{\sfss'}-\abs{\sfss}, -p_0-1\right \}=
  \begin{cases}
\nu^\circ_{\sfss'}-\abs{\sfss} +1,\quad& \textrm{if $\star =C^*$};\\
 \nu^\circ_{\sfss'}-\abs{\sfss},\quad& \textrm{otherwise}.
   \end{cases}
\]
Thus $\pi_{\uptau'}\otimes \chi$ is over-convergent for $\check \Theta_{\sfss'}^{\sfss}$ for every unitary character $\chi$ of $G_{\sfss'}$.

Recall from \eqref{eq:def-pi} that
\[
    \pi_{\uptau}:=\left\{
     \begin{array}{ll}
         \check \Theta_{\sfss'}^{\sfss}(\pi_{\uptau'})\otimes (1_{p_\tau, q_\tau}^{+,-})^{\varepsilon_{\tau}}, &\hbox{if  $\star=B$ or $D$}; \\
         \check \Theta_{\sfss'}^{\sfss}(\pi_{\uptau'}\otimes \det^{\varepsilon_{\wp}}), &\hbox{if $\star=C$ or $\widetilde C$}; \\
              \check \Theta_{\sfss'}^{\sfss}(\pi_{\uptau'}), &\hbox{if $\star=C^*$ or $D^*$}. \\
            \end{array}
   \right.
\]
Similarly define
\[
   \bar \pi_{\uptau}:=\left\{
     \begin{array}{ll}
         \Thetab_{\sfss'}^{\sfss}(\pi_{\uptau'})\otimes (1_{p_\tau, q_\tau}^{+,-})^{\varepsilon_{\tau}}, &\hbox{if  $\star=B$ or $D$}; \\
          \Thetab_{\sfss'}^{\sfss}(\pi_{\uptau'}\otimes \det^{\varepsilon_{\wp}}), &\hbox{if $\star=C$ or $\widetilde C$}; \\
              \check \Thetab_{\sfss'}^{\sfss}(\pi_{\uptau'}), &\hbox{if $\star=C^*$ or $D^*$}. \\
            \end{array}
   \right.
\]
Theorem \ref{positivity000} implies that
$\bar \pi_{\uptau}$ is unitarizable.  Theorem \ref{thm:GDS.AC} and Proposition \ref{thmac1} imply that $\bar \pi_{\uptau}$ and $\pi_{\uptau}$ are $\CO$-bounded, and
\[
    \mathrm{AC}_{\cO}( \bar \pi_{\uptau})=\mathrm{AC}_{\cO}(\pi_{\uptau})=\mathrm{AC}(\uptau)\neq 0.
\]
Since $\bar \pi_{\uptau}$ is a completely reducible quotient of $\pi_{\uptau}$, and $\pi_{\uptau}$ has at most one  irreducible quotient by the fundamental result of Howe (\cite[Theorem 1A]{Howe89}), we conclude that $\bar \pi_{\uptau}$ is irreducible.

Write
$\CJ_\uptau$ for the kernel of the natural surjective homomorphism $\pi_{\uptau}\rightarrow \bar \pi_{\uptau}$. By Lemma \ref{inf1},  $\CU(\g_\sfss)^{G_{\sfss, \C}}$ acts on $\CJ_{\uptau}$ through the character $\chi_{\check \CO}$. Moreover, $\CJ_\uptau$  is $\CO$-bounded and  $\mathrm{AC}_{\CO}(\CJ_{\uptau})=0$. This implies that $\CJ_{\uptau}=0$ by Lemma \ref{inf2}.
Thus $\pi_{\uptau}=\bar \pi_{\uptau}$. Finally, Proposition \ref{boundm} implies that  $\bar \pi_{\uptau}$ is $(\mathrm d'_\tau-\nu_{\sfss})$-bounded. This finishes the proof of Theorem
\ref{thmpitau222}.

\section{Associated cycles: combinatorial aspect}
\label{sec:ACC}

\def\dsign{{}^d\mathrm{Sign}}

\def\acm{\cL}
\def\acme{\tilde{\cL}}
\def\dlifttso{{\check \vartheta}_{\sfss_{\tau'},\cO'}^{\sfss_{\tau},\;\cO}}
\def\DDtss{\DD_{\sfss_{\tau'}}^{\sfss_{\tau}}}
\def\taut{\tau_{\bftt}}

The goal of this section is to prove Propositions \ref{thmac1}-\ref{thmac5}.
We treat the cases of $\star\in \{B,D\}$ in Sections \ref{sec:ppb}-\ref{secfurth}. The cases of  $\star\in \{C,\wtC\}$ are reduced to the cases of  $\star\in \{B,D\}$ in \Cref{seccwtc}. The quaternionic case is much simpler and will be sketched in \Cref{quat}.

As before, $\star'$ is the Howe dual of $\star$, $\check \CO$ has good parity and  $\CO$ is the Barbasch-Vogan dual of $\check \CO$ (see \Cref{bvdual0}). The dual descent of $\check \CO$ is denoted by $\check \CO'$, and the Barbasch-Vogan dual of $\check \CO'$ is denoted by $\CO'$.  Recall from \Cref{sec:MYD} the notion of marked Young diagram and the related notation. Put
\begin{eqnarray*}
\MYD_\star(\CO)&:=&\{\CE\in \MYD_\star\,:\, \textrm{the composition of $\bN^+\xrightarrow{\CE} \Z\times \Z\xrightarrow{(a,b)\mapsto \abs{a}+\abs{b}}\bN$}\\
&&\qquad \qquad \qquad \textrm{represents the Young diagram $\CO$}\}.
\end{eqnarray*}

\subsection{Combinatorial cycles}
We introduce one notation. For each $(a,b)\in \Z\times \Z$, there is at most one element $\CE\in \MYD_\star$ such that
\[
  \CE(i):=\begin{cases}
    (a,b),\quad\textrm{if $i=1$;}
    \\
    (0,0),\quad\textrm{if $i=2,3,4, \dots$.}
  \end{cases}
\] When such an element exists, we denote it by $(a,b)_\star\in \MYD_\star\subset\Z[\MYD_\star]$.

Recall the map $\sV: \Z\times \Z  \rightarrow \bN\times \bN$ from \eqref{eq:F1}. This induces a map
\be\label{sfcirc0}
 \MYD_\star\rightarrow \SYD_\star,\qquad \CE\mapsto \sV \circ \CE.
 \ee
 We say that an element $\cA \in \bZ[\MYD_\star]$ is multiplicity free if it is a sum of
pairwise distinct elements in $\MYD_\star$ whose images under \eqref{sfcirc0} are also pairwise distinct.

Let  $\uptau=(\tau,\wp)\in \PBPes(\ckcO)$. Recall from \Cref{subsecass}
the associated cycle $\AC(\uptau)$ of $\uptau$, which is an element in
$\Z[\AOD_{\sfss_\tau }(\cO)]$
by \Cref{lem:actau}.
%, where $\sfss:=(\star, p_\tau, q_\tau)$.
Let $\ac_{\uptau}$ be the element in $\Z[\MYD_{\star}(\CO)]$ corresponding to $\AC(\uptau)$ via the map in \eqref{identmy}, to be called the combinatorial cycle attached to $\uptau$.

 Suppose that $\uptau'=(\tau',\wp')=\nabla(\uptau)$ is the descent of $\uptau$. By \Cref{lem:clift}, $\ac_{\uptau}$  is
described  combinatorially as follows.
\begin{itemize}
\item
When $\abs{\ckcO}=0$,
\[
  \ac_{\uptau} :=
  \begin{cases}
   (1,0)_{\star}, & \text{if } \alpha_\tau=B^+;\\
   (0,-1)_{\star}, & \text{if } \alpha_\tau=B^-;\\
   (0,0)_{\star}, & \text{otherwise}.
  \end{cases}
\]
\item
When $\abs{\ckcO} >0$,
\be\label{indac}
\ac_{\uptau} :=
\begin{cases}
  \dlifttso(\ac_{\uptaup})\lotimes (0,\varepsilon_{\tau}),
  & \text{if } \star \in \set{B,D}; \\
  \dlifttso(\ac_{\uptaup}\lotimes (\varepsilon_{\wp},\varepsilon_{\wp})),
  & \text{if } \star\in \set{C,\wtC};\\
  \dlifttso(\ac_{\uptaup}),
  & \text{if } \star \in \set{C^*,D^{*}}. \\
\end{cases}
\ee
Here $\dlifttso$ is defined by \eqref{eq:LCD} and \eqref{eq:LDC}.
\end{itemize}

Note that (see \eqref{MYDs} for the notation)
\be\label{signactau}
\ac_\uptau\in \Z[\MYD_{\sfss_{\tau}}(\CO)].
\ee

Using %\Cref{eptail} and
the induction formula \eqref{indac}, the following two lemmas are routine to verify.
 \begin{lem}\label{eqtail2}
Suppose that $\star \in \set{B,D}$.

\noindent (a) If   $\bfrr_1(\CO)\leq 1$, then $\ac_{\uptau}\in \MYD_\star(\CO)$ and
\[
  \ac_\uptau=
    (p_\tau, (-1)^{\varepsilon_\tau} q_\tau)_\star.
\]

\noindent (b) If   $\bfrr_1(\CO)= 1$, then the map
\[
\begin{array}{rcl}
\PBPes(\check \CO)\times \Z/2\Z &\rightarrow &
\{(a,b)\in\Z\times \Z\,:\, \abs{a}+\abs{b}=\abs{\CO}\} \\
(\uptau,\epsilon)&\mapsto & (-1)^{\epsilon}(\ac_\uptau(1))
\end{array}
\]
is bijective. %\qed

\end{lem}

\begin{lem}\label{eqtail2}
Suppose that $\star =C^*$ and    $\bfrr_1(\CO)\leq 1$. Then  $\ac_{\uptau}\in \MYD_\star(\CO)$ and
\[
  \ac_\uptau=
    (p_\tau,  q_\tau)_\star.
\]
Moreover,  the map
\[
\begin{array}{rcl}
\PBPes(\check \CO) &\rightarrow &
\{(a,b)\in 2\bN\times 2\bN\,:\, \abs{a}+\abs{b}=\abs{\CO}\} \\
\uptau&\mapsto & \ac_\uptau(1)
\end{array}
\]
is well-defined and bijective. %\qed

\end{lem}

\subsection{Tail of a painted bipartition and the descent map: a review}\label{sec:ppb}
\label{sec:tail}

We first recall from \cite[Section 10.5]{BMSZ2} the definition of the tail of a painted bipartition. For this we suppose that $\star\in\{B,D\}$ and $(\star, \abs{\check \CO})\neq(D, 0)$.
Put
\[
k :=
  \frac{\bfrr_{1}(\ckcO)-\bfrr_{2}(\ckcO)}{2} + 1,
\]
which is a positive integer.
Let $\ckcO_{\bftt}$ denote the Young diagram  that consists of two rows with lengths $2k-1$ and $1$. Then it has type $D$ and has good parity (with respect to $D$).

Every element in $\PBP_{D}(\ckcO_\bftt)$ has the form
 \begin{equation}
 \label{tail000}
  \ytb{{x_1} , {x_2} , {\enon\vdots},{\enon{\vdots}},{x_k}  } \times \emptyset \times
  D.
\end{equation}

\medskip

Let $ \tau=(\imath,\cP)\times(\jmath,\cQ)\times \alpha \in
\mathrm{PBP}_\star(\check \CO) $. The tail  $\tau_\bftt$ of $\tau$ will be a painted bipartition in
$\PBP_{D}(\ckcO_\bftt)$, which is defined below case by case.

{\bf The case when $\star = B$.}
In this case, we define the tail $\tau_\bftt$ to be the  painted bipartition  \eqref{tail000} such that the multiset $\{x_1, x_2, \cdots, x_k\}$ is the
union of the multiset
\[
  \set{\cQ(j,1)\,:\, \bfcc_{1}(\imath)+1 \leq j \leq  \bfcc_{1}(\jmath) }
\]
with the set
\[
  \begin{cases}
 \set{c}, &
 \qquad
  \text{if $\alpha = B^+$, and either $\bfcc_{1}(\imath)=0$ or $\cQ(\bfcc_{1}(\imath),1)\in \set{\bullet,s}$};  \\
 \set{s},&
  \qquad \text{if $\alpha = B^-$, and either $\bfcc_{1}(\imath)=0$ or $\cQ(\bfcc_{1}(\imath),1)\in \set{\bullet,s}$}; \\
\set{\cQ(\bfcc_{1}(\imath),1)},&
\qquad
\text{otherwise.}
\end{cases}
\]
{\bf The case when $\star = D$ and $\abs{\check \CO}\neq 0$.}
In this case, $1\leq \bfcc_1(\jmath)+1\leq \bfcc_1(\imath)$.  We define the tail $\tau_\bftt$ to be the  painted
bipartition  \eqref{tail000} such that the multiset $\{x_1, x_2, \cdots, x_k\}$
is the union of the multiset
\[
\set{\cP(j,1)\,:\, \bfcc_{1}(\jmath)+2 \leq j \leq \bfcc_{1}(\imath)}
\]
with the set
\[
\begin{cases}
    \set{c},                          &
    \ \text{if $\bfrr_2(\ckcO)=\bfrr_3(\ckcO)$, $\cP(\bfcc_1(\imath),1)\in \set{r,d}$}, \  \text{ and }                                   \\
     & \quad   (\cP(\bfcc_{2}(\imath),1) ,\cP(\bfcc_{2}(\imath),2)) = (r,c);  \\
    \set{\cP(\bfcc_{1}(\jmath)+1,1)}, &
    \    \text{otherwise.}
  \end{cases}
\]
Note that the condition $\bfrr_2(\ckcO)=\bfrr_3(\ckcO)$ implies that $ \bfcc_2(\imath)=\bfcc_{1}(\jmath)+1\geq 1 $ and hence $\cP(\bfcc_{2}(\imath),1)$ and $\cP(\bfcc_{2}(\imath),2)$ are defined.

In both cases it is clear that $(\tau_\bftt)_\bftt=\tau_\bftt$.

Let $x_{\tau}$ be the symbol in the last box of the tail $\tau_\bftt$, namely
\be\label{xtau}
x_\tau := \cP_{\tau_\bftt}(k,1).
\ee
It is clear that $x_\tau=x_{\tau_\bftt}$.

Recall the parity $\varepsilon_\tau\in \Z/2\Z$ defined in \eqref{epsilontau}. The following  lemma is  easily verified.

\begin{lem}\label{eptail}
Suppose that $\star \in \set{B,D}$, $(\star, \abs{\check \CO})\neq (D, 0)$, and  $\tau\in
\mathrm{PBP}_\star(\check \CO)$. Then \begin{itemize}
    \item $\varepsilon_\tau=0$ if and only if $x_\tau=d$;
    \item $p_{\tau_\bftt}=0$ if and only if $x_\tau=s$;
    \item $q_{\tau_\bftt}=0$ if and only if $x_\tau=r$.
\end{itemize}
\end{lem}
 Note that \Cref{eptail} (a) implies that  \[
 \varepsilon_\tau=\varepsilon_{\tau_\bftt}.
 \]

Recall that $\check \CO':=\check \nabla(\check \CO)$ is the dual descent of $\check \CO$ (see \eqref{duald}).  The key properties of the descent map
are summarized in the following proposition.

\begin{prop}[{\cite{BMSZ2}*{Proposition~10.9}}]
\label{lem:delta}
Suppose that $\star \in \set{B,D}$ and $(\star, \abs{\check \CO})\neq (D, 0)$. Write $\ckcOpp := \ckDD(\ckcO')$ and consider the map
\begin{equation}\label{eq:delta}
   \PBP_\star(\ckcO)\longrightarrow
    \PBP_\star(\ckcOpp)\times \PBP_{D}(\ckcO_\bftt),
    \qquad \tau \mapsto (\nabla(\nabla(\tau)),\tau_\bftt).
\end{equation}

\noindent (a) Suppose that
$\bfrr_2(\ckcO)>\bfrr_3(\ckcO)$. Then the map \eqref{eq:delta} is bijective, and for every $\tau\in  \PBP_\star(\ckcO) $,
\begin{equation}\label{eq:sign.D}
\ssign(\tau)
=(\bfcc_2(\cO),\bfcc_2(\cO))+\ssign(\nabla(\nabla(\tau)))+\ssign(\tau_\bftt).
\end{equation}

\noindent (b)  Suppose that  $\bfrr_2(\ckcO)=\bfrr_3(\ckcO)>0$.
Then the map \eqref{eq:delta} is injective with image
\begin{equation}\label{eq:delta.I}
    \Set{ (\tau'',\tau_0)  \in \PBP_\star(\ckcOpp)\times \PBP_D(\ckcO_\bftt)  \,:\,
    \begin{array}{l}
        \text{either
    $x_{\tau''} = d$, or} \\
    \text{$x_{\tau''}\in \set{r,c}$  and
    $\cP_{\tau_0}^{-1}(\set{s,c})\neq \emptyset$}
    \end{array}
}.
\end{equation}
Moreover,  for every $\tau\in  \PBP_\star(\ckcO) $,
\begin{equation}\label{eq:sign.GD}
\ssign(\tau)
=(\bfcc_2(\cO)-1,\bfcc_2(\cO)-1)+\ssign(\nabla(\nabla(\tau)))+\ssign(\tau_\bftt).
\end{equation}
\end{prop}

\subsection{Basic properties of combinatorial cycles}

We continue to assume that $\star\in \set{B,D}$.
Let $\uptau:=(\tau, \wp)\in \PBPes(\ckcO)$. Its  descent is denoted by $\uptau':=(\tau', \wp')\in \PBP_{\star'}^{\textrm{ext}}(\ckcO')$ ($\star'$ is the Howe dual of $\star$), and the descent of $\uptau'$ is denoted by $\uptau'':=(\tau'', \wp'')\in \PBPes(\ckcO'')$, where $\ckcO''$ is the dual descent of $\ckcO'$. We call $\uptau''$ the double descent of $\uptau$.

For every  $\CA\in \bZ[\MYD_{\star}]$, we define the one-box truncations of $\CA$ to be
\[
\CA^{+} := \Lambda_{(1,0)}(\CA) \AND \CA^{-} := \Lambda_{(0,1)}(\CA).
\]

Applying the induction formula \eqref{indac} twice and using the signature formulas \eqref{eq:sign.D} and \eqref{eq:sign.GD}, we get the following lemma.
\begin{lem}\label{lem:dlift00}
 Suppose that $\star\in\{B,D\}$ and $(\star, \abs{\check \CO})\neq (D, 0)$. Put
\be\label{gammatau}
   \gamma_\tau:=\begin{cases}
   \frac{p_{\tau_\bftt}-q_{\tau_\bftt}}{2}+1, \qquad & \textrm{if $\star=B$};\\
    \frac{p_{\tau_\bftt}-q_{\tau_\bftt}}{2}, \qquad &\textrm{if $\star=D$}.
   \end{cases}
    \ee
\noindent (a)
 If  $\bfrr_{2}(\ckcO)>\bfrr_{3}(\ckcO)$, then
  \begin{equation}\label{eq:BD}
    \ac_{\uptau}= \maltese^{\gamma_\tau}\big(\ac_{\uptau''}\otimes (\varepsilon_{\wp'}, \varepsilon_{\wp'})\circledast (n_0,n_0)\big)\circledast (p_{\tau_{\bftt}},q_{\tau_{\bftt}})
    \lotimes (0,\varepsilon_{\tau}),
  \end{equation}
 where $n_0:=\frac{\bfcc_{1}(\cO')-\bfcc_{2}(\cO')}{2}\ (\in \bN)$ and $\cO'$ is the Barbasch-Vogan dual of $\check \CO'$.

\noindent (b)
 If  $\bfrr_{2}(\ckcO)=\bfrr_{3}(\ckcO)$,
  then
\be\label{decac}
    \ac_{\uptau} =
    \ac_{\uptau,+}+\ac_{\uptau,-},
\ee
    where
  \begin{equation}\label{eq:BD201}
    \ac_{\uptau,+}:=\big(\maltese^{\gamma_\tau} (\pac{\uptaupp}\circledast (0,0))\big) \circledast (p_{\tau_{\bftt}},q_{\tau_{\bftt}}-1)\lotimes(0,\varepsilon_{\tau})
    \end{equation}
    and
      \begin{equation}\label{eq:BD202}
    \ac_{\uptau,-}:= \big(\maltese^{\gamma_\tau}(\nac{\uptaupp}\circledast (0,0)) \big) \circledast (p_{\tau_{\bftt}}-1,q_{\tau_{\bftt}})
    \lotimes(0,\varepsilon_{\tau}).
  \end{equation} %\qed
  \end{lem}

  Let $\ac_{\uptau,+}$ and $\ac_{\uptau,-}$ be as in \eqref{eq:BD201} and \eqref{eq:BD202} respectively (when the assumptions of
  Lemma \ref{lem:dlift00} (b) hold). \Cref{lem:dlift00} has the following obvious consequence.
  \begin{lem}\label{taile0}
  Suppose that $\star \in \set{B,D}$, $(\star, \abs{\check \CO})\neq (D, 0)$.

   \noindent (a) Suppose that $\bfrr_{2}(\ckcO)>\bfrr_{3}(\ckcO)$, and there is a marked Young diagram   $\CE\in \MYD_\star$ that has a nonzero coefficient in $\ac_\uptau$. Then
   \[
     \CE(1)=(p_{\tau_\bftt}, (-1)^{\varepsilon_\tau} q_{\tau_\bftt}).
   \]

   \noindent (b) Suppose that $\bfrr_{2}(\ckcO)=\bfrr_{3}(\ckcO)$, and there is a marked Young diagram   $\CE_+\in \MYD_\star$ that has a nonzero coefficient in $\ac_{\uptau,+}$. Then  $q_{\tau_\bftt}\geq 1$ and
   \[
     \CE_+(1)=( p_{\tau_\bftt}, (-1)^{\varepsilon_\tau}(q_{\tau_\bftt}-1)).
   \]

   \noindent (c) Suppose that $\bfrr_{2}(\ckcO)=\bfrr_{3}(\ckcO)$, and there is a marked Young diagram   $\CE_-\in \MYD_\star$ that has a nonzero coefficient in $\ac_{\uptau,-}$. Then  $p_{\tau_\bftt}\geq 1$ and
   \[
     \CE_-(1)=( p_{\tau_\bftt}-1, (-1)^{\varepsilon_\tau} q_{\tau_\bftt}).
   \]
  \end{lem}

 As the first application of  \Cref{taile0}, we get the following result.

  \begin{prop}\label{lem:dlift001}
 Suppose that $\star\in\{B,D\}$. Then $\ac_\uptau$ is multiplicity free. Moreover, when  $(\star, \abs{\check \CO})\neq(D,0)$ and $\bfrr_{2}(\ckcO)=\bfrr_{3}(\ckcO)$, both $\ac_{\uptau,+}$ and  $\ac_{\uptau,-}$ are multiplicity free.
 \end{prop}
 \begin{proof}
 Note that the four operations in \Cref{fouro} (sign twist, involution, augmentation, truncation) send multiplicity free elements to multiplicity free elements. Using \Cref{lem:dlift00}, we prove by induction on the number of rows of $\check \CO$ that
 \begin{itemize}
     \item if $\bfrr_{2}(\ckcO)>\bfrr_{3}(\ckcO)$, then $\ac_\uptau$ is multiplicity free; and
     \item
     if $\bfrr_{2}(\ckcO)=\bfrr_{3}(\ckcO)$, then both $\ac_{\uptau,+}$ and  $\ac_{\uptau,-}$ are multiplicity free.
 \end{itemize}
 In the second case, \Cref{taile0} implies that $\ac_\uptau=\ac_{\uptau,+}+\ac_{\uptau,-}$ is also multiplicity free.
 \end{proof}

  \begin{prop}\label{lem:BD204}
Suppose that $\star \in \set{B,D}$.

\noindent (a) The inequality $\ac_{\uptau} \neq 0$ holds.

\noindent (b)  If $(\star, \abs{\check \CO})\neq (D, 0)$ and $x_{\tau}=s$ , then $\ac_{\uptau}^{+} =0$ and $\ac_{\uptau}^{-}=0$.

\noindent (c) If $(\star, \abs{\check \CO})\neq (D, 0)$ and $x_{\tau}\in \set{r,c}$, then $\ac_{\uptau}^{+} \neq 0$ and $\ac_{\uptau}^{-}=0$.

\noindent (d) If $(\star, \abs{\check \CO})\neq (D, 0)$ and $x_{\tau}=d$, then $\ac_{\uptau}^{+} \neq 0$ and
  $\ac_{\uptau}^{-}\neq 0$.

\end{prop}
\begin{proof}
The proposition clearly holds when $\abs{\check \CO}=0$. We prove by induction on the number of rows of $\check \CO$. Now we assume that $\abs{\check \CO}>0$ and the proposition holds when the number of rows of $\check \CO$ is strictly smaller. In particular, $\ac_{\uptau''}\neq 0$.

First assume that $\bfrr_{2}(\ckcO)>\bfrr_{3}(\ckcO)$. Then \eqref{eq:BD} implies that $\ac_\uptau\neq 0$. By \Cref{eptail}, we have that
\begin{itemize}
    \item if $x_\tau=s$, then $p_{\tau_\bftt}=0$ and $(-1)^{\varepsilon_\tau} q_{\tau_\bftt}<0$;
      \item
      if $x_\tau\in \{r,c\}$, then $p_{\tau_\bftt}>0$ and $(-1)^{\varepsilon_\tau} q_{\tau_\bftt}\leq 0$;
      \item
      if $x_\tau=d$, then $p_{\tau_\bftt}>0$ and $(-1)^{\varepsilon_\tau} q_{\tau_\bftt}> 0$.
\end{itemize}
 Then the proposition follows from \Cref{taile0} (a).

Now we assume that $\bfrr_{2}(\ckcO)=\bfrr_{3}(\ckcO)$. Then $(\star, \abs{\check \CO''})\neq (D,0)$. By \Cref{lem:dlift001}, we know that $\ac_\uptau $ is nonzero  if and only if $\ac_{\uptau,+}$ or $\ac_{\uptau,-}$ is nonzero. Similarly, $(\ac_\uptau)^+ $ is nonzero  if and only if $(\ac_{\uptau,+})^+$ or $(\ac_{\uptau,-})^+$ is nonzero, and $(\ac_\uptau)^- $ is nonzero  if and only if $(\ac_{\uptau,+})^-$ or $(\ac_{\uptau,-})^-$ is nonzero.

{\bf The case when  $x_{\tau''}=d$.} In this case,  the induction hypothesis implies that \[
\ac_{\uptau''}^+\neq 0\quad\textrm{and} \quad \ac_{\uptau''}^-\neq 0.
\]

If  $x_\tau=s$, then as before, we have that
\[
p_{\tau_\bftt}=0,\  q_{\tau_\bftt}\geq 2,\quad \textrm{and}\quad  \varepsilon_\tau=1.
\]
This implies that $\ac_{\uptau,-}=0$ and $\ac_{\uptau,+}\neq 0$. \Cref{taile0} (b) implies that
$(\ac_{\uptau,+})^+=(\ac_{\uptau,+})^-=0$.

If  $x_\tau=r$, then we have that
\[
p_{\tau_\bftt}\geq 2\  \quad \textrm{and}\quad  \varepsilon_\tau=1.
\]
This implies that $\ac_{\uptau,-}\neq 0$, and \Cref{taile0} (c) further implies that $(\ac_{\uptau,-})^+\neq 0$.
Therefore  $(\ac_{\uptau})^+\neq 0$. By considering two cases when $q_{\tau_\bftt}$ is zero or nonzero, it is also easy to see that $(\ac_{\uptau})^-= 0$.

If  $x_\tau=c$, then we have that
\[
p_{\tau_\bftt}, q_{\tau_\bftt}\geq 1\  \quad \textrm{and}\quad  \varepsilon_\tau=1.
\]
This implies that $\ac_{\uptau,+},\ac_{\uptau,-}\neq 0$, and \Cref{taile0} (b) and (c) further imply that $(\ac_{\uptau})^+\neq 0$ and $(\ac_{\uptau})^-= 0$.

If  $x_\tau=d$, then we have that
\[
p_{\tau_\bftt}, q_{\tau_\bftt}\geq 1\  \quad \textrm{and}\quad  \varepsilon_\tau=0.
\]
This implies that $\ac_{\uptau,+},\ac_{\uptau,-}\neq 0$, and \Cref{taile0} (b) and (c) further imply that $(\ac_{\uptau})^+\neq 0$ and $(\ac_{\uptau})^-\neq 0$.

In conclusion, we have proved that  the proposition holds when $x_{\tau''}=d$.

{\bf The case when  $x_{\tau''}\neq d$.} In this case,  \Cref{lem:delta} implies
\be\label{taunotd}
x_{\tau''}\in \set{r,c}\quad\textrm{and}\quad
    \cP_{\tau_\bftt}^{-1}(\set{s,c})\neq \emptyset.
    \ee
    By the induction hypothesis, we have that $(\ac_{\uptau''})^+\neq 0$ and $(\ac_{\uptau''})^-= 0$.
     In particular, $\ac_{\uptau,-}=0$ and $\ac_\uptau=\ac_{\uptau,+}$.

    As in the case when $x_{\tau''}=d$, the proposition is then  easily verified by considering the four possible values of $x_\tau$.
\end{proof}

For all $\CA\in \Z[\MYD_\star]$ and $(p_0,q_0)\in \Z\times \Z$, we write $\cA \sqsupseteq (p_{0},q_{0})$ if there is a marked Young diagram $\cE$ with
a non-zero coefficient in $\cA$ such that $\cE\sqsupseteq (p_{0},q_{0})$ (see \eqref{eq:sub}). We write $\cA \nsqsupseteq (p_{0},q_{0})$ if $\cA \sqsupseteq (p_{0},q_{0})$ does not hold.
\begin{lem}\label{lempn0}
Suppose that $\star\in \{B,D\}$ and   $\bfrr_{1}(\ckcO)> \bfrr_{3}(\ckcO)$. Then there exists no $\uptau_1, \uptau_2\in \PBPes(\ckcO)$ such that
    \begin{equation}\label{eq:pnac}
    \maltese (\nac{\uptau_1}\circledast (0,0)) =\pac{\uptau_2} \circledast (0,0) \neq 0.
     \end{equation}
\end{lem}
\begin{proof}
Assume by contradiction that such $\uptau_1, \uptau_2\in \PBPes(\ckcO)$ exist. Suppose that $\uptau_1=\uptau:=(\tau,\wp)$. \Cref{lem:BD204} implies that $x_\tau=d$. Thus
\[
  p_{\tau_\bftt}, q_{\tau_\bftt}\geq 1\qquad\textrm{and}\qquad \varepsilon_\tau=0.
\]
We claim that \be\label{conta11}
 \ac_{\uptau}\sqsupseteq (1,1).
 \ee

If $\bfrr_2(\ckcO)>\bfrr_3(\ckcO)$, then \Cref{lem:dlift00} (a) and
\Cref{lem:BD204} (a) imply the claim.

Now we assume  that $\bfrr_2(\ckcO)=\bfrr_3(\ckcO)$. Then we have that
\be\label{pplusq00}
p_{\tau_\bftt}+q_{\tau_\bftt}=\bfrr_1(\ckcO)-\bfrr_2(\ckcO)+2\geq 4.
\ee

If $x_{\tau''}=d$, then by \Cref{lem:BD204} (d) we have that $\ac_{\uptau''}^+\neq 0$ and $\ac_{\uptau''}^-\neq 0$. Thus $\ac_{\uptau,+}\neq 0$ and $\ac_{\uptau,-}\neq 0$. The claim then follows from \Cref{taile0} (b) and (c).

If $x_{\tau''}\neq d$, then \Cref{lem:delta} implies that
\be\label{taunotd}
x_{\tau''}\in \set{r,c}\quad\textrm{and}\quad
    \cP_{\tau_\bftt}^{-1}(\set{s,c})\neq \emptyset.
    \ee
The second condition above implies that $q_{\tau_\bftt}\geq 2$ (since $x_\tau=d$).  By \Cref{lem:BD204} (c) we have that $\ac_{\uptau''}^+\neq 0$ and $\ac_{\uptau''}^-= 0$. Thus $\ac_{\uptau,+}\neq 0$ and $\ac_{\uptau,-}= 0$. The claim then follows from \Cref{taile0} (b).

 In conclusion, we have proved \eqref{conta11} in all cases.
 Consequently, there exists a marked Young diagram $\CE\in \MYD_\star$ that has a nonzero coefficient in $\maltese (\nac{\uptau}\circledast (0,0))$ such that
 \[
   \CE(2)\in (-\bN^+)\times (-\bN).
 \]

The lemma then follows by noting  that
\[
   \CE'(2)\in \bN\times \Z
 \]
 for every marked Young diagram $\CE'\in \MYD_\star$ that has a nonzero coefficient in $\pac{\uptau_2} \circledast (0,0)$.
 \end{proof}

\begin{lem}\label{lem:signbd}
Suppose that $\star \in \set{B,D}$ and $(\star, \abs{\check \CO})\neq (D, 0)$.
Let $\uptau_i=(\tau_i, \wp_i)\in \PBPe_\star(\ckcO)$ ($i=1,2$). If
  \[
    \ac_{\uptau_1}^+= \ac_{\uptau_2}^+\quad \textrm{and}\quad \varepsilon_{\tau_1}=\varepsilon_{\tau_2},
  \]
  or
  \be\label{acmnot0}
  \ac_{\uptau_1}^-= \ac_{\uptau_2}^-\neq 0,
  \ee
  then
  \[
      (p_{(\tau_1)_\bftt}, q_{(\tau_1)_\bftt}) =(p_{(\tau_2)_\bftt}, q_{(\tau_2)_\bftt}).
  \]
\end{lem}
\begin{proof}
By \Cref{lem:BD204}, the  condition \eqref{acmnot0}  implies that
\[
\varepsilon_{\tau_1}=\varepsilon_{\tau_2}=0.
\]
Thus $\varepsilon_{\tau_1}=\varepsilon_{\tau_2}$ always holds under our assumptions.

If $\ac_{\uptau_1}^+= \ac_{\uptau_2}^+=0$, then \Cref{lem:BD204} implies that $x_{\tau_i}=s$ ($i=1,2$). Thus
\[
      (p_{(\uptau_i)_\bftt}, q_{(\uptau_i)_\bftt}) =(0,  \bfrr_1(\check \CO)-\bfrr_1(\check \CO) +2),
  \]
  and the lemma holds.
  Now we assume that
 \[  \ac_{\uptau_1}^+= \ac_{\uptau_2}^+\neq 0\quad \textrm{or}\quad \ac_{\uptau_1}^-= \ac_{\uptau_2}^-\neq 0.
 \]

 If $\bfrr_2(\ckcO)>\bfrr_3(\ckcO)$, then \Cref{taile0} (a) implies that
 \[
  p_{(\tau_1)_\bftt},  p_{(\tau_2)_\bftt}\geq 1\quad \textrm{and}\quad
  (p_{{(\tau_1)}_{\bftt}}-1, (-1)^{\varepsilon_{\tau_1}}q_{(\tau_1)_{\bftt}})=
   (p_{{(\tau_2)}_{\bftt}}-1, (-1)^{\varepsilon_{\tau_2}}q_{(\tau_2)_{\bftt}}),
\]
or
\[
  (-1)^{\varepsilon_{\tau_1}}q_{(\tau_1)_{\bftt}},  (-1)^{\varepsilon_{\tau_2}}q_{(\tau_2)_{\bftt}}\geq 1\quad \textrm{and}\quad
  (p_{{(\tau_1)}_{\bftt}}, (-1)^{\varepsilon_{\tau_1}}q_{(\tau_1)_{\bftt}}-1)=
   (p_{{(\tau_2)}_{\bftt}}, (-1)^{\varepsilon_{\tau_2}}q_{(\tau_2)_{\bftt}}-1).
\]
Thus the lemma holds.

Now we further assume that $\bfrr_2(\ckcO)=\bfrr_3(\ckcO)$. Without loss of generality we assume that
$
  p_{(\tau_1)_{\bftt}}\geq p_{(\tau_2)_{\bftt}}.
$
Then  \Cref{taile0} easily implies that
\[
   p_{(\tau_1)_{\bftt}}= p_{(\tau_2)_{\bftt}}\ \textrm{ or }\ p_{(\tau_2)_{\bftt}}+1.
\]

Assume by contradiction that $p_{(\tau_1)_{\bftt}}= p_{(\tau_2)_{\bftt}}+1$.
Write
\[
  \ac_{\uptau_i}=\ac_{\uptau_i, +}+\ac_{\uptau_i,-}\qquad (i=1,2)
\]
as in \eqref{decac}.
Then  \Cref{taile0}  implies that
\[
  \ac_{\uptau_1, -}^+=\ac_{\uptau_2, +}^+\neq 0\quad \textrm{or}\quad \ac_{\uptau_1, -}^-=\ac_{\uptau_2, +}^-\neq 0.
\]
This further implies that
\[
\maltese(\nac{\uptau_1''}\circledast (0,0) ) =
    \pac{\uptau_2''}\circledast (0,0) \neq 0,
    \]
    which contradicts \Cref{lempn0}. Here $\uptau_i''\in \PBPes{(\ckDD^2(\ckcO))}$ is the double descent of $\uptau_i$ ($i=1,2$). This proves the lemma.
\end{proof}

 \subsection{Further properties of combinatorial  cycles}\label{secfurth}

 We begin with the following lemma.
 \begin{lem}\label{lem:BD000}
Suppose that $\star \in \set{B,D}$ and $(\star, \abs{\check \CO})\neq (D, 0)$. Then there exists no  $\uptau_1:=(\tau_1,\wp_1)$, $\uptau_2:=(\tau_2,\wp_2)\in \PBPe_\star(\ckcO)$ such that
  \be\label{ac12}
    \ac_{\uptau_1}\lotimes (1,1)= \ac_{\uptau_2}.
  \ee
\end{lem}
\begin{proof}
Suppose by contradiction that \eqref{ac12} holds. \Cref{taile0} implies that
\[
(\ac_{\uptau_1}\lotimes (1,1))^+=0.
\]
Hence $\ac_{\uptau_2}^+=0$. \Cref{lem:BD204} then implies that $x_{\tau_2}=s$. Since \eqref{ac12} implies that $\ac_{\uptau_1}= \ac_{\uptau_2}\lotimes (1,1)$, we also have that $x_{\tau_1}=s$ by a similar argument.

Now we have that $q_{(\tau_1)_\bftt}\geq 2$ and $\varepsilon_{\tau_1}=1$.  Together with  \Cref{lem:BD204} (a) and \Cref{taile0}, this implies that
\[
(\ac_{\uptau_1}\lotimes (1,1))^- \neq 0.
\]
However, \Cref{lem:BD204} (b) implies that $\ac_{\uptau_2}^-=0$. This contradicts \eqref{ac12}.
\end{proof}
 The following is the first main result of this subsection.

\begin{prop}\label{lem:BD1111}
Suppose that $\star \in \set{B,D}$ and $(\star, \abs{\check \CO})\neq (D, 0)$. Let $\uptau_i=(\tau_i, \wp_i)\in \PBPe_\star(\ckcO)$ and
  $\epsilon_i\in \Z/2\Z$ ($i=1,2$). If
  \[
    \ac_{\uptau_1}\lotimes (\epsilon_{1},\epsilon_{1})= \ac_{\uptau_2}\lotimes( \epsilon_{2},\epsilon_{2} ),
  \]
  then
  \[
    \epsilon_1=\epsilon_2,\quad \varepsilon_{\tau_1}=\varepsilon_{\tau_2}, \AND \ac_{\uptau_{1}}=\ac_{\uptau_{2}}.
  \]
\end{prop}

\begin{proof}
\Cref{lem:BD000} implies that $\epsilon_{1}=\epsilon_{2}$. Thus
\be\label{aceq12}
  \ac_{\uptau_{1}}=\ac_{\uptau_{2}}.
\ee

Recall that $\varepsilon_{\tau_i}=0$ if and only if $x_{\tau_i}=d$ ($i=1,2$). By \Cref{lem:BD204}, this is equivalent to the condition that
\[
  \ac_{\uptau_{i}}^+\neq 0\quad \textrm{and}\quad \ac_{\uptau_{i}}^-\neq 0.
\]
Therefore \eqref{aceq12} implies that  $\varepsilon_{\tau_1}=\varepsilon_{\tau_2}$.
\end{proof}

\begin{lem}\label{lem:BD2222}
Suppose that $\star \in \set{B,D}$ and
$\ckcO$ is quasi-distinguished. Let $\uptau_i=(\tau_i, \wp_i)\in \PBPe_\star(\ckcO)$
  ($i=1,2$). If
  \be\label{acleq4}
    \ac_{\uptau_1}= \ac_{\uptau_2},
  \ee
  then
  $
  \uptau_{1}=\uptau_{2}.
  $
\end{lem}
\begin{proof}
The lemma obviously holds when $\check \CO$ has at most one row (see \Cref{eqtail2}).  Now we assume that $\check \CO$ has at least two rows and the lemma holds when the number of rows of $\check \CO$ is strictly smaller.
By \Cref{lem:BD1111} and  \Cref{lem:signbd},  we have that
  \[
   \varepsilon_{\tau_1}=\varepsilon_{\tau_2} \AND  (p_{(\tau_1)_\bftt}, q_{(\tau_1)_\bftt}) =(p_{(\tau_2)_\bftt}, q_{(\tau_2)_\bftt}).
  \]
As before, let  $\uptau_i''\in \PBPes(\check \CO'')$ denote the double descent of $\uptau_i$ ($i=1,2$), where $\check \CO''$ is the dual descent of $\check \CO'$.

Note that $\bfrr_2(\ckcO)>\bfrr_3(\ckcO)$. Thus the equality \eqref{acleq4} implies that
$\ac_{\uptau_1''}=\ac_{\uptau_2''}$.
The induction hypothesis implies that $\uptau_1''=\uptau_2''$. Then by \Cref{lem:delta} and \Cref{eqtail2}, we conclude that $\uptau_1=\uptau_2$.
\end{proof}

\begin{lem}\label{lem:BD22223}
Suppose that $\star \in \set{B,D}$ and
$\ckcO$ is quasi-distinguished. Then for all $\uptau:=(\tau, \wp)\in \PBPe_\star(\ckcO)$,
$ \ac_{\uptau}\in \MYD_\star(\ckcO)$. Moreover, for all $\CE\in \MYD_\star(\ckcO)$, there exist  $\uptau:=(\tau, \wp)\in \PBPe_\star(\ckcO)$ and $\epsilon\in \Z/2\Z$ such that
$\ac_\uptau\otimes (\epsilon, \epsilon)=\CE$.
\end{lem}
\begin{proof}
The first assertion is clear in view of the inductive formula \eqref{eq:BD}.
The second assertion obviously holds when $\abs{\ckcO}=0$. By \Cref{eqtail2}, it holds when $\check \CO$ has precisely one row.
Now we assume that $\check \CO$ has at least two rows and the lemma holds when the number of rows of $\check \CO$ is strictly smaller.  Note that $\bfrr_2(\check \CO)>\bfrr_3(\check \CO)$.

Let  $\CE\in \MYD_\star(\ckcO)$. By \Cref{eqtail2}, there is a unique pair $(\tau_0, \epsilon)\in \PBP_D(\check \CO_\bftt)\times \Z/2\Z$ such that
\[
  \CE(1)=((-1)^\epsilon p_{\tau_0}, (-1)^{\varepsilon_{\tau_0}+\epsilon} q_{\tau_0}).
\]
Let $\CE''$ denote the marked Young diagram given by
\[
i\mapsto \begin{cases}
     (-1)^{\gamma_{\tau_0}} \big((\CE\otimes (\epsilon, \varepsilon_{\tau_0}+\epsilon))(i+2)\big),\quad &\textrm{if $i\equiv 1 \pmod{4}$};\\
     (\CE\otimes (\epsilon, \varepsilon_{\tau_0}+\epsilon))(i+2),\quad &\textrm{otherwise},
  \end{cases}
\]
where $\gamma_{\tau_0}$ is defined as in \eqref{gammatau}.
It is easily checked that $\CE''\in \MYD_\star(\ckcO'')$ and
\[
  \mathrm T^{\gamma_{\tau_0}}(\CE''\circledast (n_0,n_0))\circledast (p_{\tau_0},q_{\tau_0}) \otimes (0, \varepsilon_{\tau_0})=\CE\otimes (\epsilon, \epsilon).
\]

By the induction hypothesis, there exist $\uptau'':=(\tau'', \wp'')\in \PBPe_\star(\ckcO'')$ and $\epsilon'\in \Z/2\Z$  such that
\[
\ac_{\uptau''}\otimes (\epsilon', \epsilon')=\CE''.
\]
By \Cref{lem:delta}, there is a unique $\tau\in \PBP_\star(\check \CO)$ such that
\[
  \nabla(\nabla(\tau))=\tau''\qquad\textrm{and}\qquad \tau_\bftt=\tau_0.
\]

Note that there is a unique $\wp\in \PP_\star(\check \CO)$ such that
\[
 \check \nabla( \check \nabla(\wp))=\wp''\qquad \textrm{and}\qquad
 \varepsilon_{\check \nabla(\wp)}=\epsilon'.
\]
Now it is clear that $\uptau:=(\tau, \wp)$ and $\epsilon$ have the required property.
\end{proof}
\begin{prop}\label{lem:BDinj0}
Suppose that $\star \in \set{B,D}$, $(\star, \abs{\check \CO})\neq (D, 0)$, and
$\ckcO$ is quasi-distinguished. Then the map
 \be\label{acme0}
 %   \acme \colon
    \PBPes( \ckcO )\times \bZ/2\bZ \longrightarrow \bZ[\MYD_{\star}(\cO)],\quad
    (\uptau,\epsilon) \mapsto \ac_{\uptau}\otimes (\epsilon, \epsilon)
  \ee
is injective and its image equals $\MYD_{\star}(\cO)$.
\end{prop}

\begin{proof}
 The first assertion follows from   \Cref{lem:BD1111} and \Cref{lem:BD2222}.
 The second assertion follows from \Cref{lem:BD22223}.
 \end{proof}

\subsection{Combinatorial cycles for  $\star\in \set{C,\wtC}$}\label{seccwtc}

Similar to \Cref{lem:delta}, the descent map has  the following property in the case when $\star\in \{C,\wtC\}$.

\begin{prop}[{\cite{BMSZ2}*{Proposition~10.8}}]\label{prop:CC.bij}
Suppose that $\star \in \set{C,\wtC}$ and
consider the descent map
\begin{equation}\label{eq:DD.CC}
\nabla: \PBP_\star(\ckcO)\longrightarrow  \PBP_{\star'}(\ckcOp).
\end{equation}
\begin{enuma}
\item If
 $\bfrr_1(\ckcO)>\bfrr_2(\ckcO)$, then
the map \eqref{eq:DD.CC}  is bijective.

\item If   $\bfrr_1(\ckcO)=\bfrr_2(\ckcO)$,
then the  map \eqref{eq:DD.CC} is injective with image
\[
\Set{\tau'\in \PBP_{\star'}(\ckcOp)\,:\, x_{\tau'}\neq s}.
\]
\end{enuma}
\end{prop}

 The main purpose of this subsection is to prove the following proposition.
\begin{prop}\label{lem:C}
  Suppose that $\star \in \set{C,\wtC}$.

\noindent (a) For all $\uptau\in \PBPes(\ckcO)$, $\ac_\uptau$ is nonzero and multiplicity free.

\noindent (b) Let $\uptau_{i} = (\tau_{i},\wp_{i})\in \PBPes(\ckcO)$ ($i=1,2$). If  $\ac_{\uptau_{1}} =  \ac_{\uptau_{2}}$, then
    \[
      \Sign(\tau'_{1})=\Sign(\tau'_{2}) \quad \text{and} \quad
    \varepsilon_{\wp_{1}}=\varepsilon_{\wp_{2}}, \]
    where $\tau'_{i} :=
    \DD(\tau_{i})$ ($i=1,2$).

    \noindent (c) If $\ckcO$ is quasi-distinguished, then the map \[
    \PBPes(\ckcO)\rightarrow \Z[\MYD_\star(\CO)],\quad \uptau\mapsto \ac_\uptau
    \]
    is injective and its image equals $\MYD_\star(\CO)$.
  \end{prop}

Suppose that $\star\in \set{C,\wtC}$ in this subsection.
If $(1,2)$ is vacant or tailed in $\check \CO$, then both $\PBPes(\ckcO)$ and $\MYD_\star(\CO)$ are singletons, and the proposition is clear. So we only need to treat the case when  $(1,2)$ is primitive  or balanced in $\check \CO$.

Define a map
\be\label{thetaoop}
\begin{array}{rcl}
\check{\vartheta}_{\check \CO'}^{\check \CO}:=\sum_{\sfss'} \check{\vartheta}_{\sfss',\check \CO'}^{\sfss, \check \CO} & : & \Z[\MYD_{\star'}(\CO')]=
  \bigoplus_{\sfss'}
  \Z[\MYD_{\sfss'}(\CO')] \smallskip\\
  &\rightarrow& \Z[\MYD_{\star}(\CO)]=\Z[\MYD_{\sfss}(\CO)],
  \end{array}
  \ee
  where $\sfss'$ runs over all  classical signatures of type $\star'$ such that $\abs{\sfss'}=\abs{\CO'}$, and $\sfss:=(\star, \frac{|\CO|}{2}, \frac{|\CO|}{2})$.

For every $\CA\in \bZ[\MYD_{\star}] $, define its  descent signature to be
\begin{eqnarray*}
  && \dsign(\CA) \\
  &:=& \set{\Sign(\nabla_{\textrm{naive}}(\sV\circ \CE))\,:\, \CE\in \MYD_\star, \,\textrm{$\CE$ has a nonzero coefficient in $\CA$} }\subset \bN\times \bN.
\end{eqnarray*}
Here $\sV\circ \CE$ is the signed Young diagram as in \eqref{sfcirc0}, and $\nabla_{\textrm{naive}}$ indicates the naive descent of a signed Young diagram as in \eqref{naivedso}. Similarly, for every $\CA'\in \bZ[\MYD_{\star'}] $, define its   signature to be
\begin{eqnarray*}
  && \sign{\CA'} \\
  &:=& \set{\Sign(\sV\circ \CE') \,:\, \CE'\in \MYD_{\star'}, \,\textrm{$\CE'$ has a nonzero coefficient in $\CA'$} }\subset \bN\times \bN.
\end{eqnarray*}

Let
$ \uptau_i:=(\tau_i, \wp_i)\in \PBPes(\ckcO),
$
whose descent is denoted
$ \uptau'_i:=(\tau'_i, \wp'_i)\in \PBPesp(\ckcOp)$ ($i=1,2$).

\medskip

 {\bf The case when $(1,2)$ is primitive in $\check \CO$.}
 By the definition of ``primitive", $\bfrr_{1}(\ckcO)-\bfrr_{2}(\ckcO)$ is  positive and even.  In this case, for every classical signature $\sfss'=(\star', p',q')$ with $p'+q'=\abs{\CO'}$, and every  $\CA'\in \Z[\MYD_{\sfss'}(\CO')]$, we have that
 \be\label{tlcc1}
 \check{\vartheta}_{\check \CO'}^{\check \CO}(\CA')=\maltese^{\gamma}\big( \CA'
      \circledast (n_{0},n_{0})\big)
  \ee
where $n_0:=\frac{\bfcc_{1}(\cO)-\bfcc_{2}(\cO)}{2}$ (which belongs to $\bN$), and  %\begin{itemize}
   \be\label{y}
   \gamma=\begin{cases}
   \frac{p'-q'}{2}, \qquad & \textrm{if $\star=C$};\\
    \frac{p'-q'-1}{2}, \qquad &\textrm{if $\star=\wtC$}.
   \end{cases}
    \ee
 It is clear that
 \be\label{dsign1}
 \dsign(\check{\vartheta}_{\check \CO'}^{\check \CO}(\CA'))=\sign{\CA'}=
 \begin{cases}
   \{(p',q')\},\quad & \textrm{if $\CA'\neq 0$};\\
   \emptyset, \quad & \textrm{if $\CA'=0$}.
 \end{cases}
 \ee
    Moreover, the map \eqref{thetaoop}  restricts to a bijection
    \be\label{bi222}
    \check{\vartheta}_{\check \CO'}^{\check \CO}  :  \MYD_{\star'}(\CO')\rightarrow \MYD_{\star}(\CO),
    \ee
which further descends to a bijection
    \be\label{bi22222}
    \check{\vartheta}_{\check \CO'}^{\check \CO}  :  \SYD_{\star'}(\CO')\rightarrow \SYD_{\star}(\CO).
    \ee
    Here
    \[
    \SYD_{\star}(\CO):=
    \{\sO\in \SYD_{\star}\,:\,\textrm{the composition of $\BN^+\xrightarrow{\sO}\BN\times \BN\xrightarrow{(a,b)\mapsto a+b}\BN$ equals $\CO$}\},
    \]
    and $\SYD_{\star'}(\CO')$ is similarly defined.

\subsubsection*{Proof of \Cref{lem:C}~(a)} Let $\uptau\in \PBPes(\ckcO)$ whose descent is denoted by $\uptau'\in \PBPes(\ckcO')$. By \Cref{lem:dlift001}, $\ac_{\uptau'}$ is nonzero  and
 multiplicity free. Then the injectiveness of \eqref{bi22222} implies that $\ac_{\uptau}$ is also nonzero  and  multiplicity free.
 \qed

\smallskip

\subsubsection*{Proof of \Cref{lem:C}~(b)} Assume
that $\ac_{\uptau_{1}}=\ac_{\uptau_{2}}$. Then the injectiveness of \eqref{bi222} implies that
\[\ac_{\uptau'_{1}}\otimes
(\varepsilon_{\wp_{1}},\varepsilon_{\wp_{1}})=\ac_{\uptau'_{2}}\otimes
(\varepsilon_{\wp_{1}}, \varepsilon_{\wp_{2}}).
\]
Here $\uptau_i':=(\tau_i',\wp_i')$ is the descent of $\uptau_i$ ($i=1,2$).

Now \Cref{lem:BD1111}  implies that
\[
\varepsilon_{\wp_{1}}=\varepsilon_{\wp_{2}}\qquad \textrm{and}\qquad
\ac_{\uptau'_{1}}=\ac_{\uptau'_{2}}.
\]
By \Cref{lem:BD204} (a),  both $\ac_{\uptau'_{1}}$ and $\ac_{\uptau'_{1}}$
are nonzero. Thus
 \eqref{dsign1} implies that
 \[
\{\sign{\tau_1'}\}= \sign{\ac_{\uptau'_{1}}}= \sign{\ac_{\uptau'_{2}}}=\{\sign{\tau_2'}\}.
 \]
Therefore
$\Sign(\tau'_{1})=\Sign(\tau'_{2})$. \qed

\subsubsection*{Proof of \Cref{lem:C}~(c)}
 Suppose that $\ckcO$ is quasi-distinguished. Then $\ckcO'$ is also quasi-distinguished.  Assume
that
$
\ac_{\uptau_{1}}=\ac_{\uptau_{2}}$. By the previous argument, we have that
\[
\ac_{\uptau'_{1}}=\ac_{\uptau'_{2}}\qquad \textrm{and}\qquad
\Sign(\tau'_{1})=\Sign(\tau'_{2}).
\]
By \Cref{lem:BD2222}, we have that  $\uptau'_{1}=\uptau'_{2}$. Hence $\uptau_{1}=\uptau_{2}$ by \Cref{prop:CC.bij}.  \qed

\newcommand\mapsfrom{\mathrel{\reflectbox{\ensuremath{\mapsto}}}}

Consider the commutative diagram
\be\label{cd00}
 \begin{CD}
 \PBPesp( \ckcO' )\times \bZ/2\bZ
                  @<  (\nabla(\uptau), \varepsilon_\wp)                   \,\mapsfrom\,
                  \uptau:=(\tau, \wp)
                  <<  \PBPes( \ckcO )\\
            @VV  V         @ VV  \uptau\,\mapsto\, \ac_\uptau V \\
    \Z[\MYD_{\star'}(\cO')]  @>  \check{\vartheta}_{\check \CO'}^{\check \CO}  >> \Z[\MYD_{\star}(\cO)],
  \end{CD}
\ee
where the left vertical arrow is the map \eqref{acme0} for $\check \CO'$. \Cref{prop:CC.bij} implies that the top horizontal arrow is bijective. Thus  \Cref{lem:BDinj0} and   the bijectiveness of \eqref{bi222} imply that the image of the right vertical arrow is $\MYD_{\star}(\cO)$. \qed

\subsection*{The case when $(1,2)$ is balanced in $\check \CO$} By the definition of ``balanced", $\bfrr_{1}(\ckcO)=\bfrr_{2}(\ckcO)>0$.
For every classical signature $\sfss'=(\star', p',q')$ with $p'+q'=\abs{\CO'}$, and every  $\CA'\in \Z[\MYD_{\sfss'}(\CO')]$, we have that
 \be\label{tlcc1}
 \check{\vartheta}_{\check \CO'}^{\check \CO}(\CA')= \maltese^{\gamma} ((\CA'^+ + \CA'^-)\circledast (0,0)  ),
  \ee
where $\gamma$ is as in \eqref{y}. For simplicity write
\[
\check \vartheta^+(\CA'):=\maltese^{y} (\CA'^+\circledast (0,0)  )\qquad\textrm{and}\qquad \check \vartheta^-(\CA'):=\maltese^{y} (\CA'^-\circledast (0,0)  )
\]
so that
   \[
  \check{\vartheta}_{\check \CO'}^{\check \CO}(\CA')=\check \vartheta^+(\CA')+\check \vartheta^-(\CA').
    \]
Note that
\[
  \dsign(\check{\vartheta}^+(\CA'))\subset\{(p'-1,q')\}\qquad \textrm{and}\qquad
  \dsign(\check{\vartheta}^-(\CA'))\subset\{(p',q'-1)\}
\]
and hence
\be\label{intemp}
\dsign(\check{\vartheta}^+(\CA'))\cap
  \dsign(\check{\vartheta}^-(\CA'))=\emptyset.
\ee
Using \eqref{intemp}, we get the following two conclusions:
\begin{eqnarray}
\label{conc1}
  &&\textrm{If $\check{\vartheta}^+(\CA')\neq 0\ $ or  $\ \check{\vartheta}^+(\CA')\neq 0$, $\ \ $ then $\check{\vartheta}_{\check \CO'}^{\check \CO}(\CA')\neq 0$.}\smallskip \\
  \label{conc2}
  &&\textrm{If $\CA'$ is multiplicity free, then so is   $\check{\vartheta}_{\check \CO'}^{\check \CO}(\CA')$.}
\end{eqnarray}

As before, let $\uptau\in \PBPes(\ckcO)$ whose descent is denoted by $\uptau':=(\tau',\wp')\in \PBPes(\ckcO')$.
\subsubsection*{Proof of \Cref{lem:C}~(a)}
According to \Cref{prop:CC.bij},  $x_{\taup}\neq s$.
Then \Cref{lem:BD204}  implies that  $\pac{\uptaup} \neq 0$, which further implies that $\check{\vartheta}^+ (\mathcal L_{\uptaup})\neq 0$.
Therefore, by \eqref{conc1}, we have that
\[
\ac_{\uptau}=\check{\vartheta}_{\check \CO'}^{\check \CO}(\ac_{\uptau'})=\check{\vartheta}^+ (\mathcal L_{\uptaup})+\check{\vartheta}^- (\mathcal L_{\uptaup})\neq 0.
\]By \Cref{lem:dlift001}, $\ac_{\uptau'}$ is multiplicity free. Thus $\ac_{\uptau}=\check{\vartheta}_{\check \CO'}^{\check \CO}(\ac_{\uptau'})$ is also multiplicity free by \eqref{conc2}. \qed

\smallskip

The argument above shows that \be\label{dsign0}
 \dsign(\ac_{\uptau})=\{(p'-1,q')\}\ \ \textrm{or}\ \ \{(p'-1,q'), (p', q'-1)\},
\ee
where $(p',q'):=\sign{\tau'}:=(p_{\tau'}, q_{\tau'})$.

\subsubsection*{Proof of \Cref{lem:C}~(b)}
Note that
$\varepsilon_{\wp_{1}}=\varepsilon_{\wp_{2}}=0$. If $\ac_{\uptau_{1}}=\ac_{\uptau_{2}}$, then  \eqref{dsign0} implies that $\Sign(\tau'_{1})=\Sign(\tau'_{2})$. %This proves  \Cref{lem:C}~(a).
\qed

\smallskip

Note that $\ckcO$ is not quasi-distinguished, so \Cref{lem:C}~(c) is vacant.

\subsection{Combinatorial cycles for  $\star\in \set{C^*,D^*}$}\label{quat}

In this subsection, we sketch  proofs of  the following two propositions.
\begin{prop}\label{lem:Cdstar}
  Suppose that $\star \in \set{C^*,D^*}$.
  Then the map \[
    \PBPes(\ckcO)\rightarrow \Z[\MYD_\star(\CO)],\quad \uptau\mapsto \ac_\uptau
    \]
    is injective and its image equals $\MYD_\star(\CO)$.
  \end{prop}

\begin{prop}\label{lem:Cdstar2}
Suppose that  $\star= D^*$. Let $\uptau_i=(\tau_i, \emptyset)\in \PBPe_\star(\check \CO)$, and write $\uptau'_i=(\tau'_i, \emptyset)$ for its descent ($i=1,2$).   If
\[
  \ac_{\uptau_1}= \ac_{\uptau_2},
\]
then
\[
 ( p_{\tau'_1}, q_{\tau'_1})=( p_{\tau'_2}, q_{\tau'_2}).
\]
 \end{prop}

Assume that  $\star\in \{C^*,D^*\}$ in this subsection. When  $\check \CO$ is not quasi-distinghuished, the sets $\PBPes(\ckcO)$ and $\MYD_\star(\CO)$
%$\Nil(\p)$
are both empty (see \Cref{prop:CD*} and the remark after \Cref{thmac0}), and so there is nothing to prove in Propositions \ref{lem:Cdstar} and \ref{lem:Cdstar2}. Thus in the rest of this subsection we assume that $\check \CO$ is  quasi-distinghuished.

First we suppose that $\star= C^*$. We recall from \cite[Section 10.5]{BMSZ2} the definition of the tail of a painted bipartition.
Put
\[
k :=
\lfloor\frac{\bfrr_{1}(\ckcO)-\bfrr_{2}(\ckcO)-1}{2}\rfloor\in \bN.
\]
Note that $k=\jmath_{\check \CO}- \imath_{\check \CO}$.
Let $\ckcO_{\bftt}$ be the Young diagram consisting of  one row
with length  $2k+1$.
Note that
 $\check \CO_{\mathbf t}$ has good parity (with respect to $C^*$) and every element in $\PBP_{C^*}(\ckcO_\bftt)$ has the form
 \begin{equation}
 \label{tailcstar}
    \emptyset \times  \ytb{{x_1} , {x_2} , {\enon\vdots},{\enon{\vdots}},{x_k}  } \times
 C^*.
\end{equation}

\medskip

For every $\tau=(\imath_\ckcO,\cP)\times(\jmath_\ckcO,\cQ)\times C^* \in
\mathrm{PBP}_{C^*}(\check \CO) $, its tail $\tau_\bftt\in\PBP_{\star_\bftt}(\ckcO_\bftt) $  is defined to be the painted bipartition in
 \eqref{tailcstar} such that
\[
  (x_1, x_2, \cdots, x_k)= (\cQ(\bfcc_{1}(\imath_\ckcO)+1,1),\cQ(\bfcc_{1}(\imath_\ckcO)+2,1),\cdots, \cQ(\bfcc_{1}(\jmath_\ckcO),1)).
\]
By convention, $\tau_\bftt=\emptyset\times \emptyset \times C^*$ when $k=0$.
It is clear that $(\tau_\bftt)_\bftt=\tau_\bftt$.

Recall that $\check \CO':=\check \nabla(\CO)$ is the dual descent of $\check \CO$ (see \eqref{duald}).  The key properties of the descent map
when $\star\in \set{C^*,D^*}$
are summarized in the following two propositions.

\begin{prop}[{\cite{BMSZ2}*{Proposition~10.8}}]\label{prop:CC.bijq}

Suppose that $\star =D^*$. Then   the descent map
\begin{equation}\label{eq:DD.CCq}
\nabla: \PBP_\star(\ckcO)\longrightarrow  \PBP_{\star'}(\ckcOp)
\end{equation}
 is bijective.

\end{prop}

\begin{prop}[{\cite{BMSZ2}*{Proposition~10.9}}]
\label{lem:deltaq}
Suppose that $\star =C^*$, and put  $\ckcOpp := \ckDD(\ckcO')$.  Then the map
\[
  \PBP_\star(\ckcO)\longrightarrow
    \PBP_\star(\ckcOpp)\times \PBP_{\star_\bftt}(\ckcO_\bftt),
    \qquad \tau \mapsto (\DD(\DD(\tau)),\tau_\bftt)
\]
is bijective.

\end{prop}

Using the induction formula \eqref{indac}, we get the following lemma.
\begin{lem}\label{indcdstar}
Suppose that $\uptau:=(\tau,\emptyset)\in \PBPes(\ckcO)$ and write $\uptau'\in \PBPesp(\ckcO')$ for its descent.

 \noindent (a)
  If $\star = C^{*}$, then
  \[
    \ac_{\uptau}=\ac_{\uptau'} \circledast  (p_{\tau_{\bftt}},q_{\tau_{\bftt}}).
\]

\noindent (b)
If $\star = D^{*}$, then
  \[
    \ac_{\uptau}=\ac_{\uptau'} \circledast (n_{0},n_{0}),
  \]
  where $n_0:=\frac{\bfcc_1(\CO)-\bfcc_2(\CO)}{2}\  (\in \bN)$.
\end{lem}

Using \Cref{indcdstar}, an inductive argument  easily shows that the map
\be\label{acquat}
    \PBPes(\ckcO)\rightarrow \Z[\MYD_\star(\CO)],\quad \uptau\mapsto \ac_\uptau
    \ee
    is injective and its image is contained in  $\MYD_\star(\CO)$.
    Using \Cref{lem:deltaq}, a similar argument as in the proof of \Cref{lem:BD22223} shows that the image of the map \eqref{acquat} equals $\MYD_\star(\CO)$ when $\star=C^*$. When $\star=D^*$  using \Cref{prop:CC.bijq}, a similar argument as in \eqref{cd00}    shows that the image of the map \eqref{acquat} also equals $\MYD_\star(\CO)$. This proves \Cref{lem:Cdstar}.

Now let the notation and assumptions be as in \Cref{lem:Cdstar2}.
Then \Cref{indcdstar} implies that
\be\label{myd}
\ac_{\uptau_1'}=\ac_{\uptau_2'}\in \MYD_\star(\CO).
\ee
In the case under consideration, namely $\star\in \{C^*, D^*\}$, the set $\MYD_\star$ is obviously identified with the set $\SYD_\star$. Let $\sO\in \SYD_\star$ denote the element that is identified with \eqref{myd}. Then by \eqref{signactau}, we have that
\[
( p_{\tau'_1}, q_{\tau'_1})=(p_\sO, q_\sO)=( p_{\tau'_2}, q_{\tau'_2}).
\]
 This proves \Cref{lem:Cdstar2}.

 Finally, Propositions \ref{thmac1}-\ref{thmac5} follow by combining Propositions \ref{lem:dlift001}, \ref{lem:BD204} (a), \ref{lem:BD1111}, \ref{lem:BDinj0}, \ref{lem:C},  \ref{lem:Cdstar}, and \ref{lem:Cdstar2}.

\section{Special orthogonal groups and unitary  groups: modifications}\label{modify}

We retain the setting and notation of \Cref{sec:intro}.

\subsection{Special orthogonal groups}\label{sorth}

First, we prove the following proposition.

\begin{prop}\label{propd2}
Suppose that $\star=D$ and $\pi\in \Unip_{\ckcO }(G)$. Then $\pi\otimes \det\cong\pi$ if and only if $\check \CO$ has bad parity (namely all row lengths of $\check \CO$ are even).
\end{prop}
\begin{proof}
Recall that $G=\oO(p,q)$ with $p+q$ even.  The proposition is trivial when $p+q=0$. Thus we assume that $p+q$ is a positive even integer.

First suppose that $\check \CO$ has bad parity. Then we have a disjoint union:
%as a nilpotent orbit,
\be\label{oo1o2}
  \check \CO=\check \CO_1\sqcup \check \CO_2,
\ee
where $\check \CO_1$ and $\check \CO_2$ are two distinct $\SO_{p+q}(\C)$-orbits in $\check \g$.
Moreover,
\be\label{primitivei}
 I_{\check \CO}=I_{\check \CO_1}\cap I_{\check \CO_2},
\ee
where $I_{\check \CO_i}$ ($i=1,2$) is the maximal ideal of $\CU(\g)$ attached to $\check \CO_i$ (see \cite[Section 2.3]{BMSZ2}).

Similar to $\Unip_{\ckcO }(G)$, we have two sets $\Unip_{\ckcO_1 }(\SO(p,q))$ and $\Unip_{\ckcO_2 }(\SO(p,q))$ of special unipotent representations of $\SO(p,q)$ (\cite[Section 2.3]{BMSZ2}). Since primitive ideals are prime, the equality  \eqref{primitivei} implies that each irreducible subrepresentation of $\pi|_{\SO(p,q)}$ belongs to  either $\Unip_{\ckcO_1 }(\SO(p,q))$ or $\Unip_{\ckcO_2 }(\SO(p,q))$.
Note that if $\pi|_{\SO(p,q)}$ contains a subrepresentation in $\Unip_{\ckcO_1 }(\SO(p,q))$, then it must also contain a  subrepresentation in $\Unip_{\ckcO_2 }(\SO(p,q))$, and vice versa. Since the sets $\Unip_{\ckcO_1 }(\SO(p,q))$ and $\Unip_{\ckcO_1 }(\SO(p,q))$ are disjoint, we conclude that the representation  $\pi|_{\SO(p,q)}$ is reducible. Therefore $\pi\otimes \det\cong \pi$ by Clifford theory.

Recall from  the introductory section that
\begin{equation*}%\label{deccou}
     \ckcO=2\check \CO'_\mathrm b  \stackrel{r}{\sqcup} \check \CO_\mathrm g
\end{equation*}
and $n_\mathrm b:=\abs{\check \CO'_\mathrm b}$.

Now we suppose that $\check \CO$ does not have bad parity.
Then the $\oO_{p+q}(\C)$-orbit $\check \CO$ is also a $\SO_{p+q}(\C)$-orbit in $\check \g$, and  likewise $\check \CO_\mathrm g$ is also a $\SO_{p+q-2n_\mathrm b}(\C)$-orbit.
As before, we have a set $\Unip_{\ckcO }(\SO(p,q))$ of special unipotent representations of $\SO(p,q)$, and a set $\Unip_{\ckcO_\mathrm g} (\SO(p-n_\mathrm b,q-n_\mathrm b))$ of special unipotent representations of $\SO(p-n_\mathrm b,q-n_\mathrm b)$. By \cite[Theorem 2.21]{BMSZ2}, every representation $\pi_0$ in $\Unip_{\ckcO }(\SO(p,q))$ has the form
\[
  \pi_0\cong \Ind_P^{\SO(p,q)}( \pi_\mathrm b\widehat \otimes \pi_\mathrm g),
\]
where
\[
\pi_\mathrm b\in \Unip_{\check \CO'_\mathrm b}(\GL_{n_\mathrm b}(\R)),\qquad  \pi_\mathrm g\in \Unip_{\check \CO_\mathrm g}(\SO(p-n_\mathrm b,q-n_\mathrm b)),
\]
and
$P$ is a parabolic subgroup of $\SO(p,q)$ with its Levi component isomorphic to
$\GL_{n_\mathrm b}(\R)\times \SO(p-n_\mathrm b,q-n_\mathrm b)$.

By \Cref{thm100} and \cite[Theorem 2.27]{BMSZ2},
\[
\#(\Unip_{\check \CO_\mathrm g}(\oO(p-n_\mathrm b,q-n_\mathrm b)))=2 \cdot \#(\Unip_{\check \CO_\mathrm g}(\SO(p-n_\mathrm b,q-n_\mathrm b))).
\]
Clifford theory then implies that $\pi_\mathrm g^{g_0}\cong \pi_\mathrm g$, where $g_0\in \oO(p-n_\mathrm b,q-n_\mathrm b)\setminus \SO(p-n_\mathrm b,q-n_\mathrm b)$, and $\pi_0^{g_0}$ is the twisting of the representation $\pi_0$ by the conjugation via $g_0$. View $g_0$ as an element of $\oO(p,q)$ as usual. Then $g_0$ normalizes $P$ and centralizes $\GL_{n_\mathrm b}(\R)\subseteq P$. Thus
the twisted representation
\[
 \pi_0^{g_0}\cong \Ind_P^{\SO(p,q)}( \pi_\mathrm b^{g_0}\widehat \otimes \pi_\mathrm g^{g_0})\cong \Ind_P^{\SO(p,q)}( \pi_\mathrm b\widehat \otimes \pi_\mathrm g)\cong \pi_0.
\]
Using Clifford theory, this finishes the proof of the proposition.
\end{proof}

Now we prove \Cref{thmB} in the introductory section.  Recall its setting and the notation. We already know that the set $\Unip_{\ckcO }(G)$ is empty if \eqref{doubleco} or \eqref{existgl} does not hold.
 So suppose that  \eqref{doubleco} and \eqref{existgl} hold. We are to show that the normalized smooth parabolic induction from $Q$ to $G$ yields
   a well-defined bijection
   \[
    \begin{array}{rccc}
 &\Unip_{\ckcO'_{\mathrm b}}( G'_{\mathrm b}) \times   \Unip_{\ckcO_{\mathrm g}}( G_{\mathrm g})  &         \longrightarrow &\Unip_{\ckcO }(G), \\
                &   (\pi',\pi_\mathrm g) & \mapsto & \pi'\rtimes \pi_\mathrm g.
    \end{array}
 \]

\begin{proof}
When $\star\in\{A, \wtA, C, \wtC, C^*\}$, the result is proved in  \cite[Theorems 2.17 and 2.21]{BMSZ2}. When $\star=B$, the result directly follows from \cite[Theorem 2.21]{BMSZ2}. We now assume that $\star\in \{D, D^*\}$. Without loss of generality we further assume that $\abs{\check \CO}>0$.

First suppose that $\check \CO$ has bad parity so that the group $G_\mathrm g$ is trivial. Write
\[
  \check \CO=\check \CO_1\sqcup \check \CO_2,
\]
as in \eqref{oo1o2}.  Then there is a unique $i\in \{1,2\}$ such that the complexified Lie algebra $\mathfrak q$ of $Q$ is   $\check \CO_i$-relevant in the following sense (see \cite[Section 2.6]{BMSZ2}): the Barbasch-Vogan dual of $\check \CO_i$, which is an $\SO_{2n}(\C)$-orbit in $\mathrm{Nil}(\g^*)$,  equals the induction from $\mathfrak q$ to $\g$ of a nilpotent orbit  in $\mathfrak l^*$. Here $\mathfrak l$ is a Levi factor of $\mathfrak q$ which is isomorphic to $\g\mathfrak l_n(\C)$.
Without loss of generality we assume that $i=1$.

If $\star=D^*$, then \cite[Theorem 2.21]{BMSZ2} implies that
\[
  \Unip_{\ckcO_2 }(G)=\emptyset
\]
and hence
\[
  \Unip_{\ckcO }(G)=\Unip_{\ckcO_1 }(G).
\]
Here the set $\Unip_{\ckcO_i }(G)$ ($i=1,2$) of special unipotent representations is defined in \cite[Section 2.3]{BMSZ2}.  Therefore the result follows from \cite[Theorem 2.21]{BMSZ2} in this case.

If $\star=D$,  by Clifford theory, \Cref{propd2} and its proof imply that the map
\[
  \Unip_{\ckcO_1 }(\SO(p,q))\rightarrow \Unip_{\ckcO }(\oO(p,q)), \quad \pi_0\mapsto \Ind_{\SO(p,q)}^{\oO(p,q)} \pi_0
\]
is well-defined and bijective. Therefore the result also follows from \cite[Theorem 2.21]{BMSZ2} in this case.

Now we  suppose that $\check \CO$ does not have bad parity so that the group $G_\mathrm g$ is non-trivial.
Then $\check \CO$ is an $\SO_{2n}(\C)$-orbit, and likewise $\check \CO_\mathrm g$ is an $\SO_{2n-2n_\mathrm b}(\C)$-orbit. Thus $I_{\check \CO}$ is a maximal ideal of $\CU(\g)$, and likewise $I_{\check \CO_\mathrm g}$ is a maximal ideal of $\CU(\g_\mathrm g)$. Here  $\g_\mathrm g$ is the complexified Lie algebra of $G_\mathrm g$.

If $\star=D^*$, the result is the same as  \cite[Theorem 2.21]{BMSZ2}.

If $\star=D$ so that $G=\oO(p,q)$ with $p+q$ even, then for every $\pi'\in \Unip_{\check \CO'_\mathrm b}(\GL_{n_\mathrm b}(\R))$ and  $\pi_\mathrm g\in \Unip_{\check \CO_\mathrm g}(\oO(p-n_\mathrm b,q-n_\mathrm b))$, we have that
\[
 \left(\Ind_Q^G (\pi'\widehat \otimes \pi_\mathrm g)\right)|_{\SO(p,q)}\cong
 \Ind_{Q_0}^{\SO(p,q)} (\pi'\widehat \otimes (\pi_\mathrm g)|_{\SO(p-n_\mathrm b,q-n_\mathrm b)}),
\]
where $Q_0:=Q\cap \SO(p,q)$. Clifford theory and \Cref{propd2} imply that
$(\pi_\mathrm g)|_{\SO(p-n_\mathrm b,q-n_\mathrm b)}$ is irreducible and hence a representation in $\Unip_{\check \CO_\mathrm g}(\SO(p-n_\mathrm b,q-n_\mathrm b))$. Then \cite[Theorem 2.21]{BMSZ2} implies that $\left(\Ind_Q^G (\pi'\widehat \otimes \pi_\mathrm g)\right)|_{\SO(p,q)}$ is irreducible and belongs to $\Unip_{\check \CO}(\SO(p,q))$.
Hence $\Ind_Q^G (\pi'\widehat \otimes \pi_\mathrm g)$ is irreducible and belongs to
$\Unip_{\check \CO}(\oO(p,q))$.
We therefore have a commutative diagram
\[
 \begin{CD}
 \Unip_{\ckcO'_{\mathrm b}}( G'_{\mathrm b}) \times   \Unip_{\ckcO_{\mathrm g}}( G_{\mathrm g})
                  @>   \textrm{parabolic induction}  >>   \Unip_{\ckcO }(G)\\
            @VV  \textrm{the identity map $\times$ the restriction map}  V         @ VV  \textrm{the restriction map} V \\
      \Unip_{\ckcO'_{\mathrm b}}( G'_{\mathrm b}) \times   \Unip_{\ckcO_{\mathrm g}}( \SO(p-n_\mathrm b,q-n_\mathrm b))  @> \textrm{parabolic induction} >> \Unip_{\ckcO }(\SO(p,q)).\\
  \end{CD}
\]
The bottom horizontal arrow is bijective by \cite[Theorem 2.21]{BMSZ2}. This implies that the top horizontal arrow is also bijective, by considering the free action of $\Z/2\Z$ on the sets $\Unip_{\ckcO_{\mathrm g}}( G_{\mathrm g})$   and $\Unip_{\ckcO }(G)$ by the determinant twist (see
\Cref{propd2}). This completes the proof of the theorem.
\end{proof}

\subsection{Unitary groups}\label{subsec:unitary}

In this subsection, we suppose that $\star\in \{A, \wtA\}$ so that $G=\oU(p,q)$ or $\widetilde \oU(p,q)$. In view of \Cref{thmB}, we shall only consider the case when the $\check G$-orbit $\check \CO$ in $\mathrm{Nil}(\check \g)$ has good parity. In this case, every representation in $\mathrm{Unip}_{\check \CO}(G)$ is realized as a
derived functor module  and is in particular unitary. Furthermore it is determined by its wavefront set. See \cite[Section 4]{BV83} and
\cite[Section 2]{Tr}. We will outline how these representations can also be constructed by iterated theta lifts, as conjectured by Trapa \cite{Tr}*{Conjecture~2.5}.

We make the following definition.

\begin{defn}\label{defpbp1}
 A painted partition for unitary groups is a pair $(\imath, \CP)\times \gamma$ where $(\imath, \CP)$ is a painted Young diagram and $\gamma \in \{A, \wtA\}$, subject to the following conditions:
  \begin{itemize}
    \item the symbols of $\CP$ are in
                  $   \{\bullet, s, r\}$;
                    \item every row of $\imath$ has an even number of boxes painted with $\bullet$;
         \item every nonzero column length of $\imath$ has the same parity as
         \[
         \left\{  \begin{array}{ll}
               \abs{\imath}, &\quad \textrm{if $\gamma =A$};  \\
               \abs{\imath}+1, &\quad \textrm{if $\gamma=\wtA$}.
           \end{array}
           \right.
         \]
  \end{itemize}
Denote by $\mathrm{PUP}$ the set of all painted partitions for unitary groups.
   \end{defn}

For every  $\tau=(\imath, \CP)\times \gamma\in \mathrm{PUP} $, define its signature to be the pair
\begin{equation}\label{eq:signature}
    (p_\tau, q_\tau): = \left (\frac{\#(\cP^{-1}(\bullet))}{2}+\#(\cP^{-1}(r)),\,
    \frac{ \#(\cP^{-1}(\bullet))}{2}+\#(\cP^{-1}(s))\right).
\end{equation}
Put
\[
  G_\tau:= \left\{  \begin{array}{ll}
               \oU(p_\tau, q_\tau), &\quad \textrm{if $\gamma=A$};  \\
               \widetilde \oU(p_\tau, q_\tau), &\quad \textrm{if $\gamma=\wtA$}.
           \end{array}
           \right.
\]

The following lemma is obvious.
\begin{lem}\label{dppu}
For every  $\tau=(\imath, \CP)\times \gamma\in \mathrm{PUP}$, there is a unique element $\tau'=(\imath', \CP')\times \gamma' \in \mathrm{PUP}$
with the following properties:
\begin{itemize}
    \item
    \[
      \gamma'= \left\{  \begin{array}{ll}
               \wtA, &\quad \textrm{if $\gamma=A$ and  $\abs{\imath}$ is odd};  \\
               A, &\quad \textrm{otherwise};
           \end{array}
           \right.
    \]
    \item
   $\imath'=\nabla_{\mathrm{naive}}(\imath)$, and for all   $(i,j)\in \BOX(\imath')$,
   \[
     \cP'(i,j)=\begin{cases}
    \bullet \textrm{ or } s,&\textrm{ if  $\ \cQ(i,j+1)\in \{\bullet, s\}$;} \smallskip \\
  r, & \textrm{ if $\ \cQ(i,j+1)=r$}.  \end{cases}
   \]
  \end{itemize}
\end{lem}

We call the element $\tau'$ in \Cref{dppu} the descent of $\tau$, to be denoted by $\nabla(\tau)$.

\begin{eg}
  Suppose
  that \[
  \tau=\ytb{\bullet\bullet s,\bullet\bullet , sr,s,r}\times A \in \mathrm{PUP}.
  \]
  Then
  \[
 \nabla( \tau)=\ytb{\bullet\bullet ,s ,r}\times \wtA \in \mathrm{PUP},
  \]
  $G_\tau=\oU(4,5)$,  and $G_{\nabla(\tau)}=\widetilde \oU(2,2)$.
\end{eg}

Let $\tau=(\imath, \cP)\times \gamma \in \mathrm{PUP}$ and write $\tau':=\nabla(\tau)$. Let $V_\tau$ denote the standard module of $G_\tau$, which is a Hermitian space of signature $(p_\tau, q_\tau)$. Likewise let $V_{\tau'}$ denote the standard module of $G_{\tau'}$. Then \[
  V_{\tau, \tau'}:=V_{\tau}\otimes V_{\tau'}
\]
is also a Hermitian space with the Hermitian form
\[
  \la u\otimes u', v\otimes v'\ra_{\tau, \tau'}:= \la u, v\ra_{\tau}\cdot  \la  u',  v'\ra_{\tau'}\quad \textrm{for all }\, u,v\in V_\tau, \,  u',v'\in V_{\tau'}.
\]
Here $\la \,, \, \ra_{\tau}$ and $\la \,, \, \ra_{\tau'}$ are the Hermitian forms on $V_\tau$ and $V_{\tau'}$ respectively. Define the Heisenberg group
\[
  \oH(V_{\tau, \tau'}):=V_{\tau, \tau'}\times \sqrt{-1}\R,
\]
with group multiplication
\[
  (u,t)\cdot (u',t'):=(u+u', t+t'+\mathrm{Im}(\la u, u'\ra_{\tau, \tau'})), \quad u,u'\in V_{\tau, \tau'}, \ t,t'\in \sqrt{-1}\R.
\]
Here every $z\in \C$ is written as $z=\mathrm{Re}(z)+\mathrm{Im}(z)$, with  $\mathrm{Re}(z)$ real and $\mathrm{Im}(z)$ purely imaginary. Let $\Sp(V_{\tau, \tau'})$ denote the symplectic group of all $\R$-linear automorphisms of $V_{\tau, \tau'}$   that preserve the $\R$-bilinear map
$\mathrm{Im}(\la \, ,\, \ra_{\tau, \tau'})$. Form the Jacobi group
$\widetilde \Sp(V_{\tau, \tau'})\ltimes \oH(V_{\tau, \tau'})$, where
$\widetilde \Sp(V_{\tau, \tau'})$ is the metaplectic double cover of $\Sp(V_{\tau, \tau'})$, which acts on $\oH(V_{\tau, \tau'})$ as group automorphisms through the action of $\Sp(V_{\tau, \tau'})$ on $V_{\tau, \tau'}$.

Up to isomorphism, there is a unique genuine smooth Fr\'echet representation of $\widetilde \Sp(V_{\tau, \tau'})\ltimes \oH(V_{\tau, \tau'})$ of moderate growth, to be denoted by $\omega_{\tau, \tau'}$, such $(\omega_{\tau, \tau'})|_{  \oH(V_{\tau, \tau'})}$ is irreducible with central character $t\mapsto e^t$ (here ``$e$" denotes the  base of natural logarithms). Using the natural homomorphism $G_\tau\times G_{\tau'}\rightarrow \widetilde \Sp(V_{\tau, \tau'})$, we also view $\omega_{\tau, \tau'}$ as a representation of $G_\tau\times G_{\tau'}$. Finally, we inductively define a representation $\pi_\tau$ of $G_\tau$ by
\[
\pi_\tau:=\left\{  \begin{array}{ll}
               \textrm{the trivial one-dimensional representation}, &\quad \textrm{if $\gamma=A$ and  $\abs{\imath}=0$};  \\
               \textrm{the non-trivial one-dimensional representation}, &\quad \textrm{if $\gamma=\wtA$ and  $\abs{\imath}=0$};  \\
               (\omega_{\tau, \tau'}\widehat \otimes  \pi_{\tau'})_{G_{\tau'}}, &\quad \textrm{if   $\abs{\imath}>0$}.
           \end{array}
           \right.
\]

\begin{thm}\label{unitaryg}
Suppose that $\star\in \{A, \wtA\}$ so that $G=\oU(p,q)$ or $\widetilde \oU(p,q)$. Let $\check \CO$ be a $\check G$-orbit  in $\mathrm{Nil}(\check \g)$ that has good parity. Then the map
\[
\begin{array}{rcl}
  \{\tau=(\imath, \cP)\times \star \in \mathrm{PUP} \,:\, (p_\tau,q_\tau)=(p,q),\,  \textrm{$ \imath$ equals the transpose of $\check \CO$} \}&\rightarrow & \mathrm{Unip}_{\check \CO}(G),\\
  \tau&\mapsto & \pi_\tau
  \end{array}
\]
is well-defined and bijective.
Moreover, every representation in $\Unip_{\ckcO}(G)$ is unitarizable.
\end{thm}

The method of this paper proves the above theorem.  We omit the details. In addition we remark that for $G=\oU(p,q)$ or $\widetilde \oU(p,q)$, there is  precisely one admissible orbit datum over each $K_\C$-orbit in $ \CO\cap \p$, and the associated cycle map induces a bijection between $\Unip_{\ckcO}(G)$ and $\mathrm{AOD}(\CO)$. Here $\CO$ is the Barbasch-Vogan dual of $\check \CO$ (assumed to have good parity) and other notation is similarly defined as in  \Cref{classical}.

\section*{Acknowledgements}
%The authors thank Roger Howe for sharing his mathematical insight and for his encouragement.

The authors are indebted to Roger Howe for his powerful insights on representations of classical groups, and to David Vogan for his inspiring ideas on representations of real reductive groups, which are indispensable to this series of two papers.

D. Barbasch is supported by NSF grant, Award Number 2000254. J.-J. Ma is supported by the National Natural Science Foundation of China (Grant No. 11701364 and Grant No. 11971305) and  Xiamen University
Malaysia Research Fund (Grant No. XMUMRF/2022-C9/IMAT/0019).
B. Sun is supported by  National Key R \& D Program of China (No. 2022YFA1005300 and 2020YFA0712600) and New Cornerstone Investigator Program.  C.-B. Zhu is supported by MOE AcRF Tier 1 grants A-0004280-00-00 and A-8002490-00-00, and Provost’s Chair grant E-146-00-0018-01 in NUS.

C.-B. Zhu is grateful to Max Planck Institute for Mathematics in Bonn, for its warm hospitality and conducive work environment, where he spent the academic year 2022/2023 as a visiting scientist.

\begin{bibdiv}
  \begin{biblist}

\bib{Ad07}{article}{
author = {Adams, J.},
 title = {The theta correspondence over R},
 journal = {J.-S. Li et al. (eds.), Harmonic analysis, group representations, automorphic forms and invariant theory,  In Honor of Roger Howe, Lecture Notes Series, Inst. Math. Sci., National University of Singapore, vol. 12},
pages = {1--39},
 year = {2007},
  publisher={World Sci. Publ.},
}

\bib{AMVV}{article}{
       author = {Adams, J.},
       author = {Miller, S.},
       author = {van Leeuven, M.},
       author = {Vogan, D. A.}
        title = {Unipotent representations of real exceptional groups},
        journal = {in preparation},
}

\bib{AAM}{article}{
       author = {Adams, J.},
       author = {Arancibia Robert, N.},
       author = {Mezo, P.},
        title = {Equivalent definitions of Arthur packets for real classical groups},
        journal = {Mem. Amer. Math. Soc.},
        volume = {300},
        number = {1503, v+110 pp.},
        year = {2024},
}

\bib{ABV}{book}{
  title={The Langlands classification and irreducible characters for real reductive groups},
  author={Adams, J.},
  author={Barbasch, B.},
  author={Vogan, D. A.},
  series={Progr. Math.},
  volume={104},
  year={1991},
  publisher={Birkhauser}
}

\bib{AM}{article}{
       author = {{Arancibia Robert}, N.},
       author = {{Mezo}, P.},
        title = {Equivalent definitions of Arthur packets for real unitary groups},
        journal = {arXiv:2204.19715},
}

\bib{ArPro}{article}{
  author = {Arthur, J.},
  title = {On some problems suggested by the trace formula},
  journal = {Lie group representations, II (College Park, Md.), Lecture Notes in Math. 1041},
 pages = {1--49},
 year = {1984}
}

\bib{ArUni}{article}{
  author = {Arthur, J.},
  title = {Unipotent automorphic representations: conjectures},
  journal = {Orbites unipotentes et repr\'esentations, II, Ast\'erisque},
 pages = {13--71},
 volume = {171-172},
 year = {1989}
}

\bib{ArEnd}{article}{
    author = {Arthur, J.},
    title = {The endoscopic classification of representations. Orthogonal and symplectic groups},
    journal = {Amer. Math. Soc. Colloq. Publ., vol. 61, Amer. Math. Soc., Providence, RI},
    year = {2013}
    }

\bib{B.Class}{article}{
  author = {Barbasch, D.},
  title = {The unitary dual for complex classical Lie groups},
  journal = {Invent. Math.},
  number = {1},
 volume = {96},
     pages = {103--176},
      year = {1989},
}

\bib{B.Orbit}{article}{
  author = {Barbasch, D.},
  title = {Orbital integrals of nilpotent orbits},
 %booktitle = {The mathematical legacy of {H}arish-{C}handra ({B}altimore,{MD}, 1998)},
    journal = {The mathematical legacy of {H}arish-{C}handra, Proc. Sympos. Pure Math.},
    volume = {68},
     pages = {97--110},
 publisher = {Amer. Math. Soc., Providence, RI},
      year = {2000},
}

\bib{B17}{article}{
  author = {Barbasch, D.},
  title = {Unipotent representations and the dual pair correspondence},
  journal = {J. Cogdell et al. (eds.), Representation Theory, Number Theory, and Invariant Theory, In Honor of Roger Howe. Progr. Math.},
  %series ={Progress in Math.},
  volume = {323},
  pages = {47--85},
  year = {2017},
}

\bib{BMSZ0}{article}{
  author = {Barbasch, D.},
  author = {Ma, J.-J.},
  author = {Sun, B.},
  author = {Zhu, C.-B.},
  title = {On the notion of metaplectic Barbasch-Vogan duality},
  journal = {Int. Math. Res. Not. IMRN},
 number = {20},
 pages = {17822--17852},
 volume = {2023},
 }

\bib{BMSZ2}{article}{
  author = {Barbasch, D.},
  author = {Ma, J.-J.},
  author = {Sun, B.},
  author = {Zhu, C.-B.},
  title = {Special unipotent representations of real classical groups: counting and reduction},
  eprint={},
  journal ={J. Eur. Math. Soc. (JEMS), to appear; available at arXiv:2205.05266},
}

\bib{BMSZ4}{article}{
  author = {Barbasch, D.},
  author = {Ma, J.-J.},
  author = {Sun, B.},
  author = {Zhu, C.-B.},
  title = {Special unipotent representations of simple linear Lie groups of type $A$},
  journal = {Acta Math. Sin. (Engl. Ser.)},
  number = {3},
  volume = {40},
  pages = {707--716},
  year = {2024},
  }

\bib{BMSZ3}{article}{
  author = {Barbasch, D.},
  author = {Ma, J.-J.},
  author = {Sun, B.},
  author = {Zhu, C.-B.},
  title = {Genuine special unipotent representations of spin groups},
  journal = {M. Pevzner, H. Sekiguchi (eds.), Symmetry in Geometry and Analysis, Volume 1: Festschrift in Honor of Toshiyuki Kobayashi, Progr. Math.}
  volume = {357},
  pages = {141--164},
  year = {2025}
  }

\bib{BV83}{article}{
 author = {Barbasch, D.},
 author = {Vogan, D. A.},
 title = {Weyl group representations and nilpotent orbits},
 journal = {Representation theory of reductive groups (Park City, Utah, 1982), Progr. Math.},
 volume = {40},
 pages = {21--33},
 year = {1983}
}

\bib{BVUni}{article}{
 author = {Barbasch, D.},
 author = {Vogan, D. A.},
 journal = {Ann. of Math.},
 number = {1},
 pages = {41--110},
 title = {Unipotent representations of complex semisimple groups},
 volume = {121},
 year = {1985}
}

\bib{Borel}{article}{
  title={Automorphic $L$-functions},
  author={Borel, A.},
  book = {
    title={Automorphic Forms, Representations and $L$-functions},
    series={Proc. Sympos. Pure Math.},
    volume={33, Part 2},
  },
  pages={27--61},
  year={1979},
  publisher={Amer. Math. Soc.},
}

\bib{Br}{article}{
  author = {Brylinski, R.},
  title = {Dixmier algebras for classical complex nilpotent orbits via Kraft-Procesi models. I},
  journal = {The orbit method in geometry and physics (Marseille, 2000). Progr. Math.}
  volume = {213},
  pages = {49--67},
  year = {2003},
}

\bib{Bor}{article}{
 author = {Borho, W.},
 journal = {S\'eminaire Bourbaki, Exp. No. 489},
 pages = {1--18},
 title = {Recent advances in enveloping algebras of semisimple Lie-algebras},
 year = {1976/77}
}

\bib{BK}{article}{
author={Borho, W.},
author={Kraft, H.},
title={\"{U}ber die Gelfand-Kirillov-Dimension},
journal={Math. Ann.},
volume={220},
date={1976},
number={1},
pages={1--24},
}

\bib{Carter}{book}{
   author={Carter, R. W.},
   title={Finite groups of Lie type},
   series={Wiley Classics Library},
   publisher={John Wiley \& Sons, Ltd., Chichester},
   date={1993},
   pages={xii+544},
   isbn={0-471-94109-3},
}

\bib{CS21}{article}{
 author = {Chen, Y.},
 author = {Sun, B.},
 journal = {J. Funct. Anal. },
 pages = {108817},
 title = {Schwartz homologies of representations of almost linear Nash groups},
 volume = {280},
 year = {2021},
}

\bib{Cl89}{article}{
  author = {du Cloux, F.},
  journal = {J.  Funct.  Anal.},
  number = {2},
  pages = {420--457},
  title = {Repr\'esentations temp\'er\'ees des groupes de Lie nilpotents},
  volume = {85},
  year = {1989},
}

\bib{CM}{book}{
  title = {Nilpotent orbits in semisimple Lie algebra: an introduction},
  author = {Collingwood, D. H.},
  author = {McGovern, W. M.},
  year = {1993}
  publisher = {Van Nostrand Reinhold Co.},
}

\bib{CHH}{article}{
  author = {Cowling, M.},
  author = {Haagerup, U.},
  author = {Howe, R.},
  journal = {J. Reine Angew. Math.},
  number = {3},
  pages = {97--110},
  title = {Almost $L^2$ matrix coefficients},
  volume = {387},
  year = {1988},
}

\bib{DKPC}{article}{
title = {Nilpotent orbits and complex dual pairs},
journal = {J. Algebra},
volume = {190},
number = {2},
pages = {518--539},
year = {1997},
author = {Daszkiewicz, A.},
author = {Kra\'skiewicz, W.},
author = {Przebinda, T.},
}

\bib{Dix}{book}{
  title={Enveloping algebras},
  author={Dixmier, J.},
  year={1996},
  publisher={Grad. Stud. Math., vol. 11, Amer. Math. Soc.},
}

\bib{DM}{article}{
  author = {Dixmier, J.},
  author = {Malliavin, P.},
  title = {Factorisations de fonctions et de vecteurs ind\'efiniment diff\'erentiables},
  journal = {Bull. Sci. Math. (2)},
  year = {1978},
  volume = {102},
  pages = {307--330},
}

\bib{EGAIV2}{article}{
  title = {\'El\'ements de g\'eom\'etrie alg\'brique IV: \'Etude locale des
    sch\'emas et des morphismes de sch\'emas. II},
  author = {Grothendieck, A.},
  author = {Dieudonn\'e, J.},
  journal  = {Inst. Hautes \'Etudes Sci. Publ. Math.},
  volume = {24},
  year = {1965},
}

\bib{EGAIV3}{article}{
  title = {\'El\'ements de g\'eom\'etrie alg\'brique IV: \'Etude locale des
    sch\'emas et des morphismes de sch\'emas. III},
  author = {Grothendieck, A.},
  author = {Dieudonn\'e, J.},
  journal  = {Inst. Hautes \'Etudes Sci. Publ. Math.},
  volume = {28},
  year = {1966},
}

\bib{HLS}{article}{
    author = {Harris, M.},
    author = {Li, J.-S.},
    author = {Sun, B.},
     title = {Theta correspondences for close unitary groups},
    %series = {Adv. Lect. Math. (ALM)},
    journal = {Arithmetic Geometry and Automorphic Forms, Adv. Lect. Math. (ALM)},
    volume = {19},
     pages = {265--307},
 publisher = {Int. Press, Somerville, MA},
      year = {2011},
}

\bib{HS}{book}{
 author = {Hartshorne, R.},
 title = {Algebraic Geometry},
publisher={Grad. Texts in Math., 52. New York-Heidelberg-Berlin: Springer-Verlag},
year={1983},
}

\bib{He2}{article}{
author={He, H.},
title={Unitary representations and theta correspondence for type I classical groups},
journal={J. Funct. Anal.},
year = {2003},
volume = {199},
number = {1},
pages = {92--121},
}

\bib{He}{article}{
author={He, H.},
title={Unipotent representations, theta correspondences, and quantum induction},
journal = {Mem. Amer. Math. Soc.},
volume = {299},
        number = {1496, vii+90 pp.},
        year = {2024},
}

\bib{HL}{article}{
author={Huang, J.-S.},
author={Li, J.-S.},
title={Unipotent representations attached to spherical nilpotent orbits},
journal={Amer. J. Math.},
volume={121},
number = {3},
pages={497--517},
year={1999},
}

\bib{HZ}{article}{
author={Huang, J.-S.},
author={Zhu, C.-B.},
title={On certain small representations of indefinite orthogonal groups},
journal={Represent. Theory},
volume={1},
pages={190--206},
year={1997},
}

\bib{Howe79}{article}{
  title={$\theta$-series and invariant theory},
  author={Howe, R.},
  book = {
    title={Automorphic Forms, Representations and $L$-functions},
    series={Proc. Sympos. Pure Math.},
    volume={33, Part 1},
  },
  pages={275--285},
  year={1979},
  publisher={Amer. Math. Soc.},
}

\bib{HoweRank}{article}{
author={Howe, R.},
title={On a notion of rank for unitary representations of the classical groups},
journal={Harmonic analysis and group representations, Liguori, Naples},
pages={223--331},
year={1982},
}

\bib{Howe89}{article}{
author={Howe, R.},
title={Transcending classical invariant theory},
journal={J. Amer. Math. Soc.},
volume={2},
pages={535--552},
year={1989},
}

\bib{KoR}{article}{
author={Kostant, B.},
author={Rallis, S.},
title={Orbits and representations associated with symmetric spaces},
journal={Amer. J. Math.},
volume={93},
pages={753--809},
year={1971},
}

\bib{KP}{article}{
author={Kraft, H.},
author={Procesi, C.},
title={On the geometry of conjugacy classes in classical groups},
journal={Comment. Math. Helv.},
volume={57},
pages={539--602},
year={1982},
}

\bib{KR}{article}{
author={Kudla, S. S.},
author={Rallis, S.},
title={Degenerate principal series and invariant distributions},
journal={Israel J. Math.},
volume={69},
pages={25--45},
year={1990},
}

\bib{Ku}{article}{
author={Kudla, S. S.},
title={Some extensions of the Siegel-Weil formula},
journal={In: W. T. Gan et al. (eds.), Eisenstein Series and Applications. Progr. Math., vol. 258. Birkh\"auser Boston},
pages={205--237},
year={2008},
}

\bib{LZ2}{article}{
author={Lee, S. T.},
author={Zhu, C.-B.},
title={Degenerate principal series and local theta correspondence II},
journal={Israel J. Math.},
volume={100},
pages={29--59},
year={1997},
}

\bib{LZ1}{article}{
author={Lee, S. T.},
author={Zhu, C.-B.},
title={Degenerate principal series and local theta correspondence},
journal={Trans. Amer. Math. Soc.},
volume={350},
number={12},
pages={5017--5046},
year={1998},
}

\bib{LZ3}{article}{
author={Lee, S. T.},
author={Zhu, C.-B.},
title={Degenerate principal series of metaplectic groups and Howe correspondence},
journal = {D. Prasad et al. (eds.), Automorphic Representations and L-Functions, Tata Institute of Fundamental Research, India,},
year = {2013},
pages = {379--408},
}

\bib{Li89}{article}{
author={Li, J.-S.},
title={Singular unitary representations of classical groups},
journal={Invent. Math.},
volume={97},
number = {2},
pages={237--255},
year={1989},
}

\bib{LiuAG}{book}{
  title={Algebraic Geometry and Arithmetic Curves},
  author = {Liu, Q.},
  year = {2006},
  publisher={Oxford University Press},
}

\bib{LM}{article}{
   author = {Loke, H. Y.},
   author = {Ma, J.-J.},
    title = {Invariants and $K$-spectrums of local theta lifts},
    journal = {Compos. Math.},
    volume = {151},
    number = {1},
    year = {2015},
    pages ={179--206},
}

\bib{LMBM}{article}{
author={Losev, I.},
author={Mason-Brown, L.},
author={Matvieievskyi, D.},
title={Unipotent ideals and Harish-Chandra bimodules},
journal = {arXiv:2108.03453},
}

\bib{Lsp}{article}{
   author={Lusztig, G.},
   title={A class of irreducible representations of a Weyl group},
   journal={Indag. Math.},
   volume={41},
   date={1979},
   number={3},
   pages={323--335},
}

\bib{Lu}{book}{
   author={Lusztig, G.},
   title={Characters of reductive groups over a finite field},
   series={Ann. of Math. Stud.},
   volume={107},
   publisher={Princeton University Press, Princeton, NJ},
   date={1984},
   pages={xxi+384},
}

\bib{LS}{article}{
   author = {Lusztig, G.},
   author = {Spaltenstein, N.},
    title = {Induced unipotent classes},
    journal = {J. Lond. Math. Soc.},
    volume = {19},
    year = {1979},
    pages ={41--52},
}

\bib{Mil}{article}{
 author={Miller, S. D.},
    title = {Residual automorphic forms and spherical unitary representations of exceptional groups},
    journal = {Ann. of Math.},
    volume = {177},
    number = {2},
    year = {2013},
    pages ={1169--1179},
}

\bib{Mo11}{article}{
   author={M\oe glin, C.},
   title={Multiplicit\'{e} 1 dans les paquets d'Arthur aux places $p$-adiques},
   conference={
      title={On certain $L$-functions},
   },
   book={
      series={Clay Math. Proc.},
      volume={13},
      publisher={Amer. Math. Soc., Providence, RI},
   },
   date={2011},
   pages={333--374},
}

\bib{Mo17}{article}{
  author={M{\oe}glin, C.},
  title = {Paquets d'Arthur Sp\'eciaux Unipotents aux Places Archim\'ediennes et Correspondance de Howe},
  journal = {J. Cogdell et al. (eds.), Representation Theory, Number Theory, and Invariant Theory, In Honor of Roger Howe. Progr. Math.}
  %series ={Progress in Math.},
  volume = {323},
  pages = {469--502}
  year = {2017}
}

\bib{MR18a}{article}{
  author={M{\oe}glin, C.},
    author={Renard, D.},
  title = {Sur les paquets d'Arthur des groupes classiques et unitaires non quasi-déployés
  },
  journal = {Relative aspects in representation theory, Langlands functoriality and automorphic forms, 341–361, Lecture Notes in Math., 2221, CIRM Jean-Morlet Ser., Springer, Cham},
  year = {2018},
    }

\bib{MR18b}{article}{
  author={M{\oe}glin, C.},
    author={Renard, D.},
  title = {Sur les paquets d'Arthur aux places réelles
  },
  journal = {Geometric aspects of the trace formula, 299–-320,
Simons Symp., Springer, Cham},
  year = {2018},
    }

\bib{MR19}{article}{
  author={M{\oe}glin, C.},
    author={Renard, D.},
  title = {Sur les paquets d'Arthur des groupes unitares et quelques conséquences pour les groupes classiques},
  journal = {Pacific J. Math.},
  volume = {299},
    number = {1},
    year = {2019},
    pages ={53--88}
    }

\bib{MR20}{article}{
  author={M{\oe}glin, C.},
    author={Renard, D.},
  title = {Sur les paquets d'Arthur des groupes classiques r\'eels},
  journal = {J. Eur. Math. Soc. },
  volume = {22},
    number = {6},
    year = {2020},
    pages ={1827--1892}
    }

\bib{Mok}{article}{
author={Mok, C. P.},
  title = {Endoscopic classification of representations of quasi-split unitary groups},
  journal = {Mem. Amer. Math. Soc.},
  volume = {235},
    number = {1108},
    year = {2015},
   pages ={vi+248 pp.},
    }

\bib{NOT}{article}{
   author = {Nishiyama, K.},
   author = {Ochiai, H.},
   author = {Taniguchi, K.},
    title = {Bernstein degree and associated cycles of Harish-Chandra modules—Hermitian symmetric case},
    journal = {Ast\'erisque},
    volume = {273},
    year = {2001},
   pages ={13--80},
}

\bib{NOZ}{article}{
  author = {Nishiyama, K.},
  author = {Ochiai, H.},
  author = {Zhu, C.-B.},
  journal = {Trans. Amer. Math. Soc.},
  title = {Theta lifting of nilpotent orbits for symmetric pairs},
  volume = {358},
  year = {2006},
  pages = {2713--2734},
}

\bib{NZ}{article}{
   author = {Nishiyama, K.},
   author = {Zhu, C.-B.},
    title = {Theta lifting of unitary lowest weight modules and their associated cycles},
    journal = {Duke Math. J.},
    volume = {125},
    number = {3},
    year = {2004},
   pages ={415--465},
}

\bib{Ohta}{article}{
  author = {Ohta, T.},
  %doi = {10.2748/tmj/1178227492},
  journal = {Tohoku Math. J.},
  number = {2},
  pages = {161--211},
  publisher = {Tohoku University, Mathematical Institute},
  title = {The closures of nilpotent orbits in the classical symmetric
    pairs and their singularities},
  volume = {43},
  year = {1991}
}

\bib{PT}{article}{
  title={Some small unipotent representations of indefinite orthogonal groups and the theta correspondence},
  author={Paul, A.},
  author={Trapa, P.},
  journal={Univ. Aarhus Publ. Ser.},
  volume={48},
  pages={103--125},
  year={2007}
}

\bib{Pz1}{article}{
author={Przebinda, T.},
title={Characters, dual pairs, and unipotent representations},
journal={J. Funct. Anal.},
volume={98},
number={1},
pages={59--96},
year={1991}
}

\bib{Pz2}{article}{
author={Przebinda, T.},
title={Characters, dual pairs, and unitary representations},
journal={Duke Math. J. },
volume={69},
number={3},
pages={547--592},
year={1993}
}

\bib{PrzInf}{article}{
      author={Przebinda, T.},
       title={The duality correspondence of infinitesimal characters},
        date={1996},
     journal={Colloq. Math.},
      volume={70},
       pages={93--102},
}

\bib{Ra}{article}{
author={Rallis, S.},
title={On the Howe duality conjecture},
journal={Compos. Math.},
volume={51},
pages={333--399},
year={1984}
}

\bib{Sa}{article}{
author={Sahi, S.},
title={Explicit Hilbert spaces for certain unipotent representations},
journal={Invent. Math.},
volume={110},
number = {2},
pages={409--418},
year={1992}
}

\bib{Se}{article}{
author={Sekiguchi, J.},
title={Remarks on real nilpotent orbits of a symmetric pair},
journal={J. Math. Soc. Japan},
year={1987},
volume={39},
number={1},
pages={127--138},
}

\bib{SV}{article}{
  author = {Schmid, W.},
  author = {Vilonen, K.},
  journal = {Ann. of Math.},
  number = {3},
  pages = {1071--1118},
  title = {Characteristic cycles and wave front cycles of representations of reductive Lie groups},
  volume = {151},
year = {2000},
}

\bib{SZ1}{article}{
title={A general form of Gelfand-Kazhdan criterion},
author={Sun, B.},
author={Zhu, C.-B.},
journal={Manuscripta Math.},
pages = {185--197},
volume = {136},
year={2011}
}

\bib{So}{article}{
author = {Sommers, E.},
title = {Lusztig's canonical quotient and generalized duality},
journal = {J. Algebra},
volume = {243},
number = {2},
pages = {790--812},
year = {2001},
}

\bib{Tr}{article}{
  title={Special unipotent representations and the Howe correspondence},
  author={Trapa, P.},
  year = {2004},
  journal={Univ. Aarhus Publ. Ser.},
  volume = {47},
  pages= {210--230}
}

\bib{VGL}{article}{
   author={Vogan, D. A. },
   title={The unitary dual of ${\rm GL}(n)$ over an Archimedean field},
   journal={Invent. Math.},
   volume={83},
   year={1986},
   number={3},
   pages={449--505},
  }

\bib{VoBook}{book}{
author = {Vogan, D. A. },
  title={Unitary representations of reductive Lie groups},
  year={1987},
  series = {Ann. of Math. Stud.},
 volume={118},
  publisher={Princeton University Press}
}

\bib{Vo89}{article}{
  author = {Vogan, D. A. },
  title = {Associated varieties and unipotent representations},
  journal = {Harmonic analysis on reductive groups, Proc. Conf., Brunswick/ME
    (USA) 1989, Progr. Math.},
 volume={101},
  publisher = {Birkh\"{a}user, Boston-Basel-Berlin},
  year = {1991},
pages={315--388},
  editor = {W. Barker and P. Sally},
}

\bib{Vo98}{article}{
  author = {Vogan, D. A. },
  title = {The method of coadjoint orbits for real reductive groups},
 %booktitle ={Representation theory of Lie groups (Park City, UT, 1998)},
 journal = {Representation theory of Lie groups (Park City, UT, 1998). IAS/Park City Math. Ser.},
  volume={8},
  publisher = {Amer. Math. Soc.},
  year = {2000},
pages={179--238},
}

\bib{Wa1}{book}{
  title={Real reductive groups I},
  author={Wallach, N. R.},
  year={1988},
  publisher={Academic Press Inc. }
}

\bib{Wa2}{book}{
  title={Real reductive groups II},
  author={Wallach, N. R.},
  year={1992},
  publisher={Academic Press Inc. },
}

\bib{Weil}{article}{
  title={Sur certain group d'operateurs unitaires},
  author={Weil, A.},
  year = {1964},
  journal={Acta Math.},
  volume = {111},
  pages= {143--211}
}

\bib{Weis}{article}{
 author={Weissman, M. H.},
 title={L-groups and parameters for covering groups},
 journal={Astérisque},
 volume={398},
 pages={33--186},
 year={2018},
 }

\bib{Ya}{article}{
  title={Degenerate principal series representations for quaternionic unitary groups},
  author={Yamana, S.},
  year = {2011},
  journal={Israel J. Math.},
  volume = {185},
  pages= {77--124}
}

\bib{Zh}{article}{
  title={Local theta correspondence and nilpotent invariants},
  author={Zhu, C.-B.},
  year = {2019},
  journal={Proc. Sympos. Pure Math.},
  volume = {101},
  pages= {427--450}
}

\end{biblist}
\end{bibdiv}

\end{document}